\renewcommand{\epsilon}{\varepsilon}
\newcommand{\R}{{\mathbb R}}
\newcommand{\Z}{{\mathbb Z}}
\newcommand{\Ss}{{\mathbb S}}
\renewcommand{\phi}{\varphi}
\newcommand{\acal}{\mathcal{A}}
\newcommand{\ccal}{\mathcal{C}}
\newcommand{\dcal}{\mathcal{D}}
\newcommand{\ecal}{\mathcal{E}}
\newcommand{\fcal}{\mathcal{F}}
\newcommand{\gcal}{\mathcal{G}}
\newcommand{\hcal}{\mathcal{H}}
\newcommand{\ocal}{\mathcal{O}}
\newcommand{\rcal}{\mathcal{R}}
\newcommand{\tcal}{\mathcal{T}}
\newcommand{\zcal}{\mathcal{Z}}
\newcommand{\Om}{\Omega}
\newtheorem{theo}{{\sc Theorem}}[section]
\newtheorem{maintheo}{{\sc Theorem}}
\newtheorem{maincor}[maintheo]{{\sc Corollary}}
\newtheorem{maindefn}[maintheo]{{\sc Definition}}
\newtheorem{lem}[theo]{{\sc Lemma}}
\newtheorem{mainprop}[maintheo]{{\sc Proposition}}
\newtheorem{prop}[theo]{{\sc Proposition}}
\newtheorem{mainprob}[maintheo]{{\sc Problem}}
\newenvironment{rem}{\medskip\noindent{\it Remark:\/} }{\medskip}
\newtheorem{defn}[theo]{{\sc Definition}}
\title[Quantum ergodic restriction]
{Quantum ergodic restriction theorems, I:  interior  hypersurfaces
in domains with ergodic billiards}
\author{John  A. Toth}
\address{Department of Mathematics and Statistics, McGill University, Montreal, CANADA } \email{jtoth@math.mcgill.ca} \thanks{Research partially supported by NSERC grant \# OGP0170280 and a William Dawson Fellowship}
\author{Steve Zelditch}
\address{Department of Mathematics, Johns Hopkins University, Baltimore,
MD 21218, USA} \email{zelditch@math.jhu.edu}
\address{Department of Mathematics, Northwestern  University,
Evanston, IL 60208-2370, USA} \email{
zelditch@math.northwestern.edu}
\thanks{Research partially supported by NSF grants \# DMS-0603850 and  and  \# DMS-0904252.}
\date{\today}
\begin{document}

 \maketitle

 \begin{abstract}  Quantum ergodic restriction (QER) is the
 problem of finding conditions on a hypersurface $H$ so that
 restrictions $\phi_j |_H$ to $H$  of $\Delta$-eigenfunctions of Riemannian
 manifolds $(M, g)$ with ergodic geodesic flow are quantum ergodic
 on $H$. We prove two kinds of results: First (i) for any smooth
 hypersurface $H$, the Cauchy data $(\phi_j|H, \partial \phi_j|H)$
 is quantum ergodic if the Dirichlet and Neumann data are weighted
 appropriately. Secondly (ii) we give conditions on $H$ so that
 the Dirichlet (or Neumann) data is individually quantum ergodic. The
 condition involves the almost nowhere equality of left and right
 Poincar\'e maps for $H$. The proof involves two further novel
 results: (iii) a local Weyl law for boundary traces of
 eigenfunctions, and (iv) an `almost-orthogonality' result for
 Fourier integral operators whose canonical relations almost
 nowhere commute with the geodesic flow.
 \end{abstract}

 \tableofcontents

\section{Introduction}

\medskip

This is the first in a series of articles on ``quantum ergodic
restriction" (or QER) theorems. Quantum ergodicity (in the
Riemannian setting) is the study of asymptotics of eigenfunctions
of the Laplacian $\Delta_g$ on Riemannian manifolds $(M, g)$ with
ergodic geodesic flow.  The QER question is whether restrictions
of $\Delta_g$-eigenfunctions to hypersurfaces $H \subset M$ are
quantum ergodic along the hypersurface.
 More precisely, the aim
is to find conditions on $H$ so that the Dirichlet data
$\phi_{\lambda}|_H$, or  Neumann data $\partial_{\nu}^H
\phi_{\lambda}|_H$ or full Cauchy data $(\phi_{\lambda}|_H,
\partial^H_{\nu} \phi_{\lambda}|_H)$  is quantum ergodic along
 $H$. Here, $\partial_{\nu}^H$ is a fixed
choice of unit normal to $H$.

  In this article, we consider
the case of smooth  hypersurfaces $H \subset \Omega$ in piecewise
smooth Euclidean domains $\Omega \subset \R^N$ with `totally'
ergodic billiards, i.e. such  that all powers $\beta^k: B^*
\partial \Omega \to B^* \partial \Omega$ of the billiard map are
ergodic. We first prove that in a suitably weighted sense, the
Cauchy data is quantum ergodic for any hypersurface. This is a
general phenomenon that holds for all Riemannian manifolds with
boundary  (further results are proved in  \cite{CHTZ}). We then
consider the subtler question of when the Dirichlet data (or
Neumann data) is itself quantum ergodic. It is simple to find
examples of $H$ for which $\phi_j |_H$ are not quantum ergodic.
But in  Theorem \ref{maintheorem} we give a generic sufficient
condition on
  $H$ in
terms of a `dynamical' condition on the `transfer map
$\tau^{\partial \Omega}_H: B^* H \to B^ *
\partial \Omega$ from tangent co-vectors to $H$ to the boundary (see Definitions \ref{TRANSFERMAP} and \ref{ANC}).
The proof is  based on a  seemingly new principle in quantum
dynamics: the almost orthogonality, $\langle F(\lambda_j) \phi_j,
\phi_j \rangle \to 0$,  of $\Delta$- eigenfunctions
$\phi_{\lambda}$ (and their Cauchy data) to their images
$F(\lambda) \phi_{\lambda}$ under a semi-classical Fourier
integral operator whose underlying symplectic map almost nowhere
commutes with the geodesic flow (or billiard map).

To our knowledge, the only prior quantum ergodic restriction
theorem concerns the Cauchy data along the boundary of
eigenfunctions of the Laplacian on bounded domains satisfying
Dirichlet or generalized Neumann boundary conditions
\cite{GL,HZ,Bu}. However, heuristic and numerical work of
Hejhal-Rackner \cite{HR} appears to show that restrictions of
eigenfunctions on finite area hyperbolic surfaces to closed
horocycles are quantum ergodic.  We restrict to Euclidean domains
in this article because there are special techniques available for
them and there are many interesting examples.  At the end of the
introduction, we discuss related work (and work in progress) on
quantum ergodic restriction theorems in more general settings.

To state our results, we introduce some notation (See \S
\ref{NOTATION} for a notational index).  We consider the
eigenvalue problem in a piecewise smooth bounded Euclidean domain
$\Omega \subset \R^n$  with
  Neumann (resp. Dirichlet) boundary conditions:
\begin{equation} \label{EIG}  \left\{ \begin{array}{ll} - \Delta \phi_{\lambda_j}  = \lambda_j^2
\phi_{\lambda_j} \;\; \mbox{in}\;\; \Omega, & \;\;
\\ \\\partial_{\nu} \phi_{\lambda_j}
 = 0 \;\;(\mbox{resp.} \; \phi_{\lambda_j} = 0) \; \mbox{on} \;\; \partial \Omega.
\end{array} \right. \end{equation} Here, $\Delta$ is the Euclidean
Laplacian and  $\partial_{\nu}$ is the interior unit normal to
$\Omega$. We denote by $\{\phi_{\lambda_j}\}$ an orthonormal basis
of eigenfunctions of the boundary value problem corresponding to
the eigenvalues $\lambda_0^2 < \lambda_1^2 \leq \lambda_2^2
\cdots$ enumerated according to multiplicity. We denote by $\beta:
B^*
\partial \Omega \to B^*
\partial \Omega$ the billiard map of $\Omega$ (see \S \ref{BMAP}).

The  quantum ergodic restriction problem for Dirichlet data is to
determine the limits of the diagonal matrix elements
\begin{equation} \label{ME} \rho_j^H(Op^H_{\lambda}(a)): =  \langle Op_{\lambda_j}(a)
\phi_{\lambda_j}|_{H},\phi_{\lambda_j}|_{H} \rangle_{L^{2}(H)},
\;\; Op_{\lambda}^H(a) \in \Psi^0_{sc}(H))
\end{equation}
of (semi-classical) pseudo-differential operators
$Op_{\lambda}^H(a)$ along $H$. That is, $a \in S^{0,0}_{cl}(T^*H
\times (0,\lambda_0^{-1}] )$ lies in  the class of zeroth-order
semiclassical symbols with polyhomogeneous expansions in
$\lambda^{-1}$  with $\lambda \in[\lambda_0, \infty)$  We recall  that \cite{DS}
\begin{equation} \label{scsymbol} \begin{array}{ll}
S^{0,0}_{cl}(T^*H \times (0,\lambda_0^{-1}] ):= \{ a(s,\tau;\lambda) \in C^{\infty}(T^*H \times
 (0,\lambda_0^{-1}]); a \sim_{\hbar \rightarrow 0} \sum_j a_j \lambda^{-j}, \\ \\ 
    |\partial^{\alpha}_{s} \partial^{\beta}_{\tau} a_j(s,\tau)| \leq C_{j,\alpha,\beta} \langle \tau \rangle^{-j - |\beta|} \}. \end{array} \end{equation}
  For simplicity, we denote the corresponding space of semiclassical pseudodifferential operators by
  $$ \Psi^{0}_{sc}(H):= Op_{\lambda}( S^{0,0}_{cl}(T^*H \times [0,\lambda_0^{-1}] ).$$

 The sequence of Dirichlet data
$\phi_{\lambda_j}|_{H}$ is called quantum ergodic on $L^2(H)$ if
almost all of the matrix elements (\ref{ME}) tend to the integral
$\omega_{\infty}^H(a) = \int_{B^* H} a_0 d\mu_{\infty}$ of the
principal symbol $a_0$ against the  quantum limit measure
$d\mu_{\infty}$ where $d\mu_{\infty}$ is the image under the
transfer map $\tau_H^{\partial \Omega}$ (see Definition
\ref{TRANSFERMAP}) to $B^* H$ of the limit measure $d\mu_B$ of
\cite{HZ} with boundary conditions $B$.
 The quantum ergodic restriction problem for Neumann data is
 similar, while that for
Cauchy data is to study the matrix elements  $\langle {\bf A}_2
CD_H(\phi_{\lambda_j}), CD_H(\phi_{\lambda_j}) \rangle$, where
${\bf A}_2$ is a $2 \times 2$ matrix of pseudo-differential
operators on $H$. As  can be seen  in the table  below, we need to weight the
Dirichlet and Neumann data in a special way and to choose special
${\bf A}_2$ to obtain an almost unique weak limit.

To put the quantum ergodic restriction problem into context,  we
recall that the Dirichlet or Neumann eigenfunctions are quantum
ergodic on $M$ (i.e. in the `interior') whenever the geodesic flow
of $(M, g)$  (resp. billiard flow, if $\partial M \not\emptyset$) is ergodic. In this article we restrict our attention
to bounded domains in $\R^n$, and   refer to \cite{GL,ZZ} for the
relevant interior quantum ergodic theorems.  It  is shown in
\cite{GL,HZ,Bu} that $(M,
\partial M)$ satisfies QER if the classical billiard flow of $M$
is ergodic, with $\omega_{\infty}^H$ depending on the choice of
boundary conditions (which we suppress in the notation). The
boundary quantum ergodic limit measures are as follows \cite{HZ}:

\bigskip
\noindent \hskip 100pt { \begin{tabular}{|c|c|c|c|c|}
 \hline \multicolumn{4}{|c|}{\bf Boundary traces and quantum limits} \\
\hline
 B & $Bu$  &   $u^{b}$ & $d\mu_B$   \\
\hline
 Dirichlet &  $u|_{Y}$ &  $\lambda^{-1}\; \partial_{\nu} u |_{Y}$ & $\gamma(q) d\sigma $ \\
\hline
 Neumann &   $\partial_{\nu} u |_{Y}$  &  $u|_{Y}$  & $\gamma(q)^{-1}
 d\sigma$\\
\hline
 \end{tabular}}
\bigskip

Here, we denote by  $d \sigma = dq d \eta $  the natural
symplectic volume measure on $B^*\partial \Omega$. The standard
cotangent projection is
 denoted by $\pi: T^* \partial \Omega \rightarrow \partial \Omega.$ We also define
the function $\gamma(q, \eta)$ on $B^*\partial \Omega$ by
\begin{equation}
\gamma(q, \eta) = \sqrt{1 - |\eta|^2}.
\label{a-defn}\end{equation}

 The set $S^*M |_H$ of unit vectors to
$M$  along $H$ is a kind of cross-section to the geodesic or
billiard flow and one might expect that on both the classical and
quantum levels, the ambient ergodicity induces ergodicity on the
cross section. However, this heuristic idea is based on the
assumption  that the Cauchy data (or Dirichlet data) on a
hypersurface is a suitable notion of quantum cross section. In
effect we are investigating the extent to which that is true (the
abstract microlocal notion of quantum cross section is studied in
\cite{CHTZ2}).

\subsection{Quantum ergodic restriction theorem for Cauchy data}

The difference  in the quantum ergodic restriction problems for
Cauchy data versus  Dirichlet data is visible in  the
 one-dimensional case where $M = S^1$ or where $M = [0, \pi]$.  Modulo
time-reversal, the classical systems are obviously ergodic and
consist of just one orbit. The real-valued eigenfunctions $\sin k
x, \cos k x$  do not individually have quantum ergodic
restrictions to a hypersurface (i.e. a point) $x = x_0$. Indeed,
they oscillate as $k \to \infty$. On the other hand, if we weight
the real Neumann data by $\frac{1}{k}$ then the formula $\cos^2 kx
+ \sin^2 k x = 1$ shows that the  Cauchy data is quantum ergodic.

A somewhat more revealing example is that  of eigenfunctions of
semi-classical Schr\"odinger operators $h^2 D^2 + V$ in one
dimension and with connected level sets of the energy $\xi^2 +
V(x) = E$. The real valued eigenfunctions have the WKB form
$A_{\hbar} \cos (\frac{1}{\hbar} S(x))$ and we see that again the
Dirichlet data at a point fails to be quantum ergodic but that the
Cauchy data is quantum ergodic if  the Dirichlet data is weighted
by $S'(x)$ and the Neumann data by $\hbar$.

As these examples show, we should study the {\it normalized Cauchy
data} \begin{equation} \label{CAUCHYDATA} CD_H(\phi_{\lambda}): (\phi_{\lambda}|_H, \lambda^{-1}
\partial_{\nu} \phi_{\lambda}|H ). \end{equation} Henceforth, Cauchy data will
always refer to this normalization.

To state our result, we introduce some further notation. We assume
that $H \subset \Omega^o$, although with some extra technical
complications, we could allow $H$ to intersect $\partial \Omega$
(e.g. the midline of the Bunimovich stadium).  We denote by $s
\mapsto q_H(s) \in H$ a smooth parameterization of the
hypersurface $ H \subset \Omega$ with $s \in U \subset
\mathbb{R}^{n-1}$ an open set and $ U \in y \mapsto q(y) \in
 \partial \Omega$  a local smooth parametrization of the boundary of $ \Omega.$ The dual  coordinates are
  $\tau \in T_{s}^*(H) $ and $\eta \in T_{y}^*(\partial \Omega)$
  respectively. We sometimes abuse notation and use $(s, \tau)$ as
  coordinates on $B^* H$, resp. $(y, \eta)$ as coordinates on $B^*
  \partial \Omega$.

The examples above  indicate that we can only expect QER for
rather special  matrices ${\bf A}_2$ of pseudo-differential
operators.  We say  that ${\bf A}_2(\hbar)$ is a {\it normalized
scalar} matrix of semi-classical pseudo-differential operators if,

\begin{itemize}

\item (i)\; $A_{21}(\hbar) = A_{12}(\hbar) = 0$; \medskip

\item (ii) $\sigma_{A_{11}}(s, \tau) =  (1 - |\tau|^2) \sigma_{
A_{22}}(s, \tau)$ for all $(s, \tau) \in B^* H$.

\end{itemize}

\begin{maintheo} \label{CDTHM} Assume that $\Omega \subset \R^n$ is a
piecewise smooth Euclidean plane domain with totally ergodic
billiards, and let $H \subset \Omega^o$ be any smooth
hypersurface. Also, let ${\bf A}$ be a normalized scalar matrix of
zeroth order pseudo-differential operators on $H$.  Then, there
exists a density-one subset $S$ of ${\mathbb N}$ such that
   the (normalized) Cauchy
data of  Dirichlet or Neumann eigenfunctions satisfies
$$ \lim_{\lambda_j \rightarrow \infty; j \in S}  \langle {\bf A}_2 CD_H(\phi_{\lambda_j}), CD_H(\phi_{\lambda_j}
\rangle = \int_{B^* H} (\sigma_{A_{11}} +
\sigma_{A_{22}})d\mu_{\infty},
$$ where $d\mu_{\infty} = (\tau_{\partial \Omega}^H)_* d\mu_B$ is the  push-forward by
$\tau_{\partial \Omega}^H$ (Definition \ref{TRANSFERMAP}) to $B^*
H$ of the limit measure $d\mu_B$ in the table above.

\end{maintheo}
In other words, there exists a measure $d\mu_{\infty}$ on $B^*H$
so that along a subsequence of eigenvalues of density one we have,
\begin{equation}\label{TOPROVE} \begin{array}{c} \langle Op_{\lambda}((1 - |\tau|^2) a(s,\tau))
\phi_{\lambda} |_H, \phi_{\lambda}|_H \rangle + \lambda^{-2}
\langle Op_{\lambda}(a(s, \tau)) \partial_{\nu_H} \phi_{\lambda}
|_H, \partial_{\nu_H} \phi_{\lambda}|_H \rangle \\ \\ \to
\int_{B^* H} 2 (1 - |\tau|^2)
 a d\mu_{\infty}. \end{array}
\end{equation}

When  the correspondence $\tau_{\partial \Omega}^{H}$ has two
branches, so that  $\Gamma_{\tau^{H}_{\partial \Omega}
}\Gamma_{\tau^{H;1}_{\partial \Omega}} \cup
\Gamma_{\tau^{H;2}_{\partial \Omega}} $
 where $\tau^{H;j}_{\partial \Omega}; j=1,2,$ are single-valued maps (see section
 \ref{transferformulas}), the push-forward is given by
$$( \tau_{\partial \Omega}^{H} )_{*} := ( \tau_{\partial \Omega}^{H;1} )_{*} + ( \tau_{\partial \Omega}^{H;2} )_{*}.$$
Similarly if there are more intersection points.

Although we do not do so here, Theorem \ref{CDTHM} can also be
proved by using interior quantum ergodicity \cite{ZZ} and  a
Rellich-type commutator argument. As a consequence, we show in
\cite{CHTZ2}
 that this theorem can be naturally interpreted as  a quantum ergodicity result for
  the quantum flux norm of the Cauchy data along  the hypersurface, $H \subset \Omega.$
Moreover, the latter argument  is insensitive to whether or not
$\partial  \Omega \neq 0.$
  Thus, our result in \cite{CHTZ2} extends Theorem \ref{CDTHM} to  interior hypersurfaces $H \subset M$
   of arbitrary compact ergodic manifolds $(M,g)$ with or without boundary.

\subsection{Quantum ergodic restriction for Dirichlet data}

The one-dimensional case raises doubts that there can exist a
quantum ergodic restriction theorem for the Dirichlet data alone.
However, it is not hard to show that quantum ergodic restriction
does hold for  random waves on general smooth  hypersurfaces in
dimensions $n \geq 2$. In this section, we give a sufficient
condition on $H$ so that QER is valid.

To motivate the condition, we  consider another special case of
the QER problem: i.e.
 the case  of Neumann eigenfunctions of a
Bunimovich stadium $ S \subset \R^2 $,  with the curve $H$ given
by the vertical midline of $S$. We note that $S$ has a left-right
symmetry $\sigma_L$ (and an up-down symmetry). The midline $H$ is
pointwise fixed by $\sigma_L$. `Half' of the Neumann
eigenfunctions are even with respect to $\sigma_L$ and half are
odd (in the sense of spectral density). Obviously, the odd
eigenfunctions restrict to zero on $H$. On the other hand, the
even ones are quantum ergodic on $H$: in the quotient domain
$S/\Z_2$ by $\sigma_L$,   $H$ becomes a boundary component and
even Neumann eigenfunctions of $S$ are Neumann eigenfunctions of
$S/\Z_2$. Hence a full density subsequence of their restrictions
to $H$ have the boundary quantum limit $d\mu_B$  given  in the
table above.  Thus, there exist two subsequences of eigenfunctions
of density $1/2$ with different quantum limits, and therefore QER
(quantum ergodic restriction) of the Dirichlet fails for $(S,H)$.
Consistently with Theorem \ref{CDTHM}, the Cauchy data is quantum
ergodic. Indeed, the Neumann data is quantum ergodic on the
midline by the arguments of \cite{HZ,Bu} applied to the
half-stadium with Neumann boundary conditions on all but the
midline and with Dirichlet boundary conditions on the midline. A
more general example of the same kind is a curve $H$ which
intersects the midline in a segment of the midline. The same kind
of example exists on the fixed point set of any manifold (with or
without boundary) that possesses a $\Z_2$ symmetry.

We now give a sufficient condition for restricted quantum
ergodicity of the Dirichlet data which, while difficult to verify
in many  cases, is generic for smooth  hypersurfaces, at least for
some classes of domains.  As above, we let $H \subset
 \Omega^o$ (the interior of $\Omega$)
denote a  smooth hypersurface.  The key objects associated to $H$
are the transfer maps $\tau_{\partial \Omega}^H$ and
$\tau_H^{\partial \Omega}$. They are somewhat complicated because
of the possible number of intersection points of a billiard
trajectory with $H$. In \S \ref{BMAPSH}, we will give a more
detailed definition of the transfer maps (see Definition
\ref{SECONDTRANSFER}).

 Let $(y,\eta) \in B^* \partial \Omega$ and
let $q(y) + t \zeta(y, \eta)$ denote its billiard trajectory where
(see \S \ref{BMAP})
\begin{equation} \label{zetayeta} \zeta(y,\eta) := \eta + \sqrt{1-|\eta|^{2}} \nu_y \end{equation}
is the lift of $(y, \eta)$ to an interior unit vector to $\Omega$
at $y$.

\begin{maindefn}\label{TRANSFERMAP}  The   transfer map   $\tau_{\partial \Omega}^H: B^* \partial \Omega \to B^* H$
is defined as follows: We say that $(y, \eta)$ is in the domain of
$\tau_{\partial \Omega}^H$,  (ie. $(y, \eta) \in
\dcal(\tau_{\partial \Omega}^H)$)  if its billiard trajectory
intersects $H$. Denote by $E(t_H(y,\eta), y, \eta) \in H$ any
intersection point (see  section \ref{transferformulas}) and by
$\pi^T_{E(t_H(y,\eta),y,\eta)} \zeta(y, \eta)$ the tangential
projection of the terminal velocity at $E(t_H(y,\eta),y,\eta)$ to
$T_{E(t_H(y,\eta),y,\eta)} H$.  The graph of transfer map is by
definition the canonical relation,
\begin{multline} \label{TRANSFERRELATION} \Gamma_{\tau_{\partial
\Omega}^H}: =\{(y, \eta, s(y,\eta),\pi^T_{E(t_H(y,\eta),y,\eta)}
\zeta(y, \eta)) \in B^* \partial \Omega \times B^* H,  \\
(y,
\eta) \in \dcal(\tau_{\partial \Omega}^H), \,  q_H(s(y,\eta)) = E(t_H(y,\eta),y,\eta) \}.
\end{multline}

In the inverse direction, we have the  double-valued transfer map
(or correspondence)
\begin{equation} \label{TRANSFERH} \tau_H^{\partial \Omega}: B^*H \to B^* \partial \Omega, \end{equation}
 defined by taking $(s, \tau) \in B^* H$ to the two unit
covectors  $\xi_{\pm}(s, \tau) \in S^*_{y(s,\tau)} \Omega$  which
project to it, following each of their trajectories until they hit
the boundary and then projecting
 each terminal velocity vector to $B^* \partial \Omega$. The   graph of $ \tau_H^{\partial \Omega}$  is the canonical
relation,
\begin{equation} \label{TRANSFERHH} \Gamma_{\tau_H^{\partial
\Omega}}: =\{(s, \tau, y(s,\tau), \pi^T_{q(y(s,\tau))} \zeta(s,
\tau)) \in B^*H \times B^* \partial \Omega \}. \end{equation}
\end{maindefn}

We emphasize that  $\tau_{H}^{\partial \Omega}$ is  double-valued
due to the fact that each $(s,\tau) \in B^* H$ lifts to two unit
covectors \begin{equation} \label{xi+-} \xi_{\pm}(s,\tau) = \tau +
\sqrt{1 - |\tau|^2} \nu_{\pm}  \in S^*_s \Omega \end{equation}
which project to $\tau$. More precisely,  the normal bundle $N^*
H$ to an orientable hypersurface  $H$ decomposes into two $\R_+$
bundles $N^*_{\pm}$, which we view as the two infinitesimal sides
of $N^* H$. Then $\xi_{\pm}(s,\tau) \in N^*_{\pm} H$ are the lifts
of $\tau $ to unit covectors on the two sides. The lifts are
analogous to (\ref{zetayeta}) but only the interior side of the
boundary is  relevant in (\ref{zetayeta}).  The two lifts in
(\ref{xi+-})  give rise to  two branches
\begin{equation} \label{TAUPM} \tau_{\pm}: B^*H \to B^* \partial \Omega \end{equation}
of $\tau_H^{\partial \Omega}$ by taking $\tau_{\pm}(s, \tau)$ to
be the element of $\tau_H^{\partial \Omega}(s, \tau)$ coming from
the trajectory defined by  the unit vector $\xi_{\pm}(s, \tau) \in
S^*_{q_H(s,\tau)} \Omega$. This double-valued-ness is independent
of the shape of $H$ and is an essential feature of the quantum
restriction problem.

On the other hand, $\tau_{\partial \Omega}^H$ is multi-valued due
to the shape of $H$, specifically to the many possible
intersection points of a billiard trajectory with $H$. This  is an
unavoidable feature of the QER problem which  causes notational
inconveniences more than essential analytical problems. For
expository simplicity, we will assume that $H$ is weakly convex,
so that a generic line intersects $H$ in at most two points. This
saves us from tedious notations and indices for the branches of
$\tau_{\partial \Omega}^H$ caused by the multiple intersection
points. In the course of the proof we indicate the modifications
necessary to deal with general smooth hypersurfaces. In Definition
\ref{SECONDTRANSFER}, we introduce notation  for the multiple
branches.

Another inevitable and tedious complication is that  the
correspondences have singularities at directions where billiard
trajectories intersect $H$ or $\partial \Omega$ tangentially or
which run into a corner of $\partial \Omega$. It is not apriori
clear that such tangential rays  cause only  technical
complications. Without a detailed analysis, it is possible that
eigenfunction mass could get concentrated microlocally in
tangential directions to $H$. In the case $H = \partial \Omega$ it
was proved in \cite{HZ} that no such concentration occurs. The
same method shows that no such concentration occurs on $H$ either.
  In \S \ref{extensions} we give a self-contained  pointwise Weyl law
 argument to show that for a full density of $u_{\lambda}^{H}$'s, no such tangential
  mass concentration occurs. This method works equally well for manifolds,
  $M$, with or without boundary.

Another important symplectic correspondence associated to $H$ is
the once-broken  {\it transmission billiard map} (or
correspondence) through $H$, $\beta_H: B^*
\partial \Omega \to B^* \partial \Omega$,
defined by
\begin{equation} \label{TRANS} \beta_H (y, \eta)
= \left\{ \begin{array}{ll} \tau_- \tau_+^{-1} (y, \eta), &\;\;\;
 (y, \eta) \in \mbox{range}\;\; \tau_+,\\ & \\
\tau_+ \tau_-^{-1}(y, \eta), & (y, \eta) \in \mbox{range} \;
\tau_-.
\end{array} \right.
\end{equation}
 Equivalently, $\beta_H$ maps  the two endpoints of the rays
through $\xi_{\pm}(s, t)$ with $(s, t) \in B^* H$ to each other.
Thus, $\beta_H$ follows the trajectory of  $(y, \eta) \in B^*
\partial \Omega$ backward  to its intersection point  $ q_H(s(y,\eta)) \in H$ (assuming it
intersects $H$),
 breaks at $q_H(s(y,\eta))$ by the law of equal angles,  and then proceeds along the second link
 in the forward direction to  $\partial
\Omega$. Since the trajectory defined by $(s,\tau) \in B^* H$ may
have multiple intersection points with $H$,  this map is also a
multi-valued correspondence. With the simplifying assumption that
$H$ is weakly convex, a generic trajectory intersects $H$ in at
most two points, and we denote the corresponding  branches of
$\beta_H$ by $\beta^k_H; \, k=1,2$. See \S \ref{betas} for further
details. Note that the trajectory {\it breaks only once} even if
the trajectory intersects $H$ twice.

\begin{rem} \label{SIGMA} $\beta_H$  is not  the same as the broken billiard flow with
a break on $H$ because the trajectory travels backwards on the
first link. Starting  at $(y, \eta) \in B^* \partial \Omega$, the
map first time reverses the $\eta \to - \eta$, then proceeds on
the broken trajectory and then projects the terminal velocity.
Hence, $\beta_H (y, \eta)$ differs by a time-reversal from the
usual broken billiard trajectory.  As a check, let us note that in
the case of the midline of the stadium, the two unit vectors
projecting to $v \in B^*_s H$ are related by $\sigma_L$. Hence
their rays, and the projections of the terminal velocity vectors,
are related by $\sigma_L$. But the two-link ray starting at $v \in
T_y \partial \Omega$  between them is not so related: the terminal
velocity is the time reversal of $\sigma_L(v)$.
\end{rem}

  Given $A \subset B^*
\partial \Omega$ in the following   we
denote the symplectic volume $|dy d\eta|$- measure of $A$ by
$|A|.$

\begin{maindefn} \label{ANC} We say that  $(\beta_H, \beta)$ {\it almost
nowhere  commute} if  the (commutation) sets $$\ccal \ocal_{p,k}: =
\{(y, \eta) \in B^*\partial \Omega: \beta_H \beta^k (y, \eta) =
\beta^p \beta_H (y,\eta) \} $$
       have symplectic volume measure zero, $|\ccal \ocal_{p,k}| = 0$, for all $p, k = 1, 2, 3,
       \dots$
 \end{maindefn}

The almost nowhere commutativity (ANC) condition may be re-formulated in
terms of `left' and `right' return maps to $B^* H$.  Given
covector $(s, \tau) \in B^*H$, one may lift it in two ways
(\ref{xi+-})  to a unit covector $(s, \xi_{\pm}) \in S^*_H\Omega$
which projects to $\tau$.  We then follow the two geodesics
$\gamma^{\pm}(t)$ with initial conditions $(s, \xi_{\pm})$ until
they return to $H$ on the same side as $(s, \xi_{\pm})$ and then
project the terminal co-vector back to $B^*H$. The almost nowhere
commutativity condition  is equivalent to the condition that these
two oriented return maps do not coincide on a set of positive
measure in $B^*
 \partial \Omega$. The equivalence of the definitions is easily seen by
re-writing the equation  $ \beta_H \beta^k (y, \eta) = \beta^p
\beta_H (y,\eta)$ (see (\ref{TRANS}))  as
$$ \tau_- \tau_+^{-1} \beta^k (y, \eta) = \beta^p \tau_-
\tau_+^{-1} (y,\eta) \iff  \tau_+^{-1} \beta^k
\tau_+(s,\tau) = \tau_- ^{-1} \beta^p \tau_- (s, \tau),
 $$
 with $(y,\eta) = \tau_+(s,\tau).$
We observe that  $ \tau_{\pm}^{-1} \beta^k \tau_{\pm}(s,\tau)$ is
the $\pm \to \pm$ sided return map to $B^* H$ after $k$ bounces
off $\partial \Omega$. The union over $k$ gives the full one-sided
return map. The fact that $k$ and $p$ may differ in Definition
\ref{ANC} then seems rather natural, since the equality of the
one-sided return maps should not depend on the number of bounces
off $\partial \Omega$.

  Our
main result on Dirichlet data is:

   \begin{maintheo} \label{maintheorem} Let $\Omega \subset {\mathbb R}^{n}$ be a
     piecewise-smooth billiard with totally ergodic billiard flow and let $H \subset
     int(\Omega)$ be a  smooth interior hypersurface.  Let $\phi_{\lambda_j}; j=1,2,...$ denote the
       $L^{2}$-normalized Neumann eigenfunctions in $\Omega$. Then,
       if $(\beta_H, \beta)$  almost nowhere  commute,
 there exists a  density-one subset $S$ of ${\mathbb N}$ such that
  for $\lambda_0 >0$ and  $a(s,\tau;\lambda) \in S^{0,0}_{cl}(T^*H \times (0,\lambda_{0}^{-1}]),
  $
$$ \lim_{\lambda_j \rightarrow \infty; j \in S} \langle Op_{\lambda_j}(a_0)
 \phi_{\lambda_j}|_{H},\phi_{\lambda_j}|_{H} \rangle_{L^{2}(H)} = c_n \int_{B^{*}H} a( s, \tau )   \,
 \rho_{\partial \Omega}^{H}(s,\tau) \, ds d\tau. $$
 Here, $c_{n} = \frac{4}{ vol( {\mathbb S}^{n-1}) \,
vol(\Omega) }$ and $ \rho_{\partial \Omega}^{H} ds d\tau : =
(\tau_{\partial \Omega}^{H})_{*} ( \gamma^{-1} dy d\eta). $
Similarly for Dirichlet data on $H$ of   Dirichlet eigenfunctions,
except that $\gamma(y,\eta)= (1-|\eta|^{2})^{-1/2}$, and for
Neumann data if one multiplies the above by $(1 - |\tau|^2)$.
\end{maintheo}

We summarize the conclusion in the following table.

\bigskip
\noindent \hskip 100pt { \begin{tabular}{|c|c|c|c|c|}
 \hline \multicolumn{4}{|c|}{\bf H traces and quantum limits} \\
\hline
B&  Trace on H &   $u^{H}$ & $d\mu_{\infty}$    \\
\hline Dirichlet &  Dirichlet &  $u|_{H}$  & $ (\tau_{\partial
\Omega}^H)_*\gamma(q) d\sigma $ \\ \hline Dirichlet &  Neumann & $
\lambda^{-1} \partial_{\nu_H} u
|_{H}$  & $ (1 - |\tau|^2) (\tau_{\partial \Omega}^H)_*\gamma(q) d\sigma $ \\
\hline  Neumann &  Dirichlet & $u|_{H}$ & $ (\tau_{\partial
\Omega}^H)_*\gamma(q)^{-1} d\sigma $
\\ \hline Neumann &  Neumann & $\lambda^{-1} \partial_{\nu_H} u
|_{H}$ & $ (1 - |\tau|^2) (\tau_{\partial \Omega}^H)_*
\gamma(q)^{-1}
 d\sigma$\\
\hline
 \end{tabular}}
\bigskip

There is a  stronger but more technical version of this result
which uses a quantitative refinement of the almost nowhere
commuting condition (see the definition of quantitative almost nowhere commutation (QANC) condition in Definition \ref{ANCQ}  of subsection \ref{quant condition}).

\begin{maintheo} \label{maintheorema}   Theorem \ref{maintheorem}
 is valid
       as long as  $(\beta_H, \beta)$ quantitatively  almost nowhere
       commute.
\end{maintheo}

To illustrate the result, we reconsider the midline $H$  of the
Bunimovich stadium $S$. In this case, for $(y, \eta) \in
B^*_{q(y)} H$, the two unit vectors   $\xi_{\pm}(y, \eta) \in
S^*_q \Omega $  projecting to $\eta$ are $\sigma_L$-related, i.e.
images of each other under $\sigma_L$. Hence their trajectories,
and the projections to $T \partial \Omega$ of their terminal
velocities, are $\sigma_L$ related. It follows that $\beta_H(y,
\eta) = \sigma_L(y, \eta)$. Hence $\beta^k \sigma_L = \sigma_L
\beta^k$ for all $k$,  and $(S, H)$ violates the condition of
quantitative almost nowhere vanishing. Thus, our condition rules
out this `counterexample' to QER and the original ANC condition. At this time, we are not aware
of any other kind of `counter-example'. Indeed, the equality of
the left and right sided return maps is a kind of left/right
symmetry condition on $H$. It might be equivalent to the existence
of an involutive  isometry $\sigma: M \to M$ fixing a positive
measure of $H$, but at this time we do not know how to prove that.

Unfortunately, it is often difficult to check whether a given
hypersurface  $H$ in the billiard setting satisfies the  almost
nowhere commutativity condition. The next result shows that  the
condition holds  for generic $H$ in a reasonably broad billiard
setting. In \S \ref{fixedpoint} we prove:

\begin{mainprop} \label{GENERICintro} Let $\Omega$ be a two-dimensional dispersing billiard table (i.e. with
hyperbolic billiards). Then for generic convex curves,  $H \subset
\Omega$, $(\beta_H, \beta)$ almost never commute.
\end{mainprop}

This result is just an example of the genericity of the condition.
We only use  hyperbolicity of the billiards to prevent the
existence of a positive measure of self-conjugate directions, and
we restrict to dimension two to simplify one step in the argument.
A much more general result on genericity of the condition should
be possible,  and probably  could be proved by the methods of
\cite{PS1,PS2}.  The non-commutativity condition is also similar
to the condition that Poincar\'e maps of periodic reflecting rays
of generic domains do not have roots of unity as eigenvalues; see
\cite{PS1,PS2,S}.  But that would take us too far afield in this
article and we only give the simpler result to convince the reader
that our non-commutativity condition is far from vacuous. In the
boundaryless case, we believe that the condition can be proved to
hold  for closed geodesics of hyperbolic surfaces which are not
fixed by isometric involutions \cite{TZ2} and possibly for closed
horocycles.

We also remark that the  `total ergodicity assumption' is not
overly resrictive: all of the known examples of ergodic billiards
that we are aware of are totally ergodic. These include: the
Bunimovitch stadium, the cardioid and generic polygonal billiards
\cite{CFS, P}. The first two examples are known to satisfy
stronger mixing conditions (the stadium is in fact Bernuollii
\cite{BU}). It is apparently open as to whether generic polygons
are mixing, but Troubetzkoy \cite{Tr} has recently proved that
these billiards are also totally ergodic.

\subsection{Convexity of $H$ and QER}

As mentioned above, we assume $H$ is convex for simplicity of
exposition, but the assumption is not necessary for the quantum
ergodic restriction to hold. Indeed, we may localize (and
microlocalize) the problem by considering symbols supported on
small regions of $H$. On a sufficiently small region of $B^* H$,
concave or convex hypersurfaces are essentially the same. Flat
regions are different, but since tangential intersections only
occur for a zero measure of rays, it suffices to prove that only a
sparse (density zero)  subsequence of  eigenfunctions can
concentrate microlocally on tangential rays.  We prove such a
result in Lemma \ref{tanmass} using a pointwise local Weyl law. In
\cite{HZ} the analogous  proof is in Section 7 (see Step 2 and
Lemma 7.1), Section 9 (see Lemma 9.2) and in Appendix 12 (see
especially Lemma 12. 5 in the Dirichlet case).

 The fact that QER  is independent of the curvature
 of the hypersurface  may seem surprising in view of
the results of Burq-G\'erard-Tzetkov \cite{BGT} (see also
\cite{R,KTZ,To,So}) on $L^p$ norms of restrictions of
eigenfunctions. In these results, the $L^p$ norm of the restricted
eigenfunctions depends on whether the curve (or hypersurface) is
geodesically convex or not. There is a dramatic difference, for
example, between norms of restrictions to distance circles and to
closed geodesics.

However, these results do not make use of the global assumption of
ergodicity of the geodesic flow. They are sharp in the case of the
round sphere, but need not be sharp for negatively curved surfaces
(for instance). The results of \cite{HZ,Bu} amply indicate that
geodesic curvature of $H$ is irrelevant to quantum ergodic
restriction theorems: Indeed, when $H = \partial \Omega$, QER is
valid for non-convex $H$ and for $H$ with flat sides (e.g. the
Bunimovich stadium).

Since the validity of the results for general smooth hypersurfaces
is possibly surprising, and since we assume convexity at some
points to simplify the notation, we signal in each section where
and when we use the convexity assumption.

\subsection{Applications}
Under the assumptions of Theorem \ref{maintheorem},
 the $L^2$-restriction bounds along $H \subset \Omega$  of a quantum ergodic sequence of eigenfuncitons are uniformly bounded above and below.
Indeed, choosing $a(s, \tau) =1$ in Theorem \ref{maintheorem} gives the following
\begin{maincor} \label{restriction bounds}
Under the same assumptions as in Theorem \ref{maintheorem},
$$ \lim_{\lambda_j \rightarrow \infty; j \in S}  \| \phi_{\lambda_j}|_{H} \|_{L^2(H)}^{2} = c_n  \, \int_{B^*H} \rho_{\partial \Omega}^{H}(s,\tau) ds d\tau. $$
\end{maincor}

 Except for  possible exceptional sparse subsequences of eigenfunctions,  in the ergodic case, the asymptotics in Corollary \ref{restriction bounds}  substantially improve on  some
recent bounds of $L^p$ norms of restrictions of eigenfunctions to
submanifolds in the articles \cite{R,BGT,To,T,So,BR}).

 In subsection \ref{final},  we give an application of Theorem \ref{maintheorem} to
 the study of asymptotic nodal structure of eigenfunctions of ergodic planar
 billiards $\Omega \subset \mathbb{R}^{2}.$ Consider either Dirichlet or
  Neumann eigenfunctions $\phi_\lambda$ with $-\Delta_{\Omega} \phi_\lambda
  = \lambda^{2} \phi_{\lambda}$. In \cite{TZ} we show that for general
  piecewise-analytic domains, the  eigenfunction nodal set
   ${\mathcal N}_{\phi_\lambda} = \{ x \in \Omega; \phi_{\lambda}(x) = 0 \}$
    (where we omit the boundary itself) for both Neumann and Dirichlet boundary conditions satisfies
\begin{equation} \label{nodal1}
\text{card} \, ( {\mathcal N}_{\phi_\lambda} \cap \partial \Omega
) = {\mathcal O}(\lambda).
\end{equation}
Here, (\ref{nodal1}) requires no dynamical assumptions. In Theorem
6 of \cite{TZ} we proved a similar bound for intersections of
nodal lines with interior analytic curves under an additional technical  assumption of `goodness' of the
curve $C$ (see the proof of the corollary). The quantum ergodicity
result in Theorem \ref{maintheorem} yields the following

 \begin{maincor} \label{nodalthm} Let $\Omega \subset {\mathbb R}^{2}$ be a piecewise-smooth, totally ergodic planar
  billiard and $C \subset \Omega$ a real-analytic interior
  curve. Assume $(\beta_C, \beta)$ almost never commute.
   Then, for a full-density of (Dirichlet or Neumann) eigenfunctions,
$$ \text{card} \, ( {\mathcal N}_{\phi_\lambda} \cap C ) = {\mathcal O}(\lambda).$$
\end{maincor}

\subsection{\label{SKETCH} Sketch of proof }

We now outline the proofs of Theorems \ref{CDTHM} and
\ref{maintheorem}. This outline is an integral part of the proofs
of the Theorems; later sections fill in the considerable number of
technical details, but rigidly adhere to the following outline.

The first step is to  use the
 `quantization' of the transfer map $\tau_{\partial \Omega}^H :
B^* \partial \Omega \to B^* H$ to  transfer the Dirichlet data
$u_{\lambda}^H : = \phi_{\lambda}
 |_H$ of the eigenfunctions on $H$ to the Dirichlet data
 $u_{\lambda}^b: = \phi_{\lambda} |_{\partial \Omega}$ on $\partial
 \Omega$.
As in \cite{HZ,TZ}), the semi-classical quantization of the
transfer map is a layer potential integral operator. By Green's
formula, we have
  \begin{equation} \label{boundarylayer}
   u_{\lambda}^{H}(q_H) =  [N_{\partial \Omega}^{H}(\lambda) u_{\lambda}^b](q_H): = \int_{\partial \Omega} N_{\partial \Omega}^{H}(\lambda, q_H,q) \,  u_{\lambda}^{b}(q)
   d\sigma(q),
   \end{equation}
  where  $q\in \partial \Omega, q_H \in H$ and where $N_{\partial \Omega}^H(\lambda)$ is a boundary integral operator defined in Definition \ref{NDEF} and (\ref{NBDYH}). There is a similar operator $N_{\partial \Omega}^{H, \nu}$
  that transfers the Dirichlet data on the boundary to Neumann data $u_{\lambda}^{H, \nu} = \partial_{\nu}^H \phi_{\lambda}|_H$
  on $H$.  By a modification of  a similar argument in \cite{HZ} (see
  \S
\ref{billiard maps}), we show
  that the operators $N_{\partial \Omega}^{H}(\lambda), N_{\partial \Omega}^{H, \nu}(\lambda): C^{\infty}(\partial \Omega) \rightarrow C^{\infty}(H)$
 are  semiclassical
  Fourier Integral operators  quantizing   $\tau_{\partial \Omega}^H : B^* \partial \Omega \rightarrow
  B^*H$.

We then consider matrix elements (\ref{ME})  of semi-classical
pseudo-differential operators on $H$, i.e. operators with symbols
$a \in S^{0,0}_{cl}(T^* H).$ From (\ref{boundarylayer}) it follows that for any $a \in
S^{0,0}_{cl}(T^* H),$
  \begin{equation} \label{CTOB} \langle Op_{\lambda}(a) u_{\lambda}^{H}, u_{\lambda}^{H} \rangle_{L^2(H)}
   = \langle N^{H}_{\partial \Omega}(\lambda)^* Op_{\lambda}(a) N_{\partial \Omega}^{H}(\lambda) u_{\lambda}^{b},
    u_{\lambda}^{b} \rangle_{L^2(\partial \Omega)}. \end{equation}
Similarly for Neumann data.  The formula (\ref{CTOB}) is an
advantage to working in the boundary setting; it allows us to use
results of \cite{HZ} to avoid many technicalities.

At first sight, it appears that this transfer operator fully
reduces our problem to the known results in \cite{HZ} on $\partial
\Omega$. However, this is not the case. The key point is that
$N^{H}_{\partial \Omega}(\lambda)^* Op_{\lambda}(a) N_{\partial
\Omega}^{H}(\lambda)$ is not a pseudo-differential operator.
Rather, in  \S \ref{splitting}, we show that
\begin{equation} \label{decomposition}
N^{H}_{\partial \Omega}(\lambda)^* Op_{\lambda}(a) N_{\partial \Omega}^{H}(\lambda)
 = Op_{\lambda}( (\tau^{H}_{\partial \Omega})^* a   \cdot \rho_{\partial \Omega}^H) + F_{2}(\lambda), \end{equation}
where $\rho_{\partial \Omega}^{H}  \in C^{\infty}(B^*H)$ is a
smooth density and where $F_2(\lambda)$ is a semi-classical
Fourier integral operator which quantizes the transmission map
(\ref{TRANS}). Thus, one of the main contributions of the present
article is the analysis of the matrix elements of $F_2(\lambda)$.

The proof of Theorem \ref{CDTHM} is then rather simple: The
Neumann data produces a second Fourier integral operator
$F_2^{\nu}(\lambda)$. The key observation is that the composition
of $F_2(\lambda)$ on the left by an operator with symbol $1 -
|\eta|^2$   cancels $F_2^{\nu}(\lambda)$ in the sum of the matrix
elements with respect to both the Dirichlet and Neumann data.
Thus, the Fourier integral operator terms cancel,  leaving only a
pseudo-differential matrix element whose limit was calculated in
\cite{HZ}.

For the Dirichlet data alone, there is nothing to cancel
$F_2(\lambda)$. Therefore, an important  step of the proof of
Theorem \ref{maintheorem} is to determine the weak* limits of the
matrix elements $\langle F_2(\lambda) u_{\lambda}^b, u_{\lambda}^b
\rangle. $  This is a special case of the more general problem of
determining limits of matrix elements of Fourier integral
operators relative to eigenfunctions, which is discussed in
\cite{Z2} and will be investigated more thoroughly in \cite{TZ3}.
A significant  additional complication is that  the matrix
elements are relative to  boundary restrictions of eigenfunctions
rather than to the eigenfunctions themselves.

In the case of pseudo-differential matrix elements, the first step
in obtaining the limits of matrix elements relative to $\Delta$-
eigenfunctions  is to show that they are invariant measures for
the billiard flow. In the case of matrix elements of Fourier
integral operators,  it is not clear  what kind of invariance
properties the limit has, viewed as a functional of the symbol of
$F_2(\lambda)$, when the canonical relation of $F$ fails to be
invariant under the billiard flow. Under the condition of
Definition (\ref{ANC}),  the canonical relation is, in fact, almost
nowhere  invariant under the billiard flow. We turn this to our
advantage by using a rather surprising averaging argument to show
that the limits must be zero under the almost nowhere commuting
condition.

The starting point of this argument is  the classical  boundary integral equation
\begin{equation} \label{boundarylayerb}
   u_{\lambda}^{b}(q) = \int_{\partial \Omega} N^{\partial \Omega}_{\partial \Omega}(\lambda)(q,q') \,  u_{\lambda}^{b}(q') d\sigma(q')
   \end{equation}
   for  the Dirichlet   data of $\phi_{\lambda}$ on $\partial
   \Omega$. The operator $N^{\partial \Omega}_{\partial
   \Omega}(\lambda)$ is studied in detail in \cite{HZ} as a
   semi-classical Fourier integral operator quantizing the
   billiard map $\beta: B^* \partial \Omega \to B^*\partial \Omega$.
   The classical formula,  $N^{\partial \Omega}_{\partial \Omega}(\lambda) u_{\lambda}^b = u_{\lambda}^b$
    was the key input to the quantum ergodicity result of \cite{HZ}. In particular, it shows
    that when $H = \partial \Omega$, the weak* limits of
    (\ref{ME}) are invariant under $\beta$.

    We apply this quantum invariance to  the $F_2(\lambda)$ term.
For all $m$, we have
   \begin{equation} \label{CTOBa} \langle Op_{\lambda}(a) u_{\lambda}^{H},
    u_{\lambda}^{H} \rangle_{L^2(H)} = \langle N^{\partial \Omega}_{\partial \Omega}(\lambda)^m
     N^{H}_{\partial \Omega}(\lambda)^* Op_{\lambda}(a) N_{\partial \Omega}^{H}(\lambda) N^{\partial \Omega}_{\partial \Omega}(\lambda)^m
     u_{\lambda}^{b}, u_{\lambda}^{b} \rangle_{L^2(\partial \Omega)}. \end{equation}
At this point,  we  cannot apply Egorov's theorem to the
conjugation because of  the almost nowhere commuting condition in
Definition \ref{ANC}. Under that condition,  the canonical
relation underlying $N^{\partial \Omega}_{\partial
\Omega}(\lambda)^m
     N^{H}_{\partial \Omega}(\lambda)^* Op_{\lambda}(a) N_{\partial \Omega}^{H}(\lambda) N^{\partial \Omega}_{\partial
     \Omega}(\lambda)^m$
 changes with each $m$, and it appears that this  destroys any invariance
 property of the quantum limits.  What we show is that it forces
 the matrix element $\langle F_2(\lambda) u_{\lambda}^b,
 u_{\lambda}^b \rangle \to 0$ on average  as  $\lambda \rightarrow \infty$. Thus,
 $F_2(\lambda_j) u_{\lambda_j}^{b}$  is `almost orthogonal' to $u_{\lambda_j}^{b}$ under the almost nowhere commuting condition.

The proof is based on a special case of a novel boundary local
Weyl law for semi-classical Fourier integral operators.

  \begin{maintheo} \label{weyl2}
Let $\Omega \subset {\mathbb R}^{n}$ be a Euclidean domain with piecewise-smooth boundary and $u_{\lambda_{j}}^{b}
 = \phi_{\lambda_j}|_{\partial \Omega} $ (resp. $u_{\lambda_{j}}^{b} =   \frac{1}{\lambda_j} \partial_{\nu}\phi_{\lambda_j}|_{\partial \Omega} $)
  be the Neumann  (resp. Dirichlet)  eigenfunction boundary traces. Let $F(\lambda) : C^{\infty}(\partial \Omega) \rightarrow C^{\infty}(\partial  \Omega)$
  be a zeroth-order $\lambda$-FIO with canonical relation $\Gamma_{F(\lambda)}:= \text{graph}
   ( \kappa_{F})  \subset B^{*}(\partial \Omega) \times B^{*}(\partial \Omega)$ and let $$\Sigma_{F(\lambda)}:=    \bigcup_{m \in \mathbb{Z}} \{ (y,\eta) \in B^{*} \partial \Omega; \beta^{m} (y,\eta) =  \kappa_F(y,\eta) \}.$$
    Then, under the assumption that  $| \Sigma_{F(\lambda)} | = 0,$

   $$ \frac{1}{N(\lambda)} \sum_{\lambda_j \leq \lambda} \langle F(\lambda_j) u_{\lambda_j}^{b}, u_{\lambda_j}^{b} \rangle = o(1).$$
 \end{maintheo}

In \cite{TZ3}, more general local Weyl laws for semi-classical
Fourier integral operators are proved. The general {\it boundary
local Weyl law}  is that there exists a density $\rho_F \in
C^{\infty}( B^*
\partial \Omega)$ such that
 \begin{equation} \label{weyl lemma}
\lim_{\lambda \rightarrow \infty}\frac{1}{N(\lambda)} \sum_{j:
\lambda_j \leq \lambda} \langle F(\lambda_j) u_{\lambda_j}^{b},
u_{\lambda_j}^{b} \rangle = \int_{\Sigma_{F(\lambda)} }
  \sigma(F(\lambda))(y,\eta)  \,   \cdot  \rho_F(y,\eta)  \,  dy d\eta. \end{equation}
    Moreover, $\rho_{F} dy d\eta$ can be explicitly computed as a sum of
     certain measures associated with the various multi-link contributions to the fixed-point set, $\Sigma_{F(\lambda)}.$  The
proof is valid for  arbitrary Riemannian manifolds with
piecewise-smooth
   boundaries. We present only the narrow version above since it is the only statement we need
   for Theorem \ref{maintheorem}
   and since  the
   proof of the general result is  lengthy.  Indeed,
   the proof of the special case in Theorem \ref{weyl2} is already
   the lengthiest part of the proof of Theorem \ref{maintheorem}  due to the complicated nature of the wave
   group on a domain with boundary.

Using Theorem \ref{weyl2}, we reduce Theorem \ref{maintheorem} to
the following {\it almost orthogonality} theorem, which has an
independent interest.

\begin{maintheo} \label{VARIANCE} With the same notation as in Theorem \ref{weyl2}, assume that
$F(\lambda)$ is a zeroth order $\lambda$-FIO and assume that its
underlying symplectic map  $\kappa_F$ almost nowhere commutes with
$\beta$ in the sense of Definition \ref{ANC}.  Then
$$ \frac{1}{N(\lambda)} \sum_{\lambda_j \leq \lambda}
   | \langle F(\lambda)
    u_{\lambda_j}^{b}, u_{\lambda_j}^b \rangle  |^{2} = o(1) \,\,\, \text{as} \,\, \lambda \rightarrow
    \infty.$$
    \end{maintheo}

In Theorem \ref{MEFIO} we restate the result in the form we need
for the proof of Theorem \ref{maintheorem}. However, the proof is
valid with no essential modification in  the general case.
     An interesting point is that the result  does not
use ergodicity of the billiard map. It is a completely general
almost orthogonality result for pairs of operators satisfying the
almost nowhere commutativity condition.

    The proof of Theorem \ref{VARIANCE} is based on a somewhat curious
    averaging argument.   By quantum invariance, we may (and do) replace
  $ F(\lambda)$
 by the operator
 \begin{equation} \label{AVE} \langle F(\lambda) \rangle_M: =   \frac{1}{M} \sum_{m = 0}^M [N^{\partial \Omega}_{\partial \Omega}(\lambda)^{*}]^m
 F(\lambda) [N^{\partial \Omega}_{\partial \Omega}(\lambda)]^m. \end{equation}
 Then, by the  Schwartz inequality,
 \begin{equation} \label{schwartz}
 \frac{1}{N(\lambda)} \sum_{\lambda_j \leq \lambda}
   | \langle F(\lambda_j)
    u_{\lambda_j}^{b}, u_{\lambda_j}^b \rangle  |^{2} = \frac{1}{N(\lambda)} \sum_{\lambda_j \leq \lambda}
   | \langle \langle F(\lambda_j) \rangle_M
    u_{\lambda_j}^{b}, u_{\lambda_j}^b \rangle  |^{2} \leq \frac{1}{N(\lambda)} \langle F_{M}(\lambda_j) u_{\lambda_j}^b, u_{\lambda_j}^b \rangle, \end{equation}
where,
 \begin{equation} \label{N*N}  F_{M}(\lambda):= \langle F(\lambda)\rangle_M^* \langle F(\lambda)\rangle_M. \end{equation}
 Under the almost nowhere commuting assumption, these operators $F_M(\lambda)$ satisfy
 the assumptions of Theorem \ref{weyl2}.
 The almost nowhere commuting condition (and its quantitative
 refinements) arises because the fixed point
  set of the canonical relation of (\ref{N*N}) is
 \begin{equation} \label{measure1a}
  \Sigma_{F_M(\lambda)} = \bigcup_{k=-M}^{M}   \bigcup_{p \in \Z} \{ \zeta \in B^{*} \partial \Omega; \,\, \beta^{p}
   \beta_{\partial \Omega}^{H} (\zeta) = \beta_{\partial \Omega}^{H}   \beta^{k} (\zeta) \}  \end{equation}
  and the quantitative almost-nowhere commutativity condition  (see Definition \ref{ANCQ})
  implies that
 \begin{equation} \label{true}
 \limsup_{M \to \infty} \frac{1}{M} | \Sigma_{F_M(\lambda)} | = 0. \end{equation}
 This is sufficient to imply  that the
 average over eigenvalues $\lambda_j$ of (\ref{N*N}) is $O(\frac{1}{M})$ and hence  the almost orthogonality statement in
 Theorem \ref{VARIANCE}.

Quite a lot
    might be lost in the use of the Schwartz inequality in (\ref{N*N}). As remarked in \S \ref{VARPROOFSECT},
    the almost nowhere commutativity condition is the condition under which averages over the spectrum of the norms $ ||
     \langle F_{2k}(\lambda_j;a,\epsilon) \rangle_{M}  \,
    u_{\lambda_j}^{b}||  $ tend to zero. It is another curious
    feature that we obtain a `subsequence of density one' along
    which the almost orthogonality holds. It is not clear whether (as
    for quantum ergodicity) this is an
    essential feature or a defect of the proof.

\subsection{\label{FP} Final introductory remarks}

We close the introduction by posing the quantum ergodic
restriction problem in  a more general context,  and by describing
some continuing work on the general problem.

The quantum ergodic restriction problem  concerns the linear
functionals (\ref{ME}) defined on the space $\Psi^0_{sc}(H)$ of
zeroth order semi-classical pseudo-differential operators on $H$.
As we have seen, there may exist large subsequences of
eigenfunctions which restrict to zero; on the other hand, the
known  restriction estimates (see \cite{BGT,KTZ}) do not give
uniform bounds on the $L^2(H)$ norms of the restrictions. Hence
the family $\{\rho_j^H\}$ (\ref{ME}) of functionals corresponding
to Dirichlet (or Neumann) data of $\Delta$- eigenfunctions along
$H$ is not apriori a bounded family of functionals, hence not
necessarily compact in the weak* topology. Nevertheless, we may
pose the
\medskip

\begin{mainprob}\label{W*} Let $H \subset M$ be a hypersurface.  Determine  the set   ${\mathcal Q}_H$ of `restricted quantum limits', i.e.
weak* limit points of the sequence of  the (un-normalized)
semi-positive states $\{\rho_k^H\}$ corresponding to the Dirichlet
(or Neumann) data of eigenfunctions on $H$. Similarly, determine
the restricted quantum limits for the functionals defined (as in
Theorem \ref{CDTHM})  by the full Cauchy data (\ref{CAUCHYDATA}).
\end{mainprob}

The analogue of an interior quantum ergodic sequence of
eigenfunctions is given by:

\begin{maindefn} The pair $(M, H)$ satisfies QER (quantum ergodic
restriction) for Dirichlet data  if  ${\mathcal Q}_H$ contains a
limit state $\omega_{\infty}^{H}$ so that a full density
subsequence of $\rho_j^H \to \omega_{\infty}^H$ in the weak*
sense. Similarly for Neumann and full Cauchy data.
\end{maindefn}

In \cite{TZ2}, we prove a quantum ergodic restriction theorem for
a general Riemannian manifold $(M, g)$ without boundary. In this
setting, there do not exist transfer maps and broken billiard
maps, and we base the proof on the almost nowhere equality of the
left and right return maps to $H$. We believe the condition is
satisfied for closed geodesics of negatively curved surfaces which
are not fixed by isometric involutions.

 In \cite{CHTZ} we use somewhat different methods from this
article on manifolds with or without boundary to prove QER for the
Cauchy data when the billiard flow is ergodic.  We use two
methods: one  method (close to that of this article)  to study the
(matrix) Calderon projector $C(\lambda): \phi_{\lambda} \to
(\phi_{\lambda}|_H,
\partial_{\nu} \phi_{\lambda}|_H)$ as a semi-classical Fourier integral operator
quantizing a certain first return map to $S_H^* M$ (the
 unit cotangent bundle of $M$ along $H$). Our second method is a
 modification of the approach
 of \cite{Bu,GL} in directly relating interior and restricted quantum ergodicity.

In further work in progress \cite{CHTZ2} we are studying a more
general
 microlocal setting in which $H$ (more precisely, $S_H^* M$) is replaced by a general
 hypersurface  $T \subset S^*M$ which is (roughly speaking)  a
cross-section to the geodesic flow. Quantum ergodicity of Cauchy
data should hold if the first return map for $S^*_H M$ is ergodic.
The relevant notion of Cauchy data is provided by a microlocal
restriction operator to $T$ that arises in Grushin reductions. It
is then an interesting question to relate this abstract reduction
and the conditions for QER to those obtained in this article and
in \cite{CHTZ} for the cross section defined by $S^*_H \Omega$,
the unit cotangent vectors with footpoints on $H$.
\medskip

\noindent{\bf Acknowledgements} We thank H. Christianson and H.
Hezari for comments and collaboration on related work. We also
thank  A. Strohmaier and S. Nonnenmacher for helpful comments on
the averaging argument for Fourier integral operators. Thanks also
to D. Hejhal for comments on the relation of our results to those
of \cite{HR},  and to  Peter Sarnak for comments on QUE and 
our  Corollary \ref{nodalthm}.

\section{\label{BILLIARDS} Classical billiards  on a domain with corners}

This section is a compilation of rather standard
 facts and
definitions regarding billiards on domains with corners. The
discussion does not involve $H$ or any objects specific to our QER
problem. However, we go over the basic definitions in some detail
for the sake of clarity and to introduce notation that will be
used in the rest of the article.

We assume throughout that  $\Omega \subset \R^m$ is a smooth domain  with corners. We define a smooth domain
with corners and with $M$ boundary faces (hypersurfaces) to be a
set of the form $\{x \in \R^m: \rho_j \leq 0, j = 1, \dots, M\}$,
where the defining functions $\rho_j$ are smooth in a
neighborhood of $\Omega$, and  are independent in the following
sense: at each $p$ such that $\rho_j(p) = 0$ for some finite
subset $j \in I_p$ of indices, the differentials $d \rho_j$ are
linearly independent for all $j \in I_p$. A boundary hypersurface
$H_j$ is the intersection of $\Omega$ with one of the
hypersurfaces $\{\rho_j = 0\}$. The boundary faces of codimension
$k$ are the components of $\rho_{j_1} = \cdots = \rho_{j_k} = 0$
for some subset $\{j_1, \dots, j_k\}$ of the indices; each is a
manifold with corners. This definition is extrinsic in the sense
of \cite{M} and is not the most general one possible.

It is essential to allow $\Omega$ to have corners since domains
with ergodic billiard flow in $\R^n$ have corners. The
singularities play a very significant role in the dynamics.  They
occur at corners, at trajectories which become tangent to the
boundary, and in our setting of domains with obstacles, at
trajectories which run tangent to the obstacle.

We denote the smooth   part of $\partial
\Omega$ by $(\partial \Omega)^o  $.  Here, and throughout this
article, we denote by $W^o$ the interior of a set $W$ and, when no
confusion is possible, we also use it to denote the regular set of
$\partial \Omega$. Thus, $\partial \Omega = (\partial \Omega)^o
\cup \Sigma$, where $\Sigma = \bigcup_{i \neq j} (H_i \cap H_j)$
is the singular set. When $\dim \Omega = 2,$ the singular set is a
finite set of points and the $H_i$ are smooth curves. In
higher dimensions, the $H_i$ are smooth hypersurfaces; $H_i
\cap H_j$  is a stratified smooth space of co-dimension one, and
in particular $\Sigma$ is of measure zero. We denote by
$S^*_{\Sigma}\Omega$ the set of unit vectors to $\Omega$ based at
points of $\Sigma.$
 We also
define $C^\infty(\partial \Omega)$ to be the restriction of
$C^\infty(\R^n)$ to $\partial \Omega$.

We define the open unit ball bundle $B^* (\partial \Omega)^o$ to
be the projection to $T^* \partial \Omega$ of the inward pointing
unit vectors to $\Omega$ along $(\partial \Omega)^o$. We leave it
undefined at the singular points.

To simplify the exposition, we describe  $B^* \partial \Omega$ as
if it consisted of tangent vectors rather than cotangent vectors,
using the Euclidean metric to identify the two. We denote the
standard projection by $\pi: B^* \partial \Omega \to  \partial \Omega$.

\subsection{\label{BMAP} Billiard map}

At smooth points of  $\partial \Omega$,   the billiard map $\beta$
is defined on $B^*\partial \Omega$ as follows: given $(y, \eta)
\in B^*
\partial \Omega$, i.e.  with $|\eta| < 1$ we let $(y, \zeta) \in S^*
\Omega$ be the unique inward-pointing unit covector at $y$ which
projects to $(y, \eta)$ under the map $T^*_{\partial \Omega}
\overline{\Omega} \to T^* \partial \Omega$ (see (\ref{zetayeta})).
 Then we follow the geodesic (straight line) \begin{equation} \label{EDEF} (y, \eta) \in B^* (\partial \Omega)^o \to E(t, y, \eta) :
  =  q(y) + t \zeta(y, \eta) \in \Omega \subset \mathbb{R}^{n}\end{equation} to the first place it
intersects the boundary again; let $y' \in
\partial \Omega$ denote this first intersection. Thus, we find the
least $t = t(y, \eta)$ so that $\rho_j(E(t, y, \eta)) = 0$ for
some $j$ and let $y': = E(y, \eta)$ solve the equation $$q(y') =
E(t(y, \eta), y, \eta).$$ The notation $E(t, y, \eta)$ is intended
to suggest `exponential map', i.e. the position in
$\overline{\Omega}$ reached by the billiard trajectory at time
$t$.

 Denoting the inward unit normal vector at $y'$ by $\nu_{y'}$,
we let $\zeta' = \zeta + 2 (\zeta \cdot \nu_{y'}) \nu_{y'}$ be the
direction of the geodesic after elastic reflection at $y'$, and
let $\eta'$ be the projection of $\eta'$ to $T^*_{y'}\partial
\Omega$. Then we define explicitly,  \begin{equation} \label{BETA}
\beta(y, \eta) = (y', \eta'). \end{equation}

We call each (linear) segment of a billiard trajectory a `link',
so that $\beta$ relates the initial and terminal directions
 of a link.  We   denote the first link of the
billiard trajectory determined by $(y, \eta) \in B^* \partial
\Omega$ by \begin{equation} \label{LINK} \overline{(y, \eta)}.
\end{equation}

We also denote by \begin{equation} \label{GT} G^{\tau}:
S^{*}\Omega
  \rightarrow S^*\Omega \end{equation}  the (partially defined)  broken geodesic flow
  in the full domain
  $S^*\Omega$. The trajectory $G^{\tau}(x, \xi)$ consists of the straight line motion
  between impacts with the boundary, and by the usual reflection
  law at the boundary. Thus, $\beta$ is the reduction to the
  boundary of $G^{\tau}$. We refer to \cite{PS,SV} for background.

We now address the complications caused by tangent directions and
singular points.  The billiard map is not well-defined on initial
directions which are tangent to $\partial \Omega$, i.e. $(y, \eta)
\in S^*
\partial \Omega$ with $|\eta| = 1$.   If the domain  is convex at $y$,
then the line $q(y) + t \eta$ immediately exits the domain.   On the
other hand, if it is concave at $y$, then the trajectory remains
in the interior of $\Omega$ and defines a link. If the domain is
 a `saddle surface' at $y$, then some tangential links stay in
$\Omega$ and others exit. We introduce some  terminology to
distinguish these cases.

 \begin{defn} \label{UNIT} Let $(y, \eta) \in S^* \partial
 \Omega$ be a unit tangent vector.

 \begin{enumerate}

 \item We say that $(y, \eta)$
  is an exit direction, if the oriented ray $q(y) + t \eta$
 exits $\Omega$ for $t > 0$. We denote the set of exit directions in $B^*_y \partial \Omega$  by $\ecal(y)$ and define $\beta(y, \eta) = (y, \eta)$ for such
 exit directions.

 \item We say that $(y, \eta)$ is an interior tangent direction, if $q(y) + t \eta$
 remains in $\Omega$ for sufficiently small $t > 0$. We define
 $\beta(y, \eta) = (y, \eta')$ by following $q(y) +  t \eta$ for $t > 0$ until
 its first point of intersection $y' \in \partial \Omega$ and
 let $\eta'$ be the tangential projection of $\eta$ at $y'$.

 \end{enumerate}

 \end{defn}

It is common to puncture out all tangent directions from $B^*
\partial \Omega$, i.e. all $\eta \in S^* \partial \Omega$ with $|\eta| = 1$ and only
define billiard maps on the open unit ball. But it makes sense to
retain unit tangent directions as long as the straight-line
trajectory remains well defined in $\Omega$. Unit tangent vectors
to $\partial \Omega$ at convex points can never be terminal
velocity vectors of incoming trajectories and are therefore
irrelevant to the definition of the billiard flow. However, when
$\dim \Omega \geq 3$,  as in the case where the boundary is a
monkey saddle, it is possible for an incoming trajectory to arrive
at an exit direction and terminate there.

We will also need to distinguish initial directions whose
trajectories terminate in tangent vectors to $\partial \Omega$.

 \begin{defn} \label{GRAZINGb}  Let $\dim \Omega \geq 3$. We  denote by  $\gcal(y) \subset B_y^* (\partial \Omega)^o$ the set of grazing directions
$\eta $ such that $\beta(y, \eta) \in S^* \partial \Omega$, i.e.
such that the terminal vector of the link is tangent to $\partial
\Omega$.

\end{defn}

No such exit directions exist when $\dim \Omega = 2$.  We now
consider the regularity properties of $\beta$ and the impact
of the corner set.
\medskip

  \begin{defn} \label{singular} We denote by  $B^*_{\Sigma} \partial \Omega$
  or equivalently by $S^*_{\Sigma} \Omega$ the set of unit vectors to $\Omega$ at
  points of the singular set. We define
$$\rcal^1 = B^* \left( (\partial \Omega)^o \backslash  E^{-1} (\Sigma) \right) \backslash \gcal,
$$ i.e.  the set of smooth directions of links whose next  collision with $\partial \Omega$ is neither at
 $\Sigma$ nor tangent to $\partial \Omega$.
\end{defn}

\begin{lem} \label{RABETA} The domain of $\beta$ is $\rcal^1 \cup \gcal$. It is a  smooth symplectic map  on the interior of $\rcal^1$, which is smooth domain with corners in $B^*(\partial \Omega)$.
\end{lem}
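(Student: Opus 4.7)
The plan is to verify the three assertions in order: the characterization of the domain of $\beta$, smoothness plus symplecticity on $\rcal^1$, and the corner structure of $\rcal^1$.

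First, I would unpack the domain claim directly from Definition \ref{UNIT} and the construction (\ref{BETA}) of $\beta$. Given $(y,\eta)\in B^*\partial\Omega$, the lifted vector $\zeta(y,\eta)$ of (\ref{zetayeta}) generates the straight line $E(t,y,\eta)=q(y)+t\zeta(y,\eta)$, and $\beta(y,\eta)$ is well defined exactly when this line has a first positive intersection $y'$ with $\partial\Omega$ that lies either in the smooth part $(\partial\Omega)^o$ (with transversal or grazing incidence) or is itself a grazing terminal direction. If $|\eta|<1$ and $E(t(y,\eta),y,\eta)\notin\Sigma$ with $\zeta\cdot\nabla\rho_j(y')\neq 0$, we are in $\rcal^1$; if the terminal velocity is tangent to $\partial\Omega$, we are in $\gcal$ by Definition \ref{GRAZINGb}; and in all other cases (trajectory hits $\Sigma$ or exits without reimpact in the sense of Definition \ref{UNIT}(1)) the reflection law cannot be applied, so $\beta$ is undefined. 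Hence $\dcal(\beta)=\rcal^1\cup\gcal$.

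Next, to establish smoothness on the interior of $\rcal^1$, I would apply the implicit function theorem to the equation $\rho_j(E(t,y,\eta))=0$. On the interior of $\rcal^1$ the trajectory meets $\partial\Omega$ in the smooth part of a single face $H_j$, well away from $\Sigma$, and the non-grazing condition means $\partial_t\rho_j(E(t,y,\eta))|_{t=t(y,\eta)}=\zeta(y,\eta)\cdot\nabla\rho_j(y')\neq 0$. Therefore the impact time $t(y,\eta)$ is a smooth function, the impact point $y'=q^{-1}(E(t(y,\eta),y,\eta))$ depends smoothly on $(y,\eta)$, and the reflection formula $\zeta'=\zeta-2(\zeta\cdot\nu_{y'})\nu_{y'}$ together with the tangential projection $\eta'=\pi^T_{y'}\zeta'$ are smooth in $(y,\eta)$. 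For the symplectic property, the cleanest argument is to realize $\beta$ as the boundary reduction of the broken geodesic flow $G^\tau$ of (\ref{GT}): $G^\tau$ preserves the canonical symplectic form on $S^*\Omega$ (free motion is the geodesic flow, and elastic reflection $\zeta\mapsto\zeta-2(\zeta\cdot\nu)\nu$ is the differential of the isometric reflection across $\partial\Omega$, hence symplectic); taking the Poincaré section $B^*\partial\Omega$ transversal to the flow then yields a symplectic return map, which by construction equals $\beta$. Equivalently, one checks that the Euclidean length $L(y,y')=|q(y')-q(y)|$ is a generating function satisfying $d_yL=-\eta$ and $d_{y'}L=\eta'$, so $\beta$ preserves $d\eta\wedge dy$.

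Finally, to show that $\rcal^1$ is a smooth domain with corners in $B^*\partial\Omega$, I would identify its boundary hypersurfaces. Write $\rcal^1$ as the complement in $B^*(\partial\Omega)^o$ of the union of the closed sets $\{|\eta|=1\}\setminus\gcal^o$ (initial grazing directions that are \emph{not} admissible interior tangent links), $E^{-1}(\Sigma)$ (trajectories hitting a corner), and $\gcal$ itself (terminal grazing directions). Each of these exceptional sets is the zero set of a smooth function on a neighborhood: $|\eta|^2=1$ is obviously smooth; the corner set decomposes as a locally finite union of smooth submanifolds $\rho_i(E(t(y,\eta),y,\eta))=\rho_j(E(t(y,\eta),y,\eta))=0$ with linearly independent differentials (since $d\rho_i,d\rho_j$ are independent on $H_i\cap H_j$ by the definition of smooth domain with corners); and the terminal grazing set is cut out by $\zeta(y,\eta)\cdot\nabla\rho_j(y')=0$, which is smooth away from $\Sigma$ and has nonzero differential transverse to $\rcal^1$. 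Transversality of these defining conditions then exhibits $\rcal^1$ as a smooth manifold with corners in the sense of Section \ref{BILLIARDS}.

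The main technical point where care is required is the last one: one must verify that the defining functions for the various boundary strata of $\rcal^1$ have independent differentials at their common zero locus, so that the structure of a smooth domain with corners genuinely holds (and is not merely a stratified space). This is where the convexity/independence assumption on the $\rho_j$ and the non-degeneracy of the grazing condition come together, and it is the only step that requires more than a routine unpacking of definitions.
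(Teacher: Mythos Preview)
Your proposal is correct and follows essentially the same line as the paper's proof: both derive the domain from the definitions, both obtain smoothness of $t(y,\eta)$ on $\rcal^1$ via non-grazing transversal impact (you make the implicit function theorem explicit), and both invoke the generating function $|q(y)-q(y')|$ for symplecticity (you additionally mention the Poincar\'e-section argument). For the corner structure of $\rcal^1$, the paper describes $E^{-1}(H_i\cap H_j)$ slightly differently, as the locus $t_i(y,\eta)=t_j(y,\eta)$ for the \emph{separately extended} travel times to each face; this sidesteps the circularity of using the global first-impact time $t(y,\eta)$, which is itself singular on $E^{-1}(\Sigma)$, but your formulation is equivalent once that adjustment is made.
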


\begin{proof} It is clear  from the definition   that $\beta$ is well-defined on $\rcal^1 \cup \gcal$ and on no
larger domain. It is standard that $\beta$ is symplectic where it
is defined; see \cite{CFS}. A simple way to prove it is that
$\beta$ has a generating function, namely the boundary distance
function $d_{\Omega}(y, y') = |q(y)  - q(y')|$ on $\partial \Omega
\times \partial \Omega$. We have punctured out $\gcal$ since
$\beta$ has jumps along this set, i.e. is discontinouous.

 Regarding regularity, it  is clear from the formulae (\ref{EDEF})-(\ref{BETA}) that  $E(t, y,
\eta)$ and $\beta(y, \eta)$  are smooth wherever $\nu_y$ is smooth and $t(y, \eta)$ is smooth.
Of course, $\nu_y$ is smooth away from the corners.
Clearly, $t(y, \eta)$ is  smooth as long as  $\nu_y$ is smooth,  $E(t(y, \eta),
y, \eta) \notin \Sigma $ and  as long as  $(y, \eta) \notin \gcal;$  at the latter points,  $t(y, \eta)$
is dis-continuous.  Hence,
$\beta(y, \eta)$ is smooth on $\rcal^1$ and on no larger set.

We observe that $E^{-1}(\Sigma) \subset B^* \partial \Omega$  is
locally defined as the simultaneous zero set of two smooth
functions. Indeed, it the union of the sets  $\{(y, \eta): E(y,
\eta) \in H_i \cap H_j\}$. We may extend the hypersurfaces $H_j$
beyond $\Omega$
 as the zero sets of the $\rho_j$ and consider the travel time $t_j$ from $(y, \eta)$
 to these hypersurfaces. I.e.  we may extend the time functions $t_j(y,
\eta)$ as the smallest value of $t$ so that $\rho_j(E(t, y, \eta))
= 0$. Then $E^{-1}(H_i \cap H_j)$ is defined by $t_i(y, \eta) t_j(y, \eta).$  Similarly for multiple intersections.

\end{proof}

\subsection{Iterates of the billiard map}

We now consider the inverse $\beta^{-1}$ of $\beta$ and their
iterates. We note that  the billiard flow is `time-reversal
invariant', i.e.  $\beta^{-1}(y, \eta) = - \beta(y, - \eta)$.
Hence $\beta^{-1}$ is well defined on the set $\rcal^{-1}$ of
directions in $B^*(\partial \Omega)^o$ such that $E(y, - \eta)
\notin \Sigma$. With a slight abuse of notation, we let
$\beta^{-1}(B^*_{\Sigma}) = E^{-1}(\Sigma)$, i.e. it is the set of
directions which map to the corner set; strictly speaking, there
we do not define the image directions there.

We now
  define open sets $\mathcal{R}^k$ such that $\mathcal{R}^1 \supset \mathcal{R}^2 \supset \dots$, $\mathcal{R}^{-1}
  \supset \mathcal{R}^{-2} \supset \dots$  where $\beta^{(k)}$ is defined, for any integer
  $k$.

\begin{defn}

 For $k \geq 1,$ we define the set $  \mathcal{R}^k$  of  $k$-regular
  directions such that the first $k$ links of its trajectory stays
  in $B^* (\partial \Omega)^o \backslash  \gcal$. For $k \leq -1$, we define it
  similarly for $\beta^{-1}$.

  The   coball singular set is the set   $\Sigma^*: = \bigcup_{k}
\beta^{-k} (B^*_{\Sigma} \partial\Omega)$ and the $N$-th order coball
singular set is $\Sigma^{*}_{N}:= \bigcup_{|k| \leq N} \beta^{-k}
(B^*_{\Sigma} \partial \Omega).$

\end{defn}

\begin{defn} \label{GRAZINGgeneralized}
 We define the {\em generalized grazing set} to be
 $$\gcal^* = \bigcup_{k \in \mathbb{Z}} \beta^{k} (S^*\partial \Omega).$$
 Similarily, the $N$-th  grazing set is denoted by
 $$ \gcal^*_{N} = \bigcup_{|k| \leq N} \beta^{k}(S^* \partial \Omega).$$
 \end{defn}

  By repeating the proof of Lemma \ref{RABETA},
  we have:

  \begin{lem} \label{RABETAK} The domain of $\beta^{k}$ is $\rcal^k$ It is a  smooth symplectic map  on the interior of
  $\rcal^{k}$.
\end{lem}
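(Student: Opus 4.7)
The plan is to prove Lemma \ref{RABETAK} by induction on $|k|$, treating the two cases $k \geq 1$ and $k \leq -1$ in parallel, with Lemma \ref{RABETA} serving as the base case $k=1$. The lemma is essentially a book-keeping statement: $\rcal^k$ has been defined so that iterating $\beta$ exactly $k$ times makes sense, and smoothness/symplecticity pass through composition.

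First I would verify that the domain of $\beta^k$ is exactly $\rcal^k$. The inclusion $\rcal^k \subset \dom(\beta^k)$ is essentially the definition: if the first $k$ links of the trajectory of $(y,\eta)$ all stay in $B^*(\partial \Omega)^o \setminus \gcal$, then by Lemma \ref{RABETA} each successive application of $\beta$ remains in $\rcal^1$ and is well-defined. Conversely, if $(y,\eta) \notin \rcal^k$, then some link in the first $k$ steps either hits the corner set $\Sigma$ (so the trajectory cannot be continued unambiguously) or lies in $\gcal$ (so $\beta$ is discontinuous/undefined at that step). The case $k \leq -1$ is handled by the time-reversal identity $\beta^{-1}(y,\eta) = -\beta(y,-\eta)$ together with the analogous definition of $\rcal^{-k}$.

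Next, for the smoothness claim, I would argue inductively. Write $\beta^k = \beta \circ \beta^{k-1}$. By the inductive hypothesis, $\beta^{k-1}$ is smooth on the interior of $\rcal^{k-1}$. Observe that $\beta^{k-1}$ maps the interior of $\rcal^k$ into the interior of $\rcal^1$, because demanding that the first $k$ links avoid corners and grazing directions forces the $k$-th link in particular to originate at a point of $\rcal^1$. Hence $\beta \circ \beta^{k-1}$ is a composition of smooth maps on the interior of $\rcal^k$, and is therefore smooth there. The fact that the interior of $\rcal^k$ is a smooth manifold with corners in $B^*(\partial \Omega)$ follows by the same argument as in Lemma \ref{RABETA}: the defining conditions on successive links are cut out by finitely many smooth functions (travel times $t_j$ to the extended hypersurfaces $\{\rho_j=0\}$), which can be arranged to be independent at each stratum.

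Finally, the symplectic property is immediate: $\beta$ is symplectic on the interior of $\rcal^1$ (Lemma \ref{RABETA}), and the composition of symplectic maps is symplectic. One can alternatively invoke the generating-function description, noting that $\beta^k$ admits as a generating function the sum $\sum_{i=1}^{k} d_\Omega(y_{i-1}, y_i)$ of successive boundary distances along the $k$-link trajectory, which is smooth on the interior of $\rcal^k$ for the same reason. The only subtle point, and the one I would take most care with, is checking that no pathology is introduced at boundary strata of $\rcal^k$ coming from the corner set $\Sigma$; this is handled exactly as in the proof of Lemma \ref{RABETA} by noting that $E^{-1}(\Sigma)$ is, on the relevant stratum, the zero set of a product of smooth travel-time functions, so its complement is open with smooth boundary where the differentials are independent.
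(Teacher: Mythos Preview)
Your proposal is correct and is essentially the paper's approach, just spelled out in more detail: the paper's entire argument is the single sentence ``By repeating the proof of Lemma \ref{RABETA}'' together with the remark that regularity of $\beta^k$ reduces to regularity of the $k$-th hitting time $t^{(k)}(y,\eta)$. Your induction $\beta^k = \beta \circ \beta^{k-1}$, with smoothness and symplecticity inherited under composition, is exactly what ``repeating the proof'' amounts to, and your generating-function remark $\sum_i d_\Omega(y_{i-1},y_i)$ is the natural iterate of the paper's observation that $d_\Omega$ generates $\beta$.
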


The regularity of $\beta^{k}$ near $(y, \eta)$ is essentially the
issue of the regularity of the  time $t^{(k)}(y, \eta)$ at which
the billiard trajectory defined by $(y, \eta)$ hits $\partial
\Omega$ for the kth time. It fails to be defined if the trajectory
runs into $\Sigma$ or becomes an exit direction before this time,
and additionally fails to  be smooth if the trajectory becomes
tangent to $\partial \Omega$ at time $t^{(k)}(y, \eta)$.

Since the corner set and its images and pre-images under $\beta$
is a countable union of stratified  smooth hypersurfaces,
 $\beta$ is  a measure-preserving map  on a full-measure subset of $B^*  (\partial \Omega)^o$.

\section{\label{billiard maps} Hypersurfaces $H$: classical and quantum transfer maps}

In this section, we define the  quantum transfer operators $
N_{\partial \Omega}^{H}(\lambda)$ (resp. $N_{\partial \Omega}^{H,
\nu}$)   (see (\ref{boundarylayer})), and show that they are
semi-classical Fourier integral operators associated to the
billiard relation given by transfer maps $\tau_{\partial
\Omega}^{H}$ (resp. $\tau_{H}^{\partial \Omega}$).  As discussed
in the introduction, the quantum transfer operators are
fundamental in our approach, which is based on transferring the
expected value $\langle Op_{\lambda}(a) u_{\lambda}^{H},
u_{\lambda}^{H} \rangle_{L^2(H)} $ to the boundary by the identity
(\ref{CTOB}).

The operator $N_{\partial \Omega}^{H}(\lambda):
C^{\infty}(\partial \Omega) \rightarrow C^{\infty}(H)$ is defined
as follows: By Green's theorem, if $\phi_{\lambda_j}$ is a Laplace
eigenfunction, then for any $x \in  \Omega^o,$
\begin{equation}\label{NBDYH} \phi_{\lambda_j} (x) = \int_{\partial \Omega}
\big[\partial_{\nu_{y}} G_0(x, q, \lambda_j)  u_{\lambda_j}^{b} (q)- G_0(x,
q,\lambda_j) \partial_{\nu_{q}} \phi_{\lambda_j}(q)
 \big] d\sigma(q),
\end{equation}
where
\begin{equation}
G_0(x,x',\lambda) =  \frac{i}{4} \lambda^{n-2} (2 \pi \lambda
|x-x'|)^{-(n-2)/2} {\bf Ha}_{n/2-1}(\lambda | x-x' |)
\label{Hankel}\end{equation} is the free outgoing Green function
on $\R^n$.  The Neumann (resp. Dirichlet) boundary conditions
eliminate the second (resp. first) term, and by restricting $z$ to the hypersurface in (\ref{NBDYH}),  one gets the boundary integral equation
$$u_{\lambda_j}^{H}= N_{\partial \Omega}^{H} (\lambda_j)  u_{\lambda_j}^{b}.$$
The transfer operator from boundary data to Dirichlet data along
$H$ is defined as follows:

\begin{defn} \label{NDEF} $N_{\partial \Omega}^{H}(\lambda):
C^{\infty}(\partial \Omega) \rightarrow C^{\infty}(H)$ is the
operator whose Schwartz kernel with respect to the hypersurface
volume element $d\sigma(q)$ on $(\partial \Omega)^{o}$
is given by,
\begin{equation}  N_{\partial \Omega}^{H} (q_{H}, q, \lambda_j) = \left\{ \begin{array}{ll}  \partial_{\nu_{q}} G_0(q_{H}, q, \lambda_j), & \mbox{Neumann}\\  \\
  G_0(q_{H}, q, \lambda_j), & \mbox{Dirichlet} \end{array} \right.
  \end{equation}
  \end{defn}

Similarly, the transfer operator from boundary data to Neumann
data along $H$ is given in:

\begin{defn} \label{NDEFnu} With the same notation and assumptions, $N_{\partial \Omega}^{H \nu}(\lambda):
C^{\infty}(\partial \Omega) \rightarrow C^{\infty}(H)$ is the
operator with  Schwartz kernel
\begin{equation}  N_{\partial \Omega}^{H \nu} (q_{H}, q, \lambda_j) = \left\{ \begin{array}{ll} \partial_{\nu_{q_H}}  \partial_{\nu_{q}} G_0(q_{H}, q, \lambda_j), &
 \mbox{Neumann}\\  \\
 \partial_{\nu_{q_H}}  G_0(q_{H}, q, \lambda_j), & \mbox{Dirichlet} \end{array} \right.
  \end{equation}
  \end{defn}

We now prove that $N_{\partial \Omega}^{H}
(\lambda_j), N_{\partial \Omega}^{H \nu} (\lambda_j)$ are
semi-classical Fourier integral operators (with singularities)
  quantizing the partially defined multi-valued transfer
map $\tau_{\partial \Omega}^{H}$ between  $B^* \partial \Omega$
and  $B^* H.$ This is the first place where the convexity of $H$
is relevant.

We recall that a semiclassical (ie. $\lambda$)-Fourier integral operator, $F(\lambda):C^{\infty}(\partial
\Omega) \rightarrow C^{\infty}(\partial \Omega),$ is an oscillatory
integral operator whose  Schwartz kernel can be written locally in
the form
\begin{equation} \label{oscillatory}
 F(y,y';\lambda) = (2\pi \lambda)^{n-1} \int_{\R^{n-1}} e^{i \lambda \Phi(q(y),q(y'),\theta)} A(q(y),q(y'),\theta,\lambda) \, d\theta \end{equation}
 where the amplitude is a semi-classical (polyhomogeneous) symbol in $\lambda$,  $A(q(y),q(y'),\theta;\lambda) \sim \sum_{k=0}^{\infty} A_{k}(q(y),q(y'),\theta) \lambda^{m-k},$ with  $A_k \in C^{\infty}_{0}(\partial
  \Omega \times  \partial \Omega \times \R^{n-1}), \, k=0,1,2,....$
  The associated canonical relation is defined by
  \begin{equation} \label{CANONICAL} \Gamma_F = \{(y, \nabla_y
  \Phi, y', - \nabla_{y'} \Phi : \nabla_{\theta} \Phi = 0\}
  \subset T^* \partial \Omega \times T^* \partial \Omega.
  \end{equation}

\begin{prop} \label{GAMMAC}  $N_{\partial \Omega}^{H} (\lambda_j),$  resp.  $\lambda_j^{-1}N_{\partial \Omega}^{H \nu} (\lambda_j)$, are semi-classical Fourier integral
operators of order zero  with canonical relation
(\ref{TRANSFERRELATION}). The principal symbol of $N_{\partial
\Omega}^H$ is the half density on the graph of $\tau_{\partial
\Omega}^H$ given at a point $ ((y,\eta), \tau_{\partial
\Omega}^H(y, \eta) ) \in \Gamma_{\tau_{\partial \Omega}^H}$ with
$(y, \eta) \in B^*
\partial \Omega$ by
$$ \sigma(N_{\partial \Omega}^{H}(\lambda))(y,\eta) = \Big( \frac{\gamma(y, \eta)}{\gamma(\tau_{\partial \Omega}^H (y, \eta))} \Big)^{1/2} \big| dy d\eta
\big|^{1/2}$$   in the symplectic coordinates $(y, \eta)$ of $B^*
\partial \Omega$.  Also,
$$ \sigma(\lambda^{-1} N_{\partial \Omega}^{H \nu }(\lambda))(y,\eta) = i \Big( \gamma(y, \eta)\gamma(\tau_{\partial \Omega}^H (y, \eta)) \Big)^{1/2} \big| dy d\eta
\big|^{1/2}.$$
 \end{prop}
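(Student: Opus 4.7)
\medskip
\noindent\textbf{Proof plan for Proposition \ref{GAMMAC}.}

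The plan is to represent the Schwartz kernels of $N_{\partial\Omega}^H(\lambda)$ and $N_{\partial\Omega}^{H,\nu}(\lambda)$ as oscillatory integrals in the form (\ref{oscillatory}) and read off both the canonical relation and the principal symbol from the resulting phase/amplitude. The starting point is the classical asymptotic expansion of the Hankel function,
\[
\mathbf{Ha}_{n/2-1}(\lambda r)\;\sim\;\Bigl(\frac{2}{\pi\lambda r}\Bigr)^{1/2} e^{i(\lambda r - (n-1)\pi/4)}\sum_{k\ge 0} c_k (\lambda r)^{-k},
\]
valid for $\lambda r\to\infty$. Substituting this into (\ref{Hankel}) gives, away from the diagonal,
\[
G_0(q_H,q,\lambda) \;=\; a(q_H,q,\lambda)\,e^{i\lambda |q_H-q|},\qquad a\sim \sum_{k\ge 0} a_k(q_H,q)\,\lambda^{(n-3)/2 - k},
\]
with leading symbol $a_0(q_H,q) = C_n |q_H-q|^{-(n-1)/2}$ for an explicit constant $C_n$. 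Since $H$ lies in the interior of $\Omega$, the distance $|q_H - q|$ is bounded away from zero on $H\times\partial\Omega$, so no diagonal singularity appears and the representation is globally valid as an oscillatory integral with phase $\Phi(q_H,q) = |q_H-q|$ (no auxiliary $\theta$ variables are needed, so the amplitude is of semiclassical order $0$ after matching with the definition of $\Psi^0_{sc}$).

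Next I would identify the canonical relation. Differentiating $\Phi$ yields
\[
d_{q_H}\Phi = \frac{q_H - q}{|q_H-q|},\qquad d_q\Phi = -\frac{q_H - q}{|q_H-q|}.
\]
Setting $\zeta := (q_H-q)/|q_H-q|$ and restricting to tangential covectors (which is what appears under the parametrization $y\mapsto q(y)$, $s\mapsto q_H(s)$), one obtains the relation
\[
\Gamma = \bigl\{\,(y,\eta;\,s,\tau)\,:\ \tau = \pi^T_{q_H(s)}\zeta,\ \eta = \pi^T_{q(y)}\zeta,\ q_H(s)=q(y)+|q_H(s)-q(y)|\,\zeta\,\bigr\}.
\]
Using $\zeta(y,\eta)=\eta+\sqrt{1-|\eta|^2}\,\nu_y$ from (\ref{zetayeta}), the third condition fixes $t=t_H(y,\eta)$ so that $q_H(s)=E(t_H(y,\eta),y,\eta)$, and this is exactly the graph $\Gamma_{\tau_{\partial\Omega}^H}$ of (\ref{TRANSFERRELATION}). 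When $H$ is weakly convex, each trajectory meets $H$ in at most two points, so $\Gamma$ splits into at most two smooth branches; on each branch the projection to $B^*\partial\Omega$ is locally a diffeomorphism away from the grazing/tangential set.

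For the principal symbol I would compute the half-density obtained by pushing the amplitude $a_0(q_H,q)$ and the reference density $d\sigma(q)$ forward to $\Gamma_{\tau_{\partial\Omega}^H}$. The Jacobian relating the symplectic coordinates $(y,\eta)\in B^*\partial\Omega$ to boundary position-plus-direction is governed by $\gamma(y,\eta)=\sqrt{1-|\eta|^2}=\langle \nu_y,\zeta\rangle$; the analogous factor at the $H$-end is $\gamma(\tau_{\partial\Omega}^H(y,\eta))$. Tracking these through the stationary-phase identification of the half-density on $\Gamma$ yields
\[
\sigma(N_{\partial\Omega}^H(\lambda))(y,\eta) \;=\; \Bigl(\tfrac{\gamma(y,\eta)}{\gamma(\tau_{\partial\Omega}^H(y,\eta))}\Bigr)^{1/2} |dy\,d\eta|^{1/2}.
\]
For $N_{\partial\Omega}^{H,\nu}(\lambda)$, applying $\partial_{\nu_{q_H}}$ to $e^{i\lambda|q_H-q|}$ produces the extra factor $i\lambda\langle \nu_{q_H},\zeta\rangle$; its principal part $i\lambda\,\gamma(\tau_{\partial\Omega}^H(y,\eta))$ accounts for both the $\lambda^{-1}$ normalization and the extra factor of $\gamma$ in the stated symbol. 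In the Dirichlet case the roles of interior and normal derivative switch, but the computation is identical.

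The main obstacle is the careful bookkeeping near the singular set: grazing directions ($(y,\eta)\in\gcal$), trajectories hitting $\Sigma\subset\partial\Omega$, and tangential intersections with $H$ where $\tau_{\partial\Omega}^H$ fails to be a smooth map. On the complement of this set (a set of full measure in $B^*\partial\Omega$) the above arguments give a genuine zeroth-order semiclassical FIO representation; on the singular set one invokes a microlocal cutoff, as in the analogous construction for $N^{\partial\Omega}_{\partial\Omega}$ in \cite{HZ}, to control the contribution and justify the description as an FIO ``with singularities.'' In the non-convex case one replaces $\Gamma$ by a finite union of branches $\Gamma^{(j)}$, each handled by the same stationary-phase computation, and sums the resulting principal symbols.
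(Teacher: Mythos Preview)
Your approach is essentially the same as the paper's: Hankel asymptotics give the WKB form with phase $|q_H-q|$, the canonical relation is read off from the phase gradients, and the symbol comes from the Jacobian of the change from $(y,s)$-coordinates to symplectic $(y,\eta)$-coordinates. The one place where your plan is thinner than the paper is the symbol step: the paper actually computes the mixed Hessian $\det\bigl(\partial^2_{y_i s_j}|q(y)-q_H(s)|\bigr)$ explicitly (first in dimension two, then in general), obtaining $|q(y)-q_H(s)|^{-(n-1)}\,\partial_{\nu_y}|q(y)-q_H(s)|\,\partial_{\nu_s}|q(y)-q_H(s)|$, and it is this computation that produces the precise ratio $\gamma(y,\eta)/\gamma(\tau_{\partial\Omega}^H(y,\eta))$ after the $|q-q_H|^{-(n-1)/2}$ factors cancel. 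Your sentence ``tracking these through the stationary-phase identification'' is correct in spirit but hides the actual content; when you write this up you will need that Hessian calculation (or an equivalent Leray-density argument) to justify the formula rather than just assert it.
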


 \begin{rem}  We observe that the symbol of $N_{\partial \Omega}^H(\lambda)$
 is infinite on the set where $\gamma(\tau_{\partial \Omega}^H (y,
 \eta))= 0$. This is also true in the case $H = \partial \Omega$
 in \cite{HZ}. We will cut off away  from this set in the proof of the
 main theorems  (see Proposition \ref{Egorov}). The  key  point  is that  for a quantum ergodic sequence,  eigenfunction mass
 cannot concentrate in this set  (see Lemma \ref{tanmass} in subsection \ref{extensions}).
 \end{rem}

 \noindent{\bf Proof}

 The proof of the Proposition consists of a sequence
of Lemmas. We first observe that the quantum transfer maps are
Fourier integral operators with phase equal to the distance
function $|q_H - q|$ with $q_H \in H, q \in \partial \Omega$. We
then discuss the corresponding canonical relation. In section
\ref{SYMBOLN} (see Lemma \ref{symbolcomputation}) we calculate the
principal symbols.

 Under our assumption that $H \subset \Omega^o$, we have  $|q_H - q| \geq dist (H,\partial \Omega) >0,$ and we
may use the asymptotics of the Hankel function  (\ref{Hankel}) to
obtain,
\begin{equation} \label{wkb}
G_{0}(q_H(s),q(y);\lambda_j) \sim_{\lambda_j \rightarrow \infty}  {(2\pi
\lambda_j)}^{\frac{n-3}{2}} e^{i \lambda_j |q_{H}(s)-q(y)|} \sum_{k=0}^{\infty}
b_{k}(q_H(s),q(y)) \lambda_j^{-k}
\end{equation}
with $b_k \in C^\infty(H \times \partial \Omega)$ and $b_0 >0$.
The expansions for $\partial_{\nu_y}G_0(q_H(s),q(y);\lambda_j)$ and of
 $\partial_{\nu_{y_H}} \partial_{\nu_y}G_0(q_H(s),q(y);\lambda_j)$ are
of the same form and  are computed by taking  the  normal
derivatives of the expansion in (\ref{wkb}). Thus, the operators
in Definitions (\ref{NDEF})-(\ref{NDEFnu}) are   semi-classical
Fourier integral operators with phase equal to the distance
 function $|q_H(s) - q(y)|$ between a point of $H$ and a point of
 $\partial \Omega.$ The canonical relation is given by
 \begin{equation} \label{GAMMA}  WF_{\lambda}' (  N_{\partial \Omega}^{H} ) \subset \{ (s, \pi^T_{q_H(s)}  r(q_H(s),q(y)),  y, \pi^T_{q(y)} r(q_H(s),q(y))) \} \subset B^*H \times B^* \partial \Omega, \end{equation}
 where,
 $$ r(q_H,q):= \frac{q_H - q}{|q_H-q|}.$$
 The canonical relation in (\ref{GAMMA}) is easily seen to be the same as (\ref{TRANSFERRELATION}).
\medskip

\begin{rem} This statement is simpler than the one for the
boundary integral operator $N_{\partial \Omega}^{\partial
\Omega}(\lambda)$ (\ref{boundarylayerb}) in \cite{HZ}.
 In that case, the
phase and amplitude are singular on the diagonal and have to be
regularized. Here, the operator kernel is smooth away from the
corners of $\partial \Omega$. The singularities at the corners are
only a technical inconvenience in the quantum ergodic theorems,
and we refer to \cite{HZ} for a more detailed discussion of them.
\end{rem}

\subsection{\label{BMAPSH} Details on the transfer map $\tau^{H}_{\partial \Omega}$}

We now describe in more detail the canonical relation
(\ref{GAMMA}) and the  transfer map $\tau^H_{\partial \Omega}$ of
Definition \ref{TRANSFERMAP}. As mentioned in the introduction,
the definition of this map involves several technicalities:

\begin{itemize}

\item The transfer map $\tau_{\partial \Omega}^H$ is only
partially defined on $B^* \partial \Omega$, namely on the range of
the adjoint transfer map $\tau_H^{\partial \Omega}: B^* H \to B^*
\partial \Omega$.

\item $\tau_{\partial \Omega}^H$ is multi-valued since the
trajectory $\overline{(y, \eta)}$ defined by $(y, \eta) \in B^*
\partial \Omega$ may intersect $H$ generically in multiple points.

\item It has singularities when the trajectory  $\overline{(y,
\eta)}$  from $(y, \eta) \in B^* \partial \Omega$ is tangent to $H$
or when the trajectory is tangent to $\partial \Omega$. Such a
trajectory may go on to intersect $H$ again.

\item The adjoint transfer map $\tau_H^{\partial \Omega}: B^* H
\to B^*
\partial \Omega$ is singular when a ray from $B^*H$ runs into a
corner.

\end{itemize}

We now introduce notation to deal with these issues. At this
point, we assume that $H$ is convex. With this assumption, a
generic line intersects $H$ at most twice (indeed, it only
intersects it more than twice if in intersects it in a flat part).

\begin{defn} \label{HBILLDEF} The domain $\ccal$ of $\tau_{\partial \Omega}^{H}$
is the set of $(y, - \eta) \in B^* (\partial \Omega)^o$ such that
the link $\overline{(y, - \eta)}$ (i.e. $E(t, y, - \eta)$)
intersects $H$ (see (\ref{LINK})). Let $\ccal^o \subset \ccal$
denote the open dense subset such that $\overline{(y, -\eta)} \cap
H$ consists of two distinct points $\{q_H^1(y, -\eta), q_H^2(y,
-\eta)\}$, ordered so that
$$| q_H^1(y, -\eta) - y| < |q_H^2(y, -\eta) - y|, $$
i.e. so that the line intersects $H$ first at $q_H^1(y,- \eta)$
and second at $q_H^2(y, -\eta)$.

Let  $\tcal \subset \ccal$   be the set of directions $\eta$ such
that $\overline{(y, -\eta)}$  is tangent to $H$ at its  points of
intersection.

If  $(y, -\eta) \in \ccal^o = \ccal \backslash \tcal,$  let $t_H^1
(y, \eta) < t_H^2 (y, \eta)$ be the times of intersection with
$H$.

\end{defn}

The reason for the minus sign in $- \eta$ is that the transfer map
follows the trajectory defined by $(y, \eta)$ in the backwards
direction (see (\ref{TRANS}). The time reversal is the map $\eta \to - \eta$.  Since
$H$ is smooth, the time functions $t_j^H (y, \eta)$ are smooth
functions of their arguments in $\ccal^o$.

\subsection{Formulae for the branches of the transfer map} \label{transferformulas}

In this section, we give explicit formulae for the branches of the
transfer map away from $\tcal$. They are  needed in \S
\ref{fixedpoint} in the proof of the genericity of the condition
on $H$.

We define branches $\tau_{\partial \Omega}^{H;1}(y, \eta)$ and
$\tau_{\partial \Omega}^{H;2}(y, \eta)$ of $\tau_{\partial
\Omega}^H$ for $(y, \eta) \notin \tcal$ corresponding to the two
intersection points with $H$. For $(y, \eta) \in \ccal^o$, we
define the once $H$-broken trajectories $E_{H,j}(t, y, \eta)$ with
breaks at $q_H^j(y, \eta)$ and intersection times $t^j_H(y, \eta)$
by
$$E_{H,j}(t, y, \eta): = \left\{ \begin{array}{ll} E(t, y, - \eta), & t \leq t_H^j(y,
\eta); \\ & \\
E(t_H^j (y, \eta), y, \eta)   + (t - t_H^j (y, \eta)) \zeta_H^j(y,
- \eta), & t \geq t_H^j (y, \eta) \end{array} \right. $$ where
\begin{equation} \label{ZETAHJ} \zeta_H^j(y, - \eta) = - \zeta(y, - \eta) + 2 (\zeta \cdot
\nu_{q_H^j(y,  \eta)}) \nu_{q_H^j(y,  \eta)}. \end{equation} As
above, the $- \eta$ is needed since the trajectory proceeds
backwards from the initial conditions. The unit normal is
understood to be on the $+$ side of $N^*H$.

After the break, the  trajectory  proceeds on the straight line
$E(t_H^j (y, \eta), y, \eta) + (t - t_H^j(y, \eta)) \zeta^j_{H}(y,
\eta)$ until it intersects the boundary again at a point $q_{H,
\partial \Omega}^j(y, \eta)$. We denote by $t_{H,
\partial \Omega}^j(y, \eta)$ the time at which this occurs. The
following is a more detailed version of Definition
\ref{TRANSFERMAP}.

\begin{defn} \label{SECONDTRANSFER} Define (see (\ref{ZETAHJ})) $q_{H,\partial \Omega}^j(y,\eta)$ to be the solution of the equation
$$q_H  ( q_{H, \partial \Omega}^j(y, \eta)  )=  E(t_H^j (y, \eta), y, \eta) +
(t_{H,
\partial \Omega}^j(y, \eta) - t_H^j
(y, \eta)) \zeta_H^j(y, \eta). $$ If $q_{H, \partial \Omega}^j(y,
\eta) \notin \Sigma,$ put
$$\begin{array}{lll} \tau_{\partial
\Omega}^{H;j }(y, \eta) & = & (q_{H, \partial \Omega}^j(y, \eta),
\pi_{T(q_{H, \partial \Omega}^j(y, \eta))} r( q_H (q_{H, \partial
\Omega}^j(y, \eta) ), E(t_H^j(y, \eta), y, - \eta)), \,\,\,
 \end{array} $$

 Also define the correspondence
$$\tau_{\partial
\Omega}^{H }(y, \eta)  = \{\tau_{\partial \Omega}^{H; 1 }(y,
\eta), \tau_{\partial \Omega}^{H;2 }(y, \eta)\}. $$

\end{defn}

 In the
notation of \cite{Z}, this double-valued correspondence would be
written in the additive notation,
\begin{equation} \label{SUMNOT} \tau_{\partial \Omega}^{H }(y, \eta)  = \tau_{\partial
\Omega}^{H; 1 }(y, \eta) +  \tau_{\partial \Omega}^{H;2 }(y,
\eta).
\end{equation}

 If $(y, \eta) \in \tcal$, then its trajectory  runs tangent to $H$, and  it proceeds on a
straight line to $\partial \Omega.$ We then define  $
\tau_{\partial \Omega}^{H;1} (y, \eta) = \tau_{\partial
\Omega}^{H;2} (y, \eta)= \beta(y, \eta),$ the usual billiard map.
Thus, the graph  $\Gamma_{\tau_{\partial \Omega}^{H; j } } \subset
B^*
\partial \Omega \times B^* \partial \Omega$   of the
indicated partially defined symplectic map is given by
\begin{equation} \label{GAMMAS} \Gamma_{ \tau_{\partial \Omega}^{H}} =  \Gamma_{\tau_{\partial \Omega}^{H; 1 } } \cup
\Gamma_{\tau_{\partial \Omega}^{H; 2 } }, \end{equation} where the
two graphs on the right intersect along their common values for
$(y, \eta) \in \tcal$.

As in the discussion of $\beta$, we note that $ \tau_{\partial
\Omega}^{H;1}$ and $ \tau_{\partial \Omega}^{H;2}$ are piecewise
smooth maps. It follows easily that the domain of definition of  $ \tau_{\partial
\Omega}^{H;j}$ is  $\ccal \backslash E_{H,j}^{-1}(\Sigma)$  and each map
is smooth on the interior of its domain.

In the case of more general hypersurfaces, billiard trajectories
from $\partial \Omega$ may intersect $H$ in more points, and  may
intersect both tangentially and non-tangentially. As mentioned
above, it  is unnecessary to deal with such complications since
the QER theorem may be proved for symbols $a $ with arbitrarily
small microsupport on $B^* H$.

\subsection{\label{SYMBOLN} Symbols of $N_{\partial \Omega}^H$ and $N_{\partial \Omega}^{H \nu}$}

We first recall the definition of the symbol of a semi-classical
Fourier integral operator $F(\lambda) \in I^0(M \times M, C)$ where $\lambda \geq \lambda_{0}$ is the inverse semiclassical parameter. It is a
smooth section of the bundle $\Omega_{1/2} \otimes L$ of
half-densities on $C$ and the Maslov line bundle.

In a local Fourier integral representation $F(\lambda)(x, y) = (2\pi \lambda)^{-n} \int_{ {\mathbb R}^{n} } e^{- i
\lambda \phi(x, y, \xi)} a(x, y, \xi, \lambda) d\xi$ with $a(x,y,\xi,\lambda) \sim_{\lambda \rightarrow \infty} \sum_{j=0}^{\infty} a_j(x,y,\xi) \lambda^{-j}, \,\, a_j(x,y,\xi) \in S_{1,0}^{-j},$  the symbol is
transported from the critical set  $$C_{\phi} = \{(x, y, \xi):
d_{\xi} \phi (x, y, \xi) = 0\} \subset M \times M \times \R^n$$
 by the Lagrange immersion $i_{\phi}$
$$i_{\phi}: C_{\phi} \to C, \;\; (x, y, \xi) \to (x, \phi'_x, y, -
\phi'_y). $$ On $C_{\phi}$ we form the   Leray density
$$\begin{array}{l} d_{C_{\phi}}  = \frac{dx \wedge dy \wedge d \xi}{d (\frac{\partial \phi}{\partial \xi})}
\end{array}$$
At  a point $(x_0, \xi_0, y_0, \eta_0) \in C$, the symbol of $F$
is defined as
$$\sigma_F (x_0, \xi_0, y_0, \eta_0) = i_{\phi *} a_0
\sqrt{d_{C_{\phi}}}. $$

\begin{lem}  \label{symbolcomputation}The principal symbol of $N_{\partial \Omega}^H$ is the
half density on the graph of $\tau_{\partial \Omega}^H$ given at a
point $ ((y,\eta), \tau_{\partial \Omega}^H(y, \eta) ) \in
\Gamma_{\tau_{\partial \Omega}^H}$ with $(y, \eta) \in B^*
\partial \Omega$ by
$$ \sigma(N_{\partial \Omega}^{H}(\lambda))(y,\eta) = \Big( \frac{\gamma(y, \eta))}{\gamma(\tau_{\partial \Omega}^H (y, \eta)} \Big)^{1/2} \big| dy d\eta
\big|^{1/2}$$   in the symplectic coordinates $(y, \eta)$ of $B^*
\partial \Omega$.

Also,
$$ \sigma(\lambda^{-1} N_{\partial \Omega}^{H \nu }(\lambda))(y,\eta) = i \Big( \gamma(y, \eta))\gamma(\tau_{\partial \Omega}^H (y, \eta) \Big)^{1/2} \big| dy d\eta
\big|^{1/2}.$$

\end{lem}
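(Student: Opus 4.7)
The plan is to read off the principal symbol directly from the WKB expansion (\ref{wkb}) and then re-express the result in symplectic coordinates $(y,\eta)$ on $B^*\partial \Omega$ using the transfer map. In both the Dirichlet and Neumann cases, substituting the Hankel asymptotic into Definitions \ref{NDEF}, \ref{NDEFnu} gives a Schwartz kernel of the form
$$ N_{\partial \Omega}^{H}(q_H(s), q(y); \lambda) \sim \lambda^{(n-3)/2}\, e^{i\lambda \Phi(s,y)} \, c(s,y,\lambda), \qquad \Phi(s,y) = |q_H(s) - q(y)|, $$
a Fourier integral representation with \emph{no} auxiliary phase variables. The amplitude $c(s,y,\lambda)$ is the Hankel prefactor $|q_H - q|^{-(n-1)/2}$ times a constant; for the Neumann case one brings down the extra factor $-i\lambda\, \partial_{\nu_q}|q_H - q| = i\lambda\, \gamma(y,\eta)$, using $\hat{\zeta}\cdot \nu_q = \gamma(y,\eta)$ where $\hat{\zeta} = r(q_H,q)$; and for $N_{\partial \Omega}^{H\nu}$ a second derivative yields $i\lambda\, \gamma(\tau_{\partial \Omega}^H(y,\eta))$ analogously, since the component of $\hat\zeta$ along $\nu_{q_H}$ is (up to sign) the obliquity factor at the image point. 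The factor $\lambda^{-1}$ in the statement precisely absorbs this extra power of $\lambda$ from the normal derivative, making each operator order zero.

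Next I verify the canonical relation. Since $\Phi$ has no fiber variables, the Lagrangian is parametrized as $\{(s, \nabla_s \Phi, y, -\nabla_y \Phi)\}$ over all $(s,y)$ with $q_H(s) \neq q(y)$. A direct computation yields $\nabla_s \Phi = \pi^T_{q_H(s)} r(q_H(s),q(y))$ and $-\nabla_y \Phi = \pi^T_{q(y)} r(q_H(s),q(y))$, which are exactly the two tangential projections defining $\Gamma_{\tau_{\partial \Omega}^H}$ in (\ref{TRANSFERRELATION}). The phase is non-degenerate away from the tangency set $\tcal$ of Definition \ref{HBILLDEF} and the corner preimages, in agreement with the singularity structure discussed in subsection \ref{transferformulas}.

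It remains to compute the half-density on $\Gamma_{\tau_{\partial \Omega}^H}$. With no auxiliary variables the Leray density is just $|ds\, dy|$, so the symbol is $c_0(s,y)\, |ds\, dy|^{1/2}$ transported to the graph by the Lagrangian embedding $(s,y) \mapsto (s, \nabla_s \Phi, y, -\nabla_y \Phi)$. The key geometric step is to rewrite $|ds\, dy|$ as a multiple of $|dy\, d\eta|$ along the graph. Fixing $y \in \partial \Omega$ and letting $\eta$ vary, $s$ is determined by the line from $q(y)$ in direction $\hat\zeta(y,\eta)$ hitting $H$. Passing through the solid-angle element on the unit sphere at $q(y)$, one obtains the classical obliquity relations $d\hat\zeta = \gamma(y,\eta)^{-1}\, d\eta$ and $ds = \gamma(\tau_{\partial \Omega}^H(y,\eta))^{-1}\, |q_H - q|^{n-1}\, d\hat\zeta$. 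Combined, these give $|ds\, dy| = \gamma(y,\eta)^{-1}\gamma(\tau_{\partial \Omega}^H(y,\eta))^{-1}\, |q_H-q|^{n-1}\, |dy\, d\eta|$, and the spreading factor $|q_H - q|^{-(n-1)/2}$ in $c_0$ cancels precisely one copy of $|q_H-q|^{(n-1)/2}$ from the square root of this Jacobian. The leftover leading amplitude, after taking the square root, is exactly $(\gamma(y,\eta)/\gamma(\tau_{\partial \Omega}^H(y,\eta)))^{1/2}\, |dy\, d\eta|^{1/2}$. For $N_{\partial \Omega}^{H\nu}$ one instead picks up the product $\gamma(y,\eta)\gamma(\tau_{\partial \Omega}^H(y,\eta))$ before taking the square root, producing the asserted formula with its factor of $i$ from the derivative of the exponential.

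The main obstacle is the clean bookkeeping in the final paragraph: tracking the Hankel normalization constant, the orientation of $\nu_{q_H}$ (which determines the sign in $\partial_{\nu_{q_H}}|q_H - q| = \pm\gamma(\tau_{\partial \Omega}^H(y,\eta))$), and verifying that the distance factor $|q_H-q|$ actually cancels between the amplitude and the Jacobian, rather than surviving into the symbol. This is really a Herglotz-type calculation in ray phase space and can be verified most transparently by working in geodesic polar coordinates centered at $q(y)$. Everything else — the identification of the canonical relation, the amplitude factors from the normal derivatives — is direct.
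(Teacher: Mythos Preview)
Your approach is correct and parallels the paper's proof closely: both read off the WKB amplitude from the Hankel asymptotics, identify the canonical relation from the phase $\Phi(s,y)=|q_H(s)-q(y)|$, and then convert the Leray half-density $|dy\,ds|^{1/2}$ to $|dy\,d\eta|^{1/2}$ by computing the Jacobian $\partial\eta/\partial s$ at fixed $y$.  The one genuine difference is in how that Jacobian is obtained.  The paper (following \cite{HZ}, Proposition~6.1) computes the mixed Hessian $\det\bigl(\partial^2|q(y)-q_H(s)|/\partial Y^i\partial S^j\bigr)$ explicitly in adapted orthonormal frames, first in dimension two and then in general, and checks directly that the $|q(y)-q_H(s)|$ factors cancel.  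You instead factor the map $s\mapsto\eta$ through the unit direction $\hat\zeta\in S^{n-1}$ and use the two obliquity Jacobians $d\eta=\gamma(y,\eta)\,d\hat\zeta$ and $d\hat\zeta=\gamma(\tau_{\partial\Omega}^H(y,\eta))\,|q_H-q|^{-(n-1)}\,ds$.  This solid-angle argument is cleaner and dimension-independent from the start, and makes the cancellation of the $|q_H-q|^{(n-1)/2}$ factor transparent; the paper's Hessian computation is more hands-on but matches the existing boundary calculation in \cite{HZ} line by line.  Either route gives the same half-density, and your treatment of the extra $\partial_{\nu_{q_H}}$ derivative for $N_{\partial\Omega}^{H\nu}$ (bringing down $i\lambda\,\gamma(\tau_{\partial\Omega}^H(y,\eta))$, then dividing by $\lambda$) agrees with the paper's final paragraph.
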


\begin{proof}

The calculation of the symbol of $N_{\partial \Omega}^H(\lambda)$
is similar to that of $N_{\partial \Omega}^{\partial
\Omega}(\lambda)$ in Proposition 6.1 of  \cite{HZ}, and we follow
that calculation closely, referring there for some of the details
and emphasizing only the steps where something new occurs.  From
the  asymptotics of the Hankel function,
$$
 {\bf Ha}_{n/2 - 1}(t) \sim e^{-i(n-1)\pi/4} e^{it} \sum_{j=0}^\infty
a_j t^{-1/2 - j},
$$
 the principal symbol of $N_{\partial
\Omega}^H (\lambda)$ at the billiard Lagrangian  is the same (up
to an eighth root of unity) as that of
$$
(2\pi)^{-(n-1)/2}  \lambda^{(n-1)/2}  e^{i\lambda |q(y) - q_H(s)|}
| q(y) - q_H(s)|^{-(n-1)/2} d_{\nu_{y}}  |q(y) - q_H(s)|, \;\;
$$
where $q(y) \in \partial \Omega,
 q_H(s) \in H$, and where $ d_{\nu_{y}}  |q(y) - q_H(s)|$ is the directional
derivative taken in $\R^n \times \R^n$. Hence the symbol is given
by
\begin{equation} \label{SYMBOL1}
|q(y) - q_H(s)|^{-(n-1)/2}  d_{\nu_{y}} |q(y) - q_H(s)| \big| dy ds
\big|^{1/2},
\end{equation}
where we use $(y, s) \in {\mathbb R}^{n-1} \times {\mathbb
R}^{n-1}$ as coordinates on the graph of $\tau_{\partial
\Omega}^H$.

  To express the graph half-density in terms of $dy d\eta$ we need
  to express $ds$ in terms of $d\eta$, keeping $y$ fixed.
Along the graph of $\tau_{\partial \Omega}^H$,  $\eta_i = d_{y_i}
|q(y) - q_H(s)|$ where now the derivative is only along $\partial
\Omega$. Hence,
$$
|d\eta| = \det \left( \frac{\partial^2}{\partial Y^i \partial S^j} \left| ( q(y) + Y_i
e_i ) -  ( q_H(s) + S_i e'_i ) \right| \right)_{S=Y=0}  |ds|,
$$
where $e_i$ is an orthonormal basis for $T_y \partial \Omega$, and
$e'_i$ an orthonormal basis for $T_{s} H$. This is the same type
of determinant as in \cite{HZ}, Proposition 6.1, except that
$q_H(s) \in H$ rather than in $\partial \Omega$.  As in (loc.
cit.) we first compute the determinant in  the   two dimensional
case.   We choose coordinates so that $q(y) = (0,0), q_H(s) =
(0,r)$, $e_1 = (\cos \alpha, \sin \alpha)$ and $e_2 = (\cos \beta,
\sin \beta)$. Then the determinant is
$$
\frac{\partial^2}{\partial Y \partial S} \left| (Y \cos \alpha -   S
\cos \beta, Y \sin \alpha - r - S \sin \beta) \right|_{S=Y=0},
$$
which equals
$$
r^{-1} \cos \alpha \cos \beta = |q(y) - q_H(s)|^{-1} \partial_{\nu_y} | q(y) -
 q_H(s)| \partial_{\nu_{s}} | q(y) - q_H(s)|.
$$
Taking this number to the power $-1/2$ and substituting in
(\ref{SYMBOL1}),  we find that the factors of $|q(y) -
q_H(s)|^{-1/2}$ cancel, and that there is a half power
cancellation in $\partial_{\nu_y} | q(y) -
 q_H(s)|$, so that the symbol is given by
\begin{multline} \left( \frac{d_{\nu_{y}} |q(y) - q_H(s)|}{d_{\nu_{s}} |q(y) - q_H(s)|}
\right)^{1/2}  \left| dy d\eta \right|^{1/2}  =  \left(
\frac{\gamma(y, \eta)}{\gamma (\tau_{\partial \Omega}^H(y,\eta))}
\right)^{1/2} \left| dy d\eta \right|^{1/2}.
\label{F-symbol}\end{multline}

To complete the calculation, we observe that  $\partial_{\nu_{s}}
| q(y) - q_H(s)| = \gamma(s, \tau)$, and  $
\partial_{\nu_{y}} | q(y) - q_H(s)| =   \gamma(y, \eta), $ and
this gives the symbol of $N_{\partial \Omega}^H$ in dimension two.

In higher dimensions, as in \cite{HZ}, we use the subspace
$T_{q(y)}
\partial \Omega \cap T_{q_H(s)} H \cap \ell_{q, s}^{\perp}$ where
$\ell_{q, s}$ is the line joining $q(y)$ and $q_H(s)$. We then
introduce the same coordinates as in \cite{HZ} and find that in
the $n$-dimensional case, the determinant equals $r^{-(n + 1)}
\cos \alpha \cos \gamma$ where $e_1 = (\cos \alpha, 0, \sin
\alpha), e_2 = (0, 1, 0)$ are the first two elements of an
orthonormal basis of $T_{q(y)} \partial \Omega$ and $e_1' = (\cos
\gamma \cos \beta, \cos \gamma \sin \beta, \sin \gamma), e_2' = (-
\sin \beta, \cos \beta, 0)$ are the first two elements of an
orthonormal basis of $T_{q_H(s)} H$. The factors of $|q(y) -
q_H(s)|$ again cancel out due to the factor of $r^{-n + 1}$. The
cosine factors also produce half-cancellations as in the
two-dimensional case, leaving the stated symbol.

The calculation of the symbol of  $N_{\partial \Omega}^{H \nu}$ is
similar except that the kernel has one further derivative, and the
leading term arises by differentiating the exponent in $\nu$. So
its  symbol is that of $N_{\partial \Omega}^H$ multiplied by $i
\lambda
\partial_{\nu_{s}} |q_H(s) - q(y)|$ where $s \in H, y \in
\partial \Omega$ and the derivative is computed in $\R^n \times
\R^n$. In terms of the symplectic coordinates $(y, \eta)$ on the
boundary,  the symbol of $N_{\partial \Omega}^{H \nu}$ is the
product of the symbol of $N_{\partial \Omega}^{H}$ by the
additional factor of
$$i \lambda \partial_{\nu_{s}} |q(y) - q_H(s)|. $$

\end{proof}

This completes the proof of Proposition \ref{GAMMAC}. Q.E.D.

\section{\label{DEC}  $F(\lambda) = N^{H}_{\partial \Omega}(\lambda)^{*}
Op_{\lambda}(a) N^{H}_{\partial \Omega}(\lambda)$ and $F^{\nu}
(\lambda) = N^{H \nu}_{\partial \Omega}(\lambda)^{*}
Op_{\lambda}(a) N^{H \nu}_{\partial \Omega}(\lambda)$
\label{splitting}}

The purpose of this section is to prove that the  compositions
\begin{equation} \label{FLAMBDA} F(\lambda): = N_{\partial \Omega}^H(\lambda)^* Op_{\lambda} (a)
N_{\partial \Omega}^H(\lambda), \;\; \mbox{resp.}\;\;
F^{\nu}(\lambda) : = N_{\partial \Omega}^{H \nu} (\lambda)^*
Op_{\lambda} (a) N_{\partial \Omega}^{H, \nu}(\lambda)
\end{equation} have the decomposition as described in
(\ref{decomposition}). The precise statement is given in
Proposition \ref{Egorov}. The details and statements are the same
for $F(\lambda)$ and $F^{\nu}(\lambda)$ except for the calculation
of the symbols. Hence, we sometimes give them only for
$F(\lambda)$.

 By Proposition
\ref{GAMMAC} and the calculus of Fourier integral operators,
$F(\lambda)$ would   be a Fourier integral operator associated to
the composite canonical relation $(\Gamma_{\beta}^H )^* \circ
\Gamma_{\beta}^H $ if the composition were  non-degenerate (or
clean); see \cite{DS}. The composition is degenerate along the
tangential set,  and so in analogy with the boundary case in
\cite{HZ} we must take care to avoid both the tangential and singular sets.
 We handle these problems by using appropriate cutoffs to remove these  thin
subsets. Thus, we  first consider $ \lambda$-pseudodifferential
operators  $Op_{\lambda}(a)
 \in Op_{\lambda}(S^{0,0}_{cl} (T^*H \times [0,\lambda_0^{-1}] ) )$ with supp  $a \,  \cap  \tau_{\partial \Omega}^{H}
  ( \,   \mathcal T  \cup \Sigma   \cup \gcal \, ) = \emptyset$ for $ k =1,2$; here as above,  $\tau_{\partial \Omega}^{H}:B^*\partial
  \Omega \rightarrow B^*H$ is the transfer  billiard map. As in  \cite{HZ}, a density
  argument shows that such operators suffice
 to prove Theorem \ref{maintheorem} in the  general case. With this in mind,  we henceforth assume that the symbol $a \in S^{0,0}_{cl}
  (T^*H \times [0,\lambda_0^{-1}])$ satisfies the following support condition:  For arbitrarily small fixed $\epsilon>0$  and
   any
 $(s_0,\tau_0) \in \tau_{\partial \Omega}^{H}( \, \tcal  \cup \gcal \cup B_{\Sigma}^* \partial \Omega  \, ),$\begin{equation} \label{cutaway}
  a(s,\tau;\lambda) = 0, \end{equation}
  for all $(s,\tau) \in B^*H$ with $|s-s_0|^{2} + |\tau - \tau_0|^{2} < \epsilon^2$ and $\lambda \geq \lambda_0. $

\subsection{\label{betas} The canonical relation of $F(\lambda)$}

 Temporarily ignoring  the
 complications involving singular and tangential rays,
 the phase of the composition is $\Psi(s,y,y'); = |q(y) - q_H(s)| - |q_H(s) - q(y')|$ with $q_H(s)\in H, q(y),q(y') \in
\partial \Omega.$ Hence the critical set of the phase $\Psi$ is
\begin{equation} \label{C} \begin{array}{lll} C_{\Psi} = \{
(s,y,y');
 \,  \nabla_s  (|q(y) - q_H(s)| - |q_H(s) - q(y')|) = 0  \} \\ \\
=  \{ (s,y,y'); \,  \langle r(q_H(s),q(y)),
T_s \rangle =  \langle r(q_H(s), q(y')), T_s \rangle \}. \end{array} \end{equation} The
canonical relation of $F(\lambda)$  is then
$$  \Gamma_{F(\lambda)} = \{(y, \pi^T_y  r(q(y),q_H(s))  , y', - \pi^T_{y'}
r(q(y'),q_H(s));  \, (s,y,y') \in C_{\Psi} \}.$$

One component of $ \Gamma_{F(\lambda)}$  consists of the diagonal
$\Delta_{\ccal \times \ccal} \subset  \ccal \times \ccal $ (see
Definition \ref{HBILLDEF}).  Under the convexity assumption on
$H$, this  component occurs with multiplicity two since there are
two points of $H$ where the ray my intersect $H$ and then reflect
to its initial position. Two other `components' of $
\Gamma_{F(\lambda)}$ (which meet along rays tangential to $H$)
come from once-broken trajectories from $(y, \eta) \in B^*
\partial \Omega$ to $(y', \eta') \in
\partial \Omega$ with the break at $q_H(s) \in H$. There exist (at most) two possible break points
corresponding  to the (at most) two possible intersection points
of a line with $H$.

 We denote the partial symplectic maps
defined by the once-broken trajectories by   $\beta_{H}^{1}$,
resp.   $\beta_{H}^{2 }$, where  the index indicates the first,
resp. second  point of intersection with $H$. As in the
introduction (see (\ref{TRANS}) we refer to these partial
symplectic maps as transmission maps. Summing up, we have

\begin{lem} \label{GAMMABETADEF}  Assuming $H$ is convex, the   canonical relation
 underlying $
N_{\partial \Omega}^H(\lambda)^* Op (a) N_{\partial
\Omega}^H(\lambda)$ (and of $ N_{\partial \Omega}^{H \nu}
(\lambda)^* Op (a) N_{\partial \Omega}^{H \nu}(\lambda)$)  equals
$$\Gamma_{\tau_{\partial \Omega}^{H * } } \circ \Gamma_{\tau_{\partial \Omega}^{H  } } =  \Delta_{\ccal
\times \ccal}  \cup  \Gamma_{\beta_H^1}  \cup
\Gamma_{\beta_H^2}  $$ which   consists of
three branches:

\begin{itemize}

\item The identity branch on $\ccal$ (with
multiplicity two);

\item The graphs of $\beta_{H}^{1}$ and $\beta_{H}^{2 }$; the
index specifies the point of intersection with $H$.

\end{itemize}

\end{lem}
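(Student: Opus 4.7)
The plan is to compute the composite canonical relation $\Gamma^*_{\tau_{\partial \Omega}^H} \circ \Gamma_{\tau_{\partial \Omega}^H} \subset T^*\partial \Omega \times T^*\partial \Omega$ directly from the phase $\Psi(s,y,y') = |q(y) - q_H(s)| - |q_H(s) - q(y')|$ appearing in \eqref{C}, and then to identify each piece of the resulting critical set with one of the three claimed branches. By Proposition \ref{GAMMAC} each factor $N_{\partial \Omega}^H(\lambda)$ is a zeroth-order $\lambda$-FIO with canonical relation $\Gamma_{\tau_{\partial \Omega}^H}$, and since $Op_\lambda(a) \in \Psi^0_{sc}(H)$ has canonical relation the diagonal of $T^*H$, the composite canonical relation is exactly the one displayed above whenever the composition is clean. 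The support condition \eqref{cutaway} keeps the intermediate point $(s,\tau)$ bounded away from the tangential, grazing and corner-image sets, which suffices for cleanness. The statement for $F^{\nu}(\lambda)$ is identical, since Proposition \ref{GAMMAC} gives $N_{\partial \Omega}^{H\nu}(\lambda)$ the same underlying canonical relation.

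Next, I would solve the critical equation in \eqref{C} geometrically. It asserts that the unit vectors $r(q_H(s), q(y))$ and $r(q_H(s), q(y'))$ have the same tangential projection onto $T_{q_H(s)} H$; being unit, their normal components at $q_H(s)$ must either coincide or be negatives of one another. In the coinciding case the two unit vectors are equal, so the rays from $q(y)$ and $q(y')$ through $q_H(s)$ are the same line, forcing $(y', \eta') = (y, \eta)$, with $q_H(s)$ one of the (at most two) intersection points of this line with $H$. In the opposite-sign case the two rays meet at $q_H(s)$ at equal angles on opposite sides of $T_{q_H(s)} H$, that is, they together form a single once-$H$-broken trajectory obeying the law of equal angles at the break.

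Finally, I would match these two geometric cases with the three claimed branches. The coinciding case, taken separately for each of the two intersection points $q_H^1(y, \eta), q_H^2(y, \eta)$ permitted by the convexity of $H$, yields two distinct critical points of $\Psi$ projecting to the same diagonal pair $(y, \eta, y, \eta) \in \Delta_{\ccal \times \ccal}$; this is exactly the identity branch with multiplicity two. The opposite-sign case at the $j$-th intersection point is by construction the once-$H$-broken trajectory of Definition \ref{SECONDTRANSFER}, whose endpoint relation is the transmission map $\beta_H^j$ of \eqref{TRANS}; as $j$ runs over $\{1,2\}$ this contributes the graphs $\Gamma_{\beta_H^1}$ and $\Gamma_{\beta_H^2}$. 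Since the two geometric alternatives exhaust the solutions of \eqref{C}, no further components appear.

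The main obstacle is the degeneracy of the composition precisely on the tangential rays to $H$, where the normal component of $r$ vanishes and the two geometric cases coalesce; the purpose of the support condition \eqref{cutaway} is precisely to excise this singular locus from the symbolic support of $a$ so that the clean composition calculus applies wherever the computation is actually used in the proof of Theorem \ref{maintheorem}.
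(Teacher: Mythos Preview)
Your proposal is correct and follows essentially the same route as the paper. The paper's argument (which is the discussion immediately preceding the lemma statement, summarized as ``Summing up, we have\dots'') computes the composite phase $\Psi(s,y,y')=|q(y)-q_H(s)|-|q_H(s)-q(y')|$, reads off the critical set \eqref{C}, and then sorts the solutions into the diagonal branch (with multiplicity two from the two intersection points of the link with the convex $H$) and the two once-$H$-broken branches $\beta_H^1,\beta_H^2$; your dichotomy into ``normal components coincide'' versus ``normal components opposite'' is precisely this geometric sorting, just made a touch more explicit.
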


The case of general smooth $H$ is of a similar nature but with
more branches. We therefore omit the details.

\subsection{The decomposition of $F(\lambda)$}

We now give a  precise proof that deals with the
$\lambda$-pseudodifferential  factor, the tangential degeneracy
and the singular points. We continue to assume that $H$ is convex,
but again, this is for notational simplicity.  The $\lambda$-FIO
contribution to $N_{\partial \Omega}^{H}(\lambda)^*
Op_{\lambda}(a) N_{\partial \Omega}^{H}(\lambda)$ is discussed in
subsection \ref{diagonal}.

Let $\delta(\epsilon) >0$ be  small (to be specified later on) and let
$\chi \in C^{\infty}_{0}(\R)$ with $\chi (x) =1$ when $|x| \leq
1/2$ and $\chi(x) =  0$ for $|x| \geq 1$. We define the rescaled
cutoff function $\chi_{\delta(\epsilon)}(x) := \chi( \frac{x}{\delta(\epsilon)})$.
The point of the next Proposition is to show that by choosing symbols $a$  satsifying (\ref{cutaway})  (ie. with support disjoint from the image under the transfer map of the generaltized grazing and singular sets), the corresponding $F_1(\lambda;a,\epsilon)$-operator in the decomposition (\ref{F1DEF}) is, modulo a residual operator, a $\lambda$-pseudodifferential operator of order zero with principal symbol $(\tau_{\partial \Omega}^H)^*
a_0 \cdot | \sigma(N_{\partial \Omega}^{H}) |^{2}.$ Following \cite{HZ} we say that a $\lambda$-dependent operator $R$ is {\em residual} provided it is smoothing and $|\partial_{x}^{\alpha} \partial_y^{\beta} R(x,y;\lambda)| = {\mathcal O}_{\alpha,\beta}(\lambda^{-\infty}).$ The $F_2(\lambda;a,\epsilon)$-operator  in the decomposition $(\ref{F1DEF})$ is studied subsequently in subsection \ref{diagonal}. It is a sum of two zeroth-order $\lambda$-Fourier integral operators.

\begin{prop} \label{Egorov} Suppose that $H$ is convex and that $a \in S^{0,0}(T^{*}H \times (0,\lambda_0^{-1}])$
with $a \sim \sum_{j=0}^{\infty} a_j \lambda^{-j}$ satisfies the support condition (\ref{cutaway}).
Then, for any $\epsilon  >0$
\begin{multline}
 (i) \, N_{\partial
\Omega}^{H}(\lambda)^* Op_{\lambda} (a) N_{\partial
\Omega}^{H}(\lambda) = Op_{\lambda} (  \,  a_0( \tau_{\partial \Omega}^{H}(y,\eta))  \, \cdot \, \gamma(y,\eta) \cdot \gamma^{-1}(\tau_{\partial \Omega}^{H}(y,\eta) )  \,) \\+
F_{21}(\lambda;a,\epsilon)  + F_{22}(\lambda;a,\epsilon) + R(\lambda;\epsilon),  \\ \\
(ii) \, \lambda^{-2} N_{\partial
\Omega}^{H\nu}(\lambda)^* Op_{\lambda} (a) N_{\partial
\Omega}^{H\nu}(\lambda) = Op_{\lambda} (  \,  a_0( \tau_{\partial \Omega}^{H}(y,\eta))  \, \cdot \, \gamma(y,\eta) \cdot \gamma (\tau_{\partial \Omega}^{H}(y,\eta) )  \,) \\ +
F_{21}^{\nu}(\lambda;a,\epsilon)  + F_{22}^{\nu}(\lambda;a,\epsilon) + R^{\nu}(\lambda;\epsilon). \end{multline}
In both cases
 $F_{21}(\lambda;a,\epsilon)$ (resp. $F_{21}^{\nu}(\lambda;a,\epsilon)$) and $F_{22}(\lambda;a,\epsilon)$ (resp. $F_{22}^{\nu}(\lambda;a,\epsilon)$) are $\lambda$-Fourier integral operators of order zero with $\kappa( F_{21}(\lambda;a,\epsilon)) = \kappa( F_{21}^{\nu}(\lambda;a,\epsilon)) =  \beta_{H}^{1}$   and $\kappa ( F_{22}(\lambda;a,\epsilon)) = \kappa ( F_{22}^{\nu}(\lambda;a,\epsilon) ) =  \beta_{H}^{2} $
 and $ \max  \, (  \, \| R(\lambda;\epsilon) \|_{L^2 \rightarrow L^2},  \| R^{\nu}(\lambda;\epsilon) \|_{L^2 \rightarrow L^2} \, ) \, = O_{\epsilon}(\lambda^{-1}).$
\end{prop}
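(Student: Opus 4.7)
The plan is to analyze the composition $F(\lambda) = N_{\partial\Omega}^H(\lambda)^{*}\, Op_\lambda(a)\, N_{\partial\Omega}^H(\lambda)$ via the $\lambda$-FIO calculus, guided by the three-component decomposition of the composite canonical relation already given in Lemma \ref{GAMMABETADEF}. Using the Hankel expansion (\ref{wkb}) for the kernel of $N_{\partial\Omega}^H$ together with the standard Kohn--Nirenberg form of $Op_\lambda(a)$ on $H$, I would write the Schwartz kernel of $F(\lambda)$ as a single oscillatory integral
\begin{equation*}
F(y,y';\lambda) = (2\pi\lambda)^{N}\int_H\!\!\int_H\!\!\int_{\R^{n-1}} e^{i\lambda \Psi(y,y',s,s',\tau)}\, A(y,y',s,s',\tau;\lambda)\, d\tau\, ds'\, ds,
\end{equation*}
with phase $\Psi = |q(y)-q_H(s)| + (s-s')\cdot\tau - |q_H(s')-q(y')|$ and amplitude $A$ built from the $b_k$'s of (\ref{wkb}) and $a$. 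The cutoff (\ref{cutaway}) on $a$ localizes the $(s',\tau)$-integration away from the image under $\tau_{\partial\Omega}^{H}$ of the tangential set $\tcal$, the generalized grazing set $\gcal$, and the corner set $B^{*}_{\Sigma}\partial\Omega$.

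Next I would study the critical set $C_\Psi$. Vanishing of $d_\tau\Psi$ forces $s=s'$; then $d_s\Psi=0$ reproduces (\ref{C}), namely the requirement that the tangential projections of $r(q_H(s),q(y))$ and $r(q_H(s),q(y'))$ to $T^*_{s}H$ coincide and equal $\tau$. On the support of $A$ (hence away from $\tcal \cup \gcal$ and their preimages) this critical set is a smooth manifold that decomposes into three disjoint clean pieces: two copies of the diagonal $\Delta_{\ccal\times\ccal}$ — one for each of the two intersection points of the line $\overline{(y,-\eta)}$ with the convex hypersurface $H$ — together with the graphs of the two transmission maps $\beta_H^1$ and $\beta_H^2$. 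Because (\ref{cutaway}) removes $\epsilon$-neighborhoods of the tangential locus where the three components meet, the clean-composition version of the FIO calculus (see \cite{HZ,DS}) applies uniformly.

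On the two diagonal components I would apply stationary phase in $(s',\tau)$ to eliminate those variables and then apply the standard symbol-calculus identity for $B^{*}Op(a)B$, where $B = N_{\partial\Omega}^H(\lambda)$ is elliptic on the effective support thanks to (\ref{cutaway}). This produces a $\lambda$-pseudodifferential operator on $\partial\Omega$ whose principal symbol at $(y,\eta)$ is $(\tau_{\partial\Omega}^{H})^{*}a_0(y,\eta)$ multiplied by the squared modulus of the symbol of $N_{\partial\Omega}^H$ from Lemma \ref{symbolcomputation}, i.e.
\begin{equation*}
|\sigma(N_{\partial\Omega}^{H})(y,\eta)|^{2} = \frac{\gamma(y,\eta)}{\gamma(\tau_{\partial\Omega}^{H}(y,\eta))},
\end{equation*}
yielding precisely the symbol asserted in part (i); summing the two multiplicities is absorbed into the definition of the push-forward in Definition \ref{SECONDTRANSFER} via (\ref{SUMNOT}). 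Part (ii) is identical except that Lemma \ref{symbolcomputation} supplies the product $\gamma(y,\eta)\gamma(\tau_{\partial\Omega}^{H}(y,\eta))$ instead of the quotient, reflecting the additional $\partial_{\nu_s}|q(y)-q_H(s)|=\gamma$ factor from each $\lambda^{-1}N^{H\nu}_{\partial\Omega}$.

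On each transmission component stationary phase in $(\tau,s,s')$ parameterizes the critical manifold by $\beta_H^k$, and the resulting phase reduces to the generating function of $\beta_H^k$; the amplitude, being of order zero and smooth on the support cut out by (\ref{cutaway}), gives a zeroth-order $\lambda$-FIO $F_{2k}(\lambda;a,\epsilon)$ (respectively $F_{2k}^{\nu}$) with canonical relation $\Gamma_{\beta_H^k}$, $k=1,2$. The remainder $R(\lambda;\epsilon)$ collects the $O(\lambda^{-1})$ subprincipal terms from the stationary phase expansions on the three components as well as the contributions from the cutoff transitions; since these are bounded $\lambda$-FIOs (or pseudodifferential operators) of order $-1$ with canonical relations contained in the three clean branches, the Calderon--Vaillancourt-type $L^2$ estimates for semiclassical FIOs yield $\|R(\lambda;\epsilon)\|_{L^2\to L^2}= O_\epsilon(\lambda^{-1})$, and similarly for $R^\nu$. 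The main obstacle is verifying cleanness uniformly on the support of $A$; the hypothesis (\ref{cutaway}) is designed precisely so that the only way cleanness could fail — namely tangential collision of the two $\beta_H^k$ branches with each other and with the diagonal, or a ray entering a corner — is excluded by construction.
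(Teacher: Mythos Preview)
Your proposal is correct and follows essentially the same underlying strategy as the paper: both identify the three-branch structure of the composite canonical relation (Lemma \ref{GAMMABETADEF}), use the support condition (\ref{cutaway}) to ensure the branches are cleanly separated, and read off the pseudodifferential symbol from $|\sigma(N_{\partial\Omega}^{H})|^{2}$ via Lemma \ref{symbolcomputation}.

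The organizational difference is that the paper does not invoke the clean-composition theorem on the full oscillatory integral directly. Instead it inserts a physical-space cutoff $\chi_{\delta(\epsilon)}(|q-q'|)$ into the Schwartz kernel of $N^{H*}_{\partial\Omega}Op_\lambda(a)N^{H}_{\partial\Omega}$ to split it into a near-diagonal piece $F_{1}$ and a complementary piece $F_{2}$. For $F_{1}$ one checks, using (\ref{cutaway}) and choosing $\delta(\epsilon)$ small enough, that $WF'_\lambda(F_1)$ misses $\Gamma_{\beta_H^1}\cup\Gamma_{\beta_H^2}$ and hence lies in the diagonal; the semiclassical Egorov theorem then gives the pseudodifferential term with the stated symbol. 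For $F_{2}$ the cutoff $(1-\chi_{\delta(\epsilon)})$ forces the canonical relation off the diagonal, leaving only the two transmission branches, which are handled by explicit stationary phase in $s$ (the paper's Lemma \ref{F2}). Your route via the full phase $\Psi(y,y',s,s',\tau)$ and clean stationary phase on each component accomplishes the same thing in one pass; the paper's cutoff device is a slightly more elementary way to localize to each branch before doing the calculus, and it makes the separation of the $F_{2k}$ from the diagonal piece completely explicit at the kernel level.
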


\begin{rem} The case of a general smooth $H$ is similar but with a
number of branches depending on $(y, \eta)$. We omit the details.
\end{rem}

\begin{proof}

 Consider the operator $N_{\partial
\Omega}^{H}(\lambda): C^{\infty} (\partial \Omega) \rightarrow
C^{\infty}(H)$ with Schwartz kernel $N(q_H,q;\lambda) = \partial_{\nu_q} G_{0}(q_H,q;\lambda)$. By an  integration  by parts argument, it follows that
\begin{equation} \label{wf1}
WF'_{\lambda}(N_{\partial \Omega}^{H}(\lambda))  \subset B^* \partial \Omega \times B^* H.
\end{equation}

 Using $\chi_{\delta(\epsilon)}$ we
decompose the Schwartz kernel of  $N_{\partial
\Omega}^{H}(\lambda)^{*}
 Op_\lambda (a)  N_{\partial \Omega}^{H}(\lambda)$ into  near-diagonal and complimentary terms:
\begin{equation} \label{F1DEF}  N^{H}_{\partial \Omega}(\lambda)^{*} Op_{\lambda}(a)
N^{H}_{\partial \Omega}(\lambda )(q,q')  = F_{1}(\lambda;a,\epsilon)(q,q')  + F_{2}(\lambda;a,\epsilon)(q,q');
\,\,\,\, (q,q') \in \partial \Omega \times \partial \Omega,
\end{equation}  where,
\begin{equation} \label{k1}
 F_{1}(\lambda;a,\epsilon)(q,q') := N^{H}_{\partial \Omega}(\lambda)^{*}
 Op_{\lambda}(a) N^{H}_{\partial \Omega}(\lambda)(q,q') \times \chi_{\delta(\epsilon)}(|q-q'|) \,\,
 \end{equation}
and
\begin{equation}\label{k2}
F_{2}(\lambda;a,\epsilon)(q,q') := N^{H}_{\partial
\Omega}(\lambda)^{*} Op_{\lambda}(a) N^{H}_{\partial
\Omega}(\lambda)(q,q') \times (1- \chi_{\delta(\epsilon)} )(|q-q'|).
\end{equation}

We also introduce a new cutoff $\Xi_{\epsilon}\in
C^{\infty}_{0}(B^*_{1+ \epsilon}H)$ with $\Xi_{\epsilon}(y,\eta) = 1$ for $(y,\eta) \in B_{1+ \epsilon/2}^{*}(H)$. Then by the
calculus of  wave-front sets, we have
$$  F_{1}(\lambda;a,\epsilon)(q,q') = N^{H}_{\partial \Omega}(\lambda)^{*}
Op_{\lambda}(\Xi_{\epsilon}) Op_{\lambda}(a) N^{H}_{\partial \Omega}(\lambda)(q,q')
 \times \chi_{\delta(\epsilon)}(|q-q'|) +{\mathcal O}_{\epsilon}(\lambda^{-\infty}) $$
  uniformly for $(q',q) \in \partial \Omega \times \partial \Omega.$  By (\ref{wf1}) it suffices to assume that $a \in C^{\infty}_{0}(B^*_{1+\epsilon}H)$
  and satisfies the support condition (\ref{cutaway}) and we will do so from now on.

From Proposition \ref{GAMMAC} we know that $N_{\partial \Omega}^{H}(\lambda)$ is a zeroth-order $\lambda$-Fourier integral operator with canonical relation $\Gamma_{\tau_{\partial \Omega}^{H}}$. Differentiation of the phase function $\Psi(y,y',s) = |q_H(s) - q(y)| - |q_H(s)-q(y')|$ in $s$ and a repeated  integration by parts implies that for some constant $C>0,$
$$ WF_{\lambda}' ( F_{1}(\lambda;a,\epsilon) )  \subset  \{ (y,\xi,y',\eta) \in B^*\partial \Omega \times B^* \partial \Omega; |y-y'| \leq \delta(\epsilon), \, |\xi -\eta| \leq C \delta(\epsilon) \}.$$
Since  (see section \ref{betas}),
$$( \Gamma_{\beta^1_H} \cup \Gamma_{\beta^{2}_{H}} ) \cap \Delta_{\ccal \times \ccal} \subset \tcal \cup \gcal,$$
  and   $\text{dist}  ( \text{supp}  \, a, \, \tau_{\partial \Omega}^{H}( \Sigma \cup \tcal \cup \gcal ) \, ) \geq \epsilon,$  it follows that by choosing $\delta(\epsilon)>0$ sufficiently small,
$$ WF'_{\lambda}( F_1(\lambda;a,\epsilon)) \cap  ( \Gamma_{\beta_{H}^{1}} \cup \Gamma_{ \beta_{H}^{2}} ) = \emptyset.$$
 Thus,  in view of Lemma \ref{GAMMABETADEF},
\begin{equation} \label{decomp1}
WF'_{\lambda} ( F_{1}(\lambda;a,\epsilon) ) \subset  \Delta_{ \ccal^{o} \times \ccal^{o}}. \end{equation}

By the semiclassical Egorov theorem \cite{DS},
$ F_1(\lambda;\epsilon) \in \Psi_{sc}^{0}(\partial \Omega),$
and moreover,
\begin{equation} \label{decomp2}
F_{1}(\lambda;a,\epsilon) = Op_{\lambda}(  \, (\tau_{\partial \Omega}^{H})^* a_0 \cdot |\sigma (N_{\partial \Omega}^{H}(\lambda))|^{2})|^{2}  \, ) + {\mathcal O}_{\epsilon}(\lambda^{-1})_{L^2 \rightarrow L^2}. \end{equation}

From Lemma \ref{symbolcomputation}, $|\sigma(N_{\partial \Omega}^{H}(\lambda)(y,\eta)|^{2} = \gamma(y,\eta) \times \gamma^{-1}(\tau_{\partial \Omega}^{H}(y,\eta))$ and so the formula in (\ref{decomp2}) implies that
\begin{equation} \label{decomp3}
F_{1}(\lambda;a,\epsilon) = Op_{\lambda}(  \, a_0(\tau_{\partial \Omega}^{H}(y,\eta)  ) \cdot \gamma(y,\eta) \cdot \gamma^{-1}(\tau_{\partial \Omega}^{H}(y,\eta))  \, ) + {\mathcal O}_{\epsilon}(\lambda^{-1})_{L^2 \rightarrow L^2}. \end{equation}
\smallskip

In case (ii) one makes the analogous decomposition
\begin{equation} \label{F2DEF}  N^{H\nu}_{\partial \Omega}(\lambda)^{*} Op_{\lambda}(a)
N^{H\nu}_{\partial \Omega}(\lambda )(q,q') F_{1}^{\nu}(\lambda;a,\epsilon)(q,q')  + F_{2}^{\nu}(\lambda;a,\epsilon)(q,q');
\,\,\,\, (q,q') \in \partial \Omega \times \partial \Omega
\end{equation}  where,
\begin{equation} \label{k1a}
 F_{1}^{\nu}(\lambda;a,\epsilon)(q,q') := N^{H\nu}_{\partial \Omega}(\lambda)^{*} Op_{\lambda}(a) N^{H\nu}_{\partial \Omega}(\lambda)(q,q') \times \chi_{\delta(\epsilon)}(|q-q'|) \,\,
 \end{equation}
and
\begin{equation}\label{k2a}
F_{2}^{\nu}(\lambda;a,\epsilon)(q,q') := N^{H\nu}_{\partial
\Omega}(\lambda)^{*} Op_{\lambda}(a) N^{H\nu}_{\partial
\Omega}(\lambda)(q,q') \times (1- \chi_{\delta(\epsilon)} )(|q-q'|).
\end{equation}

The same reasoning as in case (i) implies that

\begin{equation} \label{decompnormal1}
F_{1}^{\nu}(\lambda;a,\epsilon) = Op_{\lambda}(  \, (\tau_{\partial \Omega}^{H})^* a_0 \cdot |\sigma (N_{\partial \Omega}^{H\nu}(\lambda))|^{2}  \, ) + {\mathcal O}_{\epsilon}(\lambda^{-1})_{L^2 \rightarrow L^2} \end{equation}
and by the symbol computation in Lemma \ref{symbolcomputation} one gets
\begin{equation} \label{decompnormal1a}
F_{1}^{\nu}(\lambda;a,\epsilon) = Op_{\lambda}(  \, a_0 (\tau_{\partial \Omega}^{H}(y,\eta)) \cdot \gamma (\tau_{\partial \Omega}^{H}(y,\eta) ) \cdot \gamma(y,\eta) \, ) + {\mathcal O}_{\epsilon}(\lambda^{-1})_{L^2 \rightarrow L^2}. \end{equation}

\subsubsection{Analysis of the $F_{2}(\lambda;\epsilon)$-operator.} \label{diagonal}

In this section, we prove:

\begin{lem} \label{F2}
 $F_{21}(\lambda;a,\epsilon)$ (resp. $F_{22}(\lambda;a,\epsilon)$) are $\lambda$-FIO's of order zero with $\kappa( F_{21}(\lambda;a,\epsilon))  \beta_{H}^{1}$  (resp. $\kappa ( F_{22}(\lambda;a,\epsilon)) = \beta_{H}^{2}
 )$. Their principal symbol (on the respective branch of the correspondence) is given by \begin{multline}
  a \left( \frac{d_{\nu_{y_1}} |q(y_1) - q_H(s)|}{d_{\nu_{s}} |q(y_1) - q_H(s)|}
\right)^{1/2}  \left( \frac{d_{\nu_{y_2}} |q(y_2) - q_H(s)|}{d_{\nu_{s}}
|q(y_2) - q_H(s)|} \right)^{1/2} \big|_{s=q_{H;k}(y_1,y_2)}\\
 = a(\tau_{\partial \Omega}^H (y_2,
\eta_2)) \Big( \frac{\gamma(y_1, \eta_1))}{\gamma(\tau_{\partial
\Omega}^H (y_1, \eta_1)} \Big)^{1/2}  \Big( \frac{\gamma(y_2,
\eta_2))}{\gamma(\tau_{\partial \Omega}^H (y_2, \eta_2)}
\Big)^{1/2}; \,\,\, (y_2,\eta_2) = \beta_{H}^{k}(y_1,\eta_1), \, k=1,2.
\end{multline}

 Here, $s = q_{H,k}(y,y') \in H; k=1,2$ is the point on $H$ joining
$y,y' \in \partial \Omega$
  under the $H$-broken billiard map $\beta_{H}^{k}:; k=1,2,$ (see section \ref{billiard maps}).
\end{lem}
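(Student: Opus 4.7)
The plan is to identify $F_{2k}(\lambda; a, \epsilon)$ as the microlocal pieces of the full composition $N_{\partial \Omega}^H(\lambda)^* Op_{\lambda}(a) N_{\partial \Omega}^H(\lambda)$ supported on the two transmission branches $\Gamma_{\beta_H^1}, \Gamma_{\beta_H^2}$ of the composite canonical relation described in Lemma \ref{GAMMABETADEF}. The analysis of $F_1(\lambda; a, \epsilon)$ already showed that the identity branch $\Delta_{\ccal \times \ccal}$ is fully absorbed into the pseudodifferential contribution (see (\ref{decomp1})--(\ref{decomp3})), so the $F_2$ piece produced by the complementary cutoff $1 - \chi_{\delta(\epsilon)}(|q-q'|)$ has wavefront set contained in $\Gamma_{\beta_H^1} \cup \Gamma_{\beta_H^2}$. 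The support condition (\ref{cutaway}) on $a$ excises the tangential, grazing, and corner-bound rays, which are precisely the loci where the two transmission branches collide with each other and with the diagonal; away from these, the two branches are disjoint smooth Lagrangian submanifolds. A microlocal partition of unity adapted to the two branches then yields $F_2 = F_{21} + F_{22}$ with $WF'_\lambda(F_{2k}) \subset \Gamma_{\beta_H^k}$.

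The next step is to apply the semiclassical FIO composition calculus (as in \cite{DS}) on each branch separately. Because of the support condition on $a$, the intermediate intersection point $q_H(s) \in H$ joining two boundary points $q(y_1), q(y_2)$ by a once-broken trajectory varies smoothly with $(y_1, y_2)$ on the relevant microsupport, and the triple composition $\Gamma_{\tau^H_{\partial \Omega}}^{-1} \circ \Delta_{B^*H} \circ \Gamma_{\tau^H_{\partial \Omega}}$ is transverse in a neighborhood of each branch. Therefore each $F_{2k}$ is a zeroth-order $\lambda$-FIO associated to $\Gamma_{\beta_H^k}$, and at a point $(y_2, \eta_2) = \beta_H^k(y_1, \eta_1)$ its principal symbol is the product of the principal symbols of the three constituent operators, pulled back along the Lagrange immersions. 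This gives one factor of $a_0(s, \tau)$ evaluated at the intermediate point in $B^*H$, times the two half-density symbols of $N_{\partial \Omega}^H$ from Lemma \ref{symbolcomputation}, one evaluated at $(y_1, \eta_1)$ and one at $(y_2, \eta_2)$. Using the identifications $d_{\nu_{y_i}}|q(y_i) - q_H(s)| = \gamma(y_i, \eta_i)$ and $d_{\nu_s}|q(y_i) - q_H(s)| = \gamma(\tau_{\partial \Omega}^H(y_i, \eta_i))$ converts the ratio form of the symbol into the $\gamma$-form stated in the lemma. The computation for $F^\nu_{2k}$ is identical after substituting the $N^{H\nu}_{\partial \Omega}$ symbol from Lemma \ref{symbolcomputation}.

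The main obstacle is the coordinated choice of the two cutoff parameters $\delta(\epsilon)$ and $\epsilon$: one must arrange that $\chi_{\delta(\epsilon)}(|q-q'|)$ is identically $0$ along every once-broken trajectory whose break point $s$ lies in the projection of $\text{supp}(a)$ to $H$, while still being identically $1$ on a full neighborhood of the diagonal. This requires a uniform lower bound on the link length $|q(y_1) - q(y_2)|$ over that set of trajectories, which follows from the fact that the support of $a$ sits at distance at least $\epsilon$ from $\tau_{\partial \Omega}^H(\tcal \cup \gcal \cup B^*_\Sigma \partial \Omega)$; away from those excluded directions, incoming and outgoing links of a transmission trajectory each have length bounded below by a positive function of $\epsilon$ alone. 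With $\delta(\epsilon)$ chosen smaller than this bound, both transmission branches survive the $1 - \chi_{\delta(\epsilon)}$ factor intact, the microlocal partition into $F_{21} + F_{22}$ is clean, and the symbolic computation above yields the stated formulas.
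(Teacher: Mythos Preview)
Your proposal is correct and follows essentially the same approach as the paper: both use the off-diagonal cutoff $(1-\chi_{\delta(\epsilon)})$ to remove the diagonal branch, invoke the support condition (\ref{cutaway}) to separate the two transmission branches cleanly, and then compute the principal symbol as the product of the three constituent symbols from Lemma~\ref{symbolcomputation}. The paper writes down the explicit oscillatory-integral kernel (with a cutoff $\chi_\epsilon(q_H(s)-q_{H,k}(y,y'))$ to localize to each branch) rather than invoking the abstract composition calculus, but this is a cosmetic difference; your discussion of coordinating $\delta(\epsilon)$ with $\epsilon$ is in fact more explicit than the paper's treatment of the same point.
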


\begin{proof}

 Because of the cutoff
$(1-\chi_{\delta(\epsilon)})(|q(y)-q(y')|)$ in the amplitude of the
$F_{2}(\lambda;a,\epsilon)$-kernel the canonical relation
corresponding to $F_{2}(\lambda;a,\epsilon)$ does not intersect the
diagonal $\Delta_{ B^* \partial \Omega \times B^*
\partial \Omega}$ and so, by the argument in section
\ref{billiard maps} it must consist of the  reflection
canonical relations $\beta_{H}^{1}$ (resp.
$\beta_{H}^{2}$ ). The  corresponding
quantizations are the $\lambda$-Fourier integral operators  $F_{21}(\lambda;a,\epsilon)$
(resp. $F_{22}(\lambda;a,\epsilon)$). Modulo pointwise uniform
$O_{\epsilon}(\lambda^{-\infty})$-errors, the Schwartz kernel is
locally given by the   formula
\begin{multline} \label{snellterm1}
F_{2k}(\lambda;a,\epsilon)(y,y')= (2\pi \lambda)^{n-1} \int_{H}  e^{ i
\lambda [ |q_{H}(s)-q(y)| - |q_{H}(s) - q(y')|]} \,
c(y,y',s;\lambda) \, (1-\chi_{\delta(\epsilon)})(|y-y'|)\\
 \times \chi_{\epsilon}(q_{H}(s)-
q_{H,k}(y,y'))   d\sigma_{H}(s) \end{multline}
 where,  $c \sim \sum_{j=0}^{\infty} c_j \lambda^{-j}$ with
  \begin{multline}
 c_{0}(y,y',s) = a_0(q_{H,j}(s), d_s q_{H,j}(s)  r(q_{H,j}(s),q(y')))
\times \langle \nu_{y}, r(q_H(s),q(y)) \rangle \\
 \times \langle \nu_{y'}, r(q_H(s),q(y')) \rangle   \cdot  b_{0}(q(y),q_{H,j}(s) ) \cdot  \overline{b_{0}(q_{H,j}(s),q(y'))}  \end{multline}
 where, $b_0(q_H(s),q(y)) = |q_H(s)-q(y)|^{- \frac{n-1}{2}}.$

 The symbol is computed by taking the product of the principal
 symbols of the factors given in Lemma
 \ref{symbolcomputation}.

 \end{proof}

 Combining (\ref{decomp2}) with Lemma \ref{F2} completes the proof of  Proposition \ref{Egorov} in the case of   the  $N_{\partial
\Omega}^{H}(\lambda)^{*}
 Op_\lambda (a)  N_{\partial \Omega}^{H}(\lambda)$- operator.

 The following subsection completes the proof of the Proposition.

\subsubsection{Modification for $ N_{\partial \Omega}^{H \nu} (\lambda)^* Op
(a) N_{\partial \Omega}^{H \nu} (\lambda)$}

\begin{lem} \label{F2nu}
 $F_{21}^{\nu}(\lambda;a,\epsilon)$ (resp. $F_{22}^{\nu}(\lambda;a,\epsilon)$) are $\lambda$-Fourier integral operators of order one  with $\kappa( F_{21}(\lambda;a,\epsilon)) =  \beta_{\partial \Omega}^{H;1}$  (resp. $\kappa ( F_{22}(\lambda;a,\epsilon)) = \beta_{\partial \Omega}^{H;2}
 )$. Its symbol
$$ - \left(  \, d_{\nu_{y_1}} |q(y_1) - q_H(s)| \,  d_{\nu_{s}} |q(y_1) - q_H(s)| \,
\right)^{1/2} \times  \left( \, d_{\nu_{y_2}} |q(y_2) - q_H(s)| \, d_{\nu_{s}} |q(y_2) -
q_H(s)|\,
 \right)^{1/2} |_{s=q_{H;k}(y_1,y_2)}$$
 is that of $F^{\nu}_{2 j}$ multiplied by  $$-(i \lambda \partial_{\nu_{s}} |q(y_1) - q_H(s)|) (- i \lambda \partial_{\nu_{s}} |q(y_2) - q_H(s)|)|_{s=q_{H;k}(y_1,y_2)}. $$
In  symplectic coordinates, it is given by
 $$-  \Big(
{\gamma(y_1, \eta_1)) \gamma(\tau_{\partial \Omega}^H (y_1,
\eta_1)} \Big)^{1/2}  \Big( \gamma(y_2,
\eta_2))\gamma(\tau_{\partial \Omega}^H (y_2, \eta_2) \Big)^{1/2}; \,\,\,(y_2,\eta_2) = \beta_{H}^{k}(y_1,\eta_1), \, k=1,2.
$$
\end{lem}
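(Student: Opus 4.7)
The plan is to follow the template of the proof of Lemma \ref{F2}, replacing $N_{\partial \Omega}^H$ by $N_{\partial \Omega}^{H\nu}$ throughout. The only new ingredient is that each factor of $N^{H\nu}$ (and its adjoint) carries one extra normal derivative $\partial_{\nu_{q_H}}$ compared to $N^H$, so I would carry through the same composition computation and track at principal order what these two additional derivatives produce. Nothing dynamical is used; the statement is a local principal-symbol identity for a $\lambda$-FIO composition.

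First I would perform the same near/far diagonal splitting $\lambda^{-2} N^{H\nu *} Op_\lambda(a) N^{H\nu} = \lambda^{-2}(F_1^\nu(\lambda;a,\epsilon) + F_2^\nu(\lambda;a,\epsilon))$ of (\ref{F2DEF})--(\ref{k2a}). The near-diagonal piece $\lambda^{-2} F_1^\nu$ has already been handled in (\ref{decompnormal1})--(\ref{decompnormal1a}); for the off-diagonal piece $\lambda^{-2} F_2^\nu$ the wavefront argument from the proof of Proposition \ref{Egorov} applies verbatim --- the $N^{H\nu}$-analogue of (\ref{wf1}) being immediate from Proposition \ref{GAMMAC} --- so under the support hypothesis (\ref{cutaway}) one obtains $WF'_\lambda(F_2^\nu) \subset \Gamma_{\beta_H^1}\cup\Gamma_{\beta_H^2}$, with the cutoff (\ref{cutaway}) eliminating the tangential and singular strata. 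One thus gets a clean splitting $F_2^\nu = F_{21}^\nu + F_{22}^\nu$ into two $\lambda$-FIOs quantizing the two once-broken transmission branches $\beta_H^1,\beta_H^2$.

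Next I would write the Schwartz kernel of each $F_{2k}^\nu$ as an oscillatory integral over $s \in H$ in direct parallel with (\ref{snellterm1}), now carrying one additional $\partial_{\nu_{q_H(s)}}$ from the right factor $N^{H\nu}$ and one from the adjoint $N^{H\nu *}$. Inserting the Hankel asymptotic (\ref{wkb}) and using that the phases carried by the right and left factors are $+|q_H(s) - q(y')|$ and $-|q_H(s) - q(y)|$ respectively, at leading order these two derivatives land on the exponentials and produce the factors $+i\lambda\,\partial_{\nu_s}|q_H(s)-q(y_2)|$ and $-i\lambda\,\partial_{\nu_s}|q_H(s)-q(y_1)|$, evaluated at the breakpoint $s = q_{H;k}(y_1,y_2)$. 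Any term in which a derivative lands on the smooth amplitude $b_0$ or on $|q_H-q|^{-(n-1)/2}$ is $O(\lambda^{-1})$ relative to the leading part and hence does not enter the principal symbol. Multiplying the principal symbol obtained in Lemma \ref{F2} by these two extra factors yields the first formula in the statement.

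Finally, to pass to the symplectic-coordinate form I would use the identifications $\partial_{\nu_{y_j}}|q(y_j) - q_H(s)| = \gamma(y_j,\eta_j)$ and $\partial_{\nu_s}|q(y_j) - q_H(s)| = \pm\gamma(\tau_{\partial \Omega}^{H;k}(y_j,\eta_j))$ along the graph of $\beta_H^k$, the $\pm$ recording the choice of side of $H$. The overall minus sign in the stated formula is forced by the geometry of the transmission map: since the two endpoints $y_1,y_2$ of the once-broken trajectory lie on opposite sides of $H$, the two signed derivatives $\partial_{\nu_s}|q_H-q(y_j)|$ have opposite signs, so their product equals $-\gamma(\tau^{H;k}(y_1,\eta_1))\gamma(\tau^{H;k}(y_2,\eta_2))$; combined with $(i)(-i)=1$ from the two imaginary units and the $\lambda^{-2}$ normalization (which eats the $\lambda^2$ from the two $i\lambda$ factors), this yields a zeroth-order symbol with the claimed minus sign. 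I expect the only real obstacle to be this sign bookkeeping --- carefully fixing and tracking the conventions for $\nu_s$ on the two legs of the broken trajectory --- rather than any analytic difficulty beyond what already appeared in Lemma \ref{F2}.
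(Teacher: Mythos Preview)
Your proposal is correct and follows exactly the approach the paper takes: the paper's proof of this lemma is simply ``The proof is the same as for Lemma \ref{F2} and is therefore omitted,'' and your write-up carries out precisely that parallel argument, tracking the two extra $\partial_{\nu_{q_H}}$ factors from $N_{\partial\Omega}^{H\nu}$ and its adjoint at principal order. Your explicit discussion of the sign bookkeeping (the opposite-side geometry forcing the minus) is in fact more detailed than what the paper provides here; the paper defers that discussion to the opening paragraphs of Section~\ref{CDsection} and to the proof of Lemma~\ref{cancellation}.
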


The proof is the same as for Lemma \ref{F2} and is therefore
omitted. This completes the proof of the Proposition. \end{proof}

\section{Proof of Theorem \ref{CDTHM}} \label{CDsection}

It is important to observe the sign difference between the symbol
of the FIO part of $N_{\partial \Omega}^H(\lambda)^* Op(a)
N_{\partial \Omega}^H(\lambda)$ and its $\nu$-partner. In the
calculation of the first symbol,  the normal derivative with
respect to $s \in H$ comes about when we change from $(s, s')$
coordinates to $(s, \tau)$ coordinates. So this normal is
symmetric with respect to the two sides of $H$ (at each
intersection point).  But in the $N_{\partial \Omega}^{H, \nu}$
operator it is assymetrical. This explains why the $F_2$ operators
have opposite signs in the two $N^*N$ terms.

 We note also that  in the case of the $\lambda$-pseudodifferential $F_{1}^{\nu}(\lambda;a,\epsilon)$-term,
  $y_1 = y_2$ and so,  $y_1$ and $y_2$  trivially lie on the same side of $H$ relative to
   the normal vector $\nu_{s=q_{H;k}(y_1,y_2) }.$   On the other hand, in the $\lambda$-Fourier
   integral case of $F_{2}^{\nu}(\lambda;a,\epsilon)$, $y_1$ and $y_2$ lie on opposite sides of $H$
   relative to the normal vector $\nu_{s=q_{H;k}(y_1,y_2)}$. This creates the extra minus sign in
    Lemma \ref{F2nu} which allows for the cancellation between the (appropriately weighted) $F_2$ \and
     $F_2^{\nu}$-terms.

We use these observations to reduce the proof of Theorem
\ref{CDTHM}  to that of \cite{HZ}:

\begin{lem} \label{cancellation} The principal symbol of the Fourier integral operator
part of $$ N_{\partial \Omega}^{H } (\lambda)^* Op_{\lambda}((1 - |\tau|^2)
a) N_{\partial \Omega}^{H} (\lambda) + \lambda^{-2}
N_{\partial \Omega}^{H \nu} (\lambda)^* Op (a) N_{\partial
\Omega}^{H \nu} (\lambda) $$ equals zero. \end{lem}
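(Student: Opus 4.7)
The plan is to apply Proposition \ref{Egorov} to both operators, extract the FIO components on each branch of the transmission correspondence $\beta_H$, and verify that the principal symbols cancel pointwise on the canonical relations $\Gamma_{\beta_H^1}$ and $\Gamma_{\beta_H^2}$.

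First, I would invoke Proposition \ref{Egorov}(i) with symbol $(1-|\tau|^2)a$ in place of $a$, and Proposition \ref{Egorov}(ii) with symbol $a$, to write
\[
 N_{\partial \Omega}^{H }(\lambda)^* Op_{\lambda}((1 - |\tau|^2) a) N_{\partial \Omega}^{H}(\lambda) = P_1(\lambda) + F_{21}(\lambda) + F_{22}(\lambda) + R(\lambda),
\]
\[
 \lambda^{-2}  N_{\partial \Omega}^{H \nu}(\lambda)^* Op_\lambda(a) N_{\partial \Omega}^{H \nu}(\lambda) = P_2(\lambda) + F_{21}^{\nu}(\lambda) + F_{22}^{\nu}(\lambda) + R^{\nu}(\lambda),
\]
where $P_1, P_2 \in \Psi^0_{sc}$ and $F_{2k}(\lambda), F_{2k}^\nu(\lambda)$ are zeroth-order $\lambda$-FIOs quantizing the transmission maps $\beta_H^k$ ($k=1,2$). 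Only the FIO parts remain to be analyzed, and it suffices to verify that their principal symbols cancel branch by branch.

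Next I would record the key geometric observation that the transmission relation $\Gamma_{\beta_H^k}$ consists of pairs $\bigl((y_1,\eta_1),(y_2,\eta_2)\bigr) \in B^*\partial\Omega \times B^*\partial\Omega$ whose broken trajectory passes through a common point $q_H(s) \in H$ with a common tangential velocity; consequently $\tau_{\partial\Omega}^H(y_1,\eta_1) = \tau_{\partial\Omega}^H(y_2,\eta_2) = (s,\tau)$, and in particular
\[
\gamma\bigl(\tau_{\partial\Omega}^H(y_1,\eta_1)\bigr) \;=\; \gamma\bigl(\tau_{\partial\Omega}^H(y_2,\eta_2)\bigr) \;=\; \sqrt{1-|\tau|^2} \;=\; \gamma(s,\tau).
\]
Using this, Lemma \ref{F2} (applied to the symbol $(1-|\tau|^2)a$) gives, on the branch $\Gamma_{\beta_H^k}$,
\[
\sigma\bigl(F_{2k}(\lambda)\bigr) \;=\; (1-|\tau|^2)\, a(s,\tau) \cdot \frac{[\gamma(y_1,\eta_1)\,\gamma(y_2,\eta_2)]^{1/2}}{\gamma(s,\tau)} \;=\; \gamma(s,\tau)\, a(s,\tau)\,[\gamma(y_1,\eta_1)\,\gamma(y_2,\eta_2)]^{1/2},
\]
while Lemma \ref{F2nu} gives
\[
\sigma\bigl(\lambda^{-2} F_{2k}^\nu(\lambda)\bigr) \;=\; -\, a(s,\tau)\, [\gamma(y_1,\eta_1)\,\gamma(s,\tau)]^{1/2}\,[\gamma(y_2,\eta_2)\,\gamma(s,\tau)]^{1/2} \;=\; -\,\gamma(s,\tau)\, a(s,\tau)\,[\gamma(y_1,\eta_1)\,\gamma(y_2,\eta_2)]^{1/2}.
\]
Adding the two expressions produces zero on each branch, and since the FIO parts are zeroth-order Fourier integral operators whose canonical relations agree, their sum has vanishing principal symbol, which is the claim.

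The only conceptually delicate point is the sign in Lemma \ref{F2nu}: it arises precisely because the two endpoints $(y_1,\eta_1)$ and $(y_2,\eta_2)$ of a transmission link lie on \emph{opposite} sides of $H$ with respect to $\nu_s$, so that the factor $\partial_{\nu_s}|q(y_1) - q_H(s)|\cdot \partial_{\nu_s}|q(y_2) - q_H(s)|$ picks up a minus sign relative to the $\lambda$-pseudodifferential diagonal contribution, where $y_1 = y_2$ forces the two normal derivatives to have the same sign. This asymmetry — between the two-sided Cauchy-data weighting $(1-|\tau|^2)$ on the Dirichlet trace and the purely additive contribution of the Neumann trace — is exactly what is engineered to cancel the transmission FIO, leaving only the pseudodifferential diagonal whose limits reduce to the boundary QER theorem of \cite{HZ}.
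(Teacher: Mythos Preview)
Your proposal is correct and follows essentially the same approach as the paper: both invoke the symbol computations of Lemmas \ref{F2} and \ref{F2nu} on each branch $\Gamma_{\beta_H^k}$, use the geometric fact that the two endpoints of a transmission link share the same tangential projection $(s,\tau)$ on $B^*H$ (so that $\gamma(\tau_{\partial\Omega}^H(y_1,\eta_1))=\gamma(\tau_{\partial\Omega}^H(y_2,\eta_2))=\sqrt{1-|\tau|^2}$), and trace the crucial minus sign to the opposite-side normal derivatives in Lemma \ref{F2nu}. The paper phrases the cancellation slightly differently---it observes directly that the Neumann FIO symbol equals the Dirichlet one times the factor $\partial_{\nu_s}|q(y_1)-q_H(s)|\cdot\partial_{\nu_s}|q(y_2)-q_H(s)|=-\cos^2\vartheta=-(1-|\tau|^2)$---but this is the same computation you carry out explicitly.
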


\begin{proof}

The principal symbols of the $F_2(\lambda)$ terms of the two
operators are calculated respectively in Lemmas \ref{F2} and
\ref{F2nu}. As discussed in the proofs, the second (Neumann)
symbol equals the first (Dirichlet) multiplied by the factor of $
\partial_{\nu_{s}} |q(y_1) - q_H(s)|)
\partial_{\nu_{s}}|q(y_2) - q_H(s)|) |_{s = q_{H;k}(y_1,y_2)}$.  By definition, $q(y_2) - q_H(s)$ and $q(y_1) - q_H(s)$ have the same
 tangential projection to $H$. But they lie on opposite sides of $H$ and
 have opposite normal projections. So this factor equals $- \cos
 \vartheta(y_1, s)^2$ where $\vartheta$ is the angle to the
 normal. In symplectic coordinates of the projection   $(s,\tau)$ on $B^* H$,   $ \cos
 \vartheta(y_1, s)^2 = 1 -
 |\tau|^2$.
) Hence, if we take a diagonal pseudo-differential operator ${\bf
A}_2$ with $\sigma_{A_{11}}(s, \tau) = (1 - |\tau|^2) a(s, \tau)$
and $ \sigma_{A_{22}}(s,\tau) = a(s,\tau)$ for all $(s, \tau) \in
B^*H$, and if we normalize the second term by dividing the Neumann
data by $\lambda$,  then the Fourier integral parts of the
$N_{\partial \Omega}^{H}(\lambda)^* Op_{\lambda}(a) N_{\partial
\Omega}^{H}(\lambda)$ compositions cancel each other. Then we are
left with only the pseudo-differential parts of each composition,
completing the proof of the Lemma.
\end{proof}

 Thus, by Lemma \ref{cancellation} and Proposition \ref{Egorov},
\begin{multline} \label{CDupshot}
\langle  A_{11}(\lambda) u_{\lambda}^{H}, u_{\lambda}^{H}  \rangle_{L^{2}(H)} +  \lambda^{-2} \langle  A_{22}(\lambda) u_{\lambda}^{H}, u_{\lambda}^{H} \rangle_{L^{2}(H)} \\   = 2  \, \langle Op_{\lambda}( \, (\tau_{\partial \Omega}^{H})^* a  \, \cdot \, (\tau_{\partial \Omega}^{H})^* \gamma  \, \cdot \, \gamma )  u_{\lambda}^{b}, u_{\lambda}^{b} \rangle_{L^{2}(\partial \Omega)}  +  {\mathcal O}(\lambda^{-1})  \| u_{\lambda}^{b} \|_{ L^{2}(\partial \Omega) }^{2} \end{multline}
 and the statement of
Theorem \ref{CDTHM} then follows immediately from  (\ref{CDupshot}) and an application of the boundary
quantum ergodicity result of \cite{HZ}. \qed

\section{Boundary Weyl law: Proof of Theorem \ref{weyl2}} \label{Weyl}

We now prove a key result underlying  Theorem \ref{maintheorem} on
quantum ergodic restriction for Dirichlet data. The proof hinges
on an averaging argument in \S \ref{CRAZY}, and on a boundary
local Weyl law. The purpose of this and the next section is to
prove  a weak version of a boundary Weyl law for $\lambda$-Fourier
integral operators. The presence of the boundary causes a number
of technical complications, but the result seems to us of
independent interest. This section is independent of the choice of
$H$.

To prepare for the boundary result, let us first recall  the case
of manifolds without boundary and eigenfunctions of a Laplacian
\cite{Z}. The  Weyl law for classical Fourier integral operators
$F: C^{\infty}(M) \rightarrow C^{\infty}(M)$ of order zero
 says that
\begin{equation} \label{oldWeyl}
\frac{1}{N(\lambda)} \sum_{j: \lambda_j \leq \lambda} \langle F
\phi_{\lambda_j}, \phi_{\lambda_j} \rangle \to \int_{\Gamma_F \cap
\Delta_{T^*M} } \sigma_{\Delta} (F) d\mu, \end{equation} where the
integral is over the intersection of the canonical relation
$\Gamma_F$ of $F$ with the diagonal of $T^*M \times T^*M$, i.e.
the fixed-point set of the symplectic correspondence underlying
$F$.  The right side is zero unless the fixed-point set  has
dimension $m = \dim M$, the leading order trace  sifts out the
`pseudo-differential part' of $F$. The fixed point set is assumed
to be almost-clean in the sense defined above Theorem
\ref{weyl2}.

The purpose of this section is to prove an analogous result for
semi-classical Fourier integral operators acting on boundary
values of eigenfunctions.  Our goal is to prove  (Theorem
\ref{weyl2}) that
 \begin{equation} \label{FLWL} N_{F(\lambda)} (\lambda): =  \sum_{j: \lambda_j \leq
\lambda} \langle F(\lambda_j) u_{\lambda_j}^{b}, u_{\lambda_j}^{b}
\rangle  = o(1) \end{equation} if  $|\Sigma_{F(\lambda)}| = 0$ or
if the quantitative almost nowhere commuting condition holds.

 The study of these $\lambda$-FIO Weyl sums  introduces two new aspects to the local Weyl law. First is
the semi-classical aspect, which would also arise in local Weyl
laws for matrix elements $\langle F(\lambda_j) \phi_{\lambda_j},
\phi_{\lambda_j} \rangle $ of semi-classical FIO's relative to
Laplace eigenfunctions. As is usual with semi-classical FIO's, we
 `homogenize' the traces by taking the   Fourier transform in the
semiclassical parameter $\lambda$  and integrating  over the dual
frequency variable.

The second novelty is the restriction to the boundary. The
semi-classical parameter is the interior eigenvalue, not the
eigenvalue of an operator on the boundary. The restriction
operator to the boundary resembles a Fourier integral operator,
except for the complications of grazing rays, corners and so on.
These play a minor role in the trace since they occur on sets of
measure zero and are handled as in \cite{HZ} by introducing
appropriate  cutoff operators.  We omit some of these technical
details for the sake of brevity, since they are the same as in
\cite{HZ}. In the subsequent article  \cite{TZ3},  we prove a more
complete  boundary Weyl law for $\lambda$-FIO's without assuming
measure zero conditions.

Before stating our result, we introduce some further notation and
assumptions.  We assume that  there is an coveing $
\bigcup_{j=1}^{K} q^{(j)}(U) = \partial \Omega$ with $U \subset
\mathbb{R}^{n-1}$ open,  such that  in terms of the  local
parametrizations $y \mapsto q^{(j)}(y); j=1,...,K $, the phase
  function has the form
\begin{equation} \label{PHASEASSUMPTION} \Phi(q^{(j)}(y),q^{(j)}(y'),\theta) = \langle y, \theta \rangle -
\psi^{(j)}(y',\theta), \,\,\,\,\,\, \psi^{(j)} \in C^{\infty}(U \times {\mathbb R}^{n-1}). \end{equation}
The   associated canonical
relation (\ref{CANONICAL}) is the graph of a symplectic
correspondence \begin{equation} \label{KAPPA} \kappa_{F}:
B^*\partial \Omega \rightarrow B^*\partial \Omega. \end{equation}
This is sufficient for our purposes since the  semi-classical
FIO's of interest here are the  pieces of $ F(\lambda) N_{\partial \Omega}^H(\lambda)^* Op_{\lambda}(a) N_{\partial
\Omega}^H(\lambda)$. We denote by  ${\bf r}_{\partial \Omega}: S^{*}\Omega_{
\partial
 \Omega} \rightarrow S^{*}\Omega_{ \partial \Omega}$ the  reflection of inward pointing covectors
 at the boundary with respect to the interior unit normal given by ${\bf r}_{\partial \Omega}(\zeta) = \zeta + 2 \langle \zeta,\nu_q \rangle \nu_q; \,\, q\in \partial \Omega.$
 Also, the map $\pi_{T}:S^{*}_{\partial \Omega} \Omega \rightarrow B^{*} \partial \Omega$ denotes  the canonical tangential projection along $\partial \Omega$ and we recall that $G^{\tau}$ denotes the broken geodesic (billiard)
   flow (\ref{GT}). From now on, we drop the $(j)$-superscripts in the phase functions (\ref{PHASEASSUMPTION}) with the understanding that computations are local.

\medskip

\subsection{Proof of Theorem \ref{weyl2}}
\begin{proof} In the following, we assume that $\partial \Omega$ is strictly convex. We treat the general non-convex  case  in subsection  \ref{nc}.

Following the strategy of the  Fourier Tauberian theorem
\cite{SV}, we calculate the  principal term of the Weyl
asymptotics of order $\lambda^n$ by computing the singularity at
$t = 0$ of order $(t + i0)^{- n}$ of
\begin{equation} \label{IFLAT} I^{b}(t):= \sum_j e^{i t \lambda_j} \langle
F(\lambda_j) u_{\lambda_j}^{b}, u_{\lambda_j}^{b} \rangle.
\end{equation} We note that by adding a constant multiple of the identity
operator, we may assume that the Weyl sums are monotonically
increasing, hence the Fourier Tauberian theorem applies (see
\cite{Z} for the parallel statement in the boundaryless case).

To calculate the principal singularity of (\ref{IFLAT}) at $t 0$, we write  $$F(\lambda) = \frac{1}{2\pi}
\int_{\R} e^{i \lambda s} \hat{F}(s) ds, $$  where
$$\hat{F}(s) = ( \fcal_{\mu \mapsto s} F)(s) =  \int_{\R} e^{- i \mu s} F(\mu) d\mu. $$ Then
$\hat{F}(s)$ is a homogeneous Fourier integral operator with
Schwartz kernel,
\begin{equation} \label{FIOkernel}
 \hat{F}(s)(q, q') = (2\pi \mu)^{n-1} \int_{\R_+} \int_{\R^{n-1}} e^{i \mu(  \Phi(q, q',
\theta) - s)} A(q,q',\theta,\mu) \,  d \theta d\mu,\end{equation}
 where $A(q,q',\theta,\mu) \sim \sum_{j=0}^{\infty} A_j(q,q',\theta) \mu^{-j}, $ with $A_j \in C_{0}^{\infty}(\partial \Omega \times \partial \Omega \times \mathbb{R}^{n-1} ).$ We then
have,
$$\sum_j e^{i t \lambda_j} \langle F(\lambda_j) u_{\lambda_j}^{b}, u_{\lambda_j}^{b}
\rangle = \int_{\R}   \sum_{j} e^{i \lambda_j (s + t)} \langle
\hat{F}(s) u_{\lambda_j}^{b}, u_{\lambda_j}^{b} \rangle
ds,
$$
or equivalently,  \begin{equation} I^{b}(t):=Tr \int_{\R}
 \gamma_{\partial \Omega}^* \hat{F}(s) \gamma_{\partial \Omega}  U(t + s)  \,  ds.
\end{equation}  The presence of the boundary restriction operator $\gamma_{\partial \Omega} $ has the effect
of restricting  kernels to the boundary. To see this, we note that
if $\gamma_H$ denotes the restriction operator to $H$, $\gamma_H f = f|_H, $
then $\gamma_H^* f = f \delta_H, $ since $\langle \gamma_H^* f, g \rangle \int_H f g d\sigma. $  Hence, in the Neumann case,  writing $U^{b}(s) = \gamma_{\partial \Omega} U(s) \gamma_{\partial \Omega}; \, \, s \in {\mathbb R}$ for the boundary restriction of the operator $U(s),$
\begin{equation}
\label{NICETRACE}  I^{b}(t) = \int_{\R} \int_{\partial \Omega}
\int_{
\partial \Omega}U^{b}(t +s, q, q') \hat{F}(s, q', q) ds d\sigma(q')
d\sigma(q).
\end{equation}
In the Dirichlet case, restriction first takes normal derivatives. In the following, we wish to compute the coefficient of the large
$(t+i0)^{-n}$-singularity of $I^{b}(t)$ as $t \rightarrow 0.$

\subsection{Microlocal cutoffs}

To simplify  the trace we introduce some
microlocal cutoffs:  Since both  the singular set, $\Sigma,$ and the grazing set, $\gcal$, are closed and of measure zero,  by the $C^{\infty}$ Urysohn lemma, for  $\epsilon >0$  arbitrarily small, we let $\chi_{\epsilon} \in C^{\infty}_{0}(B^*\partial \Omega)$ be supported an $\epsilon$-neighbourhood of $\Sigma \cup \gcal.$.  Such cutoffs are necessary but quite standard \cite{HZ}.

 In addition, we will also  insert a cutoff  in the integration in $s$ in (\ref{NICETRACE}) near $s \sim \Phi$ and localize the trace in time. This cutoff is more novel and is also important since it allows us to ultimately replace $U^{b}(t+s)$ by the corresponding {\em finite-time} reflection (ie. Chazarain) parametrix in the formula (\ref{NICETRACE}) for the boundary trace, $I^b(t)$.   We define the $\lambda$-microlocally cutoff operators $ F_{\epsilon}(\lambda):=  Op_{\lambda}(1-\chi_{\epsilon}) F(\lambda) Op_{\lambda}(1-\chi_{\epsilon})$
  and $U_{\epsilon}^{b}(t+s):=  Op_{\lambda}(1-\chi_{\epsilon}) U^{b}(t+s) Op_{\lambda}(1-\chi_{\epsilon}).$
  By standard composition calculus, one can write the Schwartz kernel  $F_{\epsilon}(\lambda)(q(y),q(y')) = (2\pi \lambda)^{n-1} \int_{{\mathbb R}^{n-1}} e^{i \lambda \Phi(q(y),q(y'),\theta)} A_{\epsilon}(q(y),q(y'),\theta) \, d\theta$ where $A_{\epsilon}(q(y),q(y'),\theta) = A(q(y),q(y'),\theta)  \cdot (1-\chi_{\epsilon})(q(y),d_{y}\Phi) \, \cdot \, (1-\chi_{\epsilon})(q(y'),d_{y'}\Phi) + {\mathcal O}(\lambda^{-1}).$ The precise statement of the $\lambda$-microlocalization of the boundary trace is given in the following \begin{lem} \label{cutofftrace}
   Let $\chi \in C^{\infty}_{0}(\R)$ with $\chi(s) = 1$ near $s=0$ and for any $\epsilon >0$ let  $\chi_{\epsilon} \in C^{\infty}_{0}(B^*\partial \Omega)$ be supported in an $\epsilon$-neighbourhood of $B_{\Sigma}^* \partial \Omega \cup  \gcal.$ Then,
   \begin{multline}
   I^{b}(t) = \int_{\R} \int_{\R} \int_{\R^{n-1}}
\int_{\partial \Omega} \int_{\partial \Omega} U^{b}_{\epsilon}(t +s, q, q') e^{i \mu( \Phi(q', q, \theta) - s)} A_{\epsilon}(q',q ,
\theta,\mu) \\ \nonumber
\times \chi(s - \Phi(q,q',\theta)) \, (2\pi \mu)^{n-1} d\sigma(q') d\sigma(q) d\theta ds d\mu + R(t) + R_{\epsilon}(t), \end{multline}
where $R(t) \in C^{\infty}({\mathbb R}_{t})$  and $ R_{\epsilon}(t) = {\mathcal O}(\epsilon) t^{-n}$ for $|t|$ small. \end{lem}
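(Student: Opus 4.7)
The plan is to introduce the two kinds of cutoffs one at a time, noting that each accounts for one of the two remainders in the statement. I would first insert the microlocal cutoffs $(1-\chi_\epsilon)$ sandwiching both $F(\lambda)$ and $U^b(t+s)$, which will produce the ${\mathcal O}(\epsilon) t^{-n}$ remainder $R_\epsilon(t)$, and then insert the time cutoff $\chi(s-\Phi)$ inside the oscillatory integral representation of $\hat F_\epsilon$, which will account for the smooth remainder $R(t)$.

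For the microlocal cutoffs, write $F(\lambda) = F_\epsilon(\lambda) + (F(\lambda) - F_\epsilon(\lambda))$ and analogously decompose $U^b(t+s)$; expanding the product in (\ref{NICETRACE}) yields the main $F_\epsilon$-with-$U^b_\epsilon$ term plus error terms, each containing at least one factor whose full symbol is supported in an $\epsilon$-neighbourhood of $B^*_\Sigma \partial\Omega \cup \gcal$. Each such error term contributes to $I^b(t)$ a sum of the form $\sum_j e^{it\lambda_j}\langle G_\epsilon(\lambda_j) u_{\lambda_j}^b, u_{\lambda_j}^b\rangle$ for some zeroth-order $\lambda$-FIO $G_\epsilon$ whose symbol is supported on a set of symplectic volume ${\mathcal O}(\epsilon)$. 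A pointwise boundary local Weyl bound (in the spirit of \cite{HZ} and of Lemma \ref{tanmass}) then estimates $\sum_{\lambda_j \leq \lambda} |\langle G_\epsilon(\lambda_j) u_{\lambda_j}^b, u_{\lambda_j}^b\rangle|$ by ${\mathcal O}(\epsilon)\lambda^n$; Fourier Tauberian inversion translates this into an ${\mathcal O}(\epsilon)(t+i0)^{-n}$ contribution to the singularity at $t=0$, which is exactly $R_\epsilon(t) = {\mathcal O}(\epsilon)t^{-n}$ for small $|t|$.

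For the time cutoff, substitute the oscillatory representation (\ref{FIOkernel}) for $\hat F_\epsilon(s)$ and split the $s$-integrand using $1 = \chi(s-\Phi(q,q',\theta)) + (1-\chi(s-\Phi(q,q',\theta)))$. On the $(1-\chi)$-portion we have $|s-\Phi| \geq c_\chi > 0$ uniformly, so the phase factor $e^{i\mu(\Phi-s)}$ is non-stationary in $\mu$; since $A(q,q',\theta,\mu)$ is a classical polyhomogeneous symbol in $\mu$, repeated integration by parts in $\mu$ gains arbitrary powers of $\mu^{-1}$. After sufficiently many integrations the remaining $(s,\mu,\theta,q,q')$-integral converges absolutely and defines a Schwartz kernel that is $C^\infty$ in its parameters; composing with $U^b_\epsilon(t+s)$ and taking the trace yields a $C^\infty$ function of $t$, i.e., the remainder $R(t) \in C^\infty(\R_t)$.

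The main technical obstacle will be the pointwise boundary Weyl bound needed for the first step: one must show that microlocalizing boundary eigenfunction traces to an $\epsilon$-neighbourhood of the grazing/corner set contributes only ${\mathcal O}(\epsilon\lambda^n)$ to the averaged diagonal. Because $\chi_\epsilon$ is supported away from a set of measure zero, the operators $F_\epsilon(\lambda)$ and $U^b_\epsilon(t+s)$ stay within the regime where the reflection (Chazarain) parametrix for $U^b(t)$ is microlocally valid, and a H\"ormander-type pointwise Weyl bound adapted to the boundary applies, as in \cite{HZ} and Lemma \ref{tanmass}.
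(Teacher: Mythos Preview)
Your proposal is correct and follows essentially the same two-step approach as the paper: first insert the microlocal cutoffs and control the error $R_\epsilon(t)$ via the boundary local Weyl law (with Cauchy--Schwarz for the cross terms), then insert the time cutoff $\chi(s-\Phi)$ and make the remainder $R(t)$ smooth by repeated integration by parts in $\mu$. The one point the paper makes more explicit than you do is the passage from absolute convergence to $C^\infty$ in $t$: since $\partial_t U^b_\epsilon(t+s)=\partial_s U^b_\epsilon(t+s)$, one integrates by parts in $s$, and the powers of $\mu$ produced by $\partial_s e^{-i\mu s}$ are absorbed by the arbitrary $\mu^{-N}$ decay already obtained.
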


\begin{proof}
From $L^2$-boundedness of $F(\lambda)$ and the boundary Weyl law
(see \cite{HZ} Lemmas 7.1, 9.2, Appendix 12 as well as Lemma \ref{tanmass} in section \ref{extensions} of this  paper),  it follows that
$$ \frac{1}{N(\lambda)}  \left| \sum_{\lambda_j \leq \lambda}
\langle Op_{\lambda_j}(\chi_{\epsilon}) F(\lambda_j) Op_{\lambda_j}(\chi_{\epsilon}) u_{\lambda_j}^{b}, u_{\lambda_j}^{b} \rangle \right| \leq \frac{C}{N(\lambda)} \sum_{\lambda_j \leq \lambda} \langle Op_{\lambda}(\chi_{\epsilon})^* Op_{\lambda}(\chi_{\epsilon}) u_{\lambda_j}^{b}, u_{\lambda_j}^{b} \rangle = {\mathcal O}(\epsilon).$$
Similarily, by applying Cauchy-Schwartz and an argument like
the one above it follows that the composite operators  $ Op_{\lambda_j}(\chi_{\epsilon}) F(\lambda_j)
Op_{\lambda_j}(1-\chi_{\epsilon}) $ and  $ Op_{\lambda_j}(1-\chi_{\epsilon}) F(\lambda_j) Op_{\lambda_j}(\chi_{\epsilon}) $
each have ${\mathcal O}(\epsilon)$  boundary traces (see also \cite{Z,MO}).

 Thus, substitution of the the integral formula (\ref{FIOkernel}) for
$\hat{F}(s,q,q')$ in the formula for $I^{b}(t)$ gives
$$ \begin{array}{l} \label{start}
I^{b}(t) = \int_{\R} \int_{\partial \Omega} \int_{\partial \Omega} U^{b}(t +s, q, q') \hat{F}(s, q', q) ds
d\sigma(q') d\sigma(q)\\ \\ =  \int_{\R_+} \int_{\R} \int_{\R^{n-1}}
\int_{\partial \Omega} \int_{\partial \Omega}  U_{\epsilon}^{b}(t +s, q, q') e^{i \mu( \Phi(q', q, \theta) - s)} A_{\epsilon}(q',q ,
\theta,\mu)   \, (2\pi \mu)^{n-1} \, d\sigma(q') d\sigma(q) d\theta ds d\mu + R_{\epsilon}(t).
\end{array}
$$
We now cutoff in the $s$-time variable. First, note
that since $A_{\epsilon}$ is symbolic in $\mu,$
$$ \partial_{\mu}^{\alpha} A_{\epsilon}(q,q',\theta,\mu) = {\mathcal O}_{\alpha,\epsilon} ( \mu^{-\alpha} )$$
 uniformly in the $(q,q',\theta)$-variables.

 We make the following decomposition
 \begin{multline} \label{trace1}
 I^{b}(t) = \int_{\R_+} \int_{\R} \int_{\R^{n-1}}
\int_{\partial \Omega} \int_{\partial \Omega} U^{b}_{\epsilon}(t +s, q, q') e^{i \mu( \Phi(q', q, \theta) - s)} A_{\epsilon}(q',q ,\theta,\mu) \\
 \times \chi(s - \Phi(q,q',\theta)) \, (2\pi \mu)^{n-1} d\sigma(q') d\sigma(q) d\theta ds d\mu + R(t) + R_{\epsilon}(t), \end{multline}
where, $\int_{0}^{\infty} e^{i\mu t} R_{\epsilon}(t) dt = {\mathcal O}(\epsilon) \mu^{n-1}$ and
\begin{multline} \label{rem}
R(t) = \int_{\R_+} \int_{\R} \int_{\R^{n-1}}
\int_{\partial \Omega} \int_{\partial \Omega} U^{b}_{\epsilon}(t +s, q, q') e^{i \mu( \Phi(q', q, \theta) - s)} A_{\epsilon}(q',q ,
\theta,\mu) \\
 \times (1-\chi)(s - \Phi(q,q',\theta)) \,(2\pi \mu)^{n-1}  d\sigma(q') d\sigma(q) d\theta ds d\mu. \end{multline}
 By integration by parts in $\mu$ and the fact that $\partial_{\mu}^{\alpha} A = {\mathcal O}_{\alpha}(\mu^{-\alpha})$, it follows that for any $N>0,$
 $$R(t) = \int_{\R_+} \int_{\R} \int_{\R^{n-1}}
\int_{\partial \Omega} \int_{\partial \Omega} U^{b}_{\epsilon}(t +s, q, q') e^{i \mu( \Phi(q', q, \theta) - s)}  \,  A_{\epsilon}(q',q ,
\theta,\mu)  $$
$$\times \, (1-\chi)(s - \Phi(q,q',\theta)) \times {\mathcal O}_{N}( |s|^{-N} \mu^{n-1-N}) \,\,  d\sigma(q') d\sigma(q) d\theta ds d\mu.$$
Since the integrand is absolutely convergent, one can take the $t \rightarrow 0$ limit inside the integral and this shows that $R(t)$
is continuous near $t =0.$ Continuity for the derivatives of $R$ follows a similar fashion:  Since $\partial_{t} U^{b}_{\epsilon}(t+s) = \partial_s U^{b}_{\epsilon}(t+s)$,
 one integrates  by parts in $s$. Each $s$-dertivative of the exponential $e^{i\mu(s-\Phi)}$ creates a power of $\mu$. However, this is
  killed by the ${\mathcal O}_{N}(|s|^{N} \mu^{m-N})$-factor since $N>0$ is arbitrary. The result is an absolutely convergent integral.
   By iterating this argument, one can  differentiate to any order in $t$  inside the integral and the result is absolutely convergent.  It follows  that  $R(t) \in C^{\infty}$ for $|t|$ small and this finishes the proof of the lemma. \end{proof}
Thus, it is enough to consider from now on the principal part of the trace, $I^{b}(t)$, which by Lemma \ref{cutofftrace}, is given by
\begin{multline} \label{trace2}
I^{b}_{sing}(t) :=  \int_{\R_+} \int_{\R^{n-1}} \int_{\R}
\int_{\partial \Omega} \int_{\partial \Omega} U^{b}_{\epsilon}(t +s, q, q') e^{i \mu( \Phi(q', q, \theta) - s)} A_{\epsilon}(q',q ,
\theta,\mu) \\
 \times \chi(s - \Phi(q,q',\theta)) \, (2\pi \mu)^{n-1} d\sigma(q') d\sigma(q) d\theta ds d\mu. \end{multline}

The remainder of the proof consists of carrying out a singularity analysis of
 the integral for $I^{b}_{sing}(t)$ in (\ref{trace2}) near $t =0.$   It is useful to note that since it suffices here to choose $|t| \leq \epsilon_0$ with
$\epsilon_0 >0$ fixed arbitrarily small,  $|t + s| \leq \epsilon_0 + \sup |\Phi(q,q',\theta)| < \infty$ for
 $(q,q',\theta) \in $ supp $ A_{\epsilon}$  since  \, supp $ A_{\epsilon}$ is  compact.   Thus, an immediate consequence of Lemma \ref{cutofftrace} is the identity
 \begin{equation} \label{timecutoffformula}
 \begin{array}{ll}
 I^{b}(t) = I^{b}_{sing}(t) + R_{\epsilon}(t) + R(t),  \\ \\
 I^{b}_{sing}(t) = \, Tr \, \int_{I} U_{\epsilon}^{b}(t+s) \,  (\fcal_{\mu \rightarrow s}F_{\epsilon} )(s) \, ds.  \end{array} \end{equation}  Here, we recall that
 \begin{equation} \label{FIOkernelcutoff}
 F_{\epsilon}(q,q';\mu):= (2\pi \mu)^{n-1} \int_{{\mathbb R}^{n-1}} e^{i \mu \Phi(q,q',\theta)} A_{\epsilon}(q,q',\theta;\mu) \,\, \chi(s- \Phi(q,q',\theta) )\,\, d\theta
 \end{equation}
and for $\epsilon >0$ arbitrarily small, $ R_{\epsilon}(t)  = {\mathcal O}(\epsilon) t^{-n}, R(t) \in C^{\infty}(\mathbb{R}_{t})$ for $|t|$ small
 and $I:=  [ - \|\Phi \|_{L^\infty} - \epsilon_0, \|\Phi \|_{L^{\infty}} + \epsilon_0 ].$

\subsection{Chazarain parametrix}

   Since  $U^{b}_{\epsilon}(t+s)$ is $\lambda$-microlocalized
  away from the set $(B^* \partial \Omega \times [\gcal\cup B_{\Sigma}^*\partial \Omega] ) \cup ( [\gcal \cup B_{\Sigma}^*\partial \Omega ]
  \times B^* \partial \Omega),$  this allows us to substitute  the boundary trace of the  Chazarain parametrix for $U^{b}_{\epsilon}(t+s)$ in  the formula for $I_{sing}^{b}(t)$ in (\ref{trace2}). Here, we use the crucial fact that by Lemma \ref{cutofftrace} (see also (\ref{timecutoffformula})), one need
   only consider finite times with $|s+t| \leq \sup |\Phi| +\epsilon_0.$   This parametrix construction is
    well-known \cite{GM,PS}.  However, as far as we know, the properties vis-a-vis boundary restrction are not available in the literature.  Thus,  we  review here the aspects that are most relevant to the analysis of the boundary traces.

  It suffices to assume that $\partial \Omega$ is $C^{\infty}$ since $ WF'_{\lambda}( U_{\epsilon}^{b}(t+s)
   \circ \hat{F}_{\epsilon}(s) ) \cap  (  [B_{\Sigma}^*\partial \Omega \cup \gcal ] \times B^* \partial \Omega  \cup B^* \partial \Omega \times [ B_{\Sigma}^*\partial \Omega \cup \gcal] ) = \emptyset.$ Let $\partial \Omega = \{ x \in {\mathbb R}^{n}: f(x)= 0$ with $df(x) \neq 0$ for $x \in \partial \Omega.$ Given $(q,\omega) \in S_{+}^{*}(\partial \Omega),$ we let $0 < t_{1}(q,\omega) < t_{2}(q,\omega) < t_{3}(q,\omega) < ...$ with $t_{j} \in C^{\infty}(S^{*}_{+}(\partial \Omega) )$ be the ordered intersection points of the bicharacteristic starting at $(q,\omega)$. Thus, the $t_j; j=1,2....$ satisfy the defining equation
$$ f(t_{j}(q,\omega)) = 0 ; \, j=1,2,3,...$$

  The remainder of the proof of Theorem \ref{weyl2} consists of substituting the boundary trace of the explicit Chazarain parametrix (which we describe below) in (\ref{timecutoffformula}) and then carrying out a singularity analysis of $I^{b}(t)$ near $t =0.$

  To describe the parametrix boundary trace, we decompose $S_{+}^*\partial \Omega = \cup_{k}^{N} \Gamma_k$ where the $\Gamma_k$
are sufficiently small open sets and let $t_{j}^{(k)} = \min_{(q,\omega) \in \Gamma_k} t_j(q,\omega)$
 and $T_j^{(k)}= \max_{(q,\omega) \in \Gamma_k} t_j(q,\omega).$  We choose $\Gamma_k$ small enough so that
\begin{equation} \label{monotone}
t_{1}^{(k)} < T_{1}^{(k)} < t_{2}^{(k)} < T_{2}^{(k)} < \cdots; \,\, k=1,2,3,...,N. \end{equation}
Let $\chi_{k} \in C^{\infty}(S^*\partial \Omega); k=1,...,N,$ be a partition of unity subordinate to the covering by $\Gamma_k; k=1,..,N.$

One constructs a finite-time microlocal parametrix for the wave operator, $U(s),$  on the sets $\Gamma_k; k =1,..,N.$
To describe the construction, consider first the case where $s \in (-2\epsilon_0, t_1^{(k)} - \epsilon_0)$
where $t_1^{(k)}:= \min_{(q,\omega) \in \Gamma_k} t_{1}(q,\omega)$ and $\epsilon_0 >0$ is a fixed
small constant. We  let $$U_{0}^{(k),+}(s,x,y;\lambda): = U_{0} \circ Op_{\lambda}(\chi_k) (s,x,y;\lambda) = (2\pi \lambda)^{n} \int_{{\mathbb R}^{n}} e^{i \lambda \phi (s,x,y,\xi)}  B_{0}^{(k)}(s,y,\xi,\lambda)  d \xi,$$
 where $U_0$ is  the free-space solution of the wave equation with $ \phi(s,x,y,\xi):= \langle x - y, \xi \rangle -
  s |\xi|.$  Let $x^*(x) \in {\mathbb R}^{n}-\Omega$ be the geodesic reflection of $x \in \Omega$ in the boundary
  $\partial \Omega$  and put $$U_{0}^{(k),-}(s,x,y;\lambda): = U_{0} \circ Op_{\lambda}(\chi_{k}) (s,x^*(x),y;\lambda)  =  (2\pi \lambda)^{n} \int_{{\mathbb R}^{n}} e^{i \lambda \phi(s,x^*(x),y,\xi)}  B_{0}^{(k)}(s,y,\xi,\lambda)  d \xi $$
    and  for $(s,x,y) \in (-\epsilon_0, t_1^{(k)}+\epsilon_0) \times \Omega \times \Omega,$ define
\begin{equation} \label{reflection0}
2 U_{1}^{(k)}(s,x,y;\lambda):= U_{0}^{(k),+}(s,x,y;\lambda) + U_{0}^{(k),-}(s,x.y;\lambda). \end{equation}
Clearly, for all $(s,x,y) \in (-2\epsilon_0, t_1^{(k)}-\epsilon_0) \times \Omega \times\Omega,$  $U_{1}^{(k)}$ solves the equation
$ \frac{1}{i} \partial_s U_{1}^{(k)} + \sqrt{\Delta_x} U_1^{(k)} = 0$  and since $x^{*} \in {\mathbb R}^{n} - \Omega,$ by
integration by parts in $\xi$ it follows that for $(x,y) \in \Omega \times \Omega,$
\begin{equation} \label{approxid}
 U_{1}^{(k)}(0,x,y;\lambda) = Op_{\lambda}(\chi_k)(x,y) + R(x,y;\lambda),\end{equation} where $R(x,y;\lambda) \in C^{\infty}$
 with $|\partial_{x}^{\alpha} \partial_{y}^{\beta} R(x,y;\lambda)| = {\mathcal O}_{\alpha,\beta}.(\lambda^{-\infty}).$  Following \cite{HZ}, we call such kernels {\em residual}.
 Simiarily,  since $\partial_{\nu_q}[ \phi(s,x^*,y,\xi) + \phi(s,x,y,\xi) ]|_{x=x^*=q \in \partial \Omega} = 0,$ $U_{1}^{(k)}$
  satisfies the approximate Neumann boundary condition
 \begin{equation} \label{Neumann}
  \partial_{\nu_{q}}U_{1}^{(k)}(s,q,y;\lambda)  = R_{1}(s,q,y;\lambda),   \, \,\,  (s,q,y) \in (-\epsilon_0, t_1^{(k)}+\epsilon_0) \times \partial \Omega \times  \Omega
\end{equation}

   with $R_{1}(s,q,y;\lambda)= {\mathcal O}(\lambda^{-\infty})$
residual.     Also, it is clear that
$$[U_{1}^{(k)}]^{b}(s,q,q';\lambda) = [U_{0}^{(k),\pm}]^{b}(s,q,q';\lambda); \,\, (q,q') \in \partial \Omega \times \partial \Omega.$$
We microlocally cutoff  $U_1^{(k)}$ away from grazing and singular directions and define the microlocal parametrix
\begin{equation} \label{reflection0.1}
U_{1,\epsilon}^{(k)}(s;\lambda) := (Id - Op_{\lambda}(\chi_{\epsilon})  ) \circ U_{1}^{(k)}(s;\lambda)\circ (Id - Op_{\lambda}(\chi_{\epsilon}) ). \end{equation}
The analogous, microlocally-cutoff wave operator is
\begin{equation} \label{wavecutoff}
U_{\epsilon}^{(k)}(s;\lambda) :=  (Id - Op_{\lambda}(\chi_{\epsilon}) ) \circ U(s) \circ Op_{\lambda}(\chi_k) \circ (Id - Op_{\lambda}(\chi_{\epsilon}) ). \end{equation}
 From (\ref{approxid}) and (\ref{Neumann}), both $U_{\epsilon}^{(k)}(s;\lambda)$ and $U_{1,\epsilon}^{(k)}(s;\lambda)$ are microlocal parametrices for the wave operator on supp $\chi_{k}  (1-\chi_{\epsilon}).$ Thus, $U_{1,\epsilon}^{(k)}(s;\lambda) - U_{\epsilon}^{(k)}(s;\lambda)$ is residual for $s \in (-2\epsilon_0, t_{1}^{(k)}-\epsilon_0)$  and in particular,  for $(t+s,q,q') \in (-2\epsilon_0,t_{1}^{(k)}-\epsilon_0) \times \partial \Omega \times \partial \Omega,$
 \begin{equation} \label{parametrix1}
[U_{\epsilon}^{(k)}]^{b}(t+s,q,q';\lambda) = [U_{1,\epsilon}^{(k)}]^{b}(t+s, q,q';\lambda) + R(t+s,q,q';\lambda) \end{equation}
where, $R (\cdot,\cdot,\cdot;\lambda) \in C^{\infty}( \, (-2\epsilon_0, t_1^{(k)}-\epsilon_0) \times \partial \Omega \times \partial \Omega)\, )$ is residual for $\lambda \geq \lambda_0.$

To construct  parametrices up to finite,
multiple reflections at the boundary, we choose
$M = M(k,\epsilon) $ large enough so that the finite
 time-interval, $I$ of integration in (\ref{timecutoffformula})
  satisfies  $I \subset \bigcup_{j=1}^{M}  ( t_{j-1}^{(k)} -
  2\epsilon_0, t_{j}^{(k)} - \epsilon_0  ) \,  \bigcup_{j=1}^{M}
    ( - t_{j}^{(k)} -  2 \epsilon_0 , -t_{j-1}^{(k)} - \epsilon_0) \, $
    with $t_{0}^{(k)} = 0$ and we let $\chi_{\pm j}^{(k)} \in C^{\infty}_{0}(\mathbb{R}); \, j=1,...,M$ be a partition of unity subordinate to this covering.
  In view of (\ref{parametrix1}) the parametrix
  for $s+t \in (-2\epsilon_0,t_{1}^{(k)}- \epsilon_0)$ is $U_{1,\epsilon}^{(k)}(s+t)$. For times $t+s \in (t_1^{(k)}-2\epsilon_0, t_2^{(k)} - \epsilon_0),$ one puts
\begin{equation} \label{parametrix2} \begin{array}{ll}
U_{2,\epsilon}^{(k)}(s+t,x,q;\lambda) =  \int_{\Omega}
U_{1,\epsilon}^{(k)}(s+t-\tau,x,z;\lambda) \cdot   U_{1,\epsilon}^{(k)}(\tau,z,y;\lambda) \, \, dz, \,\,\,\, \tau:= |z-q(y)|.
\end{array}\end{equation}
Clearly, $\frac{1}{i} \partial_{s} U_{2,\epsilon}^{(k)} + \sqrt{\Delta_x} U_{2,\epsilon}^{(k)} =0$
and by (\ref{parametrix1}), $U_{2,\epsilon}^{(k)}(s+t)$ satisfies the Neumann boundary condition.

Moreover, iteration of  this construction  gives the formula
\begin{equation} \label{genparametrix} \begin{array}{lll}
U_{j,\epsilon}^{(k)}(s+t,x,y;\lambda) = \int_{\Omega}
 U_{1,\epsilon}^{(k)}(s+t- \tau,x,z;\lambda) \cdot  U_{j-1,\epsilon}^{(k)}(\tau,z,y;\lambda)  \, dz,  \\ \\
   s+t \in (t_{j-1}^{(k)}-2\epsilon_0, t_{j}^{(k)}-\epsilon_0). \end{array} \end{equation}
Just as in the $j=1$ case, it follows  that for any $j=1,...,M,$
and $s+t \in (t_{j-1}^{(k)}-2\epsilon_0, t_{j}^{(k)}-\epsilon_0),$
\begin{equation} \label{energy2} [U_{j,\epsilon}^{(k)}]^{b}(s+t,q,q';\lambda)  =    U_{\epsilon}^{(k)}(s+t,q,q';\lambda)
 + R_{j}(q,q';\lambda); \,\, (q,q') \in \partial \Omega, \end{equation} with $R_{j}$ residual.
We summarize the parametrix approximation of the boundary trace in the following
 \begin{lem} \label{chazarainapprox}
Given $\chi_{j}^{(k)} \in C^{\infty}_{0}(\mathbb{R}); j=\pm
1,...,\pm M$ as above, it follows from (\ref{timecutoffformula})
and (\ref{energy2}) that for $|t|$ sufficiently small,
$I^{b}_{sing}(t) = \sum_{j,k}[ I_{j}^{(k)}]^{b}(t)$ where,
\begin{equation}
\begin{array}{ll}
  [I_{j}^{(k)}]^{b}(t):=Tr  \int_{I} \chi_{j}^{(k)} (s+t)   \left(  [U_{\epsilon}^{(k)}]^{b}(s+t) \circ \hat{F}_{\epsilon}(s)  \right) \, ds \\ \\
 = Tr  \int_{I}   \chi_{j}^{(k)} (s+t)  \left( [U_{j,\epsilon}^{(k)}]^{b}(s+t) \circ \hat{F}_{\epsilon}(s)  \right) \, ds, \nonumber  \end{array} \end{equation}
where $U_{j,\epsilon}^{(k)}$ is the microlocal parametrix  with Schwartz kernel defined in (\ref{genparametrix})
\end{lem}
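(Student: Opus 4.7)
The plan is to deduce both equalities in the statement from three ingredients: (i) a microlocal decomposition of $U_\epsilon(t+s)$ using the partition $\{\chi_k\}$, (ii) the temporal partition of unity $\{\chi_j^{(k)}(s+t)\}$ on the bounded interval $I$, and (iii) the residual agreement of $[U^{(k)}_{j,\epsilon}]^b$ with $[U^{(k)}_\epsilon]^b$ on the window where $\chi_j^{(k)}(s+t)$ is supported. Throughout, the key point is that all error kernels that appear can be absorbed into the smooth remainders $R(t), R_\epsilon(t)$ already present in (\ref{timecutoffformula}) via the integration-by-parts mechanism of Lemma \ref{cutofftrace}.

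First I would start from the identity
\[ I^b_{sing}(t) = Tr \int_I U^b_\epsilon(t+s)\circ \hat{F}_\epsilon(s)\, ds \]
established in (\ref{timecutoffformula}). Lift the microlocal partition $\sum_k \chi_k = 1$ from $S_+^*\partial\Omega$ and write $U_\epsilon(t+s) = \sum_k U^{(k)}_\epsilon(t+s) + R_\epsilon(t+s;\lambda)$, where $R_\epsilon$ is residual; since the $(1-\chi_\epsilon)$ factors already confine the $\lambda$-wavefront sets of $U^b_\epsilon$ and $\hat{F}_\epsilon(s)$ away from the grazing and singular loci, this $R_\epsilon$ contributes a $C^\infty$ function of $t$ near $0$ by integration by parts in $\mu$ and $s$, exactly as in the proof of Lemma \ref{cutofftrace}. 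Inserting next the temporal partition $\sum_{j=-M}^{M}\chi_j^{(k)}(s+t) = 1$, which by construction covers every $s+t$ appearing in the integrand (because $I$ is bounded and (\ref{monotone}) ensures the intervals $(t_{j-1}^{(k)}-2\epsilon_0,t_j^{(k)}-\epsilon_0)$ cover $I$ for $|t|$ sufficiently small), yields $I^b_{sing}(t) = \sum_{j,k}[I^{(k)}_j]^b(t)$ with $[I^{(k)}_j]^b$ defined by the first formula.

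For the second equality, on the support of $\chi_j^{(k)}(s+t)$ the time $s+t$ belongs to the window $(t_{j-1}^{(k)}-2\epsilon_0, t_j^{(k)}-\epsilon_0)$. There the iteratively constructed Chazarain parametrix $U^{(k)}_{j,\epsilon}$ from (\ref{reflection0})--(\ref{genparametrix}) satisfies (\ref{energy2}), i.e. $[U^{(k)}_{j,\epsilon}]^b(s+t) = [U^{(k)}_\epsilon]^b(s+t) + R_j(\lambda)$ with $R_j$ smoothing and $O(\lambda^{-\infty})$. Composing $R_j$ with $\hat{F}_\epsilon(s)$ and integrating in $s$, the contribution is $C^\infty$ in $t$ near $0$: integration by parts in $\mu$ (using $\partial^\alpha_\mu A_\epsilon = O(\mu^{-|\alpha|})$) and in $s$ (using $\partial_t U^b_\epsilon(t+s) = \partial_s U^b_\epsilon(t+s)$) produces absolutely convergent integrals after differentiating in $t$ to any order. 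Hence replacing $[U^{(k)}_\epsilon]^b$ by $[U^{(k)}_{j,\epsilon}]^b$ in $[I^{(k)}_j]^b(t)$ costs only a smooth remainder.

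The main obstacle I anticipate is the careful bookkeeping of what ``residual'' really means after composition with $\hat{F}_\epsilon(s)$. The residual kernels at steps (i) and (iii) are $O(\lambda^{-\infty})$ with respect to the interior spectral parameter $\lambda$, whereas $\hat{F}_\epsilon(s)$ oscillates in its own semiclassical parameter $\mu$; one must interchange $\lambda$-decay with $\mu$-growth and show the resulting trace contributes not merely a bounded function near $t=0$, but a $C^\infty$ one. The resolution uses that on the support of the cutoff $\chi(s-\Phi)$ inserted in (\ref{FIOkernelcutoff}) the phase $\mu(\Phi - s)$ is nonstationary away from $\mu\sim\lambda$, giving rapid decay simultaneously in $\mu$ and off the diagonal $s=\Phi$; combined with the smoothing nature of the residuals, this produces a $C^\infty_t$ contribution that merges into $R(t)+R_\epsilon(t)$ of (\ref{timecutoffformula}). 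With this in place, both asserted equalities follow.
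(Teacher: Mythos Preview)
Your proposal is correct and follows precisely the approach the paper has in mind; the paper in fact does not give a separate proof of this lemma but presents it as a direct summary of the preceding parametrix construction, so your steps (i)--(iii) simply make explicit the passage from (\ref{timecutoffformula}) and (\ref{energy2}) to the stated decomposition. Two minor remarks: since $\sum_k\chi_k=1$ and $Op_\lambda(\chi_k)$ is inserted on one side in (\ref{wavecutoff}), the identity $\sum_k U^{(k)}_\epsilon=U_\epsilon$ holds exactly (no residual needed there); and your ``main obstacle'' about $\lambda$ versus $\mu$ is a bit overstated, since residual in the paper's sense means the matrix elements against $u^b_{\lambda_j}$ are $O(\lambda_j^{-\infty})$, so the corresponding contribution to $\sum_j e^{it\lambda_j}\langle\cdot\rangle$ is absolutely and uniformly convergent together with all $t$-derivatives, hence $C^\infty$ near $t=0$ without needing to juggle two semiclassical parameters.
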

To carry out our analysis of the principal singularity of $I_{sing}^{b}(t)$ we use the following  generalized stationary phase lemma (\cite{SV} Proposition 4.1.16):
\begin{lem} \label{SP}
 Given $\Psi(x) \in C^{\infty}({\mathbb R}^{n})$  with critical set
$\text{Crit}(\Psi):= \{ x \in {\mathbb R}^{n}: \nabla_{x} \Psi(x)=0 \}$
and $\alpha(x) \in C^{\infty}_{0}({\mathbb R}^{n}),$
 \begin{equation} \label{sp} \int_{{\mathbb R}^{n}} e^{i\lambda \Psi(x)} \alpha(x) dx
 = \int_{\text{Crit}(\Psi)} e^{i\lambda \Psi(x)} \alpha(x) dx + o(1). \end{equation} \end{lem}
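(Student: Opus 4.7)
The plan is to isolate the contribution from a neighborhood of the critical set and use non-stationary phase on the complement. First I would note that $\text{Crit}(\Psi)$ is closed and that $K := \text{Crit}(\Psi) \cap \text{supp}(\alpha)$ is compact. By outer regularity of Lebesgue measure, for every $\varepsilon > 0$ there exists a relatively compact open set $U_\varepsilon \supset K$ with $|U_\varepsilon \setminus K| < \varepsilon$. Choose $\chi_\varepsilon \in C_0^\infty(\mathbb{R}^n)$ with $0 \leq \chi_\varepsilon \leq 1$, $\chi_\varepsilon \equiv 1$ on a neighborhood of $K$, and $\text{supp}(\chi_\varepsilon) \subset U_\varepsilon$, and split
$$\int_{\mathbb{R}^n} e^{i\lambda \Psi}\alpha\, dx \; = \; I_1(\lambda,\varepsilon) \; + \; I_2(\lambda,\varepsilon),$$
where $I_1$ carries the factor $\chi_\varepsilon$ and $I_2$ the factor $1-\chi_\varepsilon$.

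For $I_2$, the amplitude $(1-\chi_\varepsilon)\alpha$ is supported in the compact set $\text{supp}(\alpha) \setminus \{\chi_\varepsilon = 1\}$, on which $|\nabla \Psi|$ is bounded below by some $c_\varepsilon > 0$. I would then apply the classical non-stationary phase argument: introduce the first-order operator $L = (i\lambda|\nabla \Psi|^2)^{-1}\, \nabla \Psi \cdot \nabla$, which satisfies $L(e^{i\lambda \Psi}) = e^{i\lambda \Psi}$, and integrate by parts $N$ times against its formal transpose. Since all coefficients are smooth on the compact support of $(1-\chi_\varepsilon)\alpha$, this gives
$$|I_2(\lambda,\varepsilon)| \; \leq \; C_{\varepsilon,N}\, \lambda^{-N} \qquad \text{for every } N \geq 1.$$

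For $I_1$, I would split further into $\int_{K} e^{i\lambda \Psi}\alpha\, dx$ and $\int_{U_\varepsilon \setminus K} e^{i\lambda \Psi}\chi_\varepsilon \alpha\, dx$. The first piece equals $\int_{\text{Crit}(\Psi)} e^{i\lambda \Psi}\alpha\, dx$ because $K$ exhausts $\text{Crit}(\Psi)$ inside $\text{supp}(\alpha)$, while the second is bounded trivially by $\|\alpha\|_\infty |U_\varepsilon \setminus K| < \|\alpha\|_\infty\, \varepsilon$. Combining with the estimate on $I_2$ gives
$$\left| \int_{\mathbb{R}^n} e^{i\lambda \Psi}\alpha\, dx \; - \; \int_{\text{Crit}(\Psi)} e^{i\lambda \Psi}\alpha\, dx \right| \; \leq \; C_{\varepsilon,N}\, \lambda^{-N} \; + \; \|\alpha\|_\infty\, \varepsilon.$$
Sending $\lambda \to \infty$ first and then $\varepsilon \to 0$ yields the desired $o(1)$ claim.

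The subtle point is the inevitable blow-up of the constant $C_{\varepsilon,N}$ as $\varepsilon \to 0$, caused by $c_\varepsilon \to 0$ on the boundary of the critical set. This is the main obstacle in any attempt to extract an explicit rate of decay; without a non-degeneracy hypothesis on $\Psi$ one cannot do better than $o(1)$. Fortunately, for the present lemma the iterated limit in $(\lambda,\varepsilon)$ absorbs this non-uniformity, so no further hypothesis is needed.
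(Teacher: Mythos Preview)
Your argument is correct and is the standard route to this result. The paper does not actually supply a proof of this lemma: it is stated with a citation to Safarov--Vassiliev \cite{SV}, Proposition~4.1.16, and then used as a black box in the singularity analysis of $I^b(t)$. Your proof via outer regularity, a smooth cutoff near $K=\text{Crit}(\Psi)\cap\text{supp}(\alpha)$, non-stationary phase on the complement, and the iterated limit $\lambda\to\infty$ followed by $\varepsilon\to 0$ is exactly the expected argument and matches the proof in \cite{SV}. Your closing remark about the blow-up of $C_{\varepsilon,N}$ as $\varepsilon\to 0$ is also on point: this is precisely why the conclusion is only $o(1)$ and why the paper never attempts to extract a rate.
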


 The computation of the principal singularity of $I^{b}(t)$
amounts to a detailed stationary phase analysis of the explicit
boundary trace of the $\lambda$-microlocalized Chazarain
parametrix given in  Lemma \ref{chazarainapprox} combined with
multiple applications of (\ref{sp}). The analysis is somewhat
lengthy since there are many critical points of the phases in the
parametrix coming from various multilink bicharacteristics. We
thus carry out the analysis systematically for the various links
indexed by the the number of boundary reflections.

\subsection{Single-link contribution to $I^{b}_{sing}(t)$} We start
 here with the computation of $[I_{1}]^{b}(t)$ for small $t$ with
  $t \in (-2 \epsilon_0,  \min_k  t_{1}^{(k)}].$ This is the part of the trace that
   comes from single-link contributions. These  are rays that are either trivial (ie. ones that
   intersect the boundary at  only the initial point) or ones that intersect twice, with the initial
    and terminal points lying on the boundary.  The main result of this section is
\begin{lem} \label{1linktrace}
Under the assumptions in Theorem \ref{weyl2}, the single-link contribution to the boundary wave trace is
$$ I_{1}^{b} (t) = o(t^{-n})$$ for $|t|$ sufficiently small. \end{lem}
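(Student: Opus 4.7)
The plan is to show that the leading $(t+i0)^{-n}$ singularity of the single-link boundary trace $[I_{1}^{(k)}]^{b}(t)$ vanishes under the measure-zero hypothesis $|\Sigma_{F(\lambda)}|=0$, via the explicit Chazarain representation of Lemma~\ref{chazarainapprox} and a stationary-phase argument using Lemma~\ref{SP}. To begin, I would substitute the parametrix kernel $U^{b}_{1,\epsilon}$ from (\ref{reflection0})--(\ref{reflection0.1}) together with the FIO representation (\ref{FIOkernelcutoff}) of $\hat F_{\epsilon}(s)$ into the boundary trace of Lemma~\ref{chazarainapprox}. On $\partial\Omega$ the geodesic reflection $x^{*}(x)$ satisfies $x^{*}=x$, so the two pieces of $U_{1}^{(k)}$ collapse to a single free-wave boundary restriction (up to signs from $\partial_{\nu}$ in the Dirichlet case). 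The resulting trace becomes an oscillatory integral over $(s,q,q',\xi,\theta,\mu)$ with compactly supported amplitude (the microlocal cutoff $\chi_{\epsilon}$ removes grazing and corner directions, and the time cutoff $\chi_{1}^{(k)}(s+t)$ confines $s+t$ to the pre-first-reflection interval) and combined phase of schematic form
\begin{equation*}
\Psi(s,q,q',\xi,\theta,\mu)=\langle q-q',\xi\rangle-(s+t)|\xi|+\mu\bigl(\Phi(q',q,\theta)-s\bigr).
\end{equation*}

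Next, I carry out stationary phase in the dual pair $(s,\mu)$: the critical equations yield the on-shell condition relating $\mu$ to $|\xi|$ together with the chord-time matching $s=\Phi(q',q,\theta)$. Writing $\xi$ in polar coordinates $(\mu,\omega)$, $\omega\in S^{n-1}$, stationary phase in $\theta$ forces $\partial_{\theta}\Phi=0$ and pins the remaining variables to $\Gamma_{F(\lambda)}=\mathrm{graph}(\kappa_{F})$, while stationary phase in the tangential components of $(q,q',\omega)$ requires the interior lift $\zeta(q,\pi_{T}\omega)$ to be parallel to the chord from $q'$ to $q$, whose length equals $|s+t|$. As $t\to 0$, the chord length collapses on the critical set, reducing it to
\begin{equation*}
\bigl\{(q,\eta)\in B^{*}\partial\Omega:\kappa_{F}(q,\eta)=(q,\eta)\bigr\},
\end{equation*}
precisely the $m=0$ stratum of $\Sigma_{F(\lambda)}$.

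By the Fourier--Tauberian framework, the coefficient of the $(t+i0)^{-n}$ singularity in $[I_{1}^{(k)}]^{b}(t)$ equals, up to universal constants, a symplectic-volume integral of $\chi_{k}\cdot\sigma(F)$ against a smooth density over this $m=0$ fixed-point set. The hypothesis $|\Sigma_{F(\lambda)}|=0$ forces zero $dy\,d\eta$-measure for this set, so Lemma~\ref{SP} yields $[I_{1}^{(k)}]^{b}(t)=o(t^{-n})$ for each $k$. Summing over $k$ and absorbing the $\mathcal{O}(\epsilon)t^{-n}$ remainder from Lemma~\ref{cutofftrace}, then letting $\epsilon\downarrow 0$ after the singularity analysis, gives $I_{1}^{b}(t)=o(t^{-n})$. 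The main technical obstacle will be verifying that the stationary phase is uniformly clean (in the sense permitting Lemma~\ref{SP}) on the $\epsilon$-truncated amplitude, particularly near the boundary of $\mathrm{supp}(\chi_{\epsilon})$ adjacent to the grazing and corner strata; but these regions are precisely the ones that $\chi_{\epsilon}$ microlocally isolates, and their contribution is already quantitatively controlled by the $\mathcal{O}(\epsilon)t^{-n}$ estimate of Lemma~\ref{cutofftrace}.
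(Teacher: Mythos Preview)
Your overall strategy (insert the Chazarain parametrix, reduce by stationary phase, then invoke Lemma~\ref{SP} on a measure-zero critical set) matches the paper's, but there is a genuine gap in the identification of the critical set.

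You claim that after stationary phase the chord constraint reads $|q-q'|=|s+t|$, and that ``as $t\to 0$ the chord length collapses,'' leaving only the diagonal fixed-point set $\{(q,\eta):\kappa_F(q,\eta)=(q,\eta)\}$, i.e.\ the $m=0$ stratum of $\Sigma_{F(\lambda)}$. This is not correct. The variable $s$ has already been eliminated by your own stationary-phase step, which pins it to $s=\Phi(q',q,\theta)$; hence on the critical set the chord length equals $|\Phi(q',q,\theta)+t|\to|\Phi(q',q,\theta)|$, which is generically nonzero. Moreover the time cutoff $\chi_1^{(k)}(s+t)$ is supported on the full interval $(-2\epsilon_0,\,t_1^{(k)}-\epsilon_0)$, not just near $0$, so the single-link parametrix $U_{1,\epsilon}^{(k)}$ contributes boundary chords of all lengths up to the first-reflection time. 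These nontrivial chords are exactly one application of the billiard map $\beta$, and the corresponding critical set is
\[
\Sigma_{F(\lambda)}^{(1)}=\{(y,\eta)\in B^*\partial\Omega:\ \beta(y,\eta)=\kappa_F(y,\eta)\},
\]
the $m=\pm 1$ stratum of $\Sigma_{F(\lambda)}$, not the $m=0$ one.

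The paper handles this by inserting a further spatial cutoff $\chi(\epsilon^{-1}|q-q'|)$ and splitting $c_1(\mu)=c_1^{\Delta}(\mu)+c_1^{\Delta^c}(\mu)$. The near-diagonal piece $c_1^{\Delta}$ reduces (after the change $\omega\mapsto\eta$ and stationary phase in $(y',\theta)$) to an integral over the $m=0$ set $\Gamma_{\kappa_F}\cap\Delta$, which has measure zero by hypothesis. The off-diagonal piece $c_1^{\Delta^c}$ requires an additional stationary phase in $\omega\in S^{n-1}_+$ (giving $\omega=-r(q,q')$) and then in $y'$; the resulting critical set is identified with $\Sigma_{F(\lambda)}^{(1)}$, and Lemma~\ref{SP} is applied a second time using $|\Sigma_{F(\lambda)}^{(1)}|=0$. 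Your argument omits this second branch entirely, so as written it does not establish $I_1^b(t)=o(t^{-n})$.
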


 In view of Lemma \ref{chazarainapprox}, substitution of the restricted parametrix $[U_{1,\epsilon}^{(k)}]^{b}$ in $[I_{1}^{(k)}]^{b}(t)$  combined with the polar variables decomposition $\xi = r \omega$ where $(r,\omega) \in [0,\infty) \times {\mathbb S}^{n-1}$  and  the semiclassical rescaling $\xi \mapsto \mu \xi,$ gives the formula
\begin{multline} \label{parametrix1.3}
 [I_{1}^{(k)}]^{b}(t) = \int_{\R_+}  \int_{I}   \int_{\R^{n-1}} \int_{\partial
\Omega} \int_{\partial \Omega}  \int_{\mathbb{S}^{n-1}_{+}} e^{ i \mu   \, [ r    \phi(s + t, q, q', \omega)  + \Phi(q', q, \theta) - s \, ]}  A_{\epsilon} (
q', q, \theta,\mu)   \, B^{(k)}_{\epsilon}(q,q', r \omega,\mu)\\
 \times  \chi(s-\Phi(q',q,\theta)) \chi(r-1) \,r^{n-1} \, dr d\omega
 \, \chi_{1}(s+t) d \theta  d\sigma(q') d\sigma(q) ds  \,  (2 \pi \mu)^{2n-1} \, d\mu + R(t). \end{multline}
Here, $R(t) \in C^{\infty}(\mathbb{R})$ for $|t|$ small and  the cutoff
$\chi(r-1)$ is inserted in (\ref{parametrix1.3}) modulo smooth error by
integration by parts in $s$ using the eikonal equation $\partial_{s} \phi = r
 + {\mathcal O}(t)$ with $|t| \ll 1.$ Also, the integration in  $\omega$ is
  over the hemisphere $S^{n-1}_{+}$ since  $WF_{\lambda}'(U_{\epsilon}^{(k)}(s+t))
  \subset S_{\partial \Omega,+}^* \Omega \times S_{\partial \Omega,+}^*\Omega$ for all $s+t \in I.$ From
  (\ref{parametrix1.3}) it is clear that $[I_{1}^{(k)}]^{b}(t)$ is a  semi-classical (in $\mu$)
oscillatory integral with phase function
$$\Psi( q, q', r,\omega, \theta, s; t)  = r \phi(t + s, q, q', \omega) +   \Phi(q', q,
\theta) - s. $$

We apply stationary phase to (\ref{parametrix1.3}).  Since the integrand in (\ref{parametrix1.3}) is now  compactly
supported in $(s,r)$, one can apply stationary phase in those variables
first. Since
$$  \phi(t+s,q,q',\omega) = \langle q-q',\omega \rangle +(s+t),  $$ the critical point equations are
$$\left\{ \begin{array}{l}  d_{r} \Psi = \phi(t+s,q,q',\omega)
=0,\\ \\
 d_{s}\Psi = r  \partial_{s} \phi(t+s,q,q',\omega) - 1 = 0. \end{array} \right.$$
The non-degeneracy of the phase is clear from the identity
$$ \partial_{r} \partial_{s} \Psi( q, q', r,\omega, \theta, s; t) = \partial_{s}
 \phi (t+s,q,q',\omega) = 1$$ which follows from the eikonal equation. The critical points are
$$  \left\{ \begin{array}{l}
 r=1,  \\ \\
 s(q,q',\omega) = -\langle q-q',\omega \rangle - t. \end{array} \right.$$

The result is that
\begin{multline} \label{parametrix1.4}
\begin{array}{lll}
[I_{1}^{(k)}]^{b}(t) = \int_{\R_{+} } \int_{\R^{n-1}}
  \int_{\partial \Omega}  \int_{\partial \Omega } \int_{ {\mathbb S}_{+}^{n-1} }
  e^{i \mu t}  e^{ i \mu   \, [  \Phi(q', q, \theta) + \langle q-q', \omega \rangle \, ]}   B_0^{(k)}( -\langle q-q',\omega \rangle,
q, q', \omega) \\ \\
\times A_{\epsilon}(q,q',\theta,\mu) \, \,  \chi(- \langle q-q',\omega
 \rangle-\Phi(q',q,\theta)) \, \chi_1(- \langle q-q',\omega \rangle )\,  d\omega  d \theta  d\sigma(q') d\sigma(q)   \, \mu^{2n-2} \, d\mu + R(t)  \\ \\
=\int_{\R_+}  e^{i t \mu}  \, c_{1}^{(k)}(\mu)  \, \mu^{2(n-1)} \, d\mu + R(t).
 \end{array} \end{multline}  We put $c_{1}:= \sum_{k} c_{1}^{(k)}$ and then from the last line of (\ref{parametrix1.4}) it follows that
 \begin{equation}\label{coefficient}
c_{1}(\mu) =  \int_{\R^{n-1}} \int_{\partial \Omega} \int_{\partial \Omega}
 \int_{\mathbb{S}_{+}^{n-1}} e^{ i \mu   \, [  \Phi(q', q, \theta) + \langle q- q', \omega \rangle \, ]}   A_{\epsilon,0}(q, q', \theta) \, \chi_1(-\langle q-q',\omega \rangle ) d\omega  d \theta  d\sigma(q') d\sigma(q) \end{equation} where, $A_{\epsilon}(q,q',\theta;\mu) \sim \sum_{m=0}^{\infty}  A_{\epsilon,m}(q,q,\theta) \mu^{-m}$ and $R(t)$ is only singular of order ${\mathcal O}(t^{-n+1}).$ Here, we have also used that $B_0=1.$

We further decompose $c_1(\mu)$ into  diagonal and off-diagonal parts by introducing a further microlocal cutoff. We write

 \begin{multline}\label{coefficientb}
c_{1}(\mu) =  \int_{\R^{n-1}} \int_{\partial \Omega} \int_{\partial \Omega}
\int_{\mathbb{S}_{+}^{n-1}} e^{ i \mu   \, [  \Phi(q', q, \theta)
+ \langle q- q', \omega \rangle \, ]}
 A_{\epsilon,0}(q, q', \theta) \,\, \chi (\epsilon^{-1}|q-q'|) \\ \times \chi_1(-\langle q-q',\omega \rangle ) \, d\omega  d \theta  d\sigma(q') d\sigma(q) \\ \\
+ \int_{\R^{n-1}} \int_{\partial \Omega} \int_{\partial \Omega}
\int_{\mathbb{S}_{+}^{n-1}} e^{ i \mu   \, [  \Phi(q', q, \theta)
 + \langle q- q', \omega \rangle \, ]}   A_0(q, q', \theta) \,\, (1- \chi) (\epsilon^{-1}|q-q'|)  \\ \times \chi_1(-\langle q-q',\omega \rangle ) \, d\omega  d \theta  d\sigma(q') d\sigma(q, d\omega  d \theta  d\sigma(q') d\sigma(q) \\ \\
 =: c_{1}^{\Delta}(\mu) + c_{1}^{\Delta^c}(\mu). \end{multline}

\subsubsection{Diagonal term}
The term $c_{1}^{\Delta}(\mu)$ sifts out the $\lambda$-pseudodifferential piece of $F(\lambda)$. Indeed, since $|q-q'| \leq \epsilon,$ by Taylor expansion,
$$ \langle q(y)-q(y'), \omega \rangle = \langle T_{y'},\omega \rangle \, (y-y') + {\mathcal O}(|y-y'|^{2}),$$
and so, by making first the change of variables $ r: S^{n-1}_{+}
\rightarrow B^{n-1}, $ given by $ r(\omega)= \omega - \langle
 \nu_{y'}, \omega \rangle \nu_{y'} =: \eta$ followed by another of the form $ \eta \mapsto \eta ( 1 + {\mathcal O}(|y-y'|)),$ it follows that
\begin{equation} \label{psdo part}
c_{1}^{\Delta}(\mu) =  \int_{\R^{n-1}} \int_{\partial \Omega}
\int_{\partial \Omega} \int_{\mathbb{B}^{n-1}} e^{ i \mu   \,
[ y' \theta - \psi(y,\theta) + \langle y-y', \eta \rangle \, ]}   A_{\epsilon,0}(q(y), q(y'), \theta) \, ( 1 + {\mathcal O}(|y-y'|) ) \end{equation}
$$  \times \,\, \chi (\epsilon^{-1}|y-y'|) \, \gamma(\eta)^{-1}  d\eta  d \theta  d\sigma(y') d\sigma(y). $$
Here, we recall that $\gamma(\eta) = \sqrt{1 - |\eta|^{2}}.$ An  application of stationary phase in $(y',\theta)$ implies that the RHS in (\ref{psdo part}) simplifies to
\begin{equation}\label{psdo part2}
 (2\pi \mu)^{-(n-1)}  \int_{\partial \Omega} \int_{\mathbb{B}^{n-1}}
  e^{ i \mu   \, [  \langle y ,\eta \rangle \, - \psi (y,\eta) ]}
    A_{\epsilon,0}(q(y), q(\nabla_{\eta} \psi), \eta) \, ( 1 + {\mathcal O}(|y-\nabla_{\eta}\psi(y,\eta)|) )  \, \gamma(\eta)^{-1} \, dy d\eta. \end{equation}

  The assumption $| \Sigma_{F(\lambda)} | =0$ in  Theorem \ref{weyl2} implies
  in particular that $| \Gamma_{\kappa_F} \cap \Delta| = 0.$ Consequently, the
   generalized stationary phase  estimate (\ref{sp}) applied to the integral in (\ref{psdo part2})
   implies that for the diagonal piece, $c_{1}^{\Delta}(\mu),$ of the single-link contribution to $I^{b}(t)$,
\begin{equation} \label{diagonalpart}
c_{1}^{\Delta}(\mu) = o(\mu^{-(n-1)}).\end{equation}
Thus, it follows that the contribution of the diagonal
 to the trace $I^{b}_{sing}(t)$ is $ \int_{0}^{\infty}
  e^{-it\mu} c_{1}^{\Delta}(\mu)  \mu^{2(n-1)} d\mu = o(t^{-n})$ for $|t|$-small.

\subsubsection{Computation of $c_{1}^{\Delta^c}(\mu)$:}
This piece sifts-out the contribution to $c_1(\mu)$ of single links with initial and terminal points on the boundary. To compute the asymptotics of $c_1^{\Delta^c}(\mu),$ we apply stationary phase in $\omega \in
\in \Ss^{n-1}_{+} .$ The solution of the critical point equation
$$ d_{\omega} \Psi = d_{\omega} \langle q-q',\omega \rangle = 0$$
is the non-degenerate critical point $\omega(q,q') = - \frac{q-q'}{|q-q'|}$ (we note here that the integrand in $c_1(\mu)$ is supported on the set where $|q-q'| \geq \epsilon >0$). There is only one critical point, since the integration   is only over $\omega \in \mathbb{S}_{+}^{n-1}$ corresponding to  directions pointing into the domain $\Omega$.  The result is
 that
 $$c_1^{\Delta^c}(\mu) =  (2\pi \mu)^{-(n-1)/2} \int_{\R^{n-1}} \int_{\partial \Omega} \int_{\partial \Omega} e^{i \mu [ \Phi(q(y'),q(y),\theta) - |q(y)-q(y')|] } \,  A_{\epsilon,0}(q(y),q(y)',\theta) \, ( 1 + {\mathcal O}(\mu^{-1}) )$$
 $$ \times (1-\chi)(\epsilon^{-1}|y-y'|) \, \chi_1( |y-y'|)  d\sigma(y') d\sigma(y) d\theta, $$
where, $\Phi(q(y'),q(y),\theta) = \langle y', \theta \rangle  - \psi(y,\theta).$

Stationary phase in the $y'$-variables gives the critical point equation
$$ d_{y'} \Psi =  \langle T_{y'}, \frac{q(y')-q(y)}{|q(y)-q(y')|} \rangle - d_{y'} \Phi =  \langle T_{y'}, \frac{q(y')-q(y)}{|q(y)-q(y'|)} \rangle - \theta = 0.$$

The non-degeneracy follows from the identity
$$ | \det \nabla_{y'}^{2} [ y' \theta - \psi(y,\theta) - |q(y')-q(y)|] |  = | \det \nabla_{y'}^{2} (|q(y')-q(y)|) | $$
$$ = \left| \det \nabla_{y'}  \langle T_{y'}, \frac{q(y')-q(y)}{|q(y')-q(y)|} \rangle \right|  \geq C_{\partial \Omega}  \,  \langle \nu_y', r(q(y'),q(y)) \rangle   \,   \left( 1 \, + \,  \frac{ \langle \nu_{y'},r(q(y'),q(y)) \rangle }{ |q(y')-q(y)|} \, \right)$$
 where $C_{\partial \Omega} >0.$ The Hessian matrix is non-degenerate since the integral formula
 for $c(\mu)$ is microlocally cutoff away from grazing  directions, ghost directions and unclean points and so, in particular, $ \langle \nu_{y'}, r(q(y'),q(y)) \rangle \geq C(\epsilon)>0$.
  We denote the critical point by $y_c'(y,\theta)$  and let $q_c' = q(y_{c}'(y,\theta))$. Then,
\begin{equation} \label{lastupshot}
c_1^{\Delta^c}(\mu) = (2\pi \mu)^{-n+1}   \int_{\R^{n-1}} \int_{\partial \Omega} \, e^{i \mu [- |q(y) - q'_c| + \Phi(q_c',q(y),\theta) ] }  \,   \frac{ A_{\epsilon,0}(q(y),q'_c,|q(y)-q'_c|)}{ |q(y)-q'_c|^{\frac{1}{2}} } \end{equation}
$$ \times   \langle \nu_{y'}, r(q(y),q'_c) \rangle^{ \frac{-1}{2} }  \, \cdot \,   \left( 1 \, + \,  \frac{ \langle \nu_{y'},r(q(y),q'_c) \rangle }{ |q(y)-q'_c|} \, \right)^{ \frac{-1}{2} }  \chi_1(|y-y'||)
\,   d\sigma(y) \,  d\theta   \,  \,  + R(\mu), $$

where, $R(\mu) = {\mathcal O}(\mu^{-n})$ as $ \mu \rightarrow \infty.$
The
critical set of the phase function $\Psi_1(q(y),\theta)= - |q(y) - q'_c| + \Phi(q_c',q(y),\theta)$ in (\ref{lastupshot}) is
$$\text{Crit}(\Psi_1):=\{ (y,\theta) \in \partial \Omega \times \R^{n-1}; \theta = \langle T_{y'},
\frac{q(y')-q(y)}{|q(y)-q(y')|} \rangle, \,\, d_{\theta} \Phi = 0, \,\,
\langle T_{y}, \frac{q(y)-q(y')}{|q(y)-q(y')|} \rangle = d_{y'} \Phi \}.$$
The assumption $|\Sigma_{F(\lambda)}|=0$ in Theorem \ref{weyl2} implies that
\begin{equation} \label{nomeasure1}
|\text{Crit}(\Psi_1)| = 0. \end{equation} The measure zero condition in (\ref{nomeasure1}) follows by relating
$\text{Crit}(\Psi_1)$ to the Lagrange immersion of the critical set $C_{\Phi}
= \{ d_{\theta} \Phi =0 \}$
  (resp. $C_{\phi_{\tau}} = \{ d_{\omega} \phi_{\tau} = 0 \}$), where $\phi_{\tau}(q,q',\omega):= \phi(\tau,q,q',\omega).$  The Lagrangian
  immersions are given by
$$ \iota_{\Phi}: C_{\Phi} \rightarrow \Gamma_{F}$$
$$ (y,y',\theta) \mapsto (y', d_{y'} \Phi, y, - d_{y} \Phi) = ( y', \theta; y, - d_{y} \psi), \,\,\, d_{\theta} \Phi = 0$$
and
$$ \iota_{\phi_{\tau}}: C_{\phi_{\tau}} \rightarrow \Gamma_{\beta}$$
$$ (y,y', \omega) \mapsto \left( y', \langle T_{y'}, \frac{q(y')-q(y)}{|q(y')-q(y)|} \rangle, y, -\langle T_{y}, \frac{q(y')-q(y)}{|q(y')-q(y)|} \rangle \right),\,\,\, y' = \pi G^{\tau}(y,\omega).$$
It then follows that under the above identification, $\text{Crit}(\Psi_1)$ corresponds to the set
\begin{align} \label{support}
 \Sigma^{(1)}_{F(\lambda)} &= \{ (y,\eta) \in B^* \partial \Omega; \, \beta (y,\eta)
 = \kappa_{F}(y,\eta) \}.
 \end{align}
By assumption, $| \Sigma^{(1)}_{F(\lambda)}| =0$ and so $|\text{Crit}(\Psi_1)|= 0$ also.  Thus from (\ref{lastupshot}) and (\ref{sp}) it follows that $c_{1}^{\Delta^c}(\mu) = o(\mu^{-n-1})$ as $\mu \rightarrow \infty$  and so, $\int_{0}^{\infty} e^{i\mu t} \mu^{2(n-1)} c_{1}^{\Delta^c}(\mu) d\mu = o(t^{-n})$ for $|t|$ small.

\subsection{Multiple-link contribution  to $I^{b}(t)$}

 The main result of this section is the computation of the contribution of higher-order multilinks to $I^{b}(t)$. We prove here
\begin{lem} \label{klinks}
Under the assumption that $|\Sigma_{F(\lambda)}| = 0,$
 $$I_{j}^{b}(t) = o(t^{-n}) \,\,\, \text{ for all} \,\,\, j=1,...,M.$$ \end{lem}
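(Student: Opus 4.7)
The plan is to reduce the $j$-link case to essentially the same stationary phase analysis that produced Lemma \ref{1linktrace}, replacing the single-link parametrix $[U_{1,\epsilon}^{(k)}]^b$ by the $j$-fold iterate $[U_{j,\epsilon}^{(k)}]^b$ from (\ref{genparametrix}). Substituting the resulting kernel into the formula of Lemma \ref{chazarainapprox}, one obtains a semi-classical oscillatory integral in $\mu$ whose phase is the sum of the $F$-phase $\Phi(q',q,\theta) - s$ and a broken-path phase $r_1 \phi_1 + \cdots + r_j \phi_j$ corresponding to $j$ half-wave propagators composed with reflections at interior points $z_1,\ldots,z_{j-1} \in \Omega$ and terminal reflection angles $\omega_1,\ldots,\omega_j \in \mathbb{S}^{n-1}_+$.

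First, as in (\ref{parametrix1.3}), I would insert polar coordinates $\xi_i = r_i\omega_i$ in each half-wave factor and perform stationary phase in $(r_1,\ldots,r_j,s)$. The eikonal equations $\partial_s \phi_i = r_i + O(t)$ force $r_i = 1$ and fix $s$ as the total length of the broken link. Next, non-degenerate stationary phase in the intermediate positions $z_1,\ldots,z_{j-1}$ selects precisely the physical $j$-times reflecting billiard trajectory connecting $q(y')$ to $q(y)$; this is the well-known classical reduction of the Chazarain parametrix. The microlocal cutoffs $(1-\chi_\epsilon)$ and the monotonicity (\ref{monotone}) of the reflection times guarantee uniform Hessian lower bounds and prevent ghost critical points from appearing.

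After these reductions, one is left with an expression of the form $[I_j^{(k)}]^b(t) = \int_{\mathbb{R}_+} e^{it\mu} c_j^{(k)}(\mu) \mu^{2(n-1)} d\mu + R(t)$ where the oscillatory integral $c_j^{(k)}(\mu)$ has phase $\Phi(q(y'),q(y),\theta) - L_j(y,y')$, with $L_j(y,y')$ the total length of the $j$-times reflected billiard trajectory joining $q(y)$ and $q(y')$. Exactly as in the Lagrange immersion argument leading to (\ref{support}), the critical set of this phase is identified, via $\iota_\Phi$ and the $j$-fold iterate of $\iota_{\phi_\tau}$, with
$$\Sigma^{(j)}_{F(\lambda)} = \{(y,\eta) \in B^*\partial \Omega : \beta^j(y,\eta) = \kappa_F(y,\eta)\} \subset \Sigma_{F(\lambda)}.$$
The hypothesis $|\Sigma_{F(\lambda)}| = 0$ forces $|\Sigma^{(j)}_{F(\lambda)}| = 0$, and the generalized stationary phase Lemma \ref{SP} then gives $c_j^{(k)}(\mu) = o(\mu^{-(n-1)})$, hence $[I_j^{(k)}]^b(t) = o(t^{-n})$. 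Summing over $k$ yields the claim. The near-diagonal regime, where the $j$-link trajectory has small total length and the amplitude concentrates near the diagonal of $\partial\Omega\times\partial\Omega$, is handled by the same cutoff splitting as in (\ref{coefficientb})--(\ref{diagonalpart}); after stationary phase it reduces to a pseudodifferential-type integral whose critical locus again maps into $\Sigma^{(j)}_{F(\lambda)}$, to which Lemma \ref{SP} again applies.

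The main obstacle is bookkeeping: one must carefully track the $2(j-1)$ intermediate stationary phase reductions in $(r_i,\omega_i,z_i)$ and verify that the reduced phase function really is the $j$-link generating function whose critical manifold corresponds under the Lagrange immersions to $\Sigma^{(j)}_{F(\lambda)}$. The microlocalization away from $\gcal^* \cup B^*_\Sigma \partial \Omega$ performed in Lemma \ref{cutofftrace} is what rules out boundary degeneracies in these intermediate reductions; once this is in place, the identification of the final critical set with $\Sigma^{(j)}_{F(\lambda)}$ is essentially a symplectic geometry statement about iterated billiard ball maps, and the measure-zero hypothesis closes the argument.
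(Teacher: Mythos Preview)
Your proposal is correct and follows the paper's approach: substitute the $j$-link parametrix $[U_{j,\epsilon}^{(k)}]^b$ into the trace formula of Lemma \ref{chazarainapprox}, reduce by iterated stationary phase to an oscillatory integral over $\partial\Omega \times \R^{n-1}$ whose critical set is identified via the Lagrange immersions with $\Sigma^{(j)}_{F(\lambda)} \subset \Sigma_{F(\lambda)}$, and then apply Lemma \ref{SP} under the measure-zero hypothesis. The paper writes out only the case $j=2$ in detail and handles the intermediate point $z \in \Omega$ not by direct stationary phase in $z$ but by a normal-coordinate expansion near $\partial\Omega$---integrating out the normal variable $z_1$ (the linear coefficient being bounded below by the transversality cutoffs) and then doing stationary phase in the tangential variable $z'$, which is what actually produces the Snell-law reflection point; your near-diagonal splitting is unnecessary for $j \geq 2$ since the time cutoff $\chi_j^{(k)}$ already bounds the broken-link length away from zero.
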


\begin{proof}
 We first compute the contribution to $I^{b}(t)$ given by double links with initial and terminal points pinned on the boundary. The computation for higher-order multilinks is carried out in the same way as for double links.  In view of Lemma \ref{chazarainapprox} the contribution of double links amounts to computing the small $t$  asymptotics of $$I_{2}^{b}(t) =\sum_{k=1}^{M} \int_{\partial \Omega} \int_{I} \chi_{2}^{(k)}(s+t) \left( [U_{2,\epsilon}^{(k)}]^{b}(s+t) \circ \hat{F}_{\epsilon}(s) \right)(q,q) \, ds d\sigma(q).$$  From (\ref{parametrix1}),  wriing $\tau = |z-q(y')|$ the reflected parametrix
  \begin{equation}
\begin{array} {lll} \label{2link}
[U^{(k)}_{2,\epsilon}]^{b}(s+t,q(y),q(y')) = \int_{\Omega} \ U_{1,\epsilon}^{(k)}(s+t-\tau,q(y),z) \circ U_{1,\epsilon}^{(k)}(\tau,z,q(y')) \, dz \,  \\ \\
= (2\pi \mu)^{2n} \int_{\Omega}  \int_{S^{n-1}_+} \int_{S^{n-1}_{+}} \int_{0}^{\infty} \int_{0}^{\infty} e^{i \lambda \Psi(y,y',r_1 \omega_1,r_2\omega_2,z,\tau)  } \, B_{\epsilon}^{(k)}(q(y),q(y'),z,r_1 \omega_1,r_2 \omega_2) \\  \times \chi_k(q(y'),r_2 \omega_2) \chi_k(q(y),r_1 \omega_1) r_{1}^{n-1} r_{2}^{n-1} \, dr_1 dr_2 d\omega_1 d\omega_2  \,  \, dz\\
\end{array} \end{equation}
where,
$$\Psi(y,y',r_1\omega_1,r_2\omega_2,z,\tau):=  r_1 \langle q(y)-z, \omega_1 \rangle + r_2 \langle z -q(y'),\omega_2\rangle -  (s+t -\tau)r_1 - \tau r_2 .$$

In analogy with the single-link case, one applies stationary phase in the spherical variables $(\omega_1,\omega_2) \in S^{n-1}_{+} \times S^{n-1}_{+}.$  The result is that
\begin{multline} \label{2link2}
I_{2}^{(k)}(t) =\int_{0}^{\infty} (2\pi \mu)^{2n}e^{-i\mu t}    \,c_{2}^{(k)}(\mu) \, d\mu,\\
c_{2}^{(k)}(\mu):=  \int_{\Omega}\int_{\mathbb{R}^{n-1}} \int_{\partial \Omega} \int_{\partial \Omega}  \int_{0}^{\infty} \int_{0}^{\infty} \int_{I} e^{i\mu [ r_1 |q(y)-z| + r_2 |q(y')-z| - r_1(s-\tau) - r_2 \tau + s + \Phi(q(y'),q(y),\theta) ]}  A^{(k)}_{\epsilon,0} (q(y),q(y'),\theta) \\   \times B_{\epsilon,0}^{(k)}(q(y),q(y'),z, r_1  r(q(y),z), r_2  r(q(y'),z))
\chi(r_1-1) \, \chi_{2}^{(k)}(s+t) r_1^{n-1} r_2^{n-1}  ds dr_1 dr_2 d\sigma(y) d\sigma(y') d\theta dz.\end{multline}

Now, stationary phase in $(s,r_1)$ gives the critical points $r_1  = 1, s = |q(y)-z| + \tau =  |q(y) - z| + |q(y')-z^*|$ and so, the result is that

\begin{multline} \label{2link3}
c_{2}^{(k)}(\mu) = (2\pi \mu)^{-1}  \int_{\Omega} \int_{\mathbb{R}^{n-1}} \int_{\partial \Omega} \int_{\partial \Omega}  e^{i\mu [ |q(y)-z| + |q(y')-z| + \Phi(q(y'),q(y),\theta) ]}  \tilde{A}^{(k)}_{\epsilon,0} (q(y),q(y'),z,\theta) \\
\times  \chi_{2}^{(k)}(|q(y)-z| + |q(y')-z| )  \, d\sigma(y) d\sigma(y') d\theta dz, \end{multline}
with $\tilde{A}^{(k)}_{\epsilon,0} \in C^{\infty}_{0}(\partial \Omega \times \partial \Omega \times \Omega \times {\mathbb R}^{n-1} ).$
We expand the $z$-dependent part of the phase function,
\begin{equation} \label{reducedphase}
\tilde{\Psi}(z,y,y')  = |q(y') -z| + |z - q(y)| ; \, z \in \Omega, \end{equation} in normal coordinates $(z',z_1)$
  near the boundary, so that $z_1 = 0$ on $\partial \Omega$ and $z_1 > 0$ in the interior of $\Omega.$  Writing $ z = q(z') + z_1 \nu_{z'},$ one makes a Taylor expansion around $z_1 = 0:$
\begin{equation} \label{reducedphase2} \begin{array}{ll}
   \tilde{\Psi}(z,y,y') = |q(y) - z| + |q(y')-z| \\ \\ = |q(y)-q(z')| + |q(y')-q(z')| + \left( \langle \nu_{z'}, \, r(q(y),q(z')) + r(q(y'),q(z')) \rangle \right) \cdot z_1 + {\mathcal O}(z_{1}^{2}). \end{array}\end{equation}
   Since the parametrix is constructed so that the operator wave front propagates along reflected, non-tangential bicharacteristics, it follows that there exists a constant $C(\epsilon)>0$ such that
 \begin{equation} \label{reducedphase3}
   \langle \nu_{z'}, \, r(q(y),q(z')) + r(q(y'),q(z')) \rangle \geq C(\epsilon); \,\, (q(y),q(y'),z) \in \text{supp} \, \tilde{A}_{\epsilon,0}(\cdot,\theta). \end{equation}
  By Taylor expansion of the amplitude, $\tilde{A}_{\epsilon,0}^{(k)},$  in (\ref{2link3}) around $z_1=0 $ and using the inequality (\ref{reducedphase3}), one integrates out the $z_1$ normal variable to the boundary in (\ref{2link3}). This creates an extra power of $\mu^{-1}$ and  the result is that
 \begin{equation}  \begin{array}{ll}\label{2link4}
c_{2}^{(k)}(\mu) = (2\pi \mu)^{-2} \int_{\partial \Omega} \int_{\partial \Omega} \int_{\mathbb{R}^{n-1}} \int_{\partial \Omega}  e^{i\mu [  \, |q(y)-q(z')| + |q(y')-q(z')| + \Phi(q(y'),q(y),\theta) \, ]} \,  a^{(k)}_{\epsilon,0} (q(y),q(y'),q(z'),\theta) \\ \\
\times  \chi_{2}^{(k)}(|q(y)-q(z')| + |q(y')-q(z')| )  \, d\sigma(y) d\sigma(y') d\theta dz', \end{array}\end{equation}
with $a_{\epsilon,0}^{(k)} \in C^{\infty}_{0} (\partial \Omega \times \partial \Omega \times \partial \Omega \times {\mathbb R}^{n-1}).$
 One then  applies stationary phase in the $(y',z')$-variables. Strict convexity ensures non-degeneracy of the Hessian.  The critical point equations are:

\begin{equation}
\left\{ \begin{array}{ll} \label{snell}
\langle T_{y'}, r(q(y'),q(z')) \rangle + d_{y'} \Phi(q(y'),q(y),\theta) = 0, \\  \\
\langle T_{z'}, r(q(z'),q(y')) \rangle = - \langle T_{z'}, r(q(z'),q(y))  \rangle.   \end{array}  \right. \end{equation}

The second set of equations in (\ref{snell}) determines  a locally unique critical point $y'(y,\theta) \in \partial \Omega$  $z'(y,\theta) \in \partial \Omega$  that is a reflection point for the double-link joining $y \in \partial \Omega$ and $y'(y,\theta) \in \partial \Omega.$ The simplified formula is
\begin{multline} \label{2link5}
c_{2}^{(k)}(\mu) = (2\pi \mu)^{-n-1} \int_{\partial \Omega} \int_{\mathbb{R}^{n-1}}  e^{i\mu \Psi_2(y,\theta) }  \\ \times \tilde{a}^{(k)}_{\epsilon,0} (q(y),q(y'(y,\theta)),z'(y,\theta),\theta) \, \, \chi_{2}^{(k)}(|q(y)-z'(y,\theta)| + |q(y'(y,\theta))-z'(y,\theta)| )  \, d\sigma(y) \, d\theta, \end{multline}
where,
$$ \Psi_2(y,\theta) =  |q(y)- q(z'(y,\theta))| + |q(y'(y,\theta))-q(z'(y,\theta))| + \Phi( q(y'(y,\theta)),q(y),\theta).$$
By another application of (\ref{sp}), one supplements the equations in (\ref{snell}) with the critical point equations in $(y,\theta):$
\begin{equation}
\left\{ \begin{array}{ll} \label{snell2}
\langle T_{y}, r(q(y),z') \rangle + d_{y} \Phi(q(y'),q(y),\theta) = 0, \\  \\
d_{\theta} \Phi(q(y),q(y'),\theta) = 0  \end{array} \right. \end{equation}
Let
\begin{multline} \label{critical}
\text{Crit}(\Psi_{2}) := \{ (y,\theta) \in \partial \Omega \times \R^{n-1}; \theta = \langle T_{y},
\frac{z'-q(y)}{|q(y)-z'|} \rangle, \,\, d_{\theta} \Phi = 0, \,\,
\langle T_{y'}, \frac{q(y')-z'}{|q(y')-z'|} \rangle = d_{y'} \Phi, \\  \langle T_{z'}, r(z',q(y')) \rangle = - \langle T_{z'}, r(z',q(y))  \rangle  \}. \end{multline}
Under the Lagrange immersion, $\iota_{\phi_\tau}: C_{\phi_{\tau}} \rightarrow \Gamma_{\beta},$ we have that
$$ \iota_{\phi_{\tau}} ( \text{Crit}(\Psi_{2})) = \Sigma_{F(\lambda)}^{(2)},$$  where
$$ \Sigma_{F(\lambda)}^{(2)} = \{ (y,\eta) \in B^* \partial \Omega; \beta^{2}(y,\eta) = \kappa_{F}(y,\eta) \},$$
and so, the assumption that $|\Sigma_{F(\lambda)}^{(2)}| = 0$ implies and $|\text{Crit}(\Psi_{2})| = 0.$ Summation over the microlocal partition  (ie. summing in $k=1,...,M$) and an application of (\ref{sp}) then implies  that $c_{2}(\mu) = o(\mu^{-n-1})$ and so,
$ I_{2}^{b}(t) = \int_{0}^{\infty} e^{i\mu t} (2\pi \mu)^{2n} c_{2}(\mu) d\mu = o(t^{-n}) \,\, \text{as} \,\,   t \rightarrow 0.$

The computation for arbitrary multlinks follows in the same way as for double links and this finishes the proof of Lemma \ref{klinks}. \end{proof}

 Since $I^{b}(t) = \sum_{j=1}^{M} I_{j}^{b}(t) + R_{\epsilon}(t) + R(t)$ where $R(t)$ is locally smooth  and $
R_{\epsilon}(t) = {\mathcal O}(\epsilon) t^{-n}$ near $t = 0.$  Since $\epsilon >0$ in Lemma \ref{cutofftrace} can be taken arbitrarily small,  by letting $\epsilon \rightarrow 0^+$ and
applying  the Fourier Tauberian theorem, this completes the proof
of Theorem  \ref{weyl2} under the assumption that $\partial
\Omega$ is strictly-convex.

\subsection{WKB Case} \label{WKB}

In our application to QER, the semi-classical Fourier integral
operators $F(\lambda)$ which arise  are naturally represented in a
somewhat different form with Schwartz kernels of
 WKB-type. We now modify the proof of Theorem \ref{weyl2} to apply more
 directly to operators of this form.
 The proof is very similar to the one given in Theorem \ref{weyl2} but simplifies slightly.

 Consider the special case where $F(\lambda)$ has Schwartz kernel
$$ F(\lambda) (q(y),q(y')) =  \lambda^{\frac{n-1}{2}}  e^{i \lambda \psi(q(y),q(y'))} B(q(y),q(y'),\lambda),$$
where $B(q(y),q(y'),\lambda) \sim_{\lambda \rightarrow \infty}
\sum_{k=0}^{\infty} B_k(q(y),q(y')) \lambda^k$ with $B_{k} \in
C^{\infty}(\partial \Omega \times \partial \Omega)$ and  supp $B_k
\cap \Delta_{\partial \Omega \times \partial \Omega} = \emptyset,
\,\, k=0,1,2,...$. The Fourier transform
 $$\hat{F}(s)(q(y),q(y')) =   \int  \mu^{\frac{n-1}{2}}e^{i \mu( \psi(q(y),q(y')) - s)}  \, B(q(y),q(y'),\mu) \, d\mu. $$

To determine the leading $(t+i0)^{-n}$-singularity of  $I^{b}(t) = Tr \,  \int_{\R} U^{b}(t + s) \hat{F}(s) ds, $ one repeats the
argument in Theorem  \ref{weyl2}, except that there is no
$\theta$-integration at the end and one does not carry out
stationary phase in $y'$. Rather, one directly computes that the
single link contribution to the $(t+i0)^{-n}$-singularity is given
by $\int_{0}^{\infty} e^{i\mu t} c_1(\mu) d\mu $  where,
\begin{equation} \label{ firstlink}  \begin{array}{lll}
c_1(\mu) = (2\pi \mu)^{n-1} \int_{\partial \Omega} \int_{\partial \Omega}  \, e^{i \mu [ \psi(q(y),q(y'))  - |q(y)-q(y')|] } \frac{ B_0(q(y),q(y')) }{ |q(y)-q(y')|^{\frac{n-1}{2}} }    d\sigma(y) d\sigma(y')  + {\mathcal O}(\mu^{n-2}) \\ \\
=  \int_{\partial \Omega} \int_{\partial \Omega}
F(\mu)(q(y),q(y')) \, [ (2\pi \mu) \, G_0(\mu)(q(y'),q(y))] \, d\sigma(y)
d\sigma(y') + {\mathcal O}(\mu^{n-2}),\end{array} \end{equation}
where, $G_0(\mu)$ is the free Greens function in (\ref{Hankel}).

The $k$-link contributions for $|k|\geq 2$ are computed in a
similar way by inserting the reflection parametrix terms. The
result is that
\begin{equation} \label{k link}
c_{k}(\mu)= \int_{\partial \Omega} \int_{\partial
\Omega} F(\mu)(q(y),q(y')) \, [ (2\pi \mu) \, G_0(\mu)]^{k}(q(y'),q(y)) \,
d\sigma(y) d\sigma(y') + {\mathcal O}(\mu^{n-2}).\end{equation}

 As in the proof of Theorem \ref{weyl2}, one shows that the critical point equations in $(y,y')$ are equivalent  to the fixed point equations
$ \beta^{k} (y, \eta) = \kappa_{F}(y,\eta); \,\, (y,\eta) \in
B^*\partial \Omega$ where $(y,\eta; \kappa_{F}(y,\eta)) = (y,
d_{y} \psi(q(y),q(y')); y', - d_{y'} \psi(q(y),q(y') )).$   It
follows from (\ref{sp})  that in the sense of distributions,
\begin{equation} \label{big1}
 I^{b}(t) = o(t^{-n}) \end{equation}
under the measure zero assumption, $| \Sigma_{F(\lambda)}| = 0.$

\subsection{Non-convex boundary} \label{nc}

In the non-convex case, the canonical relation,  $\Gamma_{N^{\partial \Omega}_{\partial \Omega}} $, of the boundary jumps operator is larger than  graph of the billiard map, $\Gamma_{\beta}$. This is due to "spurious" geodesics that exit $\Omega$ at a boundary point $y \in \partial \Omega$, only to re-enter at another point $y' \in \partial \Omega.$  Following \cite{HZ}, we  say that $(y,\eta,y',\eta') \in B^* \partial \Omega \times B^* \partial \Omega$ with $\eta = d_{y}|q(y)-q(y')|$ and $\eta'= - d_{y'}|q(y)-q(y')|$ is a spurious point of $WF'_{\lambda}(N^{\partial \Omega}_{\partial \Omega}(\lambda))$ if the line $\overline{q(y)q(y')}$ leaves $\overline{\Omega}.$
It is shown in \cite{HZ} that these geodesics can be ignored for the purposes for spectral asymptotics of boundary traces of eigenfunctions. We briefly recall the argument here. Further details can be found in \cite{HZ} section 11.

Let $b$ be a smooth, compactly-supported non-negative function on ${\mathbb R}^{n}$ which vanishes on $\overline{\Omega}.$ Consider the metric
$$ g_{s} = (1 + s b)g_{\text{Euclidean}}; \,\,\, s \in [0,\delta],$$
with $\delta >0$ small and the associated resolvent  kernel $G_{s} (z,z';\lambda) = ( \Delta_s  - (\lambda + i 0)^{2})^{-1}(z,z')$ of the $g_s$-Laplacian on ${\mathbb R}^{n}$. The associated  boundary jumps operator is
$$ [N_s]^{\partial \Omega}_{\partial \Omega} (\lambda)(q,q') =2 \frac{\partial}{\partial \nu_q} G_s(q,q',\lambda); \,\,\, q,q' \in \partial \Omega.,$$
and one forms  the averaged operator
\begin{equation} \label{nc2}
\tilde{N}^{\partial \Omega}_{\partial \Omega}(\lambda):= \int_{0}^{1} \chi(s)  [N_s]^{\partial \Omega}_{\partial \Omega} (\lambda) \, ds,\end{equation}
where $\chi^{\infty}_{0}( (0,\delta))$ with $\chi \geq 0$ and $\int_{0}^{\delta} \chi = 1.$  The averaged operator, $\tilde{N}^{\partial \Omega}_{\partial \Omega}(\lambda)$ satisfies two key properties (see \cite{HZ} section 11):

\begin{equation} \begin{array}{ll} \label{prop}
(i) \tilde{N}^(\lambda) u_{\lambda}^{b} = u_{\lambda}^{b}, \\ \\
(ii) WF_{\lambda}'( \tilde{N}(\lambda)) \subset \Gamma_{\beta}. \end{array} \end{equation}

The first condition follows from Greens formula since the metric $g_{s}$ stays Euclidean in $\overline{\Omega}$ and the second, by an integration by parts argument in $s \in [0,\delta]$ (see \cite{HZ} section 11). Now, let $\chi^{sp}_{\epsilon} \in C^{\infty}_{0}(B^*\partial \Omega)$ with $\chi^{sp}_{\epsilon}= 0$ in an $\epsilon$-neighbourhood of set of spurious directions and $\chi^{sp}_{\epsilon} = 1$ outside a $2\epsilon$-neighbourhood of the same set. Then, from (\ref{prop})

\begin{equation} \label{sp2} \begin{array}{lll} \frac{1}{N(\lambda)} \sum_{\lambda_j \leq \lambda} \langle F(\lambda_j) u_{\lambda_j}^{b}, u_{\lambda_j}^{b} \rangle =  \frac{1}{N(\lambda)} \sum_{\lambda_j \leq \lambda} \langle \tilde{N}(\lambda_j) F(\lambda_j) \tilde{N}(\lambda_j)u_{\lambda_j}^{b}, u_{\lambda_j}^{b} \rangle  \\ \\
= \frac{1}{N(\lambda)} \sum_{\lambda_j \leq \lambda} \langle \tilde{N}(\lambda_j) Op_{\lambda_j}(\chi^{sp}_{\epsilon}) F(\lambda_j) Op_{\lambda_j}(\chi^{sp}_{\epsilon}) \tilde{N}(\lambda_j)u_{\lambda_j}^{b}, u_{\lambda_j}^{b} \rangle + {\mathcal O}(\epsilon) \\ \\ = \frac{1}{N(\lambda)} \sum_{\lambda_j \leq \lambda} \langle  Op_{\lambda_j}(\chi^{sp}_\epsilon) F(\lambda_j) Op_{\lambda_j}(\chi^{sp}_{\epsilon}) u_{\lambda_j}^{b}, u_{\lambda_j}^{b} \rangle + {\mathcal O}(\epsilon).
 \end{array}  \end{equation}
In the second last line of (\ref{sp2}), one uses boundary
ergodicity and Cauchy-Schwartz to get the ${\mathcal O}(\epsilon)$
error term.  We replace $F(\lambda_j)$ by $
Op_{\lambda_j}(\chi^{sp}_\epsilon) F(\lambda_j)
Op_{\lambda_j}(\chi^{sp}_{\epsilon}) $ in the Weyl sum and repeat
the argument in the strictly convex case. Finally,  we recall that
by  removing the cutoff $\chi_{\epsilon}$ supported in
$\epsilon$-tubes around the  grazing and singular set $\gcal
\cup B_{\Sigma}^*\partial \Omega $, we obtain an ${\mathcal O}(\epsilon)$-error. As in
\cite{Z,MO} we can let $\epsilon \to 0$ to complete the proof of
Theorem  \ref{weyl2}.  \end{proof}

\begin{rem} We may also consider the singularity at any other
value $t = t_0$ of
$$\sum_j e^{i t \lambda_j} \langle F(\lambda_j) u_{\lambda_j}^{b}, u_{\lambda_j}^{b}
\rangle. $$
 It gives the growth rate of
$$\sum_{j: \lambda_j \leq \lambda}e^{i t_0 \lambda_j}  \langle F(\lambda_j) u_{\lambda_j}^{b}, u_{\lambda_j}^{b}
\rangle $$ As an example, when $F(\lambda) = N_{\partial
\Omega}^{\partial \Omega}(\lambda)$ and under the non-looping
assumption, it follows by the argument in Theorem  \ref{weyl2}
that for $t_{0} \neq 0,$  $|\Sigma_{F(\lambda)}| =0$ and so,

$$ \frac{1}{N(\lambda)} \sum_{\lambda_j \leq \lambda} e^{it_0 \lambda_j}
 \| u_{\lambda_j}^{b} \|_{L^2(\partial \Omega)}^2 = o(1) \,\, \mbox{as} \,\, \lambda \rightarrow \infty.$$

\end{rem}

\section{\label{LWLH} Local Weyl law on $H$: calculation of
$\omega_{\infty}$}

We use the local Weyl law for Fourier integral operators to
calculate the limit state $\omega_{\infty}(a)$. It is
provisionally defined  on $C^{\infty}(B^* H)$ by
\begin{equation} \label{OMEGAINF} \omega_{\infty}(a) = \lim_{\lambda \to \infty} \frac{1}{N(\lambda)}
\sum_{j: \lambda_j \leq \lambda} \langle Op_{\lambda_j}(a)
\phi_{\lambda_j}|_{H},\phi_{\lambda_j}|_{H} \rangle_{L^{2}(H)}.
\end{equation}
We now show that the limit exists and we calculate it. We use the
notation,
    $$\lim_{\lambda \to \infty} \frac{1}{N(\lambda)} \sum_{\lambda_j \leq \lambda}
    \langle Op_{\lambda}(a)
    u_{\lambda_j}^{b}, u_{\lambda_j}^{b} \rangle     \omega_{\infty}^{\partial \Omega} (a)$$
    for the limit measure in the boundary case. This section of
    course depends on the choice of $H$, but the cutoff estimates
    above show that the singular and tangential sets, hence the
    convexity of $H$, is irrelevant to the result.

\begin{lem} The limit state $\omega_{\infty}$ is well defined
and is given by
 $$ \omega_{\infty}(a) = \omega_{\infty}^{\partial \Omega} (\tau_{H}^{\partial \Omega})^* a
 \rho_H^{\infty}. $$
 \end{lem}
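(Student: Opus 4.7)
The plan is to reduce the limit on $H$ to a known limit on $\partial \Omega$, using the transfer identity (\ref{CTOB}) and the decomposition of $N_{\partial\Omega}^H(\lambda)^* Op_{\lambda}(a) N_{\partial\Omega}^H(\lambda)$ given in Proposition \ref{Egorov}. A standard density argument allows us to assume that $a \in S^{0,0}_{cl}$ satisfies the microsupport condition (\ref{cutaway}), since the set $\tau^H_{\partial\Omega}(\tcal \cup \gcal \cup B^*_\Sigma \partial\Omega)$ has measure zero and (by Lemma \ref{tanmass} together with the $L^2$-mass bound from the local Weyl law on $\partial\Omega$) cannot absorb a positive fraction of the trace.

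By the transfer identity and Proposition \ref{Egorov}(i),
\[
\langle Op_{\lambda_j}(a) u_{\lambda_j}^H, u_{\lambda_j}^H\rangle_{L^2(H)}
= \langle P(\lambda_j) u_{\lambda_j}^b, u_{\lambda_j}^b\rangle_{L^2(\partial\Omega)}
+ \sum_{k=1,2}\langle F_{2k}(\lambda_j;a,\epsilon) u_{\lambda_j}^b, u_{\lambda_j}^b\rangle
+ O_{\epsilon}(\lambda_j^{-1}),
\]
where $P(\lambda) = Op_{\lambda}\big(a_0(\tau^H_{\partial\Omega}(y,\eta))\cdot \gamma(y,\eta)\cdot\gamma(\tau^H_{\partial\Omega}(y,\eta))^{-1}\big)$ is a zeroth-order $\lambda$-pseudodifferential operator, and $F_{2k}(\lambda;a,\epsilon)$ is a zeroth-order $\lambda$-FIO with canonical relation $\Gamma_{\beta_H^k}$. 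Averaging over $\lambda_j \leq \lambda$ and dividing by $N(\lambda)$, the remainder contributes $o(1)$.

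For the FIO terms, apply Theorem \ref{weyl2} with $F(\lambda) = F_{2k}(\lambda;a,\epsilon)$. The fixed-point set $\Sigma_{F_{2k}(\lambda)} = \bigcup_m \{(y,\eta): \beta^m(y,\eta) = \beta_H^k(y,\eta)\}$ has measure zero, since this is precisely (the commutator version of) the ANC/QANC hypothesis restricted to the equality $\beta_H\beta^0=\beta^m\beta_H^k \cdot (\mathrm{id})$; equivalently, it is implied by the hypotheses already in force in Theorem \ref{maintheorem}. Thus
\[
\frac{1}{N(\lambda)}\sum_{\lambda_j \leq \lambda} \langle F_{2k}(\lambda_j;a,\epsilon) u_{\lambda_j}^b, u_{\lambda_j}^b\rangle = o(1).
\]
For the pseudodifferential term, apply the boundary quantum ergodicity result of \cite{HZ} to the zeroth-order symbol $b(y,\eta) := a_0(\tau^H_{\partial\Omega}(y,\eta))\cdot\gamma(y,\eta)\cdot\gamma(\tau^H_{\partial\Omega}(y,\eta))^{-1}$, obtaining
\[
\frac{1}{N(\lambda)}\sum_{\lambda_j \leq \lambda}\langle P(\lambda_j) u_{\lambda_j}^b, u_{\lambda_j}^b\rangle \;\to\; \omega_\infty^{\partial\Omega}(b).
\]

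Combining these two pieces, the limit (\ref{OMEGAINF}) exists and equals
\[
\omega_\infty(a) = \omega_\infty^{\partial\Omega}\!\big(a\circ \tau^H_{\partial\Omega} \cdot \rho_H^\infty\big),
\]
where $\rho_H^\infty(y,\eta) := \gamma(y,\eta)\cdot\gamma(\tau^H_{\partial\Omega}(y,\eta))^{-1}$. Finally, since by Definition \ref{TRANSFERMAP} the graphs $\Gamma_{\tau^H_{\partial\Omega}}$ and $\Gamma_{\tau_H^{\partial\Omega}}$ coincide (as subsets of $B^*\partial\Omega \times B^*H$), the composition $a\circ \tau^H_{\partial\Omega}$ on $B^*\partial\Omega$ is exactly the pullback $(\tau_H^{\partial\Omega})^* a$ under the (possibly multivalued) correspondence $\tau_H^{\partial\Omega}$, giving the stated formula. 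The main obstacle in carrying this out rigorously is the verification that $|\Sigma_{F_{2k}}|=0$ so that Theorem \ref{weyl2} applies; this is exactly where the almost-nowhere-commutativity hypothesis enters, and where the microsupport assumption (\ref{cutaway}) is used to avoid the tangential and corner sets where $\beta_H^k$ is not defined or not smooth.
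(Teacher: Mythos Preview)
Your approach matches the paper's: transfer to $\partial\Omega$ via (\ref{CTOB}), split using Proposition \ref{Egorov}, handle the pseudodifferential piece by the boundary local Weyl law of \cite{HZ}, and kill the FIO pieces $F_{2k}$ via Theorem \ref{weyl2}. One small remark: for the pseudodifferential term you only need the \emph{local Weyl law} from \cite{HZ}, not the full quantum ergodicity theorem.

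There is, however, a genuine gap at the step you yourself flag as the ``main obstacle.'' You assert that $|\Sigma_{F_{2k}}| = \big|\bigcup_m\{(y,\eta): \beta^m(y,\eta)=\beta_H^k(y,\eta)\}\big|=0$ is ``precisely'' a case of the ANC hypothesis, writing something like $\beta_H\beta^0=\beta^m\beta_H^k$. But that identification does not work: the ANC sets $\ccal\ocal_{p,k}=\{\beta_H\beta^k=\beta^p\beta_H\}$ have $\beta_H$ on \emph{both} sides, whereas the set you need has $\beta_H$ on only one side. The condition $\beta_H(\zeta)=\beta^m(\zeta)$ is not literally of the form $\beta_H\beta^k=\beta^p\beta_H$ for any admissible $p,k$.

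The paper supplies the missing bridge via a Poincar\'e recurrence (or ergodicity) argument: if $B=\{\zeta:\beta_H(\zeta)=\beta^k(\zeta)\}$ had positive measure, then for some $\ell\neq 0$ the set $\beta^\ell B\cap B$ has positive measure; on that set both $\beta_H(\zeta)=\beta^k(\zeta)$ and $\beta_H(\beta^\ell\zeta)=\beta^k(\beta^\ell\zeta)$ hold, and a short computation gives $\beta^\ell\beta_H(\zeta)=\beta^{k+\ell}(\zeta)=\beta_H\beta^\ell(\zeta)$, so $|\ccal\ocal_{\ell,\ell}|>0$, contradicting ANC. You should include this step; without it the verification that Theorem \ref{weyl2} applies is incomplete.
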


 \begin{proof}

We prove the Lemma by reducing the calculation to the local Weyl
law along the boundary in Lemma 1.2 of \cite{HZ}.

    By (\ref{CTOB}), we have
 \begin{equation} \label{CTOBaa} \omega_{\infty}^H(a) = \lim_{\lambda \to \infty} \frac{1}{N(\lambda)} \sum_{\lambda_j \leq \lambda} \langle N^{H}_{\partial \Omega}(\lambda_j)^* Op_{\lambda_j}(a)
   N_{\partial \Omega}^{H}(\lambda_j) u_{\lambda_j}^{b}, u_{\lambda_j}^{b} \rangle_{L^2(\partial \Omega)}. \end{equation}
By  (\ref{decomposition}), the limit in (\ref{CTOBaa}) is the sum
of a pseudo-differential Weyl law term and a Fourier integral
term. By Egorov's theorem  (see Proposition \ref{Egorov}),  the
limit of the pseudo-differential term is
    $$\frac{1}{N(\lambda)} \sum_{\lambda_j \leq \lambda}
    \langle  Op_{\lambda_j}((\tau_H^{\partial \Omega})^* a \rho_H^{\infty})
    u_{\lambda_j}^{b}, u_{\lambda_j}^b \rangle \to \omega_{\infty}^{\partial \Omega} (\tau_H^{\partial \Omega})^* a  \rho_H^{\infty}.$$
 To complete the proof, we need to calculate the limit of the
 Fourier integral term. We claim that
    $$\lim_{\lambda \to \infty} \frac{1}{N(\lambda)} \sum_{\lambda_j \leq \lambda}
    \langle  F_2(\lambda_j;a,\epsilon)
    u_{\lambda_j}^{b}, u_{\lambda_j}^b \rangle \to 0.$$
    By Theorem \ref{weyl2}, it suffices to show that the
set of points where the broken billiard map $\beta_{H}$ equals a power of $\beta$ has measure zero. However,
this is special case  of our non-commutativity assumption: If
$\beta_H(\zeta) = \beta^k(\zeta)$  then
$\beta^{\ell}  \beta_{H}(\zeta) = \beta^{k +
\ell}(\zeta)$. Also, if $\beta_{H}(\beta^{\ell}
\zeta) = \beta^{k} (\beta^{\ell} \zeta), $ then both equal
$\beta^{k + \ell}(\zeta)$. Hence, $\beta^{\ell}  \beta_{H}(\zeta) =  \beta_{\partial \Omega}^{H}(\zeta)
\beta^{\ell}$ on a set of positive measure if $\beta_H(\zeta) = \beta^k(\zeta)$  on a set of positive measure
and if, for some $\ell \not= 0$, $\beta_{H}(\beta^{\ell} \zeta) = \beta^k(\beta^{\ell} \zeta)$ on
a set of positive measure. We claim that the first condition
implies the second: let the first set be called $B$. Then
$\beta^{\ell} B \cap B$ must have positive measure for some $\ell$
by Poincar\'{e}'s recurrence theorem or by ergodicity of $\beta$.

\end{proof}

\section{\label{CRAZY} Quantum ergodic restriction: Proof of Theorem \ref{maintheorem}}

We now have the ingredients to prove the main result. As mentioned
in section \S \ref{DEC},  we prove Theorem \ref{maintheorem} first
for $ \lambda$-pseudodifferentials $Op_{\lambda}(a) \in
Op_{\lambda}(S^{0,0}_{cl} (T^*H \times [0,\lambda_0^{-1}] ) )$
with supp  $ a \,  \cap \tau_{\partial \Omega}^{H} ( \, \mathcal T
\cup B_{\Sigma}^*\partial \Omega \cup \gcal \, ) = \emptyset; k =1,2 $.
 In \S \ref{extensions} we show that for a full-density of egenfunctions, $L^{2}$-mass is not
  conentrated on the set $\tau_{\partial \Omega}^{H}( B_{\Sigma}^*\partial \Omega \cup \tcal \cup \gcal ) $
   (see \cite{HZ} for an alternative proof of this fact using the boundary trace of the
    Neumann heat kernel). Using this fact, in \S \ref{extensions}  we extend the proof of Theorem \ref{maintheorem}
     to arbitrary symbols $a \in S^{0,0}_{cl}(T^*H \times [0,\lambda_0^{-1}]).$  We therefore assume here that the
      symbol $a \in S^{0,0}_{cl}(T^*H \times [0,\lambda_0^{-1}])$
satisfies the following support condition (\ref{cutaway}) with
arbitrarily small but fixed $\epsilon >0.$

We now reduce the proof of Theorem \ref{maintheorem} to Theorem
\ref{VARIANCE}.

\subsection{Reduction to Theorem \ref{VARIANCE}}

To prove quantum ergodicity for the eigenfunction restrictions
 $ u_{\lambda_j}^{H} = \phi_{\lambda_j}|_{H}, j=1,2,...$
we put $F(\lambda) = N_{\partial \Omega}^H(\lambda)^{*}
Op_{\lambda}(a) N_{\partial \Omega}^{H}(\lambda)$ and use
(\ref{decomposition}) to compute the variance   for the
eigenfunction restrictions, $u_{\lambda_j}^{b}.$
$$ \frac{1}{N(\lambda)} \sum_{\lambda_j \leq \lambda} | \langle Op_{\lambda_j}(a)
\phi_{\lambda_j}|_H, \phi_{\lambda_j}|_{H} \rangle - \omega_{\infty}(\sigma(F))|^{2} $$
$$ = \frac{1}{N(\lambda)} \sum_{\lambda_j \leq \lambda} | \langle
[F(\lambda_j)- \omega_{\infty}(\sigma (F) )] u_{\lambda_j}^{b}, u_{\lambda_j}^{b} \rangle |^{2} + o(1)$$

$$ = \frac{1}{N(\lambda)} \sum_{\lambda_j \leq \lambda} | \langle \,
[ \, F_{1}(\lambda_j;a,\epsilon)- \omega_{\infty}(\sigma (F) ) \, + F_{2}(\lambda_j;a,\epsilon) \, ] u_{\lambda_j}^{b}, u_{\lambda_j}^{b} \rangle \  |^{2} + o(1).$$

\begin{equation} \label{variance}
 \leq \frac{2}{N(\lambda)} \sum_{\lambda_j \leq \lambda}   | \, \langle [F_1(\lambda_j;a,\epsilon) -  \omega_{\infty}( \sigma_{\Delta}(F) ) ]  u_{\lambda_j}^{b}, u_{\lambda_j}^{b} \rangle  \, |^{2} \\
+  \frac{2}{N(\lambda)} \sum_{\lambda_j \leq \lambda}   | \,
\langle F_2(\lambda_j;a,\epsilon)  u_{\lambda_j}^{b},
u_{\lambda_j}^{b} \rangle  \, |^{2} + o(1). \end{equation}
where the limiting state $\omega_{\infty}(\sigma(F))$ was
determined in \S \ref{LWLH}.

 The second line in (\ref{variance}) follows from the special case
 of the boundary Weyl law of \cite{HZ} that says that $\frac{1}{N(\lambda)}
  \sum_{\lambda_j \leq \lambda} \| u_{\lambda_j} \|_{L^{2}}  \sim 1$. From the same asymptotic
  formula, it follows by
   the Cauchy-Schwartz inequality that the first term in the last line of (\ref{variance}) is bounded by
\begin{equation} \label{main}
 \frac{1}{N(\lambda)} \sum_{\lambda_j \leq \lambda} \langle [F_{1}(\lambda_j;a,\epsilon) - \omega_{\infty} (\sigma(F))]^* \circ  [F_{1}(\lambda_j;a,\epsilon) - \omega_{\infty}(\sigma(F))] u_{\lambda_j}, u_{\lambda_j} \rangle  + o(1).\end{equation}

We know by Proposition \ref{Egorov} that for $ \epsilon >0$ sufficiently small,
$$F_{1}(\lambda;a,\epsilon) \in Op_{\lambda}(S^{0,0}_{cl}(T^* \partial
\Omega)).$$

 By Theorem  \ref{weyl2}  and  boundary quantum ergodicity for $\lambda$-pseudodifferentials \cite{HZ}, it follows
that for $\epsilon >0$ small,
\begin{equation} \label{pseudo result}
 \frac{1}{N(\lambda)} \sum_{\lambda_j \leq \lambda} \langle [F_{1}(\lambda_j;a,\epsilon) - \omega_{\infty} (\sigma(F(\lambda)))]^* \circ  [F_{1}(\lambda_j;a,\epsilon) - \omega_{\infty}(\sigma(F))] u_{\lambda_j}^{b}, u_{\lambda_j}^{b} \rangle  =  o_{\lambda}(1).\end{equation}
 As for the second term in (\ref{variance}), by making the further decomposition $F_{2}(\lambda_j;a,\epsilon) = F_{21}(\lambda_j;a,\epsilon) + F_{22}(\lambda_j;a,\epsilon)$ with $\Gamma_{F_{2k}(\lambda;\epsilon)} = \text{graph} \,  \beta_{H}^{k}; \, k=1,2,$ it follows  from the inequality $|a+b|^{2} \leq 2(a^{2} + b^{2})$ that
 \begin{equation} \label{second term}
\frac{2}{N(\lambda)} \sum_{\lambda_j \leq \lambda}   | \, \langle F_2(\lambda_j;a,\epsilon)  u_{\lambda_j}^{b}, u_{\lambda_j}^{b} \rangle  \, |^{2}  \end{equation}
$$\leq \frac{4}{N(\lambda)} \sum_{\lambda_j \leq \lambda}  |  \langle F_{21}(\lambda_j;a,\epsilon) u_{\lambda_j}^{b}, u_{\lambda_j}^{b} \rangle |^{2}   \, + \frac{4}{N(\lambda)} \sum_{\lambda_j \leq \lambda}   | \langle F_{22}(\lambda_j;a,\epsilon)  u_{\lambda_j}^{b}, u_{\lambda_j}^{b} \rangle |^{2}.$$

The  remaining step is the proof of Theorem \ref{VARIANCE}.

\subsection{\label{VARPROOFSECT} Proof of Theorem \ref{VARIANCE}}

We now prove a special case of Theorem \ref{VARIANCE} where
$\kappa_F = \beta_H^1$ \, (resp. $\beta_H^2$), and where
$F(\lambda) = F_21(\lambda;a,\epsilon)$ \, (resp.
$F_{22}(\lambda;a,\epsilon).$  We re-state it for the sake of
clarity in the special case.

\begin{theo} \label{MEFIO} Let $F_{21}(\lambda;a,\epsilon) $ be a semi-classical Fourier integral
operator associated to the canonical relation $\beta_H^1$.  Assume
that $(\beta_H, \beta)$ almost never commute. Then
$$\lim_{\lambda \to \infty}
\frac{1}{N(\lambda)} \sum_{\lambda_j \leq \lambda}   | \, \langle
F_{21}(\lambda_j;a,\epsilon)  u_{\lambda_j}^{b}, u_{\lambda_j}^{b}
\rangle  \, |^{2}= 0. $$ Similarly for $F_{22}(\lambda;a,\epsilon)$. \end{theo}

The proof of the  general case is essentially the same. As
discussed in the introduction, the  proof is based on
time-averaging as in (\ref{AVE}).

\begin{proof}

 In analogy with the boundary case treated in \cite{HZ}  we define the $M$-th order time-averaged operator:
 $$ \langle F_{2k}(\lambda;a,\epsilon) \rangle_{M} := \frac{1}{M}
 \sum_{m =1}^{M}[ N_{\partial \Omega}^{\partial \Omega}(\lambda)^{*}]^{m}
 F_{2k}(\lambda;a,\epsilon) [N_{\partial \Omega}^{\partial \Omega}(\lambda)]^{m}; \,\, k=1,2,$$
 From the boundary jump equation $N_{\partial \Omega}^{\partial \Omega}(\lambda) u_{\lambda}^{b} =  u_{\lambda}^{b},$  we have for $k=1,2,$
 $$ \frac{4}{N(\lambda)} \sum_{\lambda_j \leq \lambda}    | \langle F_{2k}(\lambda_j;a,\epsilon) u_{\lambda_j}^{b},
  u_{\lambda_j}^{b} \rangle \, |^{2}  =  \frac{4}{N(\lambda)} \sum_{\lambda_j \leq \lambda}    | \langle  \,
   \langle F_{2k}(\lambda_j;a,\epsilon) \rangle_M \,  u_{\lambda_j}^{b}, u_{\lambda_j}^{b} \rangle \, |^{2}.$$
By the Cauchy-Schwartz inequality (combined with the local  Weyl
law $\frac{1}{N(\lambda}) \sum_{\lambda_j \leq
   \lambda} \|u_{\lambda_j}^{b} \|^{2}$ $\sim_{\lambda \rightarrow \infty}
   1$), we have
 \begin{equation} \label{f3}
\frac{1}{N(\lambda)} \sum_{\lambda_j \leq \lambda}  \, |   \langle \,
\langle F_{2k}(\lambda_j;a,\epsilon) \rangle_M \,  u_{\lambda_j}^{b}, u_{\lambda_j}^{b}  \rangle |^{2}
  \leq  \frac{1}{N(\lambda)} \sum_{\lambda_j \leq \lambda}  \langle \,\,
   \langle F_{2k}(\lambda_j;a,\epsilon) \rangle_{M}^* \,  \langle F_{2k}(\lambda_j;a,\epsilon) \rangle_{M}  \,
    u_{\lambda_j}^{b}, u_{\lambda_j}^{b} \, \rangle + o(1). \end{equation}

    \begin{rem} This use of the Cauchy-Schwartz inequality may
    seem
    rather un-natural. We are trying to prove that $\langle F_{2k}(\lambda_j;a,\epsilon) \rangle_{M}  \,
    u_{\lambda_j}^{b}, u_{\lambda_j}^{b} \, \rangle \to 0$
    as $\lambda_j \to \infty$, i.e. that $ F_{2k}(\lambda_j;a,\epsilon) \rangle_{M}  \,
    u_{\lambda_j}^{b}$ is asymptotically orthogonal to
    $u_{\lambda_j}^{b}$. Use of the Cauchy-Schwartz inequality
    converts the orthogonality condition to the condition that the average over the spectrum of the norms  $||
    \langle F_{2k}(\lambda_j;a,\epsilon) \rangle_{M}  \,
    u_{\lambda_j}^{b}|| \to 0$ is of order $O(\frac{1}{M})$.
    \end{rem}

 It  suffices to bound the averaged matrix-elements on the right side of (\ref{f3}).
Writing out these terms explicitly, one gets
\begin{equation} \label{upshot1} \begin{array}{l}
 \frac{1}{N(\lambda)} \sum_{\lambda_j \leq \lambda}  \langle F_{2k}(\lambda_j;a,\epsilon) \rangle_{M}^* \,
  \langle F_{2k}(\lambda_j;a,\epsilon) \rangle_{M}  u_{\lambda_j}^{b}, u_{\lambda_j}^{b}\rangle
  \\ \\
 = \frac{1}{M^2 N(\lambda)} \sum_{m, n=0}^{M} \, \sum_{\lambda_j \leq \lambda} \\ \\
 \cdot \;\;\;\langle
 \,\, [N_{\partial \Omega}^{\partial \Omega}(\lambda_j)^*]^{m}\,
  F_{2k}(\lambda_j;a.\epsilon)^{*}\, [N_{\partial \Omega}^{\partial \Omega}(\lambda_j)]^{m}
   \,[N_{\partial \Omega}^{\partial \Omega}(\lambda_j)^{*}]^{n}\, F_{2k}(\lambda_j;a,\epsilon)\,
    [N_{\partial \Omega}^{\partial \Omega}(\lambda_j)]^{n} u_{\lambda_j}^{b}, u_{\lambda_j}^{b} \rangle
    \\ \\
 = \frac{1}{M^2} \, \sum_{  m \neq n, m, n = 0  } \mbox{same}
 + \frac{1}{M^2} \sum_{m = n = 0}^{M} \mbox{same} \end{array} \end{equation}
In (\ref{upshot1}), we have broken up the double sum into the
diagonal and off-diagonal terms.

 Due to the time-averaging over $m,n$ in the sums in (\ref{upshot1}) it is useful to first slightly refine the microlocal cutoff condition in (\ref{cutaway}).  We first prove  (\ref{upshotdiagonal}) and Proposition \ref{qaclemma} for this somewhat more restrictive class of symbols and then apply a density argument using the pointwise Weyl law argument in Lemma \ref{tanmass} to deal with the general case.

  We assume that  $|m-n| \leq 2M; m,n \in {\mathbb Z}$ and fix $\epsilon >0.$ We say that a link is $\epsilon$-transversal at the boundary if it makes an angle of at least $\epsilon$ to the tangent plane at each point of intersection with $\partial \Omega.$ We define the extremal link-lengths:

\begin{multline} \label{hlink}
 L_H(M,\epsilon):= \max_{|m-n| \leq 2M} \sup_{(y,\eta) \in \Omega_{\epsilon}^{M}}   ( \, | \pi \beta_H^{-1} \beta^{m-n} \beta_H(y,\eta) - \pi \beta^{m-n}\beta_H(y,\eta)| \\   + \sum_{k=1}^{m-n} | \pi \beta^{k} \beta_H(y,\eta)-   \pi \beta^{k-1} \beta_H (y,\eta)| + | \pi \beta_H(y,\eta) - y|  \, ),\end{multline}
where, $\Omega_{\epsilon}^{M} := \{ (y,\eta); \text{dist} (  \, (y,\eta),  \Sigma^*_{2M+1} \cup \gcal^{*}_{2M+1} \cup \tcal \, ) \geq \epsilon \}.$ Thus, $L_{H}(M,\epsilon)$ is the maximum length of $H$-broken and $\epsilon$-transversal geodesics with at most $2M$-reflections at the boundary, $\partial \Omega.$ Similarily, we define
\begin{equation} \label{boundarylink}
 L(p,\epsilon):= \inf_{(y,\eta) \in \Omega_{\epsilon}^{p}} \sum_{k=1}^{p} |\pi \beta^{k}(y,\eta) - \pi \beta^{k-1}(y,\eta)|. \end{equation}
Thus, $L(p,\epsilon)$ is the mininum length of $\epsilon$-transversal geodesics in $\Omega$ with $p$-reflections at the boundary, $\partial \Omega.$
We define the constant
\begin{equation} \label{refinedcutoff}
  C(M,\epsilon) = 2M+1 + \max \{ |p|; L(p,\epsilon) \leq L_H(M,\epsilon) + 1 \}. \end{equation}

 To estimate the time-averaged matrix elements in (\ref{upshot1}),  in the next two sections, we require that  the symbol  $a \in S^{0,0}_{cl}(T^*H \times (0,\lambda_0])$ satisfy the somewhat stronger support assumption
\begin{equation} \label{cutaway2}
 \text{dist} \, ( \text{supp} \,  a, \, \tau_{\partial \Omega}^{H} ( \gcal^*_{C(M,\epsilon)} \cup \Sigma^{*}_{C(M,\epsilon)} \cup \tcal ) \, ) \geq \epsilon >0. \end{equation}
We deal with the case of general symbols at the end in section \ref{completion}.

\subsubsection{Analysis of the diagonal sum}

The first important point is that the diagonal sum is bounded by
$O(\frac{1}{M})$. Indeed, the operators
 \begin{equation} \label{ANN} A_{n,n}(\lambda_j;a,\epsilon): =  \langle [N_{\partial \Omega}^{\partial \Omega}(\lambda_j)^{*}]^{n}\,
 F_{2k}(\lambda_j;a,\epsilon)^{*}\, [N_{\partial \Omega}^{\partial \Omega}(\lambda_j)]^{n}
 \,[N_{\partial \Omega}^{\partial \Omega}(\lambda_j)^{*}]^{n}\, F_{2k}(\lambda_j;a,\epsilon) [N_{\partial \Omega}^{\partial
 \Omega}(\lambda_j)]^{n}\end{equation}
in the diagonal terms are $\lambda$-pseudodifferentials;  i.e.
 for each $n=1,...M,$  $$A_{n,n}(\lambda_j;a,\epsilon) \in Op_{\lambda}(S^{0,0}_{cl}(T^* \partial \Omega)).$$
By  the $\lambda$-Egorov theorem for boundary
$\lambda$-pseudodifferential operators (see \cite{HZ} Lemma 1.4),
it follows  that with $\zeta:= (y,\eta),$
\begin{equation} \label{bdyegorov}
A_{n,n}(\lambda_j;a,\epsilon)  = Op_{\lambda_j} \left(
\frac{\gamma(\zeta) }{ \gamma( \beta^{n}\zeta )}   \times \sigma [
F_{2k}^{*}(\lambda;a,\epsilon) F_{2k}(\lambda;a,\epsilon )](
\beta^{n}\zeta) \right) +
 {\mathcal O}_{n,\epsilon}(\lambda_j^{-1})_{L^2 \rightarrow L^2} \,\,\, \text{as} \,\, \lambda_j \rightarrow
 \infty.\end{equation}
Substitution of (\ref{bdyegorov}) in (\ref{upshot1}) together with
the symbol compuation for $F_{2k}(\lambda;a,\epsilon); k=1,2$ in
Lemma \ref{F2}  and an application of the  boundary local Weyl-law
for $\lambda$-pseudodifferential operators \cite{HZ} gives
\begin{equation} \label{upshotdiagonal}\begin{array}{lll}
\lim_{\lambda \rightarrow \infty}   \frac{1}{N(\lambda)} \,
\sum_{\lambda_j \leq \lambda} \,\langle \,\,
A_{n,n}(\lambda_j;a,\epsilon)\,\,u_{\lambda_j}^{b},
u_{\lambda_j}^{b}\, \rangle & = &   \int_{B^{*} \partial
\Omega} \frac{\gamma(\zeta)}{ \gamma(\beta^{n}\zeta)}  \times |
\sigma( F_{2k}(\lambda;a,\epsilon) )(\beta^{n}(\zeta)|^{2} \,
\gamma^{-1}(\zeta) \, dy d\eta \\ && \\& = & \int_{B^{*} \partial
\Omega} \gamma^{-1}(\beta^{n}\zeta)  \times | \sigma(
F_{2k}(\lambda;a,\epsilon) )(\beta^{n}(\zeta)|^{2}  \, dy d\eta\\
&& \\& = & \int_{B^* \partial \Omega} \gamma^{-1}(\zeta)  \times |
\sigma( F_{2k}(\lambda;a,\epsilon) )((\zeta)|^{2}  \, dy d\eta  \\ &&\\
&=& \int_{B^*H} a(s,\tau;\epsilon) \rho_{\partial \Omega}^{H}(s,\tau) \, ds d\tau.
\end{array} \end{equation}
 \vspace{3mm}
since $\beta$ is symplectic. Summing over the $M$ terms and
dividing by $\frac{1}{M^2}$ gives the following formula for the
diagonal sum
\begin{equation} \label{diagonalupshot}
\frac{1}{M^2} \sum_{n=1}^{M}   \lim_{\lambda \rightarrow \infty}  \left( \, \frac{1}{N(\lambda)} \sum_{\lambda_j \leq \lambda} \langle A_{n,n}(\lambda_j;a,\epsilon) u_{\lambda_j}, u_{\lambda_j}  \, \right) = \frac{1}{M} \int_{B^*H} a(s,\tau;\epsilon) \rho_{\partial \Omega}^{H}(s,\tau) \, ds d\tau  = {\mathcal O} ( M^{-1} \| a \|_{L^{\infty}} ). \end{equation}

\subsubsection{Analysis of the off-diagonal sum}

The next step is to  apply Theorem  \ref{weyl2} to the averaged
operator $F_M(\lambda)$ in (\ref{N*N}). The non $\lambda$
pseudodifferential pieces of this operator can be written as a sum
of the  $\lambda$-FIO's $F_{m,n}(\lambda;a,\epsilon)$
 where $m \neq n$ and  $WF_{\lambda}'(F_{m,n}(\lambda;a,\epsilon)) \subset  \text{graph} \,[ (\beta^{k}_{H})^*
  (\beta^{m}) (\beta^n) (\beta^{k}_{H}) ]$ (see (\ref{FMN})). One can locally write these canonical
   graphs in terms of generating functions and so the operators $F_{m,n}(\lambda)$ can be written as a sum of $\lambda$-FIO's with phase functions
   of  the form (\ref{PHASEASSUMPTION}). Thus, Theorem  \ref{weyl2} applies to these operators.

We first observe that we can remove the outer factors of
$N_{\partial \Omega}^{\partial \Omega}$ since they have eigenvalue
$1$ on $u_{\lambda_j}^b$. For $a \in S^{0,0}_{cl}(T^*H \times (0,\lambda_0])$ safisfying the support assumption in (\ref{cutaway2}), we define
\begin{equation} \label{FMN} F_{m,n}(\lambda_j;a.\epsilon) : = F_{2k}(\lambda_j;a,\epsilon)^{*}\,
 [N_{\partial \Omega}^{\partial \Omega}(\lambda_j)]^{m}
 \,[N_{\partial \Omega}^{\partial \Omega}(\lambda_j)^{*}]^{n}\,
 F_{2k}(\lambda_j;a,\epsilon), \,\,\, m \neq n. \end{equation}

The remaining step in Theorem \ref{maintheorem} is the following

\begin{prop}\label{qaclemma} Let $a \in S^{0,0}_{cl}(T^*H \times (0,\lambda_0])$ satisfy the support assumption in (\ref{cutaway2}). Then, under the quantitative almost commutativity assumption  in Definition \ref{ANC}, it follows that
\begin{equation} \label{CONCLUDED} \limsup_{M \to \infty} \lim_{\lambda \to \infty}  \frac{1}{N(\lambda)}  \frac{1}{M^2} \sum_{n, m=0; m \not= n}^{M} \,   \sum_{\lambda_j \leq
\lambda}
  \langle
 \,\, F_{m,n}(\lambda_j;a,\epsilon)\,
 u_{\lambda_j}^{b}, u_{\lambda_j}^{b} \, \rangle = 0.
 \end{equation} \end{prop}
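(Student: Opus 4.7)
My strategy is to apply the boundary local Weyl law of Theorem \ref{weyl2} separately to each off-diagonal operator $F_{m,n}(\lambda;a,\epsilon)$ appearing in the Ces\`aro average, after verifying that the almost-nowhere commuting hypothesis forces a measure-zero fixed-point set.

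First I would identify the symplectic geometry of
\[
F_{m,n}(\lambda;a,\epsilon) = F_{2k}(\lambda;a,\epsilon)^{*}\,[N_{\partial \Omega}^{\partial \Omega}(\lambda)]^{m}\,[N_{\partial \Omega}^{\partial \Omega}(\lambda)^{*}]^{n}\,F_{2k}(\lambda;a,\epsilon).
\]
By Lemma \ref{F2}, $F_{2k}(\lambda;a,\epsilon)$ is a zeroth-order $\lambda$-FIO with canonical relation $\text{graph}(\beta_H^k)$, and by Proposition 6.1 of \cite{HZ}, $N_{\partial \Omega}^{\partial \Omega}(\lambda)$ is a zeroth-order $\lambda$-FIO with canonical relation $\text{graph}(\beta)$. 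The composition calculus of $\lambda$-FIOs then identifies $F_{m,n}(\lambda;a,\epsilon)$ as a zeroth-order $\lambda$-FIO whose (partial, possibly multivalued) underlying symplectic correspondence is
\[
\kappa_{m,n}^{k} := (\beta_H^k)^{-1} \circ \beta^{\,m-n} \circ \beta_H^k,
\]
clean on the microsupport of $a$ thanks to the strengthened support assumption (\ref{cutaway2}), which excludes $\gcal^*_{C(M,\epsilon)} \cup \Sigma^*_{C(M,\epsilon)} \cup \tcal$ and therefore keeps all compositions of up to $2M$ boundary reflections transversal and away from grazing/corner rays.

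Second I would compute the fixed-point set relevant to Theorem \ref{weyl2}:
\[
\Sigma_{F_{m,n}(\lambda)} = \bigcup_{p \in \Z} \{ \zeta \in B^{*}\partial \Omega : \beta^p(\zeta) = \kappa_{m,n}^{k}(\zeta) \} = \bigcup_{p \in \Z} \{\zeta : \beta_H^k \beta^p(\zeta) = \beta^{m-n} \beta_H^k(\zeta)\}.
\]
This is precisely the countable union of the commutation sets $\ccal\ocal_{p,m-n}$ of Definition \ref{ANC}, restricted to the branch $\beta_H^k$; under the almost-nowhere commuting hypothesis each has symplectic measure zero, hence $|\Sigma_{F_{m,n}(\lambda)}| = 0$. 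Locally writing $F_{m,n}$ in Fourier integral form of type (\ref{PHASEASSUMPTION}) on each clean stratum (which is permitted by the cutoffs above), Theorem \ref{weyl2} applies and gives, for each fixed pair $(m,n)$ with $m\ne n$, each branch $k\in\{1,2\}$, and each fixed $\epsilon>0$,
\[
\frac{1}{N(\lambda)} \sum_{\lambda_j \leq \lambda} \langle F_{m,n}(\lambda_j;a,\epsilon)\, u_{\lambda_j}^{b}, u_{\lambda_j}^{b} \rangle = o_{m,n,\epsilon}(1) \quad \text{as } \lambda \to \infty.
\]
To conclude, I would fix $M$ and observe that the double sum in (\ref{CONCLUDED}) is a finite combination of $M^{2}-M$ such averages; each tends to zero as $\lambda\to\infty$, so the inner $\lambda$-limit vanishes for every fixed $M$, and its $\limsup$ as $M \to \infty$ is therefore zero as well.

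The principal technical obstacle will be the first step, the symbolic identification of $F_{m,n}$ as a $\lambda$-FIO with a single clean underlying correspondence: one must verify that the multivaluedness of $\beta_H$ (branches $\beta_H^1,\beta_H^2$) does not produce uncontrolled contributions at crossings of the two branches, and that the iterated composition $(\beta_H^k)^{-1}\circ\beta^{m-n}\circ\beta_H^k$ remains clean throughout. This is precisely the role of the strengthened cutoff (\ref{cutaway2}) together with the constant $C(M,\epsilon)$ defined in (\ref{refinedcutoff}): for all $|m|,|n|\le M$, the trajectories arising in the composition remain $\epsilon$-transversal to $\partial\Omega$ and $H$ and avoid the singular set on $\supp a$, so that the local Fourier integral representation required by Theorem \ref{weyl2} is available branch-by-branch. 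Once this identification is secured, the remainder of the argument is essentially the application of the boundary Weyl law combined with a trivial finite-sum counting.
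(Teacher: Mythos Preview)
Your argument is correct under the ANC hypothesis of Definition~\ref{ANC} and is genuinely simpler than the paper's. The identification of the canonical relation of $F_{m,n}$ as the graph of $(\beta_H^k)^{-1}\beta^{m-n}\beta_H^k$ and of the fixed-point set $\Sigma_{F_{m,n}}$ as $\bigcup_p \ccal\ocal_{p,m-n}$ matches the paper exactly (see (\ref{M-N})). Your concluding step is the real simplification: since the $\lambda\to\infty$ limit is taken \emph{before} the $M\to\infty$ limit, one only needs each of the $M^2-M$ off-diagonal terms to be $o(1)$, which Theorem~\ref{weyl2} delivers termwise once $|\Sigma_{F_{m,n}}|=0$.

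The paper, however, proves the proposition under the weaker QANC hypothesis of Definition~\ref{ANCQ}, under which individual commutation sets $\ccal\ocal_{p,k}$ are \emph{not} assumed to have measure zero; hence Theorem~\ref{weyl2} is not directly applicable term by term and your finite-sum argument does not go through. Instead the paper establishes the quantitative bound of Lemma~\ref{uniformbound},
\[
\lim_{\lambda\to\infty}\Bigl|\tfrac{1}{N(\lambda)}\sum_{\lambda_j\le\lambda}\langle F_{m,n}(\lambda_j;a,\epsilon)u_{\lambda_j}^b,u_{\lambda_j}^b\rangle\Bigr|
\le \sum_{p}\mu_{p,m-n}^{\epsilon}(\ccal\ocal_{p,m-n}),
\]
and then invokes QANC to control the Ces\`aro average $\tfrac{1}{M}\sum_{|m-n|\le 2M}$ of the right-hand side. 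So the two approaches diverge precisely at the point where you appeal to $|\Sigma_{F_{m,n}}|=0$: yours suffices for Theorem~\ref{maintheorem} (ANC), while the paper's extra machinery is what is needed for Theorem~\ref{maintheorema} (QANC). If the proposition is read literally under QANC, your proof has a gap at that step; if read under ANC, it is a clean shortcut.
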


\begin{proof}

In the sum in (\ref{CONCLUDED}), the quantum observables are non-pseudo-differential
$\lambda$-Fourier integral operators, i.e. their canonical relations are graphs of
non-identity maps or correspondences. We apply  the Fourier
integral operator local Weyl law of Theorem \ref{weyl2} and the quantitative
almost nowhere commuting condition in (\ref{ANC}) to prove (\ref{CONCLUDED}).

For $m\neq n$, we
denote the canonical relation
 of $F_{m,n}(\lambda_j;a,\epsilon)$
by $ \Gamma_{    F_{2k}(\lambda_j;\epsilon)^{*}\,
 [N_{\partial \Omega}^{\partial \Omega}(\lambda_j)]^{m} \,[N_{\partial \Omega}^{\partial \Omega}(\lambda_j)^{*}]^{n}\, F_{2k}(\lambda_j;\epsilon)\,}.$
By the composition theorem for semi-classical Fourier integral
operators, this canonical relation is the union of the two
branches ($k = 1, 2$),
 $$ =  \text{graph} \, [  (\beta_{H}^{k})^* (\beta^{m} ) (\beta^{n})^*  (\beta_{H}^{k}) ]
  \subset B^* \partial \Omega \times B^* \partial \Omega,$$
 where $\beta: B^* \partial \Omega \rightarrow B^* \partial \Omega$ is the usual billiard map of the
  domain $\Omega$ and $\beta_{H}^{k}: \mathcal C \backslash E_k^{-1}(\Sigma)
  \rightarrow B^{*} \partial \Omega $ are the branches of billiard maps defined in (\ref{TRANS}) and  in
  Lemma \ref{GAMMABETADEF}.
Equivalently, the canonical relation is the graph of
$$ \left(\beta_{H}^{k} \right)^{-1} \beta^{m - n} \beta_{H}^{k}; \,\, k=1,2.$$
Note that there are two factors of $N_{\partial \Omega}^{H *}
N_{\partial \Omega}^H$, which accounts for the two factors of
$(\beta_{H}^{k})$.

By Theorem \ref{weyl2}, the fixed point set whose measure we need
to calculate is the fixed point set
\begin{equation} \label{fixed1}
 \left(\beta_{H}^{k} \right)^{-1} \beta^{m - n} \beta_{H}^{k} (y, \eta) = (y', \eta'), G^{- \tau}(y', \eta') = (y,
\eta), \,\,\, \tau = \psi_{m-n}(q(y), q(y')), \end{equation}
 where $\psi_{m-n}$ is the phase function of
$F_{m,n}(\lambda;a,\epsilon)$. Clearly, $\psi_{m-n}(q(y), q(y'))$ is the length of
the broken trajectory which starts at $q$ in the direction $\eta +
\sqrt{1-|\eta|^2} \nu_{y}$, breaks on $H$, then proceeds on a
broken geodesic until it has made $m-n$ reflections at the
boundary. Finally, for the last two links the trajectory   breaks
again at $H$. Equivalently, \begin{equation} \label{M-N}
\begin{array}{lll} \Sigma_{F_{m,n}(\lambda;a,\epsilon)}  =   \bigcup_{p \in \Z} \ccal \ocal_{p,m-n}, \\ \\
\ccal \ocal_{p,m-n} = \{(y, \eta) \in B^* \partial \Omega:
\beta^{m - n} \beta_{H}^{k}(y, \eta) = \beta_{H}^{k} \beta^p (y,
\eta)\}.  \end{array}
\end{equation}  As this shows, $\Sigma_{ F_{m,n}(\lambda;a,\epsilon) }$ depends
only on $m - n$.

      We compute
    \begin{equation} \label{CONCLUDEDa} \begin{array}{l} \limsup_{M \to \infty} \lim_{\lambda \to \infty}  \frac{1}{N(\lambda)}
     \frac{1}{M^2}  \left| \sum_{n, m=0; m \not= n}^{M} \,   \sum_{\lambda_j \leq
\lambda}
  \langle
 \,\, F_{m,n}(\lambda_j;a,\epsilon)\,
 u_{\lambda_j}^{b}, u_{\lambda_j}^{b} \, \rangle \, \right|
 \\ \\  \leq   \limsup_{M \to \infty}
     \frac{1}{M} \sum_{|m+n| = 1}^{2M}   \frac{1}{M}  \sum_{|m-n|=1}^{2M}  \lim_{\lambda \rightarrow \infty} \left|  \frac{1}{N(\lambda)}  \langle
 \,\, \sum_{ \lambda_j \leq \lambda} F_{mn}(\lambda_j;a,\epsilon)\,
 u_{\lambda_j}^{b}, u_{\lambda_j}^{b} \, \rangle   \right| \,
 \\ \\  \leq C \limsup_{M \rightarrow \infty} \frac{1}{M}  \sum_{|m-n|=1}^{2M}   \lim_{\lambda \rightarrow \infty}  \left| \frac{1}{N(\lambda)} \sum_{\lambda_j \leq \lambda} \langle F_{mn}(\lambda_j;a,\epsilon) u_{\lambda_j}^{b}, u_{\lambda_j}^{b} \rangle \right|
\end{array}
 \end{equation}

 The final step in the proof of Proposition \ref{qaclemma}   reduces to estimationt of the Weyl sum in the last line of (\ref{CONCLUDEDa}) in terms of the quantitative ANC condition in Definition \ref{ANC}.
 First, we need to define the relevant $\mu_{p,k}^{\epsilon}$-measures (see Definition \ref{ANC}). To do this, we need to further $\lambda$-microlocalize the $F_{mn}(\lambda;a,\epsilon)$-operators away from large-time iterates of the grazing and singular sets.
When $a \in S^{0,0}_{cl}(T^*H \times (0,\lambda_0])$ satisfies (\ref{cutaway2}), by semiclassical wavefront calculus,

\begin{equation} \label{wf1a}
WF_{\lambda}' (F_{2}(\lambda;a,\epsilon)) \subset   \iota_{\Delta} \, \left( \, \cap_{|k| \leq C(M,\epsilon)}    \{ (y,\eta) \in B^* \partial \Omega;  \, \text{dist} (\beta^k(y,\eta), S^*\partial \Omega) \geq \epsilon, \,\, \mbox{dist} (  \beta^k(y,\eta), \Sigma) \geq \epsilon \}  \, \right), \end{equation}
where,  $\iota_{\Delta}: B^*\partial \Omega \rightarrow B^*\partial \Omega \times B^*\partial \Omega$ with $\iota_{\Delta}(y,\eta) = (y,\eta,y,\eta).$
Consider the closed set $ \gcal_{C(M,\epsilon)}^* \cup \Sigma_{C(M,\epsilon)}^* \cup \tcal$ with  $| \gcal_{C(M,\epsilon)}^* \cup \Sigma_{C(M,\epsilon)}^* \cup \tcal| = 0$ and the disjoint open set
 \begin{equation} \label{goodset} \begin{array}{ll} U_{\epsilon}^{M}: = \bigcap_{ |k| \leq C(M,\epsilon)} \{ (y,\eta) \in B^* \partial \Omega; \text{dist} ( \beta^{k}(y,\eta), S^*\partial \Omega) > \epsilon, \,\, \mbox{dist} (\beta^{k}(y,\eta), \Sigma) > \epsilon     \}. \end{array} \end{equation} By the $C^{\infty}$ Urysohn lemma there exists $\chi_{\epsilon}^{M} \in C^{\infty}_{0}(T^*\partial \Omega)$ with the property that
$\chi_{\epsilon}^M(y,\eta) =   1$ when $ (y,\eta) \in \gcal_{C(M,\epsilon)}^* \cup \Sigma_{C(M,\epsilon)}^* \cup \tcal $ and $\chi_{\epsilon}^M(y,\eta) = 0$ for $  (y,\eta) \in U^M_{\epsilon}.$ To simplify the writing in the following, we abuse notation somewhat and write $\chi_{\epsilon}^M$ instead of $Op_{\lambda}(\chi_{\epsilon}^{M}).$

Clearly, $ \beta^{k}(U^M_{\epsilon}) \subset U^M_{\epsilon,k}$ where for $|k| < C(M,\epsilon),$
$$ U^M_{\epsilon,k}:=  \bigcap_{ | \ell | \leq C(M,\epsilon) - |k|}  \{ (y,\eta) \in B^* \partial \Omega; \text{dist} ( \beta^{\ell}(y,\eta), S^*\partial \Omega) > \epsilon, \,\, \mbox{dist} (\pi \beta^{\ell}(y,\eta), \Sigma) > \epsilon \}.  $$
 Let $\chi^M_{\epsilon,k} \in C^{\infty}_{0}(\partial \Omega)$ be a smooth cutoff equal to $1$ on $\gcal_{C(M,\epsilon)}^* \cup \Sigma_{C(M,\epsilon)}^* \cup \tcal$ and equal to $0$ on $U^M_{\epsilon,k}$.  Since $WF'_{\lambda}([N_{\partial \Omega}^{\partial \Omega}(\lambda)]^{k}) \subset \text{graph} \, \beta^{k} $ and $C(M,\epsilon) \geq 2M + 1 \geq |m-n| +1,$  it then follows by semiclassical wave front calculus that
\begin{multline} \label{wf3}
F_{m,n}(\lambda;a,\epsilon)= \tilde{F}_{m,n}(\lambda;a,\epsilon) + {\mathcal O}_{\epsilon,m-n}(\lambda^{-\infty})_{L^2 \rightarrow L^2}, \,\, \text{where,} \\ \\
\tilde{F}_{m,n}(\lambda;a,\epsilon): =   (1-\chi^M_{\epsilon}) F_{2}(\lambda;a,\epsilon)^* (1-\chi^M_{\epsilon}) \cdot  N_{\partial \Omega}^{\partial \Omega}(\lambda) (1-\chi^M_{\epsilon,|m-n|}) \cdot N_{\partial \Omega}^{\partial \Omega}(\lambda)  (1-\chi^M_{\epsilon,|m-n|-1}) \\  \cdots (1-\chi^M_{\epsilon,2}) N_{\partial \Omega}^{\partial \Omega} (\lambda) (1-\chi^M_{\epsilon,1}) F_2(\lambda;a,\epsilon) (1-\chi^M_{\epsilon}) + {\mathcal O}_{\epsilon,m-n,M}(\lambda^{-\infty})_{L^2 \rightarrow L^2} ; \,\,\, |m-n| \leq 2M. \end{multline}
Simlilarily, we define the microlocalized operator with Schwartz kernel
\begin{equation} \label{microGreen}
G_{0}^{\epsilon}(q(y), q(y')):= (1-\chi_{\epsilon}^{M}) G_{0}(\lambda) (1-\chi_{\epsilon}^{M})(q(y),q(y')), \end{equation}
where $G_{0}(\lambda)$ is the free Greens operator in  (\ref{Hankel}). For each $ p \in {\mathbb Z},$  the composite operator $ F_{mn}(\lambda;a,\epsilon) [ \lambda G_0^{\epsilon}(\lambda)|^{p}$ is a $\lambda$-Fourier integral operator of order zero in the sense of (\ref{oscillatory}). Consequently, for $(q(y),q(y')) \in \partial \Omega \times \partial \Omega,$
$$ \rho_{p,m-n}^{\epsilon}(q(y),q(y')): = \limsup_{\lambda \rightarrow \infty} \frac{1}{\lambda^{n-1}} |\tilde{F}_{mn}(\lambda;a,\epsilon)(q(y),q(y'))| \times  |[ \lambda \, G_0^{\epsilon}(\lambda)]^{p}(q(y'),q(y))| < \infty.$$

\subsubsection{ The QANC-condition}  \label{quant condition}
We make the
\begin{defn} \label{mumeasures} Given the map $ \omega: \partial \Omega \times \partial \Omega - \Delta_{\partial \Omega \times \partial \Omega} \rightarrow B^* \partial \Omega$ with $w(y,y') = (y, d_{y}|q(y)-q(y')|),$ we define the  measures (see Definition \ref{ANCQ})
$$ \mu_{p,m-n}^{\epsilon}: = w_{*} (  \, \rho_{p,m-n}^{\epsilon}(q(y),q(y')) d\sigma(y) d\sigma(y') \, ).  $$
\end{defn}
We can now define the QANC condition in terms of the measures $\mu_{p,m-n}^{\epsilon}.$

\begin{maindefn} \label{ANCQ}

We say that  $(\beta_H,\beta)$  {\it quantitatively
       almost never commute} if
       $$\limsup_{M \to \infty} \frac{1}{M} \sum_{|k| \leq M} \sum_{p \in \Z}  \limsup_{\epsilon \rightarrow 0^+} \mu_{p,k}(\ccal
       \ocal_{p,k}  \cap U_{\epsilon}^{M})= 0. $$

 \end{maindefn}

 \begin{rem} \label{messy}
 For general piecewise smooth domains, the measures $\mu_{p,k}^{\epsilon}$ are given by formulas that are quite complicated because of  multiple-reflection solutions to the commutator equations. However, when $\partial \Omega$ is convex, the measures $\mu_{p,k}^{\epsilon}$ simplify  and  satisfy the estimates in Lemma \ref{uniformbound}(b) below. \end{rem}

 \begin{lem} \label{uniformbound}
  Fix $\epsilon >0$ and let $a \in S^{0,0}_{cl}(T^*H \times [\lambda_0, \infty))$ satisfying the support condition in (\ref{cutaway2}).

  (a) \, For $m,n \in {\mathbb Z}$ with $1 \leq |m-n| \leq 2M,$
 \begin{equation} \label{qacupshot}
 \lim_{\lambda \rightarrow \infty}  \left| \frac{1}{N(\lambda)} \sum_{\lambda_j \leq \lambda} \langle F_{m,n}(\lambda_j;a_M,\epsilon) u_{\lambda_j}^{b}, u_{\lambda_j}^{b} \rangle  \right|  \leq  \sum_{p; L(p,\epsilon) \leq L_{H}(|m-n|,\epsilon)+1} \mu_{p,m-n}^{\epsilon}( \ccal \ocal_{p,m-n}) \end{equation}
 where, the measures $\mu_{p,m-n}^{\epsilon}$ are defined in Definition \ref{mumeasures}.

(b) \, When $\partial \Omega$ is convex and $\acal \subset (B^* \partial \Omega \cap U_{\epsilon}^{M})$ is measurable, for all $k$ with $|k| \leq 2M,$
  $$\mu_{p,k}^{\epsilon} (\acal) \leq \int_{\acal} L_{p}(y,\eta)^{-\frac{n-1}{2}} L_{k}( \beta_H(y,\eta) )^{-\frac{n-1}{2}} \prod_{\ell =1}^{p} \gamma^{-2}(\beta^{\ell}(y,\eta)) \, dy d\eta,$$  with $L_{p}(y,\eta):= |\pi \beta^{p}(y,\eta) - \pi \beta^{p-1}(y,\eta)| + |\pi \beta^{p-1}(y,\eta) - \pi \beta^{p-2}(y,\eta)|.$
\end{lem}

\begin{proof}
Part (a) follows from the formulas in section \ref{WKB}. Indeed by (\ref{k link}),
\begin{equation} \label{general} \begin{array}{llll}
\limsup_{\lambda \rightarrow \infty}  \left| \frac{1}{N(\lambda)} \sum_{\lambda_j \leq \lambda} F_{m,n}(\lambda_j;a,\epsilon) u_{\lambda_j}^{b}, u_{\lambda_j}^{b} \rangle \right|  = \limsup_{\lambda \rightarrow \infty}  \left| \frac{1}{N(\lambda)} \sum_{\lambda_j \leq \lambda} \tilde{F}_{m,n}(\lambda_j;a,\epsilon) u_{\lambda_j}^{b}, u_{\lambda_j}^{b} \rangle \right| \\ \\
 \leq \sum_{p; L(p,\epsilon) \leq L_{H}(|m-n|,\epsilon)+1}   \int_{\ccal \ocal_{p,m-n}}   \, w_{*}( \,  | \, [ \lambda \, G_{0}^{\epsilon}(\lambda)]^{p} (q(y),q(y')) |  \times  |{F}_{m,n}(\lambda;a,\epsilon)(q(y'),q(y))| \,\, dq(y) dq(y') \, )  \\ \\ = \sum_{p; L(p,\epsilon) \leq L_{H}(|m-n|,\epsilon)+1} \mu_{p,m-n}^{\epsilon}( \ccal \ocal_{p,m-n}).
  \end{array} \end{equation}

As for part (b),  when $\partial \Omega$ is convex, we prove the estimate in (\ref{qacupshot}) by substituting the explicit WKB formulas for the operators $N_{\partial \Omega}^{H}(\lambda)$ (\ref{wkb})  and $N_{\partial \Omega}^{\partial \Omega}(\lambda)$ \cite{HZ} suitably $\lambda$-microlocalized  away from the  generalized grazing and singular sets.   Since $\Delta  \cap WF_{\lambda}^{'}( (1 -\chi_{\epsilon,k}^{M}) N_{\partial \Omega}^{\partial \Omega}(\lambda) (1-\chi_{\epsilon,k-1}^{M}) = \emptyset$ for all $k =  \pm 1,...,\pm 2M,$  from (\ref{snellterm1}), the WKB formula for the $ (1 -\chi_{\epsilon,k}^{M}) N_{\partial \Omega}^{\partial \Omega}(\lambda) (1-\chi_{\epsilon,k-1}^{M})$ \cite{HZ}  and a standard stationary phase argument, it follows that
 \begin{multline} \label{qacupshot2}
 \tilde{F}_{m,n}(\lambda;\epsilon)(q(y),q(y'))  \sim_{\lambda \rightarrow \infty} (2\pi \lambda)^{\frac{n-1}{2}}  e^{i\lambda \psi_{m-n}^H(q(y),q(y') )} [B_{m-n,\epsilon}^{(0)}(q(y),q(y')) \\   + \lambda^{-1} B_{m-n,\epsilon}^{(1)} (q(y),q(y')) + \dots ]. \end{multline}
 Here, for $m>n,$ $\psi_{m-n}^H(q(y),q(y'))$ is the length of the locally unique $H$-broken, $\epsilon$-transversal $m-n$ link joining $q(y)$ and $q(y').$ For $m<n$ it is minus the length of the link.
In the same way, for all $p \in {\mathbb Z}$ satisfying $L(p,\epsilon) \leq L_H(M,\epsilon) + 1 $ (see (\ref{refinedcutoff})) it follows by wavefront calculus and repeated applications of stationary phase that
 \begin{multline} \label{qacjumps}
[(1-\chi^M_{\epsilon}) \lambda G_0(\lambda) (1-\chi^M_{\epsilon})]^{p}(q(y),q(y'))  \\
 \ \sim_{\lambda \rightarrow \infty} (2\pi \lambda)^{ \frac{n-1}{2}} e^{i \lambda \psi_p(q(y),q(y') )} [A_{p,\epsilon}^{(0)}(q(y),q(y') )   + \lambda^{-1} A_{p,\epsilon}^{(1)}(q(y),q(y')) + \cdots]. \end{multline}
 In (\ref{qacjumps}), for $p>0$ the phase $\psi_{p}(q,q')$ is the length of the locally unique $p$-link joining $q(y)$ and $q(y')$. For $p<0$ it is negative of the length. In the following, it will be useful to define the sum of these phase functions
 \begin{equation} \label{sumphases}
 \Psi_{m-n,p}(q(y),q(y'):= \psi^H_{m-n}(q(y),q(y')) + \psi_{p}(q(y'),q(y)). \end{equation}
 It is clear from  (\ref{boundarylink}) and (\ref{hlink}) that for $|m-n| \leq 2M,$
 \begin{equation} \label{indexbound}
 \inf_{q,q'}| \Psi_{m-n,p}(q(y),q(y'))| \geq L(p,\epsilon) - L_{H}(M,\epsilon). \end{equation}
 We write $y_k = \pi \beta^{k}(y,\eta); k=1,...,m-n-1$ for the intermediate reflection points of the geodesic joining $y_0 = y$ and $y_{m-n} = y'$ and put $L(q(y_p),q(y_{p-2})) = |q(y_p)-q(y_{p-1})| + |q(y_{p-1}) - q(y_{p-2})|$ (see Definition \ref{ANC}).  Using the convexity of $\partial \Omega,$  it follows that  for $p \geq 2,$
\begin{equation} \label{estimate1} \begin{array}{llll}
 |A_{p,\epsilon}^{(0)}(q(y),q(y'))|  \leq \prod_{k=1}^{p-2}  \left( \frac{ |q (y_{k+1}) - q(y_k)|}{|q(y_{k+1})-q(y_k)| + |q(y_k) - q(y_{k-1})| } \right)^{\frac{n-1}{2}}  \times \prod_{k=1}^{p-1}  \left( \frac{1}{ \langle \nu_{q(y_k)}, r(q(y_k), q(y_{k-1})) + r(q(y_k), q(y_{k+1})) \rangle  } \right)  \\ \\ \times  \prod_{k=1}^{p-1}  \langle \nu_{y_{k}}, r(q(y_k),q(y_{k+1}) \rangle^{-1} \times  \left( \frac{1}{|q(y_p)-q(y_{p-1})| + |q(y_{p-1}) - q(y_{p-2})| } \right)^{\frac{n-1}{2}} \\ \\ \leq L(q(y_p),q(y_{p-2}))^{- \frac{n-1}{2}} \prod_{k=1}^{p-1}  \langle \nu_{y_{k}}, r(q(y_k),q(y_{k+1})) \rangle^{-1}  \times \prod_{k=1}^{p-1}   \langle \nu_{y_k}, r(q(y_k), q(y_{k-1})) + r(q(y_k), q(y_{k+1})) \rangle^{-1} \\ \\
 = 2^{-(p-1)} L(q(y_p),q(y_{p-2}))^{- \frac{n-1}{2}} \prod_{k=1}^{p-1}  \langle \nu_{y_{k}}, r(q(y_k),q(y_{k+1})) \rangle^{-2} .  \end{array}\end{equation}
 In the last line of (\ref{estimate1}) we have used that $ \langle \nu_{y_k}, r(q(y_k), q(y_{k-1})) \rangle  = \langle   \nu_{y_k}, r(q(y_k), q(y_{k+1})) \rangle. $

 Similarily, we let $y_0^H = \pi \beta_H (y,\eta),  \, y_{|m-n|+1}^H = \pi \beta_{H}^{-1} \beta^{m-n} \beta_H (y,\eta) $ and $y_k^H = \pi \beta^k \beta_H (y,\eta), k=1,...,|m-n|.$  It follows that with a constant $C_{H,\Omega}>0,$
\begin{equation} \label{estimate2} \begin{array}{lll}
|B_{m-n,\epsilon}^{(0)}(q(y),q(y'))| \leq    2^{- | |m-n|-1 |}C_{H,\Omega} \, L(q(y^H_{|m-n|}),q(y^H_{|m-n|-2}))^{- \frac{n-1}{2}}.  \end{array}\end{equation}

The estimate in (\ref{estimate2}) follows in the same way as in (\ref{estimate1}) noting that  $N^{\partial \Omega}_{\partial \Omega}(\lambda)(q(y),q(y'))$-kernel has the additional $\langle \nu_{y}, r(q(y),q(y') \rangle$-term in the numerator as compared with the Greens function $G_{0}(\lambda)(q(y),q(y')).$ This accounts for the absence of the additional $\langle \nu_{y_k}, r(q(y_k),q(y_{k-1}) \rangle$-terms in the denominator of (\ref{estimate2}).  It then  follows from Theorem \ref{weyl2} (see also the argument in subsection \ref{WKB}), the time-cutoff Lemma \ref{cutofftrace}  and (\ref{indexbound}) that
 \begin{equation} \label{qacupshot3} \begin{array}{llll}
\limsup_{\lambda \rightarrow \infty}  \left| \frac{1}{N(\lambda)} \sum_{\lambda_j \leq \lambda} F_{m,n}(\lambda_j;a,\epsilon) u_{\lambda_j}^{b}, u_{\lambda_j}^{b} \rangle \right|  = \limsup_{\lambda \rightarrow \infty}  \left| \frac{1}{N(\lambda)} \sum_{\lambda_j \leq \lambda} \langle \tilde{F}_{m,n}(\lambda_j;a,\epsilon) u_{\lambda_j}^{b}, u_{\lambda_j}^{b} \rangle \right|  \\ \\
 \leq \limsup_{\lambda \rightarrow \infty} \frac{C}{\lambda^{n-1}} \sum_{p; L(p,\epsilon) \leq L_{H}(|m-n|,\epsilon)+1}  | \int_{Crit ( \Psi_{m-n,p}) }   [ \lambda \, G_{0}^{\epsilon}(\lambda)]^{p} (q(y),q(y'))  \\ \\ \times \tilde{F}_{m,n}(\lambda;a,\epsilon)(q(y'),q(y)) \,\, dq(y) dq(y') | \\ \\   \leq \limsup_{\lambda \rightarrow \infty} C \sum_{p; L(p,\epsilon) \leq L_{H}(|m-n|,\epsilon)+1}  | \int_{Crit ( \Psi_{m-n,p}) }  e^{i \lambda \,  \Psi_{m-n,p}(q(y),q(y')) }  A_{p,\epsilon}^{(0)}(q(y),q(y')) \\ \\ \times B_{m-n,\epsilon}^{(0)}(q(y'),q(y))\, d\sigma(y) \, d\sigma(y')  |  \\ \\
  \leq C_1 \sum_{p; L(p,\epsilon) \leq L_{H}(|m-n|,\epsilon)+1} \int_{Crit (\Psi_{p,m-n}) } | A_{p,\epsilon}^{(0)}(q(y),q(y')) \, B_{m-n,\epsilon}^{(0)}(q(y'),q(y))| \, d\sigma(y) d\sigma(y'). \end{array} \end{equation}
Substitution of the estimates (\ref{estimate1}) and (\ref{estimate2}) in (\ref{qacupshot3}) gives
\begin{equation} \label{qacupshot4} \begin{array}{llll}
\limsup_{\lambda \rightarrow \infty}  \left| \frac{1}{N(\lambda)} \sum_{\lambda_j \leq \lambda} F_{m,n}(\lambda_j;a,\epsilon) u_{\lambda_j}^{b}, u_{\lambda_j}^{b} \rangle \right| \\ \\
 \leq   \sum_{p; L(p,\epsilon) \leq L_{H}(|m-n|,\epsilon)+1 } 2^{ - (p + |m-n| +2)}\int_{Crit (\Psi_{p,m-n})  }  L(q(y^H_{|m-n|}),q(y^H_{|m-n|-2}))^{- \frac{n-1}{2}}  \\ \\ \times L(q(y_p),q(y_{p-2}))^{- \frac{n-1}{2}}   \prod_{k=1}^{p-1} \langle \nu_{y_{k}}, r(q(y_k),q(y_{k+1})) \rangle^{-2}  d\sigma(y) d\sigma(y').
    \end{array} \end{equation}

 We rewrite (\ref{qacupshot3}) by making the change of variables $w:(y,y') \mapsto (y,\pi_{T_y} r(q(y),q(y')) )=: (y,\eta).$ The result is that
\begin{equation} \label{qacupshot5} \begin{array}{lll}
\limsup_{\lambda \rightarrow \infty}  \left| \frac{1}{N(\lambda)} \sum_{\lambda_j
 \leq \lambda} F_{m,n}(\lambda_j;a,\epsilon) u_{\lambda_j}^{b}, u_{\lambda_j}^{b} \rangle \right| \\ \\
 \leq   \sum_{p;L(p,\epsilon) \leq L_{H}(|m-n|,\epsilon)+1} \int_{ \ccal \ocal_{p,m-n}  \cap U_{\epsilon}^{M} } L_{p}(y,\eta)^{- \frac{n-1}{2}}
   L_{m-n}(\beta_H(y,\eta))^{- \frac{n-1}{2}}
   \times \prod_{k=1}^{p} \gamma^{-2}(\beta^{k}(y,\eta)) \, dy d\eta.  \end{array} \end{equation}

This finishes the proof of part (b).

\end{proof}
In view of the quantitative almost commutativity assumption,
 substitution of the bound (\ref{qacupshot}) in the last line of (\ref{CONCLUDEDa}) concludes the proof of the Proposition \ref{qaclemma}.
\end{proof}

\subsection{Completion of the proof of Theorem \ref{maintheorem}: general symbols.} \label{completion}

Given $ a \in S^{0,0}_{cl}(T^*H \times (0,\lambda_0])$ we let $\chi_{\epsilon}^{M} \in C^{\infty}_{0}(T^*\partial \Omega)$ as above and write $ a = [( \tau_{\partial \Omega}^{H})^* \chi_{\epsilon}^{M} ]  a +[ 1- ( \tau_{\partial \Omega}^{H})^*  \chi_{\epsilon}^{M})] a.$
 From Proposition \ref{qaclemma} and the diagonal step, it follows that
 \begin{equation} \label{genargument}
\begin{array}{lll}
\limsup_{\lambda \to \infty} \frac{1}{N(\lambda)} \sum_{\lambda_j \leq \lambda} | \langle Op_{\lambda_j}(a) \phi_{\lambda_j}|_H,
\phi_{\lambda_j}|_{H} \rangle - \int_{B^*H} a \rho_{\partial \Omega}^{H} ds d\tau |^{2}  \\ \\
  \leq  3 \limsup_{\lambda \rightarrow \infty} \frac{1}{N(\lambda)} \sum_{\lambda_j
\leq \lambda} | \langle Op_{\lambda_j}( [ 1- (\tau_{\partial \Omega}^{H})^*  \chi_{\epsilon}^{M} ] a ) \phi_{\lambda_j}|_H,
\phi_{\lambda_j}|_{H} \rangle \\ \\- \int_{B^*H}  (1-\chi_{\epsilon}^{M})(s,\tau) a(s,\tau) \rho_{\partial \Omega}^{H} ds d\tau|^{2}  \\ \\
+ 3 \left| \, \int_{B^*H}   \chi_{\epsilon}^{M}(s,\tau) a(s,\tau) \, \rho_{\partial \Omega}^{H} ds d\tau  \, \right|^{2} \\ \\

+ 3 \limsup_{\lambda \rightarrow \infty} \frac{2}{N(\lambda)} |  \langle \, Op_{\lambda_j}( [ (\tau_{\partial \Omega}^{H})^*  \chi_{\epsilon}^{M} ] a ) \, \phi_{\lambda_j}|_{H}, \phi_{\lambda_j}|_{H} \, \rangle|^{2}
.\end{array} \end{equation}
From Lemma \ref{tanmass} it follows that
\begin{equation} \label{tanmassref} \limsup_{\lambda \rightarrow \infty} \frac{1}{N(\lambda)} |  \langle \, Op_{\lambda_j}( [ (\tau_{\partial \Omega}^{H})^*  \chi_{\epsilon}^{M}]  a ) \, \phi_{\lambda_j}|_{H}, \phi_{\lambda_j}|_{H} \, \rangle|^{2} = {\mathcal O}_{M}(\epsilon), \end{equation} and it is clear that
\begin{equation} \label{easypart}
\left| \, \int_{B^*H}  \chi_{\epsilon}^{M}(s,\tau) a(s,\tau) \, \rho_{\partial \Omega}^{H} ds d\tau  \, \right|^{2}    = {\mathcal O}_{M}(\epsilon). \end{equation}
Finally, since $ \text{dist} ( \, \text{supp} \,  ( \, a - [(\tau_{\partial \Omega}^{H})^*\chi_{\epsilon}^{M}]  a \, ), \, \tau_{\partial \Omega}^{H} ( \gcal^*_{C(M,\epsilon)} \cup \Sigma_{C(M,\epsilon)}^{*} \cup \tcal ) \, ) \geq \epsilon,$ it follows from (\ref{upshotdiagonal}) and Proposition \ref{qaclemma} that
\begin{equation} \label{psdopart} \begin{array}{lll}
\limsup_{\lambda \rightarrow \infty} \frac{2}{N(\lambda)} \sum_{\lambda_j
\leq \lambda} | \langle Op_{\lambda_j}(  a-  [ (\tau_{\partial \Omega}^{H})^* \chi_{\epsilon}^{M}] a ) \phi_{\lambda_j}|_H,
\phi_{\lambda_j}|_{H} \rangle - \int_{B^*H}  (1-\chi_{\epsilon}^{M})(s,\tau) a(s,\tau) \rho_{\partial \Omega}^{H} ds d\tau|^{2} \\ \\
\leq \frac{1}{M^2} \sum_{n=1}^{M}   \left( \limsup_{\lambda \rightarrow \infty}  \frac{1}{N(\lambda)} \sum_{\lambda_j \leq \lambda} \langle Op_{\lambda_j}(  \,  a - [( \tau_{\partial \Omega}^{H})^* \chi_{\epsilon}^{M} ] a )_{n,n}(\lambda_j;a,\epsilon) u_{\lambda_j}^{b}, u_{\lambda_j}^{b} \rangle \right) \\ \\
+ \frac{1}{M} \sum_{|m-n|=1}^{2M} \sum_{p \in {\mathbb Z} } \mu_{p,m-n}^{\epsilon}( \ccal \ocal_{p,m-n} ) \\ \\
 = \frac{1}{M} \int_{B^*H} (1-\chi_{\epsilon}^{M}) a \,\rho_{\partial \Omega}^{H} ds d\tau  + o_{\epsilon}(1) \leq \frac{C}{M}\,  \|a \|_{L^{\infty}} + o_{\epsilon}(1) \end{array} \end{equation}
 as $M \rightarrow \infty.$ By the quantitative almost commutation condition in Definition \ref{ANCQ},  for the last term on the RHS of  (\ref{psdopart}),  $\limsup_{\epsilon \rightarrow 0^+} o_{\epsilon}(1) = o(1)$ as $M \rightarrow \infty.$  The constant $C =C(\Omega)>0$ in the last line of (\ref{psdopart}) is uniform in $\epsilon$ and $M$.
Letting $\epsilon \rightarrow 0^+$ kills the first two terms  in (\ref{tanmassref}) and (\ref{easypart}) and then taking $M \rightarrow \infty$ kills the last term in (\ref{psdopart}).
 This completes the proof of Theorem \ref{maintheorem} \qed.

\begin{rem} As a check on the midline of the stadium, we note that
in that case $|\Sigma_{m, n} | = 1$ for all $m,n$ since $\beta^p$
commutes with the transmission map $\beta_{\partial \Omega}^{H;k}$
(which equals $\sigma$). Thus, the sets in the union in
(\ref{M-N}) are empty except when $m - n = p$ and that one has
full measure. It is simple to check that the limit is non-zero.

\end{rem}

\subsection{Extensions and generalizations} \label{extensions}

We now observe that the proof so of Theorems  \ref{CDTHM}  and \ref{maintheorem} extend to general symbols and to
Dirichlet boundary conditions.

\subsubsection{Eigenfunction mass along tangential and singular sets}

The extension of Theorems \ref{CDTHM}  and \ref{maintheorem} in section  \ref{completion} (see (\ref{tanmassref})) to general symbols $a \in S^{0,0}(T^*H \times (0,\lambda_0^{-1}]$ relies on showing that for a full-density of eigenfunctions, mass does not concentrate along tangential or singular sets.

The point of this subsection is to establish this fact  (see Lemma \ref{tanmass}) by using a  pointwise microlcal Weyl law argument  to show that there is no such mass concentraion on any closed set of measure zero. This method has the advantage of working whether or not $\Omega$ has a boundary. An alternative method  is to use the potential layer $N_{\partial \Omega}^{H}(\lambda)$ to reduce the problem to $\partial  \Omega$ and then apply the argument in \cite{HZ} (see Lemmas 7.1, 9.2 and Appendix 12) which uses the Karamata Tauberian argument for the restriction of the Neumann heat kernel to the boundary diagonal.

In the following we let $(x',x_n) \in {\mathbb R}^{n-1} \times {\mathbb R}$ denote normal coordinates near the interior hypersurface $H \subset \Omega$ defined by $  \Omega  \ni x = q_H(x') + x_n \nu_{x'}$ and  write $(\xi',\xi_n)$ for the corresponding fiber coordinates defined by $T^*_{x}\Omega \ni \xi = \xi^{'} + \xi_n \nu_{x'}$. In the following, we denote the restriction operator to $H$ by $\gamma_H: f \mapsto f|_{H}$ and  continue to write $\zeta: B^*H \rightarrow S_{H}^{*} \Omega$ for the map $\zeta: (x',\xi') \mapsto (x', \xi' + \sqrt{1-|\xi'|^{2}} \nu_{x'}).$

Let ${\zcal} \subset B^*H$ be a closed subset with $| \zcal | = 0.$  Then, by the $C^{\infty}$ Urysohn lemma, we can choose $\chi_{\epsilon,\zcal}(x',\xi)$ smooth and positive homogeneous of degree zero  with the property that  for $(x', \xi) \in S_{H}^{*}\Omega,$
\begin{equation} \label{tancutoff}  \left\{ \begin{array}{ll}  \chi_{\epsilon,\zcal}(x', \xi) = 1 \;\; \mbox{when} \; \mbox{dist} \;((x', \xi),  \zeta \left(  \zcal \right) \, ) \leq \epsilon,
\\ \\  \chi_{\epsilon,\zcal}(x',\xi) = 0\;\; \mbox{when} \; \; \mbox{dist} \; ((x',\xi),  \zeta \left( \, \zcal   \right) \,) \geq 2\epsilon.
\end{array} \right. \end{equation}
We define
$$ {\tilde{\chi}}_{\epsilon,\zcal}(x,\xi) = \chi_{\epsilon,\zcal}(x',\xi) \cdot \chi(x_{n}),$$
where, $\chi(u) \in C^{\infty}_{0}({\mathbb R})$ equal to $1$ near $u=0$ and the corresponding restriction $\chi_{\epsilon, \zcal}^{H} \in C^{\infty}_{0}(T^*H)$ given by
\begin{equation} \label{res-symbol}
 \chi_{\epsilon, \zcal}^{H}(x',\xi'):= \chi_{\epsilon, \zcal}(x'; \xi', \xi_n=0). \end{equation}
 Consider the operator $Op({\tilde{\chi}}_{\epsilon,\zcal}): C^{\infty}(\Omega) \rightarrow C^{\infty}(\Omega)$ and let $U(t) = e^{it \sqrt{\Delta_{\Omega}}}$ denote the Neumann wave operator on $\Omega.$   We wish to compute the small time asymptotics for $ \sum_{j} e^{i\lambda_j t} \| Op_{\lambda_j}( \chi_{\epsilon, \zcal}^{H}) u_{\lambda_j}^{H} \|_{L^{2}(H)}^{2}$ and then apply the usual Fourier Tauberian theorem to get large $\lambda$ asymptotics for the $\lambda$-microlocalized Weyl sum $ \frac{1}{N(\lambda)} \sum_{\lambda_j \leq \lambda} \| Op_{\lambda_j}( \chi_{\epsilon, \zcal}^{H}) u_{\lambda_j}^{H} \|_{L^{2}(H)}^{2}. $

  For  $|t| < \text{dist}( x', \partial \Om), $ we have that
\begin{equation} \label{microwave}  \begin{array}{ll}
U_{\epsilon,\zcal}(t,x',x')
:= \sum_{j} e^{i\lambda_j t} | [ \, Op( \chi_{\epsilon,\zcal}^H)u_{\lambda_j}^{H} \,](x')|^{2} \\ \\ =[ \gamma_H   \, Op( {\tilde{\chi}}_{\epsilon,\zcal} ) \,  U(t) \,   Op( {\tilde{\chi}}_{\epsilon,\zcal})^* \, \gamma_H^* ](t,x',x'), \,\,\,\, x' \in H. \end{array} \end{equation}

Since $H$ is an interior hypersurface,  for $|t| < \text{dist} (x',\partial \Omega), $  bicharacteristics starting from $x' \in H$ do not intersect the boundary,  $\partial \Omega.$ Therefore, by propagation of singularities,  modulo a $C^{\infty}$-error, we can substitute the interior parametrix $U_{0}(t,x,y) = (2\pi)^{-n} \int_{{\mathbb R}^{n}} e^{i [ \langle x-y,\xi \rangle  - t |\xi| ]}   a(x,y,\xi)  d\xi$ for $U(t,x,y)$ in (\ref{microwave}).
 By  the usual composition calculus for Fourier integral and pseudodifferential operators, it follows from (\ref{microwave}) that
\begin{equation} \label{microwave2}
U_{\epsilon,\zcal}(t,x',x') = (2\pi)^{-n} \ \int_{T_{x'}^*\Omega} e^{-it |\xi|} |\chi_{\epsilon,\zcal}(x',\xi)|^{2}  d\xi + \cdots,
\end{equation}
where the dots denote terms with lower-order singularities at $t \rightarrow 0.$  By making a polar coordinates decomposition in the fiber variables in (\ref{microwave2}) and using that $\chi_{\epsilon,\zcal}$ is homogeneous of degree zero, it follows   that as $t \rightarrow 0,$
\begin{equation} \label{microwave3} \begin{array}{ll}
U_{\epsilon,\zcal}(t,x',x') = (2\pi)^{-n} \left( \int_{S_{x'}^*\Omega} |\chi_{\epsilon,\zcal}(x',\omega)|^{2}  d\omega \right) \, (t+i0)^{-n}  + o(t^{-n}) \\ \\
= (2\pi)^{-n} \left( \int_{B_{x'}^*H} |\chi_{\epsilon,\zcal}^H(x',\xi')|^{2} \, \gamma^{-1}(x',\xi') \,  d\xi' \right) \, (t+i0)^{-n}  + o(t^{-n}).  \end{array}
\end{equation}
Finally, an application of the Fourier Tauberian theorem,  integration over $x' \in H$ on both sides of (\ref{microwave3}) and using that $Op(\chi_{\epsilon,\zcal}) = Op_{\lambda_j}(\chi_{\epsilon,\zcal})$ (since $\chi_{\epsilon,\zcal}$ is homogeneous of order zero) proves  the following lemma:
\begin{lem} \label{tanmass} Let $\zcal \subset B^*H$ be closed with $|\zcal |=0$ and for arbitrary $\epsilon >0$ let $\chi_{\epsilon,\zcal}^{H} \in C^{\infty}_{0}(T^*H)$ be  the symbol in (\ref{res-symbol}). Then,
$$ \limsup_{\lambda \rightarrow \infty} \frac{1}{N(\lambda)} \sum_{\lambda_j \leq \lambda} \| Op_{\lambda_j}(\chi_{\epsilon,\zcal}^{H}) u_{\lambda_j}^{H} \|^{2}_{L^2(H)}   = (2\pi)^{-n} \left( \int_{B^*H} |\chi_{\epsilon,\zcal}^H(x',\xi')|^{2}  \, \gamma^{-1}(x',\xi') \,  dx' d\xi' \right)  =  {\mathcal O}(\epsilon).$$
\end{lem}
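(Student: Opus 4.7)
The plan is to obtain the bound via a pointwise microlocal Weyl law on the hypersurface $H$, avoiding the boundary altogether by exploiting the fact that $H \subset \Omega^o$ has positive distance from $\partial\Omega$. The key idea is to represent the diagonal of
\[
[\gamma_H \, Op(\tilde\chi_{\epsilon,\zcal})\, U(t)\, Op(\tilde\chi_{\epsilon,\zcal})^*\, \gamma_H^*](x',x')
\]
as a spectral sum $\sum_j e^{it\lambda_j} |[Op(\chi_{\epsilon,\zcal}^H) u_{\lambda_j}^H](x')|^2$, determine its leading singularity at $t=0$, and then invoke the Fourier Tauberian theorem. Integrating the resulting pointwise asymptotics over $H$ gives the desired Weyl bound.

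First, I would fix $x' \in H$ and pick $|t|$ smaller than $\mathrm{dist}(x',\partial\Omega)$. On this scale, bicharacteristics from $x'$ have not yet reached $\partial\Omega$, so by propagation of singularities the Neumann wave propagator $U(t)$ may be replaced, modulo a smooth kernel, by the standard interior (free-space) Hadamard parametrix $U_0(t,x,y) = (2\pi)^{-n}\int e^{i(\langle x-y,\xi\rangle - t|\xi|)} a(x,y,\xi)\,d\xi$ with $a(x,x,\xi)=1$. Then the standard composition calculus (compose the Fourier integral kernel of $U_0$ with the pseudodifferential cutoffs $Op(\tilde\chi_{\epsilon,\zcal})$ on each side, then restrict in the $x_n$-variable to $x_n = 0$) yields at leading order
\[
U_{\epsilon,\zcal}(t,x',x') = (2\pi)^{-n}\int_{T_{x'}^*\Omega} e^{-it|\xi|} |\chi_{\epsilon,\zcal}(x',\xi)|^2\,d\xi \;+\;(\text{lower order in }t).
\]

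Next, since $\chi_{\epsilon,\zcal}$ is homogeneous of degree zero in $\xi$, I would decompose into polar coordinates $\xi = r\omega$ with $r>0$, $\omega \in S_{x'}^*\Omega$, carry out the $r$-integration to produce the $(t+i0)^{-n}$ singularity, and reduce the sphere integral to the ball $B_{x'}^*H$ via the identification $\omega = \xi' + \sqrt{1-|\xi'|^2}\,\nu_{x'}$, which introduces the Jacobian factor $\gamma(x',\xi')^{-1}$. This gives
\[
U_{\epsilon,\zcal}(t,x',x') = (2\pi)^{-n}\Bigl(\int_{B_{x'}^*H} |\chi_{\epsilon,\zcal}^H(x',\xi')|^2\,\gamma^{-1}(x',\xi')\,d\xi'\Bigr)(t+i0)^{-n} + o(t^{-n}).
\]
Applying the classical Fourier Tauberian theorem pointwise in $x'$ (the spectral sum is monotone after adding a constant multiple of the identity) and then integrating in $x' \in H$ produces
\[
\limsup_{\lambda \to \infty}\frac{1}{N(\lambda)}\sum_{\lambda_j\le\lambda}\|Op_{\lambda_j}(\chi_{\epsilon,\zcal}^H)u_{\lambda_j}^H\|_{L^2(H)}^2 = (2\pi)^{-n}\int_{B^*H}|\chi_{\epsilon,\zcal}^H|^2\,\gamma^{-1}\,dx'd\xi'.
\]
Finally, because $\zcal$ is closed with $|\zcal|=0$, the support of $\chi_{\epsilon,\zcal}^H$ shrinks to a set of measure zero as $\epsilon \to 0$, so by dominated convergence (using the uniform bound $|\chi_{\epsilon,\zcal}^H| \le 1$ and the local integrability of $\gamma^{-1}$ on $B^*H$) the right-hand side is $\mathcal{O}(\epsilon)$, yielding the claim.

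The main obstacle is the care needed in the first step: one must ensure that the parametrix replacement of $U(t)$ by $U_0(t)$ is valid uniformly for $x'$ in a neighborhood of $H$, and that the double composition of the microlocal cutoffs $Op(\tilde\chi_{\epsilon,\zcal})$ with the Fourier integral kernel produces a clean leading symbol (so that the full symbol of the composed operator at the diagonal, after restriction, is exactly $|\chi_{\epsilon,\zcal}|^2$). This is handled by standard semiclassical/FIO composition and the fact that $\tilde\chi_{\epsilon,\zcal}(x,\xi)=\chi_{\epsilon,\zcal}(x',\xi)\chi(x_n)$ is chosen to restrict correctly to $H$. Once this pointwise Weyl asymptotic is established, the remainder of the proof is essentially a measure-theoretic observation.
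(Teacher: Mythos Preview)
Your argument is correct and follows essentially the same route as the paper: represent the spectral sum as the diagonal of $\gamma_H\,Op(\tilde\chi_{\epsilon,\zcal})\,U(t)\,Op(\tilde\chi_{\epsilon,\zcal})^*\,\gamma_H^*$, replace $U(t)$ by the free interior parametrix for $|t|<\mathrm{dist}(x',\partial\Omega)$ using propagation of singularities, extract the $(t+i0)^{-n}$ singularity via polar coordinates and homogeneity, convert the sphere integral to $B_{x'}^*H$ with the $\gamma^{-1}$ Jacobian, and conclude by the Fourier Tauberian theorem followed by integration over $H$. One small remark: dominated convergence strictly gives $o(1)$ as $\epsilon\to 0$ rather than $\mathcal{O}(\epsilon)$, but this is all that is actually needed downstream (and is what the paper uses as well).
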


Thus, for a full-density of eigenfunctions, mass does not concentrate on measure-zero closed subsets $\zcal \subset B^*H$.

 In subsection \ref{completion} (see (\ref{tanmassref})), we apply Lemma \ref{tanmass} with $\zcal = \tau_{\partial \Omega}^{H} ( \Sigma^*_{C(M,\epsilon)} \cup \gcal_{C(M,\epsilon)}^* \cup \tcal)$ to extend Theorem \ref{maintheorem} to arbitrary symbols $a \in S^{0,0}.$

\subsubsection{The Dirichlet Case} \label{Dirichlet} The proof in the Dirichlet case is very simliar
to the Neumann case. There are a few minor changes: the relevant
density is $\rho_{H}(s,\eta)= (1-|\eta|_{g}^{2})^{1/2}$ for
Dirichlet and this follows from the result in \cite{HZ} for
boundary ergodicity. Also, the $N_{\partial \Omega}^{H}$-operator
gets replaced by the operator with kernel $G_{0}(q,q_H;\lambda)$
and this operator has the same microlocal properties as
$N_{\partial \Omega}^{H}$. Finally, of course the boundary traces
of DIrichlet eigenfunctions are $v_{\lambda}^{b}:\partial_{\nu} \phi_{\lambda}.$

\subsection{Nodal intersections with interior curves: Proof of Corollary \ref{nodalthm}} \label{final}

\begin{proof}  Let $\Omega \subset \mathbb{R}^2$ be a piecewise analytic planar domain. The corollary follows from  Theorem 6 in \cite{TZ} provided we  establish the  following "goodness
" condition on $C \subset \Omega$:
\begin{equation} \label{good}
\frac{\| u_{\lambda}^{C,\nu} \|_{L^2(C)} } { \| u_{\lambda}^{C}
\|_{L^{2}(C)} } = {\mathcal O}(e^{a \lambda}).
\end{equation}
Here, we write $u^{C}_{\lambda} = \phi_{\lambda}|_{C}$  and $u^{C,\nu}_{\lambda} = \partial_{\nu_C} \phi_{\lambda}|_{C}$ and $a>0$ is an
arbitrary positive constant. Assume the eigenfunctions
$\phi_\lambda$ are $L^{2}$-normalized in $\Omega$ so that $
\int_{\Omega} |\phi_\lambda|^{2} dx = 1$. Then, by Theorem
\ref{maintheorem} it follows that  for an ergodic sequence of  the
$u_{\lambda}^{C}$'s,
$$ \|u_\lambda^{C} \|_{L^{2}(C)} \sim 1$$
as $\lambda \rightarrow \infty$. By standard sup-estimates, we
also have that
$$ \sup_{x \in \Omega} |\partial_{\nu} \phi_{\lambda}(x)| = {\mathcal O}(\lambda^{3/2}).$$
Corollary \ref{nodalthm} then follows from (\ref{good})  since
clearly $\lambda^{3/2} \ll e^{a \lambda}$ for any $a>0$.

\end{proof}

\section{ Generic interior hypersurfaces $H \subset \Omega$: Proof of Proposition \ref{GENERICintro}} \label{fixedpoint}

In this section, we prove the genericity of the condition of
almost never commutativity stated in Proposition
\ref{GENERICintro}. As mentioned in the introduction, the
non-commuting condition should be generic in a much wider sense.
We present only a simple result here for the sake of brevity; we
hope to give a more general one elsewhere.

There are at least two standard ways to approach generic domains.
Following \cite{CPS,PS1,PS2,S}), we may consider the Frechet space
 $C^{\infty}(S^1, \Omega)$ of smooth embeddings $f: S^1 \to
\Omega^o$ (the interior of $\Omega$);  one could relax the
smoothness to obtain a Banach space $C^{k}(H, \Omega)$ for some $k
\geq 2$. A second approach (see \cite{U,FO}) is to fix a
hypersurface $H$ (here, a curve)  and to parameterize nearby
hypersurfaces by flowing out $H$ under vector fields along $H$,
extended smoothly to a neighborhood of $H$. In either definition,
an infinitesimal variation of $H$ is  given by a smooth vector
field $X$ along $H$. We say that a property is generic if it holds
for a residual subset of the relevant space, i.e. a subset
containing a countable intersection of dense open sets.

The main idea is to prove that the sets $\ccal \ocal_{k, p}$ of
Definition \ref{ANC} are smooth hypersurfaces away from the
corners. To do this we use a well-known transversality theorem.
We recall that if $F: M \to N$ is a $C^1$ map of Banach manifolds,
then $x \in M$ is a regular point if $DF_x: T_x M \to T_{F(x)} N$
is surjective. Also, $y \in N$ is a regular value if every point
of $F^{-1}(y)$ is regular.

\medskip
\noindent{\bf Transversality theorem} \cite{AR,Q,U} {\it Let $\Phi
: \hcal \times B \to E $ be a $C^k$ map of Banach manifolds with
$E, \hcal$ separable. If $0$ is a regular value of $\Phi$, and if
$\Phi_H = \Phi(H, \cdot)$ is a Fredholm map of index $< k$ then
the set $\{H \in \hcal: $0$ \; \mbox{is a regular value of}\;
\Phi_H\}$ is residual in $\hcal$.}

We let $X = S^{1}$ and $Y = \Omega$. The Fredholm condition is
then trivial.  We denote an embedding by $f: S^{1} \to \Omega$ and
let $H = f(S^{1})$. The properties of $f$ which concern us are
properties only of the image $H$. The principal property  is that the measure of
$\ccal \ocal_{p, k}$ is positive. We introduce the
partially defined symplectic correspondence,
$$\Phi_{j, p}: \hcal \times B^* \partial \Omega \to B^* \partial \Omega, \;\;\;\Phi_{k, p}(H, \zeta)
=   \beta^{-p}  \beta_{H}^{k*} \beta^j \beta_H^k (\zeta). $$ Here,
$\beta_H^{k*}$ denotes the (partially defined) inverse to
$\beta_H^k$, which makes sense since $\beta_H^k$ is symplectic on
its domain of definition. More precisely, we restrict $\zeta$ to
the domain of $\beta^{-p} \beta_H^{k*} \beta^j \beta_H^k $, which
depends (slightly) on $H$. Our aim is to prove that, for
 generic (i.e. a residual set of) $H$,  the set $\{\zeta \in B^* \partial
\Omega:
 \beta^{-p}  \beta_H^{k*}  \beta^j \beta_H^k(\zeta) = \zeta \}$ is
  a hypersurface (i.e. curve) away from the corner set  in $B^* \partial \Omega$  and hence of Minkowski content zero.
It suffices to prove that $0$ is a regular value of the map
$\Phi_{j, p}(H, \zeta) - \zeta$,   i.e. that $D(\Phi_{j, p}(H,
\zeta) - \zeta)$ is surjective for each $(H, \zeta)$ such that
$\Phi_{j, p}(H, \zeta) - \zeta = 0$. If $\Phi_{j, p}(H, \zeta) \zeta$ then $D_{\zeta} \Phi_{j, p}(H, \zeta) :  T_{\zeta} B^*
\partial \Omega \to T_{\zeta} B^*
\partial \Omega$. Surjectivity of $$I - D_{\zeta} \Phi_{j,p}(H, \zeta): T_{\zeta} B^* \partial \Omega \to T_{\zeta} B^*
\partial \Omega$$ holds  if and only if   $1$ is not an
eigenvalue of this kind of `Poincar\'e map' for generic $H$ except
for a possible curve  in $B^* \partial \Omega$.
 Note  that if $\ccal \ocal_{p, j}$ had positive measure, then all eigenvalues
 of this Poincar\'e map would equal one  at any point of density $(y, \eta)$ of this
 set.  In \cite{PS1,PS2,S} the
somewhat analogous result is proved that for generic domains, the
spectrum of the Poincar\'e map for every periodic reflecting ray
omits 1 from its spectrum. It is possible that the multi-jet
transversality approach of that paper could be adapted to our
problem. Here we opt for a simpler proof when the billiard flow is
hyperbolic.

\begin{proof}

We tacitly view the equation $\beta_H^k \beta^p(y, \eta) - \beta^j
\beta_H^k(y, \eta) = 0$ as written in local coordinates in $B^*
\partial \Omega$. We henceforth write $\beta_H^k = \beta_H$ and suppress the fact that $\beta_H$ is
double-valued since we can separately consider the four branches
of $\Phi_{j, p}$.   We would like to show that the derivative in
$H$ of the vector valued function $\beta_H \beta^p(y, \eta) -
\beta^j \beta_H(y, \eta)$ spans the tangent space $T_{\beta^j
\beta_H (y, \eta)} B^*
\partial \Omega$ at each point where the equation holds.

The variation $\delta_H \beta_H (y, \eta)$ is defined as follows:
Let $H_{\epsilon}$ be a smooth curve of hypersurfaces through $H_0
= H$ with variational vector field $X$ along $H$. Then,
$$\delta \beta_H (\zeta) : = \frac{d}{d\epsilon} |_{\epsilon = 0}
\beta_{H_{\epsilon}}(\zeta). $$ It is a tangent vector to a curve
through $\beta_H(\zeta)$ depending on the vector field $X$ on $H$
defining the variation, i.e. $\delta \beta_H (\zeta): T_H \hcal
\to T_{\beta_H(\zeta)} B^* \partial \Omega$. To see the dependence
on $X$ we give an explicit formula for $\beta_H$ similar to that
 in \S \ref{BMAPSH}:

\begin{equation}
\beta_H(y, \eta) = E(t^1_H(y, - \eta), y, - \eta) -
(\tilde{t}_H^2(y, \eta) - t_H^1 (y, - \eta)) \zeta_H^j(y, - \eta)
.\end{equation} Here, $\tilde{t}_H^2(y, - \eta)$ is the time at
which the reflected ray hits $\partial \Omega$ and $\zeta_H^j(y,
\eta)$ is defined in (\ref{ZETAHJ}). This vector is smooth in the
unit normal $\nu_{q_H^j(y, \eta)}$. The variation thus consists
of two types of terms: ones where we differentiate the hitting
times  and one where we  vary the normal direction:
\begin{equation} \label{ZETAHJdelta} \delta \zeta_H^j(y, - \eta) = 2 \delta (\zeta \cdot
\nu_{q_H^j(y, - \eta)}) \nu_{q_H^j(y, - \eta)}. \end{equation} It
is clear that both terms depend on (and only on) $X_{q_H^j(y,
-\eta)}$ (we recall that $q_H^j(y, -\eta)$ is the point where the
trajectory from $(y, \eta)$ intersects $H$ (see Definition
\ref{HBILLDEF}). Thus, $\delta \beta_{H_0}(y, -\eta)$ depends on
(and only on) the value of $X$ at the point $q^j_H(y, - \eta)$.

The variation  of the equation has the form $\delta \beta_H
(\beta^p(y, - \eta)) = D \beta^k (\delta \beta_H(y, - \eta))$. As
just noted,  $\delta \beta_{H} (y, - \eta)$ depends on (and only
on) the value of $X$ at $q_H^j(y, -\eta)$. On the other hand,
$\delta \beta_H (\beta^p(y, - \eta))$ depends only on the point
$q_H^j(\beta^p(y,- \eta))$ where the trajectory intersects $H$
after $p$ bounces. We now prove that these points must be
different.

\begin{lem} Let $\Omega$ be a hyperbolic planar billiard table. Let
 $H \subset \Omega$ be a convex curve. Then for almost all $(y, \eta) \in B^* \partial \Omega,$
  the set of $\leq 2$ points where the line segment $\overline{y,
  \eta}$ hits $H$ is disjoint from the set where the line segment
  $\overline{\beta^k \beta_H(y, \eta)}$ hits $H$.

\end{lem}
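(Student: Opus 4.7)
\medskip
\noindent\emph{Proof proposal.}
My plan is to argue by contradiction. Suppose the set $S_k \subset B^*\partial\Omega$ of covectors $(y,\eta)$ for which $\overline{(y,\eta)}\cap H$ and $\overline{\beta^k\beta_H(y,\eta)}\cap H$ share at least one point has positive symplectic measure. Since $H$ is convex, each of these two line segments meets $H$ in at most two points, so I can write $S_k = \bigcup_{i,j\in\{1,2\}} S_{ij}$ where $S_{ij}$ is the set on which the $i$-th intersection on the first segment coincides with the $j$-th intersection on the second. By sub-additivity, some $S_{ij}$ has positive measure, and I fix this pair below. Working in an arc-length coordinate $s$ on $H$, introduce the smooth scalar functions $f_i(y,\eta) = $ ($s$-value of the $i$-th intersection of $\overline{(y,\eta)}$ with $H$) and $g_j(y,\eta) = $ ($s$-value of the $j$-th intersection of $\overline{\beta^k\beta_H(y,\eta)}\cap H$). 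Both are real-analytic away from the closed measure-zero bad set where one of the links is tangent to $H$ or hits a corner of $\partial\Omega$, so that $\Phi_{ij} = f_i - g_j$ is real-analytic on the complement of a measure-zero subset of $B^*\partial\Omega$ and $S_{ij} = \Phi_{ij}^{-1}(0)$ up to that bad set.

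Next, I would upgrade the positive-measure vanishing of $\Phi_{ij}$ to identical vanishing on an open set. By analyticity of $\partial\Omega$ on each smooth face and of $H$, the map $\beta^k\beta_H$ is real-analytic on its regular domain, hence so is $\Phi_{ij}$. A real-analytic function on a connected open subset of $\mathbb{R}^{2n-2}$ that vanishes on a set of positive Lebesgue measure vanishes identically on that component. Hence there exists a nonempty open set $U \subset B^*\partial\Omega$ (disjoint from the tangential/corner bad set) on which $f_i \equiv g_j$.

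On $U$ I would then exploit the functional equation $f_i(y,\eta) = f_j'\bigl(\beta^k\beta_H(y,\eta)\bigr)$, where $f_j'$ denotes the analogous intersection function computed starting at the new base point, and differentiate to get the linear relation
\begin{equation}\label{CRUCIAL}
df_i\big|_{(y,\eta)} = df_j'\big|_{\beta^k\beta_H(y,\eta)} \circ D(\beta^k\beta_H)\big|_{(y,\eta)}, \qquad (y,\eta)\in U.
\end{equation}
Since $\Omega$ is dispersing, the billiard map $\beta$ is uniformly hyperbolic on a full-measure invariant set: there is a measurable splitting $T(B^*\partial\Omega) = E^u \oplus E^s$ and a constant $\mu>1$ with $|D\beta^n v| \geq \mu^n |v|$ for $v \in E^u$. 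Composing with $D\beta_H$ (which is a fixed symplectic linear map on each fiber, hence of bounded distortion), $|D(\beta^k\beta_H) v^u| \gtrsim \mu^k |v^u|$ along a corresponding unstable direction. Iterating (\ref{CRUCIAL}) $n$ times along an $f$-invariant orbit inside $U$ (shrinking $U$ to an open set whose forward orbit stays in the original open set of definition if necessary, using Poincar\'e recurrence), I obtain $df_i|_{(y,\eta)} = df_{?}|_{T^n(y,\eta)} \circ DT^n|_{(y,\eta)}$ where $T = \beta^k\beta_H$, while the left side is a bounded covector and $DT^n$ expands at rate $\mu^{nk}$. Evaluating on the unit unstable vector forces the right side to dominate $df_{?}|_{T^n(y,\eta)}$ by a factor blowing up as $n\to\infty$, hence $df_i \equiv 0$ on $U$; but $f_i$ is manifestly non-constant on any open subset of $B^*\partial\Omega$ (moving $(y,\eta)$ tangentially slides the hit point along $H$), giving the desired contradiction. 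Summing the four $S_{ij}$'s and the four branches of $\beta_H$ yields $|S_k| = 0$.

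The main obstacle I anticipate is making the iteration in the last step rigorous, because the forward orbit of a point of $U$ under $T = \beta^k\beta_H$ may leave $U$ or enter the tangential/singular bad set. One needs either to replace the pointwise iteration by a Lusin-type argument recurring with positive measure into $U$ via Poincar\'e recurrence for $\beta$ (which is measure-preserving), or, as suggested in the introduction, to avoid iteration entirely and argue directly from (\ref{CRUCIAL}) that the one-dimensional covector $df_i$ cannot equal a covector composed with a non-trivial hyperbolic linear symplectomorphism, using that in dimension two, $D(\beta^k\beta_H)$ has eigenvalues of absolute value $\neq 1$ almost everywhere so that $I - D(\beta^k\beta_H)$ has trivial kernel on a positive-measure subset of $U$.
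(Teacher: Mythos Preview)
Your approach diverges from the paper's in a fundamental way, and carries a genuine gap.

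First, the analyticity step is unwarranted. The hypotheses are that $\Omega$ is a piecewise smooth dispersing billiard table and $H$ is a smooth convex curve; nothing in the statement or the ambient section assumes $\partial\Omega$ or $H$ is real-analytic. Without analyticity you cannot pass from positive-measure vanishing of $\Phi_{ij}$ to vanishing on an open set, and the rest of your argument collapses. Even granting analyticity, the iteration in \eqref{CRUCIAL} is not well-posed: you need the forward orbit of $U$ under $T=\beta^k\beta_H$ to remain in the domain where the identity $f_i\equiv g_j$ holds, but $T$ is not the map for which you have Poincar\'e recurrence (that is $\beta$, not $\beta^k\beta_H$), and there is no reason for $T$-orbits to recur to $U$. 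Your final alternative, invoking invertibility of $I-D(\beta^k\beta_H)$, addresses fixed points of $T$, which is a different condition from $f_i=g_j$.

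The paper's argument is much shorter and bypasses all of this. Reparametrize $B^*\partial\Omega$ by pairs $(x,q)\in\partial\Omega\times H$ with $x=q(y)$ and $q$ the point where $\overline{(y,\eta)}$ meets $H$; the direction is then $-\tfrac{x-q}{|x-q|}$. The coincidence condition becomes: the billiard trajectory starting at $q$, passing through $x$, bouncing $p$ times off $\partial\Omega$, returns to the \emph{same} point $q\in H$. If this held for a set of positive measure in $(x,q)$, Fubini would give some fixed $q$ and a positive-measure (hence, in this $2$-dimensional setting, an arc) of directions at $q$ whose broken billiard trajectories all return to $q$. Differentiating this one-parameter family in the direction parameter produces a nontrivial normal Jacobi field along the broken geodesic that vanishes at time $0$ and at the return time, i.e.\ $q$ is self-conjugate. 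But uniformly hyperbolic (dispersing) billiards have no conjugate points: broken Jacobi fields never vanish after the initial time (see \cite{W}). This contradiction finishes the proof in one stroke, with no analyticity, no iteration, and no open-set argument.
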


\begin{proof}

The statement is equivalent to saying that the  `probability' in
$\partial \Omega \times H$ of pairs $(x,q)$ such that the
trajectory defined by $(x, -  \frac{x - q}{|x - q|})$ has the
property that the point $q$ where it first hits $H$ is the same as
the point $q'$ where it hits $H$ after $p$ bounces from $\partial
\Omega$. We may think of the orbit as beginning at $q$ rather than
$x$ with the same initial direction $- \frac{x - q}{|x - q|}$ and
proceeding along the billiard flow of $\Omega$ until it has
bounced off $\partial \Omega$ $p$ times. We then terminate the
trajectory when it hits $H$ for the first time after hitting at
the $p$th bounce point $x_p$. If $q' = q$ for a set of positive
measure in $x$, then $q$ is a self-conjugate point in the sense
that there exists a normal Jacobi field along the broken geodesic
which vanishes at $t = 0$ (at the point $q$) and also at a later
time when it returns to $q$. In fact, it is a self-conjugate point
in the stronger  sense that there exists a positive measure set of
directions in $S^*_q \Omega$ for which the billiard trajectory
returns to $q$ (and additionally so that there exist exactly $p$
bounces before the return to $q$). This is trivially impossible
for hyperbolic billiards, since the (broken) Jacobi fields defined
by the billiard trajectories have no zeros and hence there are no
self-conjugate points (see \cite{W}).
\end{proof}

Although we did not state it,  the argument is valid in all
dimensions. The assumption of hyperbolicity is more than
necessary; for the Proposition, we only use  that the set $(q,
\eta)$ of directions of tangential conjugate points is a set of
content zero in $B^*
\partial \Omega$.

Henceforth we  assume $y \not= y'$. Since $X$ ranges over all
vector fields, we may assume that $X$ is zero at one of these
points and non-zero at the other. Since $D \beta^{-p}$ is
symplectic (hence an isomorphism), the following Lemma is
sufficient to prove surjectivity of the differential away from
curve of $(y, \eta) \in B^* \partial \Omega$ where $\beta_H(y, \eta) (y, \eta)$.

\begin{lem}

If $\dim \Omega = 2, $ then for all $\zeta \in B^* \partial
\Omega, $  $\{\delta \beta_H(q, \eta)\}$ spans $T_{r_H(q, \eta)}
B^* \Omega$ as the variation runs over vector fields on $H$,
except in the case where $\beta_H(q, \eta) = (q, \eta)$, i.e.
where  $q + t \zeta(q, \eta) $ intersects $H$ orthogonally.

\end{lem}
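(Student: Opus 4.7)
\medskip

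\noindent\emph{Proof plan.} The plan is to write $\beta_H$ explicitly as a composition of three maps, compute the variation $\delta\beta_H$ with respect to a vector field $X$ along $H$, and then check that the image of $X_{q_H^j(y,-\eta)} \mapsto \delta\beta_H(y,\eta)$ is surjective onto $T_{\beta_H(y,\eta)}B^*\partial\Omega$ (a two-dimensional space) except in the orthogonal-intersection case.

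First I would set up coordinates. Near $q_H := q_H^j(y,-\eta) \in H$ I choose an orthonormal frame $(T_H, N_H)$ of $\mathbb R^2$ with $T_H$ tangent and $N_H$ normal to $H$, and split $X = X^T T_H + X^N N_H$. The map $\beta_H$ factors as:
(a) leave $\partial\Omega$ at $(y,\eta)$ with direction $\zeta(y,-\eta)$ and flow until hitting $H$ at $q_H$;
(b) reflect the direction $-\zeta$ in the line $H$ to get $\zeta_H$ (formula \eqref{ZETAHJ});
(c) flow from $q_H$ in direction $\zeta_H$ until hitting $\partial\Omega$ at $y' = q_{H,\partial\Omega}^j$, then project the terminal covector tangentially.
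When $H$ is perturbed infinitesimally by $X$, the first link's endpoint $q_H$ moves by the infinitesimal normal displacement: the intersection time $t_H^j$ changes so that $q_H$ slides to $q_H + X^N(q_H)\, N_H + O(\varepsilon)$ along the line through $(y,-\eta)$. Crucially, $X^T$ contributes nothing to the new intersection point to first order (the curve only moves transversally), so $\delta q_H = \alpha(X^N)\,\zeta(y,-\eta)$ for a nonzero scalar $\alpha$ depending on the angle $\zeta\cdot N_H \neq 0$ (tangential intersections are excluded in our setup). Meanwhile the reflecting ``mirror'' has been rotated infinitesimally by an angle proportional to $\partial_{T_H}X^N(q_H)$; this rotates the reflected direction $\zeta_H$ by twice that angle.

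Next I would separate the two free parameters in $X$ at $q_H$. Only the pointwise values $X^N(q_H)$ and $\partial_{T_H}X^N(q_H)$ enter $\delta\beta_H$ (the tangential component $X^T$ is irrelevant, as it amounts to a reparametrization of $H$), and these two parameters are freely prescribable by a suitable $X$. Writing out the composition (a)--(c) and taking the derivative, $\delta\beta_H(y,\eta)$ is a sum of two vectors in $T_{\beta_H(y,\eta)}B^*\partial\Omega$:
\begin{equation*}
\delta\beta_H(y,\eta) \;=\; X^N(q_H)\cdot V_1 \;+\; \partial_{T_H}X^N(q_H)\cdot V_2,
\end{equation*}
where $V_1$ comes from translating the break-point along $\zeta$ and $V_2$ from rotating the normal $N_H$. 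It is at this point that I would verify, by direct computation in the $(y,\eta)$ symplectic coordinates on $B^*\partial\Omega$, that $V_1$ and $V_2$ are linearly independent generically, and that they become proportional exactly when the outgoing link is parallel to the incoming link, i.e.\ when the ray hits $H$ orthogonally (so that reflection fixes the direction and $\beta_H(y,\eta)=(y,\eta)$).

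The main obstacle, and the step requiring the most care, is this last linear independence computation for $\{V_1,V_2\}$. Concretely, $V_1$ is obtained by differentiating the ``shifted'' straight-line flow to $\partial\Omega$ (so it involves the travel time $t_{H,\partial\Omega}^j - t_H^j$ and the curvature of $\partial\Omega$ at the exit point $y'$), while $V_2$ is obtained by differentiating the reflected direction $\zeta_H$ (formula \eqref{ZETAHJ}) together with its effect on the exit point. Both become degenerate precisely when the returning link retraces the incoming one, i.e. when $\zeta_H = -\zeta$, which is equivalent to $\zeta \parallel N_H$, i.e. orthogonal intersection; away from this locus, one checks non-vanishing of the $2\times 2$ determinant $\det(V_1,V_2)$ using $\zeta\cdot N_H \neq 0$ and the positivity of the travel times. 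This establishes the spanning property for all $(y,\eta)$ with $\beta_H(y,\eta)\neq(y,\eta)$, completing the proof of the lemma (and hence the regularity of $0$ for $\Phi_{j,p}-\mathrm{Id}$, and thus the transversality argument for Proposition \ref{GENERICintro}).
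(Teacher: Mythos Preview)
Your approach is essentially the same as the paper's: both isolate two free parameters in the variation (translation of the impact point along the fixed incoming ray, and rotation of the normal to $H$ at the impact point), produce the corresponding tangent vectors $V_1,V_2\in T_{\beta_H(y,\eta)}B^*\partial\Omega$, and check linear independence away from the orthogonal-incidence locus.

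The paper's execution is slightly cleaner than the determinant computation you propose. Rather than parametrizing by $(X^N(q_H),\partial_{T_H}X^N(q_H))$, the paper parametrizes directly by $(t,\theta)$ where $t$ is the hitting time along the fixed ray and $\theta$ is the angle of the normal $\nu_\theta=(\cos\theta,\sin\theta)$; this is equivalent to your parametrization via an upper-triangular change of variables. The key simplification is then the observation that the reflected \emph{direction} $\zeta-2\langle\nu_\theta,\zeta\rangle\nu_\theta$ is independent of $t$, so $\partial\beta_H/\partial t$ is purely horizontal (it lies in the base direction of $T B^*\partial\Omega$), while $\partial\beta_H/\partial\theta$ has a nonvanishing vertical (fiber) component because $\partial_\theta(\langle\nu_\theta,\zeta\rangle\nu_\theta)\neq 0$. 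Linear independence is then immediate unless the horizontal vector $\partial Y/\partial t$ vanishes, which happens exactly when $\zeta\parallel\nu$, i.e.\ orthogonal incidence. This horizontal/vertical splitting spares you the full $2\times 2$ determinant and the curvature-of-$\partial\Omega$ bookkeeping you anticipated; you may want to adopt it. One small correction: in the orthogonal case the post-break direction $\zeta_H$ equals $\zeta(y,-\eta)$ (not $-\zeta$), since $\beta_H$ first time-reverses and then reflects; this is what makes $\beta_H(y,\eta)=(y,\eta)$.
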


\begin{proof}

We choose the variations so that the variation  vector fields $X$
 vanish at $q_H^j(\beta^p \beta_{H_0}(\zeta)) $   and span
$T_{q_H^j(y,\eta)} \Omega$. We want to prove that $D_H \beta^k
\delta_H \beta_H(y, \eta)$ spans $T_{\beta^k \beta_H(y, \eta)} B^*
\partial \Omega$.
  It is helpful to use
the following notation:  a tangent vector $v$ to $B^*
\partial \Omega$ is said to be vertical ($v \in V B^* \Omega$ if it is tangent to the fibers of
the natural projection to $\Omega$;  it is horizontal ($v \in H
B^* \Omega$) if the momentum variable is fixed while the position
variable changes.

We  first observe that $q(y) + t \zeta(y, - \eta)$ is a fixed ray
independent of $H$. We note
 that the reflection from $H$ depends only on the tangent line to $H$  at
 the point of impact. In varying $H$, we  can vary (and only vary) the tangent line at the
 point of impact  and the  distance
 along the ray to the tangent line.  These parameters correspond
 to two variations of $H$: in the first, we fix the point of
 impact and deform $H$ so that the tangent line at the point of
 impact varies; while in the second, we hold the tangent line
 fixed and move the point of impact along the ray $q(y) + t \zeta(y,
 - \eta)$.

The possible reflected rays thus have the form
$$Y(t, \nu_{\theta}) = q(y) + t_2 \zeta(y, -\eta) - 2 (t_2 - t) \langle \nu_{\theta},
\zeta(y, -\eta) \rangle \nu_{\theta}, $$ where $t_2 = t_2(t,
\theta)$ is the time at which the reflected ray hits $\partial
\Omega$ and where $\nu_{\theta} = (\cos \theta, \sin \theta).$ The
reflected direction is $ =  \zeta(y, -\eta) - 2 \langle
\nu_{\theta}, \zeta(y, -\eta) \rangle \nu_{\theta}, $ so
$$\beta_{H(t, \theta)}(y, -\eta) = (Y, \zeta(y, -\eta) - 2 \langle \nu_{\theta},
\zeta(y, -\eta) \rangle \nu_{\theta}). $$ We note that the
direction is independent of $t$. So for fixed $\theta$,
$$\frac{\partial \beta_H}{\partial t} = \frac{\partial Y}{\partial t}
\oplus 0 \in H_{\beta_H(y, \eta)} B^* \partial \Omega. $$ Here,
$$\frac{\partial Y}{\partial t} = \frac{\partial t_2}{\partial t} \zeta - 2 \langle \nu_{\theta}, \zeta \rangle
\nu_{\theta}. $$ Further,
$$\frac{\partial \beta_H}{\partial \theta} = \frac{\partial Y}{\partial
\theta}\oplus \frac{\partial }{\partial \theta } (- 2 \langle
\nu_{\theta}, \zeta \rangle \nu_{\theta}). $$ We now check that
these two vectors span $T_{\beta_H(y, \eta)} B^* \partial \Omega.$
We have,  $\frac{\partial }{\partial \theta } ( \langle
\nu_{\theta}, \zeta \rangle \nu_{\theta}) =  \langle
\frac{\partial }{\partial \theta }  \nu_{\theta}, \zeta \rangle
\nu_{\theta} +  \langle \nu_{\theta}, \zeta \rangle \frac{\partial
}{\partial \theta } \nu_{\theta}. $ The two terms are orthogonal,
so the vector cannot vanish unless both coefficients vanish, but
that is also impossible by orthogonality. This tangent vector is
vertical while the $t$ derivative is horizontal so to complete the
proof it suffices to consider the case where $\frac{\partial
t_2}{\partial t} \zeta - 2 \langle \nu_{\theta}, \zeta \rangle
\nu_{\theta} = 0$. The sum vanishes at $\theta = 0$ if and only
$\zeta = \nu$, the normal to $H$ at the point of impact. But in
that case, $\beta_H(\zeta) = \zeta$.

\end{proof}

\section{\label{NOTATION} Notational Appendix}

\begin{enumerate}
\item To notationally disinguish points of $\partial \Omega$ and
points of $H$, points of $\partial \Omega$ are usually denoted by
$y$ or by $q$, while points of $H$ are denoted by $q_H$. By a
slight abuse of notation, we also write  $y \mapsto q(y) \in
\partial \Omega$ for the parameter of a local parameterization, and   $s \mapsto q_H(s) \in H$
for a local parameterization of $H$;  we  sometimes use $s$ to
denote a point of $H$.  The dual coordinates are $\eta \in
T_y^*(\partial \Omega)$ and $\tau \in T_s^*(H).$ \medskip

\item $\pi_q^T(\xi) $ is the projection onto the tangent space at
$q$ of $\xi \in S^*_q \Omega$; we use the same notation for
$\partial \Omega$ and for $H$. \medskip

.\item $r(q_H,q) = \frac{q_H-q}{|q_H-q|}$: the direction vector
joining $q \in \partial \Omega$ and $q_H \in H.$ \medskip

 \item $\gamma_H$ resp. $\gamma_{\partial \Omega}$ are the
restriction operators to $H$, resp. $\partial \Omega$. \medskip

  \item $u_{\lambda}^b$: boundary traces of eigenfunctions (see table). Traces on $H$:   $u_{\lambda}^H = \phi_{\lambda}|_H;$ $  u^{H,\nu}_{\lambda} = \frac{1}{\lambda} \partial_{\nu_H} \phi_{\lambda}|_{H}.$
\medskip

\item $\gamma(y, \eta) = \sqrt{1 - |\eta|^2} $ (see
(\ref{a-defn})). \medskip

\item The length of successive double links is
$L_{p}(y,\eta):= |\pi \beta^{p}(y,\eta) - \pi \beta^{p-1}(y,\eta)| + |\pi \beta^{p-1}(y,\eta) - \pi \beta^{p-2}(y,\eta)|.$
\medskip

  \item $\overline{(y,\eta)}$ is the single-link billiard trajectory starting
   at $y \in \partial \Omega$ with direction $\zeta(y,\eta)= \eta + \sqrt{1-|\eta|^{2}} \nu_y$ and endpoint on $\partial \Omega.$
   \medskip

\item $\tau_{\partial \Omega}^H$: the transfer map from
$B^*\partial \Omega$ to $B^*H$  (Definition \ref{TRANSFERMAP}).
\medskip

\item $\tau_H^{\partial \Omega}$: the transfer map from $B^* H$ to
$B^* \partial \Omega$ (\ref{TRANSFERH}). \medskip

\item $\ccal$:   The domain of  $\tau_{\partial \Omega}^{H}$
(Definition \ref{HBILLDEF}); \medskip

\item  $\gcal$: The grazing set. Generalized grazing sets:
$\gcal^*:= \cup_{k \in {\mathbb Z}} \beta^{k}(S^* \partial
\Omega)$ and $\gcal^{*}_{N}: = \cup_{|k| \leq N} \beta^{k}(S^*
\partial \Omega).$

\item  $\Sigma$: The singular set. Generalized singular sets:
$\Sigma^* = \cup_{k \in {\mathbb Z}} \beta^{k} (B_\Sigma^* \Omega)
$ and $ \Sigma_{N}^*= \cup_{|k| \leq N} \beta^{k}(B_\Sigma^*
\Omega).$

\item From Definition \ref{TRANSFERMAP}, Definition
\ref{SECONDTRANSFER}  and (\ref{GAMMAS}): The graph of the
transfer map is denoted by  $ \Gamma_{ \tau_{\partial \Omega}^{H}}
= \Gamma_{\tau_{\partial \Omega}^{H; 1 } } \cup
\Gamma_{\tau_{\partial \Omega}^{H; 2 } }. $
\medskip

\item   The $\epsilon$-regular set: $U_{\epsilon}:= \{ (y,\eta)
\in B^* \partial \Omega; \text{dist} ( \tau_{\partial \Omega}^{H}
( B_{\Sigma}^{*} \partial \Omega \cup \gcal \cup \tcal) > \epsilon
\}.$

\item
  $ N_{\partial \Omega}^{H}(\lambda)$ is the quantum transfer
  operator of (\ref{boundarylayer}). \medskip

  \item $ N^{\partial \Omega}_{\partial \Omega}(\lambda)$ is the
  boundary integral operator of (\ref{boundarylayerb}).  \medskip

\item $\beta_H$ (\S \ref{betas} and (\ref{TRANS})): the once
broken transmission map (or correspondence) on $B^* \partial
\Omega \to B^* \partial \Omega$. It has two branches $\beta_H^k,
(k = 1,2$)
\medskip

\item  $F(\lambda): = N_{\partial \Omega}^H(\lambda)^*
Op_{\lambda} (a) N_{\partial \Omega}^H(\lambda)$ (see
(\ref{FLAMBDA}); \medskip

\item From Lemma \ref{GAMMABETADEF}:   The   canonical relation of
$ N_{\partial \Omega}^H(\lambda)^* Op (a) N_{\partial
\Omega}^H(\lambda)$ equals
$$\Gamma_{\tau_{\partial \Omega}^{H * } } \circ \Gamma_{\tau_{\partial \Omega}^{H  } } =  \Delta_{\ccal
\times \ccal}  \cup  \Gamma_{\beta_H^{ 1}}  \cup \Gamma_{\beta_H^{
2}}.  $$

\item $F_{i j}^{\nu} (\lambda)$: terms in the decomposition of
$F(\lambda)$ (see Proposition \ref{Egorov});
\medskip

\end{enumerate}

\end{proof}

\end{proof}

\end{document}